\author{Yuqi Ruan $^{\dag,1}$ \and Kaizhi Wang $^{\ddag,2}$  \and Jun Yan $^{\dag,3}$}
\def\titlerunning#1{\gdef\titrun{#1}}
\def\author#1{\gdef\autrun{\def\and{\unskip, }#1}\gdef\@author{#1}}
\newtheorem{theorem}{Theorem}[section]
\newtheorem{lemma}{Lemma}[section]
\newtheorem{definition}{Definition}[section]
\newtheorem{proposition}{Proposition}[section]
\newtheorem{remark}{Remark}[section]
\newtheorem{corollary}{Corollary}[section]
\newtheorem{example}{Example}[section]
\numberwithin{equation}{section}
\DeclareMathOperator*{\supp}{supp}
\newcommand{\g}{\gamma}
\newcommand{\R}{\mathbf{R}}
\newcommand{\eps}{\varepsilon}
\newcommand{\T}{T^*M}
\newcommand{\va}{\varphi}
\newcommand{\s}{\mathcal{S}^-}
\newcommand{\x}{\sigma}
\begin{document}
	
	\baselineskip=16pt
	
	\titlerunning{Lyapunov stability and uniqueness problems for HJ equations}
	\title{Lyapunov stability and uniqueness problems for Hamilton-Jacobi equations without monotonicity}
	\author{Yuqi Ruan$^{\dag,1}$ \and Kaizhi Wang$^{\ddag,2}$  \and Jun Yan$^{\dag,3}$ }
	\date{June 26, 2024}
	
	\maketitle
	
	\bigskip\bigskip\bigskip

	$^\dag$ School of Mathematical Sciences, Fudan University, Shanghai 200433, China
	
	$^\ddag$ School of Mathematical Sciences, CMA-Shanghai, Shanghai Jiao Tong University,   Shanghai 200240, China
	
	$^1$ Email: 22110180038@m.fudan.edu.cn
	
	$^2$ Email: kzwang@sjtu.edu.cn
	
	$^3$ Email: yanjun@fudan.edu.cn

	\bigskip\bigskip\bigskip
	
	\textbf{Keywords.}  Lyapunov stability, Hamilton-Jacobi equation, uniqueness, Mather measure, weak KAM theory
	
	\bigskip\bigskip\bigskip
	
	\textbf{MSC (2020).} 35F21, 35D40, 35B35
	
	\bigskip\bigskip\bigskip
	
	\textbf{The Data Availability Statement.} No datasets were generated or analysed during the current study
	
	\bigskip\bigskip\bigskip
	
	\textbf{The Conflict of Interest Statement.} 
	We have no conflicts of interest to disclose
	
	\newpage
	
	\bigskip
	
	\begin{abstract}
		We consider the evolutionary Hamilton-Jacobi equation
		\begin{align*}
			w_t(x,t)+H(x,Dw(x,t),w(x,t))=0, \quad(x,t)\in M\times [0,+\infty),
		\end{align*}
		where $M$ is a compact manifold, $H:T^*M\times\R\to\R$, $H=H(x,p,u)$ satisfies Tonelli conditions in $p$ and the Lipschitz condition in $u$.

		This work mainly concerns with the Lyapunov stability (including asymptotic stability, and  instability) and uniqueness of stationary viscosity solutions of the equation. A criterion for stability and a criterion for instability are given. We do not utilize auxiliary functions and thus our method is different from the classical Lyapunov's direct method.
		We also prove several uniqueness results for stationary viscosity solutions. The Hamiltonian $H$ has no concrete form and it may be non-monotonic in the argument $u$, where the situation is more complicated than the monotonic case. Several simple but nontrivial examples are provided, including the following equation on the unit circle
		\[
		w_t(x,t)+\frac{1}{2}w^2_x(x,t)-a\cdot w_x(x,t)+(\sin x+b)\cdot w(x,t)=0,\quad x\in \mathbf{S},	
		\]
		where  $a$, $b\in\R$ are parameters. We analyze the stability, and instability of the stationary solution $w=0$ when parameters vary, and show that $w=0$ is the unique stationary solution when $a=0$, $b>1$ and $a\neq0$, $b\geqslant 1$.

		The sign of the integral of $\frac{\partial H}{\partial u}$ with respect to the Mather measure of the contact Hamiltonian system generated by $H$ plays an essential role in the proofs of aforementioned results. For this reason, we first develop the Mather and weak KAM theories for contact Hamiltonian systems in this non-monotonic setting. A decomposition theorem of the Ma\~n\'e set is the main result of this part.
	\end{abstract}
	
	\newpage
	
	\tableofcontents
	\newpage
	
	
	\section{Introduction}
	\subsection{Assumptions and motivation}
	\medskip
	
	\subsubsection{Lyapunov stability theory for ordinary differential equations}
	Stability is of fundamental importance in the study of the qualitative behavior of solutions of differential equations.
	Perhaps the most important stability concept is that of stability in the sense of Lyapunov, which was introduced by Lyapunov in his doctoral dissertation in 1892. He developed the stability theory for ordinary differential equations, and provided two methods for the stability analysis of an equilibrium point: Lyapunov's direct method and Lyapunov's  indirect method.  The indirect method relies on some explicit
	representation of the solutions, in particular by infinite
	series. The direct method does not require the
	prior knowledge of the solutions themselves, while it makes an essential use of auxiliary functions, called Lyapunov
	functions. The direct method is a powerful tool for the  stability analysis, but as we all know  that there are no general rules for determining Lyapunov functions.
	Notice that converse theorems provide usually no clue to the practical search for Lyapunov functions. There is a huge literature on the development of the Lyapunov theory. Here we only mention several seminal works as well as some fundamental textbooks on the stability of dynamical systems determined by ordinary differential equations, including Hahn \cite{Hahn}, Hale \cite{Hale}, Krasovskii \cite{Kra}, LaSalle and Lefschetz \cite{LL}, Yoshizawa \cite{Yos}, Zubov \cite{Zu}. Although there have been great progresses in this direction as mentioned above, generally speaking, analyzing and judging the Lyapunov stability of an equilibrium point is not an easy task.
	
	\subsubsection{The purposes of this work}
	As Crandall pointed out in \cite{Cra}, most problems in partial differential equations arising from physical models either have the form of evolution equations, which describe the change of a physical system in time, or result from seeking stationary solutions of some evolution problem.
	In this paper, we consider the evolutionary Hamilton-Jacobi equation
	\begin{align}\label{1-1}\tag{$\mathrm{HJ_e}$}
		w_t(x,t)+H(x,Dw(x,t),w(x,t))=0,\quad x\in M,\ t\in[0,+\infty),
	\end{align}	
	where the Hamiltonian $H:T^*M\times\R\to\R$, $H=H(x,p,u)$ is of class $C^3$, $M$ is a compact and connected smooth manifold without boundary, and the symbol $D$ denotes the gradient with respect to space variables $x$. This work is devoted to studying the following two problems:
	\medskip
	\begin{itemize}
		\item [{\bf(P1)}] Lyapunov stability and instability of  stationary solutions of \eqref{1-1}.
		\item [{\bf(P2)}] The uniqueness of stationary solutions of \eqref{1-1}.
	\end{itemize}
	\medskip
	Problems (P1) and (P2) are closely related and we will explain it later.

	In general, equation \eqref{1-1} has no global classical solutions. Hence, one has to consider weak solutions, for example, the generalized solutions in the sense of Kru$\mathrm{\check{z}}$kov \cite{Kru}. At the beginning of 1980s, Crandall and Lions \cite{CL83} introduced the notion of the viscosity solution for Hamilton-Jacobi equations. They got very general existence, uniqueness and continuous dependence results for viscosity solutions of many problems arising in fields of application. It is worth mentioning that the concept of viscosity solutions is closely related to some previous work by Evans in \cite{Evans1980}, where he used the Minty trick to study the vanishing viscosity method and gave definitions of possibly weak solutions. The viscosity solution theory is an adequate framework for the present work.

	\subsubsection{Solution semigroups and operator semigroup theory}
	In the literature, people often call \eqref{1-1} the Hamilton-Jacobi equation.  To distinguish \eqref{1-1} from the classical Hamilton-Jacobi equation where the Hamiltonian is defined on the cotangent bundle of $M$, introduced by Hamilton and Jacobi in the study of classical mechanics in 1834-1837, we call \eqref{1-1} the contact Hamilton-Jacobi equation, where the Hamiltonian is defined on $T^*M\times\R$.

	It is well-known that the Lax-Oleinik semigroup solution is a viscosity solution of the classical Hamilton-Jacobi equation. See for example \cite{LPV} for details which is a famous work on the homogenization problem of classical Hamilton-Jacobi equations. See \cite{Arm,Arm1,Su} and the references therein for the stochastic homogenization problem of classical Hamilton-Jacobi equations.

	For the contact Hamilton-Jacobi equation \eqref{1-1}, under the assumptions
	\medskip
	\begin{itemize}
		\item [(H1)] $\frac{\partial^2 H}{\partial p^2} (x,p,u)>0$ for each $(x,p,u)\in T^*M\times\R$;
		\item  [(H2)] for each $(x,u)\in M\times\R$, $H(x,p,u)$ is superlinear in $p$;
		\item [(H3)] there is $\lambda>0$ such that
		$$
		\Big|\frac{\partial H}{\partial u}(x,p,u)\Big| \leqslant \lambda,\quad \forall (x,p,u)\in T^*M \times\R,
		$$
	\end{itemize}
	it was shown in \cite{WWY192} that there is still a nonlinear solution semigroup, denoted by $\left\{T_t^{-}\right\}_{t \geqslant 0}: C(M, \mathbf{R}) \mapsto C(M, \mathbf{R})$, where $C(M,\R)$ denotes the space of continuous functions on $M$ with the usual uniform topology defined by the supremum norm $\|\cdot\|_\infty$. It means that $\left\{T_t^{-}\right\}_{t \geqslant 0}$ is a one-parameter semigroup of operators and for each $\varphi \in$ $C(M, \mathbf{R})$, the function $(x,t)\mapsto T_t^{-} \varphi(x)$ is the unique viscosity solution of equation \eqref{1-1} with initial value condition $w(x, 0)=\varphi(x)$. Thus, our problems are transformed into the long-time behavior problem and the uniqueness problem for fixed points of the solution semigroup. This is strongly reminiscent of the operator semigroup theory for partial differential equations. Notice that the solution  semigroup is an infinite dimensional dynamical system.

	In the late 1940s and early 1950s the operator semigroup theory and its applications to differential equations made great progress. This is a broad topic with numerous references and  the authors are competent to mention only small parts of it.
	The semigroup theory concerns the solving of the abstract Cauchy problem of this form
	\begin{align}\label{A}
		\frac{dx}{dt}+A(x(t))=0,\quad t>0
	\end{align}
	with $x(t)=x_0$. Here, $A$ denotes an operator mapping some domain $D(A)\subset X$ into a Banach space $X$, and $x_0\in D(A)$. For the linear case (i.e., $A$ is a linear operator), Hille \cite{H} and Yosida \cite{Y} established a bijective correspondence between maximal monotone operators and continuous nonexpansive semigroups, where the Yosida approximation plays an essential role.
	The Hille-Yosida theorem can be used to deal with the existence and uniqueness problems for solutions of some linear partial differential equations, such as the heat equation, the advection-diffusion equation. For the nonlinear case (i.e., $A$ is a nonlinear operator), since the Yosida approximation does not work for a nonlinear operator in
	general, new techniques are needed. By using the discrete approximation method, Crandall-Liggett \cite{CL} gave a   generation theorem and the expotential formula for the nonlinear nonexpansive semigroup when $A$ is an m-accretive operator. Crandall-Liggett theorem has been used to study the existence and uniqueness problems for some quasilinear hyperbolic equations, quasilinear and semilinear parabolic equtions, etc.
	Especially, see \cite{Aizawa,Burch,Evans} for the semigroup treatment of the classical Hamilton-Jacobi
	equation $w_t+K(Dw)=0$. Choosing a precise notion of weak solutions that is suitable to the problem at hand is a subtle problem. Different kinds of notions of solutions of partial differential equations were used in previous research in different situations in the semigroup framework, such as mild solutions, strong solutions, semiconcave solutions.

	There is of course a wide literature concerning
	the stability problems for infinite dimensional  dynamical systems. For instance, Hale \cite{Hale1} studied  the stability problem for dynamical systems arising from hyperbolic partial differential equations. Pazy \cite{Pazy} studied  the  stability problem for semigroups for nonlinear contractions in Banach spaces.
	Pritchard and Zabczyk \cite{PZ} studied the stabilizability problem for autonomous infinite dimensional systems described in terms of linear and nonlinear semigroup.
	Henry \cite{Henry} studied the stability problem for  \eqref{A}, where $A$ is a sectorial linear operator on the Banach space $X$. \cite{Martin} is devoted to some applications of semigroup theory and ordinary differential equations in Banach spaces to the asymptotic behavior of solutions for reaction-diffusion systems, where Martin got some stability results. In the aforementioned works, the Lyapunov function played an important role. See \cite{Bruck,DS,F4} for convergence results for nonlinear nonexpansive semigroups.

	\subsubsection{Difficulties and challenges}
	{\it From now on, we always assume (H1)-(H3)}.
	
	Let us pay attention to the main difficulties we will encounter in the study of (P1) and (P2): (1) As mentioned above, we already have a viscosity solution semigroup of \eqref{1-1}. Thus, we neither need a generation theorem nor take care of the choice of a siutable notion of weak solutions. But, as pointed out by Evans in \cite{Evans} the problem is that the abstract semigroup theory alone rarely provides sufficiently detailed knowledge and we have to develop such expertise about the partial differential equation itself. Note that the semigroup $\{T^-_t\}_{t\geqslant 0}$ is nonlinear and it  does not have nonexpansiveness under assumptions (H1)-(H3) in general. So, it seems that the abstract semigroup theory  will not help resolve our problems.
	(2) The Lyapunov functions were used in most of previous works on the stability problems for infinite dimensional  dynamical systems. Usually one needs some a little strong assumptions to discover the Lyapunov function. For the Hamiltonian $H$ satisfying (H1)-(H3) without a concrete form it is hard to use (generalized) Lyapunov methods. (3) We aim to study the Lyapunov stability of stationary solutions of  \eqref{1-1}, or equivalently the Lyapunov stability of equilibrium points of the solution semigroup. This is a nonlinear stability problem. We can hardly benifit from the linearization techniques provided by some existing works on related topics. (4) Comparison theorems are powerful tools for the study of the uniqueness problem for viscosity solutions of Hamilton-Jacobi equations. However, we have no idea how to apply comparison theorems to problem (P2), since the term $\frac{\partial H}{\partial u}$ may be  sign-changing.

	\subsubsection{A closer look at (P1) and (P2)}
	
	The triple $\big(C(M,\R),\{T^-_t\}_{t\geqslant 0},\R^+\big)$ is a dynamical system, where $\R^+=[0,+\infty)$.  If there exists  $\varphi \in C(M, \mathbf{R})$ such that $T^-_t \varphi=\varphi$ for all $t \in \mathbf{R}^{+}$, then $\varphi$ is called an equilibrium point of the dynamical system determined by the semigroup $\{T_t^{-}\}_{t\geqslant 0}$. In this case, it is direct to see that $\varphi$ is a stationary viscosity solution of \eqref{1-1} which means it does not change in time, or equivalently, it is a viscosity solution of
	\begin{align}\label{1-2}\tag{$\mathrm{HJ_s}$}
		H(x,Du(x),u(x))=0,\quad x\in M.
	\end{align}

	Problem (P1) concerns with the law of evolution of $T^-_t\varphi$ described by equation \eqref{1-1}.
	More precisely, our research is motivated by the question: will $T^-_t\varphi$ that emanates from a point in a sufficiently small neighborhood of an equilibrium point remains near the equilibrium point?

	Let us recall the definitions of Lyapunov stability and instability in the setting of this paper.	
	\begin{definition}
		Let	$u\in C(M,\R)$ be a stationary viscosity solution of \eqref{1-1}. Then
		\begin{itemize}
			\item [(i)] $u$ is called stable (or Lyapunov stable) if for any $\eps>0$ there exists $\delta>0$ such that for any $\varphi\in C(M,\R)$ with $\|\va-u\|_\infty<\delta$ there holds
			\[
			\|T^-_t\va-u\|_\infty<\eps, \quad \forall t>0.
			\]
			Otherwise, $u$ is unstable (or Lyapunov unstable).
			\item [(ii)] $u$ is called asymptotically stable (or Lyapunov asymptotically stable) if it is stable and there is $\delta>0$ such that
			\[
			\lim_{t\to+\infty}\|T^-_t\va-u\|_\infty=0
			\]
			for any $\varphi\in C(M,\R)$ satisfying $\|\va-u\|_\infty<\delta$. If $\delta$ can be $+\infty$, we say that $u$ is globally  asymptotically stable.
		\end{itemize}
	\end{definition}

	\begin{remark}	
		In order to understand (P1), take as an example the classical Hamilton-Jacobi equation
		\begin{align}\label{1-3}
			w_t(x,t)+F(x,Dw(x,t))=0,\quad x\in M,\ t\in[0,+\infty),
		\end{align}
		where $F=F(x,p):\T\to\R$ is a Tonelli Hamiltonian, i.e., $F$ is at least of class $C^2$, strictly convex and superlinear in the argument $p$. It is clear that 	\eqref{1-3} is a special form of equation \eqref{1-1}.
		Notice that the Lax-Oleinik semigroup $\{S_t\}_{t\geqslant 0}$ generated by \eqref{1-3} has the following properties:
		\medskip
		\begin{itemize}
			\item[(i)] $\|S_t\varphi-S_t\phi\|_\infty\leqslant \|\varphi-\phi\|_\infty$, $\forall \varphi$, $\phi\in C(M,\R)$, $\forall t\geqslant 0$;
			\item [(ii)] $S_t(\varphi+a)=S_t\varphi + a$, $\forall t\geqslant 0$, $\forall a\in\R$.
		\end{itemize}
		In view of (i), all stationary viscosity solutions of \eqref{1-3} are stable. By (ii),  if $u$ is a stationary viscosity solution, then $u+a$ is still a stationary viscosity solution for any $a\in\R$. Thus, none of stationary viscosity solutions is  asymptotically stable.

		However, in general, the above two properties do not hold  for $\{T^-_t\}_{t\geqslant 0}$ generated by \eqref{1-1}.
	\end{remark}

	\begin{remark}	
		Note that the stability problem (P1) is quite different from the ones which have been deeply studied by many authors in various situations. A standard stability result in the study of viscosity solution theory shows that if $H_k \rightarrow H$, $w_{0, k} \rightarrow w_0$,  $w_k \rightarrow w$  locally uniformly, as $k\to+\infty$,
		and for each $k \in \mathbf{N}$, $w_k$ is a viscosity solution to
		\begin{align*}
			({w_k})_t(x,t)+H_k(x,Dw_k(x,t),w_k(x,t))=0,\quad w_k(x,0)=w_{0,k}(x),
		\end{align*}
		then $w$ is a viscosity solution to \eqref{1-1} with $w(x,0)=w_0(x)$.
		There is a large literature on this issue. See, for instance, \cite{L82, Bar} for details.
	\end{remark}

	Problem (P2) concerns with the uniqueness problem of viscosity solutions of \eqref{1-2}.
	The question of uniqueness of the solutions sometimes is more difficult than the existence one. There are many different kinds of existing uniqueness results for viscosity solutions of Hamilton-Jacobi equations in
	the literature, such as \cite{CL83,CEL84,I1,I2,I3}, where the uniqueness follows from comparison theorems of maximum principle type. See \cite{J} and \cite{C} for uniqueness results for viscosity solutions of \eqref{1-2}, where $H(x,p,u)$ is nondecreaing and convex in $u$.  As far as we know, little has been known about the uniquess of viscosity solutions of \eqref{1-2} without the monotonicity condition imposed on $H(x,p,u)$ with respect to $u$. We aim to provide several uniqueness results for viscosity solutions of \eqref{1-2} under the assumptions (H1)-(H3).
	
	As explained in Sect.1.1.4, we need to develop new methods and techniques to deal with (P1) and (P2). Our idea is inspired by the action minimizing method for the study of Hamiltonian dynamical systems. The central tool is the Mather measure.

	\subsubsection{Methods and tools}	
	In the 1980s, the Aubry-Mather theory \cite{A,M82} provided a variational approach to study the dynamics of twist diffeomorphisms of the annulus and got the existence of different action minimizing sets. Later, Moser \cite{Moser} pointed out that a smooth area-preserving monotone twist mapping of an annulus can be interpolated by the flow of a time-periodic and  convex Hamiltonian system. In the 1990s, Mather \cite{M91,M93} generalized the Aubry-Mather theory to higher dimensional convex Hamiltonian systems. Later, Fathi \cite{F1,F2,F3,F4,F-b} established the weak KAM theory to connect the Mather theory and viscosity solution theory for Hamilton-Jacobi equations. There are many interesting works on Mather and weak KAM theories, see for example, \cite{Ar,Ber,CIPP,Con} and references therein. A nice introductory lecture notes on Mather theory has been written by Sorrentino \cite{S-b}.

	Recently, several authors attempted to study the Aubry-Mather and weak KAM theories for convex contact Hamiltonian systems. Mar\`o and Sorrentino \cite{MS}  developed an analogue of Aubry-Mather theory for the conformally symplectic system. Mitake and Soga \cite{MK} developed the weak KAM theory for discounted Hamilton-Jacobi equations and the corresponding discounted Lagrangian and Hamiltonian dynamics. One thing the aforementioned two works have in common is that the Hamiltonian has the discounted form: $H(x,p,u)=u+K(x,p)$, which is a specific contact one. Consider more general contact Hamiltonian systems
	\begin{align}\label{c}\tag{C}
		\left\{
		\begin{array}{l}
			\dot{x}=\frac{\partial H}{\partial p}(x,p,u),\\[1mm]
			\dot{p}=-\frac{\partial H}{\partial x}(x,p,u)-\frac{\partial H}{\partial u}(x,p,u)p,\qquad (x,p,u)\in T^*M\times\mathbf{R},\\[1mm]
			\dot{u}=\frac{\partial H}{\partial p}(x,p,u)\cdot p-H(x,p,u).
		\end{array}
		\right.
	\end{align}
	The authors of \cite{WWY19} provided the Aubry-Mather and weak KAM type results for \eqref{c} where the Hamiltonian satisfies (H1), (H2) and
	\begin{align}\label{incre1}
		0<\frac{\partial H}{\partial u}(x,p,u)\leqslant \lambda,\quad \forall (x,p,u)\in T^*M\times\R.
	\end{align}
	
	In order to study (P1) and (P2), we need to use some  Aubry-Mather and weak KAM type results for \eqref{c}. But the existing results can not be applied to our case, since (H3) is weaker than \eqref{incre1}.
	It is worth noting that $H$ in the present work may be  non-monotonic in the argument $u$, where the situation will be quite different from the monotonic case.
	Hence, we will first prove some Aubry-Mather and weak KAM type results for \eqref{c} under assumptions (H1)-(H3). On the one hand, this will play an essential role in the study of (P1) and (P2). On the other hand, it has an independent dynamical significance.

	We will use the information carried by a stationary viscosity solution $u_-$ of \eqref{1-1} and by Mather measures whose supports are contained in the 1-graph of $u_-$ to judge the sability and instability of $u_-$. We will use the information of all Mather measures to give several uniqueness results for stationary viscosity solutions of \eqref{1-1}. To summarize, we provide sufficient conditions for the stability, instability and uniqueness results for stationary viscosity solutions of \eqref{1-1} by utilizing the information coming from the stationary viscosity solution itself and from the dynamical system \eqref{c}, which is the characteristic equations of \eqref{1-1}.
	
	We refer the reader to the series of the papers on contact Hamiltonian systems by de Le\'on and his cooperators. See for instance \cite{E,Leon}.

	\subsection{Main results}
	Tonelli Theorem is very important in  Mather and weak KAM theories for Hamiltonian systems. For contact Hamiltonian systems, in order to study the action minimizing orbits by variational methods, the authors of \cite{WWY17} introduced an  implicit variational principle for \eqref{c}. See Proposition \ref{prop4} below for details.
	Our study will start with this variational principle. First of all, we give the definition of the semi-static orbit by using the action function which plays a key role in the variational principle. Then we call the set of all semi-static orbits the Ma\~n\'e set $\tilde{\mathcal{N}}$ of \eqref{c}. We say that a Borel probability measure $\mu$ on $T^*M\times\R$ is a Mather measure if $\mu$ is $\Phi^H_t$-invariant and $\supp \mu$ is contained in $\tilde{\mathcal{N}}$, where
	$\Phi^H_t$ stands for the local flow of \eqref{c}.
	Denote by $\mathfrak{M}$ the set of all Mather measures.
	See Section 3.1 for the defnitions of semi-static orbits, the Ma\~n\'e set and Mather measures. Note that all the above definitions are independent of viscosity solutions of \eqref{1-2}.

	Let $\mathcal{S}^-$ denote the set of viscosity solutions of equation \eqref{1-2}.  We need to clarify the existence of viscosity solutions of \eqref{1-2}. Under assumptions (H1)-(H3) it was shown in \cite{WWY192} that there exists a constant $c$ such that
	\begin{align}\label{1-666}
		H(x,Du(x),u(x))=c
	\end{align}
	has viscosity solutions. Recently, under the same assumptions the authors of \cite{WY21} showed that \eqref{1-666} admits viscosity solutions if and only if $c$ belongs to a nonempty interval, which is called the admissible set for the generalized ergodic problem \eqref{1-666} in \cite{WY21}.

	For any $u_-\in\s$, there exists a compact $\Phi^H_t$-invariant subset of $T^*M\times\R$, denoted by $\tilde{\mathcal{N}}_{u_-}$. See Section 3.3 for the definition,  existence and properties of $\tilde{\mathcal{N}}_{u_-}$.

	The first main result of this paper is stated as follows.
	\begin{theorem}\label{Man}
		\[
		\tilde{\mathcal{N}}=\bigcup_{u_-\in\mathcal{S}^-}\tilde{\mathcal{N}}_{u_-}.
		\]
	\end{theorem}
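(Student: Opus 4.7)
The plan is to establish the equality by proving the two inclusions separately, treating the inclusion $\bigcup_{u_-\in\s}\tilde{\mathcal{N}}_{u_-}\subseteq\tilde{\mathcal{N}}$ as the routine half and the reverse inclusion as the substantive half that requires constructing a viscosity solution out of an abstract semi-static orbit.

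For the easy direction I would argue directly from the definitions. By the construction of $\tilde{\mathcal{N}}_{u_-}$ in Section 3.3, every point in $\tilde{\mathcal{N}}_{u_-}$ lies on the 1-graph of $u_-$ and its $\Phi^H_t$-orbit projects to a curve that calibrates $u_-$ on $\R$. Combining the calibration identity with the implicit variational principle (Proposition \ref{prop4}) and the dynamic programming property for the action function, one can chain calibration segments together and cancel the $u_-$-values at the endpoints to obtain, for any two times $s<t$ along the orbit, equality between the action of the orbit segment and the infimum defining the action function from $(x(s),u(s))$ to $(x(t),u(t))$. This is exactly the semi-static condition, so the orbit belongs to $\tilde{\mathcal{N}}$.

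The harder inclusion $\tilde{\mathcal{N}}\subseteq\bigcup_{u_-\in\s}\tilde{\mathcal{N}}_{u_-}$ requires, given a semi-static orbit $(x(\cdot),p(\cdot),u(\cdot))$, producing an element $u_-\in\s$ that contains the orbit in its calibration set. My plan is to define $u_-$ through a Peierls-type barrier relative to the orbit: set
\[
u_-(x) \;=\; \liminf_{t\to+\infty}\; h_{x(-t),\,u(-t)}(x,t),
\]
using the two-parameter action function from the implicit variational principle, and then verify three things. First, $u_-$ is finite and well-defined thanks to the compactness of $M$, the Lipschitz bound (H3), and uniform estimates on minimizers along semi-static orbits; here I would exploit the semi-static property to compare $h_{x(-t),u(-t)}(x,t)$ with $u(0)$ plus a bounded correction. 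Second, $u_-$ is a fixed point of $\{T^-_t\}_{t\geqslant 0}$, hence a viscosity solution of \eqref{1-2}; this follows by a standard semigroup manipulation, writing $T^-_t u_-(x)$ as an infimum involving the action and interchanging it with the liminf using monotonicity properties of $h$. Third, the orbit calibrates $u_-$, which is obtained by applying the semi-static equality from one $x(-t_n)$ to $x(0)$ and then to $x(s)$ and passing to the limit along a subsequence where the liminf is attained.

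The main obstacle will be controlling the liminf construction in the absence of monotonicity in $u$: without $\partial H/\partial u\geqslant 0$, the action function $h_{y,s}(x,t)$ is no longer monotone in $s$, so the classical Peierls-barrier arguments from \cite{WWY19} and from Fathi's weak KAM theory do not transfer verbatim. I expect to need two ingredients developed earlier in the paper: (i) a Gronwall-type a priori bound on how much $h$ can vary as $(y,s)$ moves along the orbit, relying only on the Lipschitz constant $\lambda$ from (H3), and (ii) the uniform boundedness of semi-static curves in the $u$-coordinate, which allows one to extract convergent subsequences for the minimizers realizing $h_{x(-t_n),u(-t_n)}(\cdot,t_n)$ and then upgrade pointwise convergence to a calibration statement. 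Once these are in hand, the inclusion follows, and Theorem \ref{Man} is proved.
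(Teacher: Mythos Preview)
Your easy direction is fine and is exactly what the paper records as Lemma~\ref{ss} (giving \eqref{ss1}).

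For the hard direction your Peierls-barrier instinct is right, but your specific construction differs from the paper's in a way that matters. You propose $u_-(x)=\liminf_{t\to+\infty}h_{x(-t),u(-t)}(x,t)$; the paper instead sets
\[
u(x,t):=\inf_{s\in\R}h_{x(s),u(s)}(x,t),
\]
taking the infimum over the \emph{entire} semi-static orbit, and then $u_-(x)=\lim_{t\to+\infty}u(x,t)$. The extra infimum over $s$ buys two things. First, combining the Markov property of $h$ (Proposition~\ref{prop3}(1)) with the strict monotonicity of $u_0\mapsto h_{x_0,u_0}(x,t)$ (Proposition~\ref{prop3}(3)) yields the exact identity $u(x,t)=T^-_\tau u(\cdot,t-\tau)(x)$; see \eqref{3-506}. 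Second, the semi-static condition forces $t\mapsto u(x,t)$ to be nonincreasing (Step~2 of the paper's proof), so one gets an honest limit, boundedness is checked via the duality $h\leftrightarrow h^{\cdot,\cdot}$ of Proposition~\ref{addprop1}, and Proposition~\ref{prop5.1}(2) then delivers $u_-\in\s$ with no liminf/infimum interchange. The remaining steps show $u_-(x(s))=u(s)$, that $x(\cdot)$ is $(u_-,L,0)$-calibrated, and finally $u_\pm(x_0)=u_0$, $Du_\pm(x_0)=p_0$ via Corollary~\ref{inva} and Corollary~\ref{uplus}.

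By contrast, your step ``interchange $T^-_t$ with the liminf using monotonicity properties of $h$'' is precisely where the lack of monotonicity in $u$ bites: the Markov property gives $h_{x(-t),u(-t)}(x,t)=\inf_z h_{z,\,h_{x(-t),u(-t)}(z,t-\tau)}(x,\tau)$, but pushing $\liminf_{t}$ through this nested infimum is not justified in general, and you have no $t$-monotonicity of $h_{x(-t),u(-t)}(x,t)$ to fall back on. So your plan, as written, leaves open exactly the obstacle you yourself flag; the paper's device of replacing the single moving basepoint $x(-t)$ by an infimum over the whole orbit is what closes it.
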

	In view of Theorem \ref{Man}, we call $\tilde{\mathcal{N}}_{u_-}$ the Ma\~n\'e set associated with $u_-$.
	From Proposition \ref{pr1011} below,  $\tilde{\mathcal{N}}_{u_-}$ is a subset of
	\[
	\Lambda_{u_-}:=\operatorname{cl}\Big(\big\{(x,p,u): x \ \text{is a point of differentiability of} \ u_{-}, p=Du_{-}(x),u=u_{-}(x)\big\}\Big),
	\]
	where $\operatorname{cl}(B)$ denotes the closure of $B \subset \T \times \mathbf{R}$. More precisely,
	\begin{align}\label{1-1111}
		\tilde{\mathcal{N}}_{u_-}=\bigcap_{t\geq 0}\Phi^H_{-t}(\Lambda_{u_-}).
	\end{align}
	Notice that $u_-$ is Lipschitz and $M$ is compact. Thus, $\Lambda_{u_-}$ is a compact subset of $\T\times\R$. Moreover, $\Lambda_{u_-}$ is negatively invariant under $\Phi^H_t$.
	By the Krylov-Bogoliubov Theorem, there exist Borel $\Phi^H_t$-invariant
	probability measures supported in $\Lambda_{u_-}$.
	Hence, the set
	\begin{align}\label{M}
		\mathfrak{M}_{u_-}:=\big\{\mu\in\mathcal{P}(\T\times\R): \supp\mu\subset \Lambda_{u_-},\ (\Phi^H_t)_\sharp\mu=\mu,\,\ \forall t\in\mathbf{R}\big\}
	\end{align}
	is nonempty, where  $\mathcal{P}(\T\times\R)$ denotes the space of Borel probability
	measures on $\T\times\R$, and $(\Phi^H_t)_\sharp\mu$ denotes the push-forward of $\mu$ through $\Phi^H_t$.
	
	By the definitions of $\mathfrak{M}_{u_-}$, $\mathfrak{M}$ and Theorem \ref{Man}, \eqref{1-1111}, one can deduce that
	\begin{corollary}\label{mather}
		\[
		\mathrm{co}\Big(\bigcup_{u_-\in\mathcal{S}^-}\mathfrak{M}_{u_-}\Big)=\mathfrak{M},
		\]
		where $\mathrm{co}(B)$ denotes the convex hull of $B \subset \mathcal{P}(\T\times\R)$.
	\end{corollary}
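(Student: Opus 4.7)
The plan is to establish the two inclusions of the claimed equality separately, leaning on the structural description \eqref{1-1111} of $\tilde{\mathcal{N}}_{u_-}$ together with Theorem \ref{Man}.

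First I would treat the inclusion $\supseteq$. Pick any $u_-\in\mathcal{S}^-$ and any $\mu\in\mathfrak{M}_{u_-}$. Since $\mu$ is $\Phi^H_t$-invariant and $\supp\mu$ is closed, the support itself is $\Phi^H_t$-invariant. Combined with $\supp\mu\subset\Lambda_{u_-}$, this forces
\[
\supp\mu=\Phi^H_{-t}(\supp\mu)\subset\Phi^H_{-t}(\Lambda_{u_-})\quad\text{for every }t\geqslant 0,
\]
so by \eqref{1-1111} we get $\supp\mu\subset\tilde{\mathcal{N}}_{u_-}$, and by Theorem \ref{Man} $\supp\mu\subset\tilde{\mathcal{N}}$. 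Hence $\mu\in\mathfrak{M}$. The set $\mathfrak{M}$ is convex (a finite convex combination of $\Phi^H_t$-invariant probability measures is again invariant, and its support sits in the union of the supports, which still lies in $\tilde{\mathcal{N}}$), so the convex hull of $\bigcup_{u_-}\mathfrak{M}_{u_-}$ is contained in $\mathfrak{M}$.

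For the reverse inclusion $\subseteq$, I would start with an arbitrary $\mu\in\mathfrak{M}$, so by definition $\mu$ is $\Phi^H_t$-invariant and $\supp\mu\subset\tilde{\mathcal{N}}$. Theorem \ref{Man} then yields $\supp\mu\subset\bigcup_{u_-\in\mathcal{S}^-}\tilde{\mathcal{N}}_{u_-}$. The idea is to write $\mu$ as a (convex) superposition of components, each sitting inside one $\tilde{\mathcal{N}}_{u_-}$. The most natural device is the ergodic decomposition: $\mu=\int\mu_\omega\,d\mu(\omega)$ with each $\mu_\omega$ ergodic for $\Phi^H_t$ and supported in $\tilde{\mathcal{N}}$. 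Using that $\{\tilde{\mathcal{N}}_{u_-}\}_{u_-\in\mathcal{S}^-}$ is a family of $\Phi^H_t$-invariant closed sets whose union is $\tilde{\mathcal{N}}$, for a typical ergodic component $\mu_\omega$ one can select a single $u_-^\omega\in\mathcal{S}^-$ such that $\supp\mu_\omega\subset\tilde{\mathcal{N}}_{u_-^\omega}\subset\Lambda_{u_-^\omega}$, so $\mu_\omega\in\mathfrak{M}_{u_-^\omega}$. That places $\mu$ in the convex combination (in the barycentric sense) of elements of $\bigcup_{u_-}\mathfrak{M}_{u_-}$.

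The main obstacle I expect is at the last step: the ergodic decomposition produces an integral barycentric representation, whereas $\mathrm{co}(\cdot)$ is defined as the set of \emph{finite} convex combinations. Bridging this gap requires either interpreting $\mathrm{co}$ as its closure in the weak-$\ast$ topology on $\mathcal{P}(\T\times\mathbf{R})$, in which case the integral representation immediately gives membership in the closed convex hull, or else exploiting the fact that $\mathfrak{M}_{u_-}$ is itself convex and $\Phi^H_t$-invariance allows the pieces of the ergodic decomposition indexed by the same solution $u_-$ to be amalgamated. The measurability of the selection $\omega\mapsto u_-^\omega$, so that Fubini can be applied to group ergodic components by their associated viscosity solution, is the delicate technical point; once handled, the reverse inclusion follows cleanly and closes the proof.
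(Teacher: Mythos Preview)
Your argument for the inclusion $\mathrm{co}\bigl(\bigcup_{u_-}\mathfrak{M}_{u_-}\bigr)\subset\mathfrak{M}$ is correct and matches the paper's one-line justification: invariance together with $\supp\mu\subset\Lambda_{u_-}$ gives $\supp\mu\subset\tilde{\mathcal{N}}_{u_-}$ via \eqref{1-1111}, hence $\supp\mu\subset\tilde{\mathcal{N}}$ by Theorem \ref{Man}, and $\mathfrak{M}$ is convex.

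For the reverse inclusion, your ergodic-decomposition approach is natural, and the step placing each ergodic component in a single $\mathfrak{M}_{u_-}$ is valid: for $\mu_\omega$-a.e.\ point $q$ the orbit is dense in $\supp\mu_\omega$ (standard for ergodic probability measures on Polish spaces), and since $q\in\tilde{\mathcal{N}}_{u_-}$ for some $u_-$ by Theorem \ref{Man} while $\tilde{\mathcal{N}}_{u_-}$ is closed and $\Phi^H_t$-invariant (Lemmas \ref{com}, \ref{ss}), the whole support follows. However, the obstacle you flag is genuine and your proposed fix does not remove it. Even with a measurable selection $\omega\mapsto u_-^\omega$, grouping ergodic components by their associated solution produces $\mu=\int_{\mathcal{S}^-}\nu_{u_-}\,d\pi(u_-)$ with $\nu_{u_-}\in\mathfrak{M}_{u_-}$ and $\pi$ a probability on $\mathcal{S}^-$; unless $\pi$ happens to be finitely supported---and nothing in (H1)--(H3) forces $\mathcal{S}^-$, or the set of solutions needed to cover $\supp\mu$, to be finite---this still only lands $\mu$ in the \emph{closed} convex hull. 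The paper gives no argument beyond asserting the corollary follows ``directly'' from Theorem \ref{Man} and \eqref{1-1111}, so the same gap is present there. Note that every subsequent use in the paper (e.g.\ the proof of Corollary \ref{addcoro1}) needs only the pointwise consequence that each point of $\supp\mu_0$ lies in some $\tilde{\mathcal{N}}_{u_-}$, which is exactly Theorem \ref{Man}; the clean repair is to state the identity with the closed convex hull, which your ergodic-decomposition argument does prove.
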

	Due to Corollary \ref{mather}, we say a measure $\mu\in \mathfrak{M}_{u_-}$ is a Mather measure associted with $u_-$.
	Consider the condition
	\begin{align}\label{A1}\tag{A1}
		\int_{T^*M\times\R}\frac{\partial H}{\partial u}(x,p,u)d\mu>0, \quad \forall \mu\in\mathfrak{M}_{u_-}.
	\end{align}
	Note that $\mathfrak{M}_{u_-}$ is convex and compact with respect to the weak topology or weak-$^*$ topology.
	The above condition implies that there is a constant $A>0$ such that
	\begin{align}\label{A2}\tag{A1'}
		\int_{T^*M\times\R}\frac{\partial H}{\partial u}(x,p,u)d\mu>A, \quad\forall \mu\in\mathfrak{M}_{u_-}.
	\end{align}

	We are now in a position to give a criterion for the Lyapunov stability of stationary solutions.
	\begin{theorem}\label{th1}
		Let $u_-\in \mathcal{S}^-$ satisfy \eqref{A1}.
		Then $u_-$ is locally asymptotically stable. More precisely, there is $\Delta>0$, such that for any $\varphi\in C(M,\R)$ satisfying $\|\varphi-u_-\|_{\infty}\leqslant\Delta$,
		\[	\limsup_{t\to +\infty}\dfrac{\ln\|T^-_t\varphi-u_-\|_{\infty}}{t}\leqslant-A,
		\]
		where  $A$ is as in \eqref{A2}.
	\end{theorem}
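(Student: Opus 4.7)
The plan is to use the implicit variational principle (Proposition \ref{prop4}) to compare $T^-_t\varphi$ with $u_-$ along extremal curves, extracting a pointwise estimate whose dominant factor is $\exp\bigl(-\int_0^t(\partial H/\partial u)\,ds\bigr)$, and then to force this factor down to $e^{-At+o(t)}$ by time-averaging $\partial H/\partial u$ against Mather measures in $\mathfrak{M}_{u_-}$ via condition \eqref{A2}.

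For the pointwise step, for any Lipschitz $\gamma:[0,t]\to M$ and $u_0\in\R$ let $h^\gamma_{u_0}(\cdot)$ solve the characteristic ODE $\dot h=L(\gamma(s),\dot\gamma(s),h)$ with $h(0)=u_0$, where $L$ is the contact Lagrangian dual to $H$. Proposition \ref{prop4} yields $T^-_t\varphi(x)\leqslant h^\gamma_{\varphi(\gamma(0))}(t)$ on every $\gamma$ with $\gamma(t)=x$ (equality on calibrated curves), while the subsolution property of $u_-$ gives $u_-(\gamma(t))\leqslant h^\gamma_{u_-(\gamma(0))}(t)$ on every Lipschitz $\gamma$ (equality on $u_-$-calibrated curves). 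The map $u_0\mapsto h^\gamma_{u_0}(t)$ is increasing, and the ODE comparison takes the explicit form
\[
h^\gamma_{u_0+r}(t)-h^\gamma_{u_0}(t)=r\exp\Big(\!-\!\int_0^t\tfrac{\partial H}{\partial u}(\gamma(s),p(s),\tilde h(s))\,ds\Big),
\]
where $p=\partial L/\partial v$ is the extremal momentum and $\tilde h$ is an intermediate trajectory. Using a $u_-$-calibrated $\gamma$ ending at $x$ for the bound $T^-_t\varphi-u_-\leqslant\cdots$, and a $\gamma$ nearly minimizing $h^\gamma_{u_-(\gamma(0))-\|\varphi-u_-\|_\infty}(\,\cdot\,)$ for the reverse bound (so by perturbation its lift is close to $\Lambda_{u_-}$), one arrives at
\[
|T^-_t\varphi(x)-u_-(x)|\ \leqslant\ \|\varphi-u_-\|_\infty\cdot\exp\Big(\!-\!\int_0^t\tfrac{\partial H}{\partial u}(\gamma,p,\tilde h)\,ds\Big),
\]
with the $(x,p)$-trajectory lying close to $\Lambda_{u_-}$ in both cases.

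For the averaging step, the $(x,p,u)$-lift of a $u_-$-calibrated curve lies in the compact, negatively $\Phi^H_t$-invariant set $\Lambda_{u_-}$. By Theorem \ref{Man} and \eqref{1-1111}, any weak-$^*$ accumulation point of the empirical measures $\frac{1}{t}\int_0^t\delta_{(\gamma(s),Du_-(\gamma(s)),u_-(\gamma(s)))}\,ds$ is $\Phi^H_t$-invariant and supported in $\tilde{\mathcal{N}}_{u_-}$, hence belongs to $\mathfrak{M}_{u_-}$. Condition \eqref{A2} together with weak-$^*$ compactness of $\mathfrak{M}_{u_-}$ then produces a uniform $T_0$ with $\frac{1}{t}\int_0^t\partial_u H\,ds\geqslant A$ for every $u_-$-calibrated $\gamma$ and every $t\geqslant T_0$. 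Uniform continuity of $\partial_u H$ on compacts lets one replace $\tilde h(s)$ by $u_-(\gamma(s))$ at cost $\epsilon(\Delta)\to 0$ as $\Delta\to 0$, giving $\|T^-_t\varphi-u_-\|_\infty\leqslant\Delta\,e^{-(A-\epsilon(\Delta))t}$ for $t\geqslant T_0$; combined with the short-time Lipschitz continuity of $T^-_t$ on $[0,T_0]$ (which uses only (H3)), this yields both Lyapunov stability and the exponential rate $\limsup_{t\to+\infty}\ln\|T^-_t\varphi-u_-\|_\infty/t\leqslant -A$.

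The main obstacle, I expect, is the lower-bound half of the pointwise estimate: ensuring that the nearly-minimizing curves used there have lifts close enough to $\Lambda_{u_-}$, uniformly in large $t$ and small $\Delta$, for the Mather-measure averaging to still deliver a rate $\geqslant A-\epsilon(\Delta)$. A compactness-and-contradiction argument invoking Theorem \ref{Man} and \eqref{1-1111} — any escaping sequence of such nearly-minimizers would produce, along a subsequence, a limit orbit that calibrates $u_-$ and yet leaves $\Lambda_{u_-}$, contradicting the definition of $\Lambda_{u_-}$ — should close this gap, but turning it into a quantitative statement absorbable in $\epsilon(\Delta)$ is the real technical content of the proof.
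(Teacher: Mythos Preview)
Your overall architecture --- compare $T^-_t\varphi$ with $u_-$ along extremal curves via the characteristic ODE $\dot h=L(\gamma,\dot\gamma,h)$, then squeeze the exponential factor using Mather-measure averaging --- is exactly the paper's, and your two halves (test $T^-_t u^\delta$ on a $u_-$-calibrated curve; test $u_-$ on a minimizer of $T^-_t u_\delta$) correspond to its Steps~1 and~2. Your averaging step is precisely Lemma~\ref{lem1}.

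Where you diverge is in trying to run the estimate directly for every large $t$, and this creates a circularity you only half-diagnose. In your pointwise formula the intermediate value $\tilde h(s)$ lies between $u_-(\gamma(s))$ and $T^-_s\varphi(\gamma(s))$, so replacing $\partial_uH(\cdot,\tilde h)$ by $\partial_uH(\cdot,u_-)$ at cost $\epsilon(\Delta)$ already requires $\|T^-_s\varphi-u_-\|_\infty$ to be small for \emph{all} $s\in[0,t]$ --- which is the conclusion. The same loop appears in the lower-bound half: the lift of a minimizer of $T^-_t u_\delta$ is close to $\Lambda_{u_-}$ only to the extent that $T^-_s u_\delta$ stays near $u_-$ along the way. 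Your compactness-and-contradiction proposal does not break this loop, because extracting a useful limit from a sequence of perturbed minimizers on intervals of length $t_n\to\infty$ presupposes exactly the uniform confinement you are trying to establish.

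The device you are missing is that the paper \emph{never} attempts a direct large-$t$ bound. It instead (i) fixes a single time $t_c$ at which Lemma~\ref{lem1} gives $\int_{-t_c}^0\partial_uL\,ds<(-A+c)t_c$ along every calibrated curve; (ii) uses only the crude a priori bound $\|T^-_s\varphi-u_-\|_\infty\leqslant e^{\lambda t_c}\delta$ on the finite window $[0,t_c]$ to control $\tilde h$; (iii) shows, by a routine compactness argument at this \emph{fixed} time (Lemma~\ref{lem2}, Corollary~\ref{coro2}), that minimizers of $T^-_{t_c}u_\delta$ are $C^1$-close to calibrated curves for small $\delta$; and thereby obtains a one-step contraction $\|T^-_{t_c}\varphi-u_-\|_\infty\leqslant e^{(-A+2c)t_c}\|\varphi-u_-\|_\infty$. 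This is then \emph{iterated}: the output is smaller than the input, so the same $t_c$ applies again, yielding $\|T^-_{nt_c}\varphi-u_-\|_\infty\leqslant\delta\,e^{n(-A+2c)t_c}$ and hence the global exponential decay; letting $c\downarrow0$ afterward gives the sharp rate $-A$. Once you insert this fixed-time-plus-iteration structure, both of your obstacles disappear: all compactness arguments live at a single finite time, and the smallness feeding the Gronwall comparison is supplied inductively by the previous iterate rather than assumed.
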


	Recall that equation \eqref{1-3} is a special case of \eqref{1-1} where the Hamiltonian does not depend on the argument $u$. As mentioned above,  all
	stationary solutions of \eqref{1-3} are stable. But, none of them is asymptotically stable. Theorem \ref{th1} tells us that a new dynamical phenomenon  appears when the Hamiltonian depends on the argument $u$. Comparing the characteristic equations \eqref{c} of \eqref{1-1} with the ones of \eqref{1-3}:
	\begin{align}\label{h}\tag{H}
		\left\{
		\begin{array}{l}
			\dot{x}=\frac{\partial F}{\partial p}(x,p),\\[1mm]
			\dot{p}=-\frac{\partial F}{\partial x}(x,p),
		\end{array}
		\right.
	\end{align}
	it is not hard to see that the term $\frac{\partial H}{\partial u}$ makes the essential differences between them. The sign of the integral of $\frac{\partial H}{\partial u}$ with respect to Mather measures plays an essential  role in Theorem \ref{th1}.


	The next result is on the Lyapunov instability of stationary solutions.
	\begin{theorem}\label{th2}
		Let  $u_-\in \mathcal{S}^-$. If
		\begin{align}\label{A3}\tag{A2}
			\int_{T^*M\times\R}\frac{\partial H}{\partial u}(x,p,u)d\mu<0, \quad\text{for some}\  \mu\in\mathfrak{M}_{u_-},
		\end{align}
		then $u_-$ is unstable. More precisely, there exists a constant $\Delta'>0$ such that for any $\eps>0$, there is $\varphi_\eps\in C(M,\R)$ satisfying $\|\varphi_\eps-u_-\|_{\infty}\leqslant\eps$ and $$\limsup_{t\to+\infty}\|T^-_t\varphi_\eps-u_-\|_{\infty}\geqslant\Delta'.$$
	\end{theorem}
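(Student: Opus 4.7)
\textbf{Proof plan for Theorem \ref{th2}.}
The plan is to exploit the negativity of $\int\frac{\partial H}{\partial u}\,d\mu$ to drive an exponential expansion of small perturbations of $u_-$ along a well-chosen orbit of the contact flow \eqref{c}. The heuristic is that a small discrepancy between $\varphi$ and $u_-$ propagated by the Hamiltonian flow evolves, to first order, like the solution of $\dot v=-\frac{\partial H}{\partial u}(x(t),p(t),u(t))\,v$, so it is multiplied by $\exp\!\bigl(-\int_0^t\frac{\partial H}{\partial u}\,ds\bigr)$; under \eqref{A3} this factor grows exponentially along a suitable orbit.

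First, by the ergodic decomposition of $\mu$, I would extract an ergodic $\Phi^H_t$-invariant measure $\mu_0\in\mathfrak{M}_{u_-}$ with $\int\frac{\partial H}{\partial u}\,d\mu_0=-2B<0$. Birkhoff's ergodic theorem then yields a point $(x_0,p_0,u_0)\in\supp\mu_0\subset\tilde{\mathcal{N}}_{u_-}$ whose forward orbit $(x(t),p(t),u(t))=\Phi^H_t(x_0,p_0,u_0)$ satisfies $-\int_0^t\frac{\partial H}{\partial u}(x(s),p(s),u(s))\,ds\geqslant Bt$ for all sufficiently large $t$. Because $(x_0,p_0,u_0)\in\tilde{\mathcal{N}}_{u_-}\subset\Lambda_{u_-}$, the curve $x(\cdot)$ is a $u_-$-calibrated curve and $u(t)=u_-(x(t))$, $p(t)=Du_-(x(t))$ at differentiability points of $u_-$.

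For each small $\eps>0$, I would set $\varphi_\eps=u_--\eps\chi$ with $\chi\geqslant 0$ a smooth bump satisfying $\chi(x_0)=1$ and supported in a small neighbourhood of $x_0$, so $\|\varphi_\eps-u_-\|_\infty=\eps$. Writing $T^-_t\varphi_\eps(x(t))$ via the implicit variational principle of \cite{WWY17} as an infimum over initial points of the associated action functions, and plugging in the trial initial point $x_0$, the value $T^-_t\varphi_\eps(x(t))$ is bounded above by the action starting from the perturbed initial value $u_-(x_0)-\eps$. Combined with the identity that the same action starting from $u_-(x_0)$ equals $u_-(x(t))$ (because $x(\cdot)$ is $u_-$-calibrated), the monotone dependence of the action on its initial-value argument and its first-order expansion in $\eps$ yield the pointwise lower bound $u_-(x(t))-T^-_t\varphi_\eps(x(t))\geqslant\tfrac12\,\eps\,e^{Bt}$ for all $t$ such that $\eps e^{Bt}$ stays below a uniform threshold $\Delta'>0$. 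Since $B>0$ does not depend on $\eps$, there is a first time $t_\eps$ at which $\|T^-_{t_\eps}\varphi_\eps-u_-\|_\infty\geqslant\Delta'$.

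To upgrade this to $\limsup_{t\to+\infty}\|T^-_t\varphi_\eps-u_-\|_\infty\geqslant\Delta'$, I would argue by contradiction: if the norm were eventually below $\Delta'/2$, the same first-order expansion could be restarted from an arbitrarily late time and would force the norm to regrow above $\Delta'$ in finite time. The main technical obstacle is the uniform-in-$\eps$ control of the nonlinear error in the first-order expansion of the action with respect to its initial value; one has to verify that the optimal trajectories for the perturbed variational problem track the selected orbit throughout the window $[0,t_\eps]$, and to choose $\Delta'$ small enough (depending only on (H3), semiconcavity bounds for $u_-$, and the compactness of $\tilde{\mathcal{N}}_{u_-}$) so the linear regime is preserved. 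Once this is established, the exponential lower bound automatically ejects the trajectory from every $\Delta'$-neighbourhood of $u_-$.
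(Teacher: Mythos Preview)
Your core mechanism---exponential growth of $u_-(x(t))-T^-_t\varphi_\eps(x(t))$ along a $u_-$-calibrated orbit, driven by the sign of $\int\frac{\partial H}{\partial u}\,d\mu$---is exactly the paper's. But several of your technical choices create unnecessary difficulties, and one stated obstacle is misidentified.

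First, the paper takes the \emph{constant} perturbation $\varphi_\eps=u_--\eps$ rather than a bump $u_--\eps\chi$. With the constant choice, monotonicity gives $T^-_t\varphi_\eps\leqslant u_-$ everywhere and the difference $\hat w(s):=u_-(\gamma_0(s-\hat t))-T^-_s\varphi_\eps(\gamma_0(s-\hat t))$ is globally nonnegative with $\hat w(0)=\eps$; there is no risk that the infimum defining $T^-_t\varphi_\eps$ is attained outside the support of a bump and collapses the gap. Second, you flag as the ``main technical obstacle'' that the minimizers of the \emph{perturbed} variational problem must track the chosen orbit. This is not needed. The paper never looks at perturbed minimizers: it inserts the $u_-$-calibrated curve $\gamma_0$ as a \emph{trial} curve in the variational formula for $T^-_{s+\Delta s}\varphi_\eps(\gamma_0(s+\Delta s-\hat t))$, which yields an inequality in exactly the right direction and produces $\dot{\hat w}(s)\geqslant\hat w(s)\,g(s)$ with $g(s)$ close to $\frac{\partial L}{\partial u}$ along $\gamma_0$. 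The genuine obstacle is only the second-order control of $\frac{\partial L}{\partial u}$ near $u_-(\gamma_0)$, handled by Lemma~\ref{coro1} and the choice $\Delta'=\bar\delta$.

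Two further simplifications in the paper relative to your plan: (i) instead of ergodic decomposition plus Birkhoff for a single orbit good for all large $t$, a short Fubini-plus-invariance argument (Lemma~\ref{lem4}) produces, for each fixed $t_0$, \emph{some} calibrated curve with $\int_{-t_0}^{0}\frac{\partial L}{\partial u}\,ds\geqslant A't_0$; this suffices. (ii) Instead of a restart scheme for the $\limsup$, the paper argues directly by contradiction: assume $\|T^-_tu_\eps-u_-\|_\infty<\Delta'$ for all $t\geqslant t_1$, split $[0,\hat t]=[0,t_1]\cup[t_1,t_1+t_0]$, use a crude bound $g\geqslant -D$ on the first interval (finite time, everything bounded) and Lemma~\ref{coro1} plus Lemma~\ref{lem4} on the second to get $\hat w(\hat t)\geqslant\eps\,e^{-Dt_1}e^{A't_0/2}$, then choose $t_0$ large. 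Your restart argument would need to guarantee a definite gap at the restart point, which is not automatic with a localized bump.
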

	
	In \cite{X}, the Lyapunov stability and instability problems for stationary solutions of equation \eqref{1-1} were studied, where the state space is the unit circle. Assume
	\[
	\frac{\partial H}{\partial p}(x,p,u)\Big|_{\Lambda_{u_-}}\neq 0, \quad (x,p,u)\in\mathbf{S}\times\R\times\R.
	\]
	Then the simple structure of the unit circle implies that the Aubry set consists of  a periodic orbit. The authors of \cite{X} used this fact and the subsolution method to get stability and instability results. We can not follow this line since the state space $M$ in this paper is a manifold of arbitrary dimension.

	Let us take a look at an example, where Theorem \ref{th1} and Theorem \ref{th2} can be applied.
	\begin{example}\label{ex1}
		Consider the equation
		\begin{align}\label{1313}
			w_t(x,t)+\|Dw(x,t)\|^2-\langle Dg(x),Dw(x,t)\rangle-f(x)(w(x,t)-g(x))=0,\quad x\in M,
		\end{align}
		where $f$, $g$ are smooth functions on $M$. It is clear that $g(x)$ is a classical solution of  \eqref{1313}. We will use Theorems \ref{th1} and \ref{th2} to analyze the stability and instability of $g(x)$.
		\begin{itemize}
			\item  If $f(x)<0$ on $\{x\in M: Dg(x)=0\}$, then assumption \eqref{A1} holds ture. By Theorem \ref{th1},
			$g(x)$ is  locally asymptotically stable.
			\item If $f(\bar{x})>0$ for some $\bar{x}$ in  $\{x\in M: Dg(x)=0\}$, then assumption \eqref{A3} holds ture. By Theorem \ref{th2},
			$g(x)$ is unstable.
		\end{itemize}
		See Section 3 for details.
	\end{example}


	The last main result of the present paper gives a criterion for the uniqueness of viscosity solutions of \eqref{1-2} by using the information from $\mathfrak{M}$.
	\begin{theorem}\label{th3}
		If
		\begin{align}\label{A4}\tag{A3}
			\int_{T^*M\times\R}\frac{\partial H}{\partial u}(x,p,u)d\mu>0,\quad 	\forall \mu\in\mathfrak{M},
		\end{align}
		then equation \eqref{1-2} has at most one viscosity solution. Moreover, if $u_-$ is the unique viscosity of equation \eqref{1-2}, then $u_-$ is globally asymptotically stable. 	
	\end{theorem}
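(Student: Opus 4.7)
The plan is to bootstrap Theorem~\ref{th1} and combine it with a connectedness argument for basins of attraction in $C(M,\mathbf{R})$. First I would observe that \eqref{A4} is strictly stronger than the hypothesis of Theorem~\ref{th1} for every $u_{-}\in\mathcal{S}^{-}$. Indeed, any $\mu\in\mathfrak{M}_{u_{-}}$ is a $\Phi_{t}^{H}$-invariant probability measure supported in $\Lambda_{u_{-}}$, and by \eqref{1-1111} together with Theorem~\ref{Man} the $\Phi_{t}^{H}$-invariant part of $\Lambda_{u_{-}}$ lies in $\tilde{\mathcal{N}}_{u_{-}}\subset\tilde{\mathcal{N}}$, so $\mu\in\mathfrak{M}$. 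Thus \eqref{A4} forces \eqref{A1} for every stationary solution; since $\mathfrak{M}$ is weak-$^{*}$ compact the same constant $A>0$ in \eqref{A2} works uniformly. Theorem~\ref{th1} then gives that every $u_{-}\in\mathcal{S}^{-}$ is locally asymptotically stable with exponential rate at most $-A$, and in particular any two distinct stationary solutions are separated by at least a fixed basin radius $\Delta>0$.

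Next I would prove the key claim: for every $\varphi\in C(M,\mathbf{R})$, $T_{t}^{-}\varphi$ converges in $C(M,\mathbf{R})$ as $t\to+\infty$ to some $u_{-}\in\mathcal{S}^{-}$. This has two ingredients. (a) Precompactness of $\{T_{t}^{-}\varphi\}_{t\geqslant 1}$: the action-minimizing representation of $T_{t}^{-}$ provided by the implicit variational principle under (H1)--(H2) yields an equi-Lipschitz bound on $T_{t}^{-}\varphi$ for $t\geqslant 1$, and a uniform sup-norm bound is obtained by comparison with some fixed $u_{-}\in\mathcal{S}^{-}$, using \eqref{A4} to rule out runaway growth (the positive mean of $\partial H/\partial u$ against Mather measures functions as a dissipation mechanism). (b) Identification of $\omega$-limits with stationary solutions: for $\psi\in\omega(\varphi)$, the idea is to consider the characteristic orbit traced by the $1$-graph of $\psi$ under \eqref{c}, build a $\Phi_{t}^{H}$-invariant probability measure via Krylov--Bogoliubov, verify that its support lies in $\tilde{\mathcal{N}}$ so it is a Mather measure, and then use \eqref{A4} to derive a contradiction with the recurrence of the orbit unless $T_{t}^{-}\psi=\psi$ for all $t\geqslant 0$.

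Granted the key claim, uniqueness follows by a connectedness argument. For each $u_{-}\in\mathcal{S}^{-}$ the basin $B(u_{-}):=\{\varphi\in C(M,\mathbf{R}):T_{t}^{-}\varphi\to u_{-}\}$ is open: if $T_{t_{0}}^{-}\varphi$ lies in the $\Delta$-ball of local stability from Theorem~\ref{th1}, continuity of the operator $T_{t_{0}}^{-}$ on $C(M,\mathbf{R})$ ensures the same for every $\psi$ in a sup-norm neighborhood of $\varphi$, hence $\psi\in B(u_{-})$. The basins are pairwise disjoint by uniqueness of limits and, by the key claim, they cover $C(M,\mathbf{R})$. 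Since $C(M,\mathbf{R})$ is connected (in fact convex), only one basin can be non-empty, so $|\mathcal{S}^{-}|\leqslant 1$; when $\mathcal{S}^{-}=\{u_{-}\}$ one has $B(u_{-})=C(M,\mathbf{R})$, which is precisely global asymptotic stability.

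The main obstacle is step (b): showing that every $\omega$-limit point of the semigroup orbit is a stationary solution. The difficulty is that $\{T_{t}^{-}\}$ acts on the infinite-dimensional space $C(M,\mathbf{R})$ while Mather measures live on $T^{*}M\times\mathbf{R}$, so one must transfer recurrence from the upstairs semigroup dynamics to an invariant measure on $\tilde{\mathcal{N}}$ via the $1$-graph and characteristic flow, and then turn \eqref{A4} into a quantitative obstruction to non-trivial recurrence. Step (a) is also non-trivial in the absence of monotonicity of $H$ in $u$, and relies essentially on \eqref{A4} to close the a priori estimate.
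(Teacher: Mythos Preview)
Your overall architecture---deduce local asymptotic stability of every $u_-\in\mathcal S^-$ from Theorem~\ref{th1}, then run a connectedness/basin argument---is sound, and the final paragraph is correct once the key claim is established. The genuine gap is the key claim itself, and neither of your sketches for (a) or (b) can be completed as written.

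For (b), the proposed mechanism does not work. If $\psi\in\omega(\varphi)$ is not already a viscosity solution, its $1$-graph has no invariance under $\Phi^H_t$ (negative invariance of $\Lambda_{u_-}$ in Corollary~\ref{inva1} uses the calibration property, which an arbitrary $\omega$-limit point lacks), so Krylov--Bogoliubov produces nothing. Even if you manufactured some $\Phi^H_t$-invariant measure from $\psi$, there is no reason its support would lie in $\tilde{\mathcal N}$: membership in $\mathfrak M$ requires the semi-static property, not just invariance. And the final step---``\eqref{A4} contradicts recurrence unless $T_t^-\psi=\psi$''---is a non sequitur: \eqref{A4} is a sign condition on integrals against Mather measures and says nothing about the $T_t^-$-dynamics of a function that is not a fixed point. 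For (a), ``comparison with some fixed $u_-$ using \eqref{A4} to rule out runaway growth'' is not a proof; without monotonicity the semigroup can expand, and nothing you have written bounds $\|T_t^-\varphi\|_\infty$ for general $\varphi$.

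The paper avoids both difficulties by exploiting the \emph{order structure} on $C(M,\mathbf R)$ and the interplay between the backward and forward semigroups, rather than trying to control arbitrary orbits directly. It fixes one $u_1^-\in\mathcal S^-$ and first shows (Step~1) that $T_t^+(u_1^--\delta)$ is unbounded below; this uses Proposition~\ref{prop5.1}, Theorem~\ref{th1}, and a contradiction argument that would otherwise produce a \emph{second} viscosity solution. From this and Proposition~\ref{prop5.2} it gets (Step~2) $T_t^-\psi\to u_1^-$ for every $\psi\le u_1^-$, with no need to identify $\omega$-limits abstractly. Uniqueness (Step~3) then follows from the lattice fact that $\min\{u_1^-,u_3^-\}$ is again a viscosity solution. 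The upper side (Step~4) is handled by a separate case analysis, again via the forward semigroup and Proposition~\ref{prop5.1}. The moral: the paper never proves your key claim for arbitrary $\varphi$ in one stroke; it splits into $\psi\le u_1^-$ and $\psi\ge u_1^-$ and uses $T_t^+$ as the engine that converts local stability into global information. If you want to salvage your approach, you would need an honest replacement for (b)---for instance, showing directly (via Proposition~\ref{prop5.1}(2)) that $\liminf_t T_t^-\varphi\in\mathcal S^-$ once boundedness is known, and then using local stability to upgrade $\liminf$ to $\lim$---but you would still need the order-based arguments to obtain boundedness in the first place.
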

	
	
	\begin{remark}
		It is well known that equation \eqref{1-2} admits a unique viscosity solution when $H(x,p,u)$ is strictly increasing in $u$. Jing et al. \cite{J} showed that nonuniqueness appears even when $\frac{\partial H}{\partial u}\geqslant 0$. In \cite{J} and \cite{WWY21}, examples on nonuniqueness of viscosity solutions of \eqref{1-2} were given when $H(x,p,u)$ is strictly decreasing in $u$. Assuming $H(x,p,u)$ is nondecreasing and convex in $u$, the authors of \cite{J} provided an interesting uniqueness result for viscosity solutions of \eqref{1-2}, where the set of adjoint measures played an important role. As they pointed out, the  set of adjoint measures is defined implicitly as it depends on all viscosity solutions of \eqref{1-2}, which are not known a priori. When $H(x,p,u)$ does not depend on $u$, the adjoint measure is a projected Mather measure for the  Hamiltonian system generated by $H$.
	\end{remark}
	
	\begin{remark}\label{re}
		Equation \eqref{1-2} determines a hypersurface in the space of 1-jets of functions of $x\in M$.
		Notice that there is a common point among assumptions (A1)-(A3): we only focus on the average value of term $\frac{\partial H}{\partial u}(x,p,u)$ over subsets of the level surface of $H$
		\[
		\mathcal{E}:=\{(x,p,u)\in T^*M\times\R: H(x,p,u)=0\}.
		\]
	\end{remark}
	
	A direct consequence of Theorem \ref{th3} is stated as follows.
	\begin{corollary}\label{addcoro1}
		Let $H\in C^3(T^*M\times\R,\R)$ satisfy  conditions (H1)-(H3) and
		\begin{align}\label{1-202}
			H(x,p,u)=H(x,-p,u), \quad \forall (x,p,u)\in T^*M\times\R.
		\end{align}
		Define $\mathcal{B}:=\Big\{(x,u)\in M\times\R:  H(x,0,u)=\frac{\partial H}{\partial x}(x,0,u)=0\Big\}$.
		If
		\begin{align}\label{1-206}
			\frac{\partial H}{\partial u}(x,0,u)\Big|_\mathcal{B}>0,
		\end{align}
		then \eqref{1-2} has at most one viscosity solution.
	\end{corollary}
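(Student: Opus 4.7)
The strategy is to reduce to Theorem \ref{th3}: it suffices to show that every Mather measure $\mu\in\mathfrak{M}$ satisfies $\int_{T^*M\times\R}\frac{\partial H}{\partial u}\,d\mu>0$. I will argue that, under the symmetry \eqref{1-202} together with strict convexity (H1), the support of every such $\mu$ is forced to lie inside the equilibrium set $\{(x,0,u):(x,u)\in\mathcal{B}\}$; the hypothesis \eqref{1-206} and compactness of $\supp\mu$ then deliver the required positivity.

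First I would record two preparatory facts. The symmetry implies $\frac{\partial H}{\partial p}(x,0,u)=0$, and strict convexity in $p$ then gives
\begin{equation*}
  p\cdot\frac{\partial H}{\partial p}(x,p,u)\geqslant 0,\qquad \text{with equality iff } p=0.
\end{equation*}
Moreover, by Theorem \ref{Man} every Mather measure is supported in $\bigcup_{u_-\in\s}\Lambda_{u_-}$, and since $u_-$ solves \eqref{1-2}, continuity gives $H\equiv 0$ on each $\Lambda_{u_-}$; hence $H=0$ everywhere on $\supp\mu$.

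Next I would exploit the $\Phi^H_t$-invariance of $\mu$. Since $\supp\mu$ is compact, the coordinate function $u$ is bounded there, so differentiating $\int u\circ\Phi^H_t\,d\mu=\int u\,d\mu$ at $t=0$ along \eqref{c} yields
\begin{equation*}
  \int_{T^*M\times\R}\Bigl(\frac{\partial H}{\partial p}(x,p,u)\cdot p-H(x,p,u)\Bigr)\,d\mu=0.
\end{equation*}
Combined with $H=0$ on $\supp\mu$, this reduces to $\int p\cdot\frac{\partial H}{\partial p}\,d\mu=0$; since the integrand is nonnegative and continuous, the usual support argument shows that it vanishes throughout $\supp\mu$, which by the strict-convexity inequality above forces $p=0$ on $\supp\mu$.

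Finally, $\supp\mu$ is $\Phi^H_t$-invariant, so along any orbit through $\supp\mu$ the momentum stays zero; substituting $p\equiv 0$ into the $\dot p$-equation of \eqref{c} yields $\frac{\partial H}{\partial x}(x,0,u)\equiv 0$ there. Together with $H(x,0,u)=0$, this confirms $\supp\mu\subset\{(x,0,u):(x,u)\in\mathcal{B}\}$, and \eqref{1-206} combined with compactness then gives $\int\frac{\partial H}{\partial u}\,d\mu>0$, so Theorem \ref{th3} applies. The main obstacle I anticipate is the careful bootstrap from the single integral identity to the pointwise conclusion $p=0$ on $\supp\mu$; once this is in hand, the equilibrium structure follows transparently from the contact flow equations.
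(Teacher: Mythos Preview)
Your proof is correct and follows the same overall strategy as the paper: reduce to Theorem \ref{th3} by showing that every Mather measure is supported in $\{(x,0,u):(x,u)\in\mathcal{B}\}$, using the key inequality $p\cdot\frac{\partial H}{\partial p}(x,p,u)\geqslant 0$ (with equality iff $p=0$) that comes from reversibility and strict convexity, together with $H\equiv 0$ on $\supp\mu$.

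The one genuine difference is in how you force $p=0$ on $\supp\mu$. The paper works pointwise via Poincar\'e recurrence: at a recurrent point $(x_0,p_0,u_0)$ one has $u(t_n)-u_0\to 0$ along a returning sequence, while $\dot u(t)=p(t)\cdot\frac{\partial H}{\partial p}\geqslant 0$, so the integrand must vanish identically along the orbit; since recurrent points are dense in $\supp\mu$, this propagates to the whole support. You instead use the measure-theoretic invariance directly: differentiating $\int u\circ\Phi^H_t\,d\mu=\int u\,d\mu$ at $t=0$ (legitimate because $\supp\mu$ is compact and the flow is $C^1$) gives $\int p\cdot\frac{\partial H}{\partial p}\,d\mu=0$, and the nonnegative continuous integrand then vanishes on $\supp\mu$. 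Your route is a touch more streamlined, avoiding the recurrence step; the paper's route is more orbit-by-orbit and perhaps makes the equilibrium nature of the support (each point is a fixed point of $\Phi^H_t$) more explicit. Either way the remaining step---using invariance of $\supp\mu$ to get $\dot p\equiv 0$ and hence $\frac{\partial H}{\partial x}(x,0,u)=0$---is identical.
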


	\begin{example}\label{ex2}
		Consider
		\begin{align}\label{1515}
			\frac{1}{2}\|Du(x)\|^2+\frac{1}{2}u(x)+\sin u(x)=0,\quad x\in M.	
		\end{align}
		It is clear that $u(x)\equiv0$ is a  solution of \eqref{1515}. Here, $H(x,p,u)=\frac{1}{2}\|p\|^2+\frac{1}{2}u+\sin u $. We will check in Section 4 that \eqref{1-206} holds true, which implies that $u(x)=0$ is the unique viscosity solution of \eqref{1515}.
	\end{example}

	

	Another consequence of Theorem \ref{th3} is stated  as follows.

	\begin{corollary}\label{addcoro2}
		If $\frac{\partial H}{\partial u}(x,p,u)\Big|_\mathcal{E}\geqslant0$ and
		\begin{equation}\label{add1.10}
			\Big((\frac{\partial H}{\partial u})^2+(\mathfrak{L}_H\frac{\partial H}{\partial u})^2+(\mathfrak{L}^2_H\frac{\partial H}{\partial u})^2\Big)\neq0, \quad\forall  (x,p,u)\in\mathcal{E},
		\end{equation}
		then \eqref{1-2} has at most one viscosity solution, where $\mathfrak{L}_H$ is the Lie derivative and $\mathfrak{L}_HF(x,p,u)=\frac{d}{dt}\big|_{t=0}F(\Phi^H_t(x,p,u))$, $\mathfrak{L}^k_HF(x,p,u)=\frac{d^k}{dt^k}\big|_{t=0}F(\Phi^H_t(x,p,u))$ for all $F\in C(T^*M\times\R,\R)$.
	\end{corollary}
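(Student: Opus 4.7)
The plan is to reduce Corollary \ref{addcoro2} to Theorem \ref{th3} by verifying that hypothesis \eqref{A4} holds, i.e.\ that $\int_{T^*M\times\R}\frac{\partial H}{\partial u}\,d\mu>0$ for every Mather measure $\mu\in\mathfrak{M}$. The first step is the observation that the level set $\mathcal{E}=\{H=0\}$ is invariant under the contact flow $\Phi^H_t$: differentiating $F(t):=H(\Phi^H_t(x,p,u))$ along \eqref{c} and cancelling terms yields $\dot F=-\frac{\partial H}{\partial u}\cdot F$, so $F(0)=0$ forces $F\equiv 0$. Since every Mather measure is supported in the Ma\~n\'e set $\tilde{\mathcal{N}}$, which (by the results of Section 3 applied to the admissible value $c=0$) sits inside $\mathcal{E}$, the support of any $\mu\in\mathfrak{M}$ is a closed, nonempty, $\Phi^H_t$-invariant subset of $\mathcal{E}$.

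Now write $g:=\frac{\partial H}{\partial u}$. The assumption $g|_\mathcal{E}\geqslant 0$ gives immediately $\int g\,d\mu\geqslant 0$ for every $\mu\in\mathfrak{M}$. Suppose for contradiction that $\int g\,d\mu=0$ for some $\mu\in\mathfrak{M}$. Since $g$ is continuous (as $H\in C^3$) and nonnegative on $\supp\mu\subset\mathcal{E}$, the vanishing of the integral forces $g\equiv 0$ on $\supp\mu$; otherwise continuity would produce an open neighborhood of positive $\mu$-mass on which $g$ is bounded below by a positive constant. Because $\supp\mu$ is invariant under the entire flow, the identity $g(\Phi^H_t y)=0$ holds for every $y\in\supp\mu$ and every $t\in\R$. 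Differentiating once and twice in $t$ at $t=0$ yields $\mathfrak{L}_H g(y)=\mathfrak{L}^2_H g(y)=0$ on the nonempty set $\supp\mu\subset\mathcal{E}$. This directly contradicts \eqref{add1.10}. Hence $\int g\,d\mu>0$ for every $\mu\in\mathfrak{M}$, and Theorem \ref{th3} applies to give uniqueness.

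The only real subtlety is the transition from $g=0$ $\mu$-almost everywhere to $g=0$ on all of $\supp\mu$, but this is a standard consequence of continuity of $g$ combined with the definition of topological support. Everything else reduces to the energy-surface invariance computation and two differentiations of a function that vanishes identically along flow lines inside $\supp\mu$. I do not anticipate any serious technical obstacle once the supporting facts from Section 3 (notably $\supp\mu\subset\mathcal{E}$ and the compactness/invariance of $\supp\mu$) are invoked.
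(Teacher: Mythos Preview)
Your proof is correct and follows essentially the same approach as the paper. The paper argues by picking an arbitrary point in $\supp\mu$ and splitting into the cases $\frac{\partial H}{\partial u}\neq 0$ and $\frac{\partial H}{\partial u}=0$ there, using \eqref{add1.10} in the second case to flow to a point where $\frac{\partial H}{\partial u}>0$; your contradiction argument (if $\int g\,d\mu=0$ then $g\equiv 0$ on the invariant set $\supp\mu$, forcing all Lie derivatives to vanish) is a clean repackaging of the same idea.
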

	\begin{remark}
		It is worth pointing out that if the Hamiltonian $H\in C^{k+1}(T^*M\times\R,\R)\ (k\geqslant2)$ and
		\[
		\Big((\frac{\partial H}{\partial u})^2+(\mathfrak{L}_H\frac{\partial H}{\partial u})^2+\cdots+(\mathfrak{L}_H^k\frac{\partial H}{\partial u})^2\Big)\neq0,\quad\forall  (x,p,u)\in\mathcal{E},
		\]
		then \eqref{1-2} has at most one viscosity solution. Similarly, if $H\in C^{\infty}(T^*M\times\R,\R)$ and for any $(x,p,u)\in\mathcal{E}$, there is $k\in\mathbf{N}$ such that  $\mathfrak{L}_H^k\frac{\partial H}{\partial u}\neq0$, then the result still holds true.
		The proofs are quite similar to the one of Corollary \ref{addcoro2}.
	\end{remark}
	
	We give another interesting example to finish this section. Theorem \ref{th1}, \ref{th2}, \ref{th3} and Corollary \ref{addcoro2} will be used to analyze this example.
	\begin{example}\label{ex3}
		Consider the contact Hamilton-Jacobi equation
		\begin{align}\label{addeq1.11}
			w_t(x,t)+\frac{1}{2}w^2_x(x,t)-a\cdot w_x(x,t)+(\sin x+b)\cdot w(x,t)=0,\quad x\in \mathbf{S},	
		\end{align}
		where $\mathbf{S}$ is the unit circle and $a$, $b\in\R$ are parameters. Here $H(x,p,u)=\frac{1}{2}p^2-a\cdot p+(\sin x+b)\cdot u$. It is clear that the constant function $w=0$ is a solution of \eqref{addeq1.11}.

		By using Theorem \ref{th1}, \ref{th2}, \ref{th3} and Corollary \ref{addcoro2}, we get the following results listed in Table 1.
		\begin{table}[htbp]
			\centering
			\caption{Stationary solution $w=0$: stability, instability and uniqueness}
			\vskip0.1cm
			\begin{tabular}{|c|c|c|}
				\hline
				\ & $a=0$ & $a\neq0$ \\
				\hline
				$b>1$ & unique, globally asymptotically stable & \multirow{2}{*}{ unique, globally asymptotically stable}\\
				\cline{1-2} $b=1$ & critical case& \\
				\hline
				$0<b<1$ & \multirow{3}{*}{unstable} & locally asymptotically stable \\
				\cline{1-1}\cline{3-3} $b=0$ & & critical case \\
				\cline{1-1}\cline{3-3} $b<0$ & & unstable \\
				\hline
			\end{tabular}
		\end{table}
	\end{example}


	The rest of the paper is organized as follows. Section 2 collects some preliminary results and tools which will be used later. The readers can skip it and keep reading. Section 3 is devoted to the Mather and weak KAM theories for contact Hamiltonian system \eqref{c}.	
	We will prove Theorem \ref{th1}, Theorem \ref{th2}  and analyze Example \ref{ex1} in Section 4. The uniqueness problem will be studied in the last section, where we will  revisit Example \ref{ex2} and \ref{ex3}.

	\section{Notations and Preliminaries}

	\subsection{Notations}	
	We write as follows a list of symbols used throughout this paper.
	\begin{itemize}
		\item We choose, once and for all, a $C^{\infty}$ Riemannian metric on $M$. It is classical that there is a canonical way to associate to it a Riemannian metric on the tangent bundle $TM$. We use the same symbol $d$ to denote the distance function defined by the Riemannian metric on $M$ and the distance function defined by the Riemannian metric on $TM$ or on $TM\times\R$. We use the same symbol $\|\cdot\|_x$ to denote the
		norms induced by the Riemannian metrics on $T_xM$ and $T^*_xM$ for $x\in M$, and by $\langle \cdot,\cdot\rangle_x$ the canonical pairing between the tangent space $T_xM$ and the cotangent space $T^*_xM$.  Sometimes we will use $\|\cdot\|$ and $\langle
		\cdot,\cdot\rangle$ to denote $\|\cdot\|_x$ and $\langle \cdot,\cdot\rangle_x$ for brevity, respectively.
		\item $C_b(TM\times\R,\R)$ stands for the space of all continuous and bounded functions on $TM\times\R$.
		\item  $\mathcal{P}(\T\times\R)$ denotes the space of Borel probability measures on $\T\times\R$.
		\item  $Du(x)=(\frac{\partial u}{\partial x_1},\dots,\frac{\partial u}{\partial x_n})$ and $ Dw(x,t)=(\frac{\partial w}{\partial x_1},\dots,\frac{\partial w}{\partial x_n})$.
		\item $\mathcal{S}^-$ (resp. $\mathcal{S}^+$) denotes the set of all  backward (resp. forward) weak KAM solutions of equation \eqref{1-2}. Backward weak KAM solutions and viscosity solutions are the same in the setting of this paper.
		\item Let $\Phi^H_t$ (resp. $\Phi^L_t$) denote the local flow of contact Hamiltonian system (\ref{c}) (resp. Lagrangian system \eqref{L} below).
		\item $h_{x_0,u_0}(x,t)$ (resp. $h^{x_0,u_0}(x,t)$) denotes the forward (resp. backward) implicit action function associated with $L$.
		\item $\{T^{-}_t\}_{t\geqslant 0}$ (resp. $\{T^{+}_t\}_{t\geqslant 0}$) denotes the backward (resp. forward) solution semigroup associated with $L$.
		\item Let $(X_1,\mathcal{B}_1,\mu)$ be a measure space, $(X_2,\mathcal{B}_2)$ a measurable space, and $f:X_1\to X_2$ a measurable map. The push-forward of $\mu$ through $f$ is the measure $f_\sharp \mu$ on $(X_{2}, \mathcal{B}_{2})$ defined by
		\begin{equation*}
			f_\sharp \mu(B):=\mu\left(f^{-1}(B)\right), \quad \forall B \in \mathcal{B}_2.
		\end{equation*}
		The push-forward has the property that a measurable map $g: X_{2} \to \mathbf{R}$ is integrable with respect to $f_\sharp \mu$ if and only if $g \circ f$ is integrable on $X_{1}$ with respect to $\mu$. In this case, we have that
		\begin{equation*}
			\int_{X_1}g(f(x))\,d\mu(x)=\int_{X_2}g(y)\,df_\sharp\mu(y).
		\end{equation*}
	\end{itemize}

	\subsection{Preliminaries}
	The authors of \cite{WWY17} provided the implicit variational principle for contact Hamiltonian systems and introduced the notion of action functions. Later, by using solution semigroups, they studied the existence of viscosity solutions of contact Hamilton-Jacobi equations in \cite{WWY192}.
	The assumptions used in \cite{WWY17,WWY192} are the same as the ones in the present paper. We collect some known results from \cite{WWY17,WWY192} and also prove some new results in this part.
	
	\subsubsection{Implicit variational principles for contact Hamiltonian systems}
	We use $\mathcal{L}: T^*M\rightarrow TM$ to denote  the Legendre transform. Let
	$\bar{\mathcal{L}}:=(\mathcal{L}, Id)$, where $Id$ denotes the identity map from $\R$ to $\R$. Then
	\[
	\bar{\mathcal{L}}:T^*M\times\R\to TM\times\R,\quad (x,p,u)\mapsto \left(x,\frac{\partial H}{\partial p}(x,p,u),u\right)
	\]
	is a diffeomorphism. Using $\bar{\mathcal{L}}$, we can define
	the contact Lagrangian $L(x,\dot{x},u)$ associated to $H(x,p,u)$ as
	\[
	L(x,\dot{x},u):=\sup_{p\in T^*_xM}\{\langle \dot{x},p\rangle_x-H(x,p,u)\},\quad (x,\dot{x},u)\in TM\times\R.
	\]
	By (H1)-(H3), it is direct to check that
	\medskip
	
	\begin{itemize}
		\item [\bf(L1)] $\frac{\partial^2 L}{\partial \dot{x}^2} (x,\dot{x},u)>0$ for each $(x,\dot{x},u)\in TM\times\R$;
		\item  [\bf(L2)] for each $(x,u)\in M\times\R$, $L(x,\dot{x},u)$ is superlinear in $\dot{x}$;
		\item [\bf(L3)] there is $\lambda>0$ such that
		$$
		\Big|\frac{\partial L}{\partial u}(x,\dot{x},u)\Big| \leqslant \lambda,\quad \forall(x,\dot{x},u)\in TM \times\R.
		$$
	\end{itemize}
	The contact Lagrangian system reads
	\begin{align}\label{L}\tag{CL}	
		\left\{
		\begin{array}{l}
			\frac{d}{dt}\frac{\partial L}{\partial \dot{x}}\big(x(t),\dot{x}(t),u(t)\big)=\frac{\partial L}{\partial x}\big(x(t),\dot{x}(t),u(t)\big)+\frac{\partial L}{\partial u}\big(x(t),\dot{x}(t),u(t)\big)\cdot \frac{\partial L}{\partial \dot{x}}\big(x(t),\dot{x}(t),u(t)\big),\\[1mm]
			\dot{u}(t)=L\big(x(t),\dot{x}(t),u(t)\big).
		\end{array}
		\right.	
	\end{align}

	\begin{proposition}(\cite[Theorem A]{WWY17},\cite{WWY192})\label{prop4}
		For any given $x_0\in M$, $u_0\in\R$, there exist two continuous functions $h_{x_0,u_0}(x,t)$ and $h^{x_0,u_0}(x,t)$ defined on $M\times(0,+\infty)$ satisfying	
		\begin{align}
			h_{x_0,u_0}(x,t)&=u_0+\inf_{\substack{\gamma(0)=x_0 \\  \gamma(t)=x} }\int_0^tL\big(\gamma(\tau),\dot{\gamma}(\tau),h_{x_0,u_0}(\gamma(\tau),\tau)\big)d\tau,\label{2-1}\\
			h^{x_0,u_0}(x,t)&=u_0-\inf_{\substack{\gamma(t)=x_0 \\  \gamma(0)=x } }\int_0^tL\big(\gamma(\tau),\dot{\gamma}(\tau),h^{x_0,u_0}(\gamma(\tau),t-\tau)\big)d\tau,\label{2-2}
		\end{align}
		where the infimums are taken among the Lipschitz continuous curves $\gamma:[0,t]\rightarrow M$.
		Moreover, the infimums in (\ref{2-1}) and \eqref{2-2} can be achieved.
		If $\gamma_1$ and $\gamma_2$ are curves achieving the infimums \eqref{2-1} and \eqref{2-2} respectively, then $\gamma_1$ and $\gamma_2$ are of class $C^1$.
		Let
		\begin{align*}
			x_1(s)&:=\gamma_1(s),\quad u_1(s):=h_{x_0,u_0}(\gamma_1(s),s),\,\,\,\qquad  p_1(s):=\frac{\partial L}{\partial \dot{x}}\big(\gamma_1(s),\dot{\gamma}_1(s),u_1(s)\big),\\
			x_2(s)&:=\gamma_2(s),\quad u_2(s):=h^{x_0,u_0}(\gamma_2(s),t-s),\quad   p_2(s):=\frac{\partial L}{\partial \dot{x}}\big(\gamma_2(s),\dot{\gamma}_2(s),u_2(s)\big).
		\end{align*}
		Then $(x_1(s),p_1(s),u_1(s))$ and $(x_2(s),p_2(s),u_2(s))$ satisfy equations \eqref{c} with
		\begin{align*}
			x_1(0)=x_0, \quad x_1(t)=x, \quad \lim_{s\rightarrow 0^+}u_1(s)=u_0,\\
			x_2(0)=x, \quad x_2(t)=x_0, \quad \lim_{s\rightarrow t^-}u_2(s)=u_0.
		\end{align*}
	\end{proposition}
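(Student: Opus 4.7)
The strategy has three parts: (i) produce $h_{x_0,u_0}$ and $h^{x_0,u_0}$ by a Banach fixed-point argument whose contraction ratio comes from (L3); (ii) use Tonelli direct-method theory on a modified non-autonomous Lagrangian to extract minimizers and prove their $C^1$ regularity; (iii) derive the system \eqref{c} from the contact (Herglotz) Euler--Lagrange equation via the Legendre transform $\bar{\mathcal L}$. I describe the forward case $h_{x_0,u_0}$; the backward function $h^{x_0,u_0}$ is analogous after a time reversal.

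First, fix $T>0$ with $\lambda T<1$ and define the operator $\mathcal T$ on $C(M\times[0,T],\R)$ by
\[
(\mathcal T h)(x,t):=u_0+\inf_{\gamma(0)=x_0,\,\gamma(t)=x}\int_0^t L\bigl(\gamma(\tau),\dot\gamma(\tau),h(\gamma(\tau),\tau)\bigr)\,d\tau.
\]
For each continuous $h$ the time-dependent Lagrangian $L(\cdot,\cdot,h(\cdot,\tau))$ still satisfies (L1)--(L2), so Tonelli's direct method yields absolutely continuous minimizers together with uniform a priori velocity bounds that render $(x,t)\mapsto(\mathcal T h)(x,t)$ continuous. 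Testing $\mathcal T h_2$ against a minimizer of $\mathcal T h_1$ and invoking (L3) gives $\|\mathcal T h_1-\mathcal T h_2\|_\infty\le\lambda T\,\|h_1-h_2\|_\infty$, so $\mathcal T$ is a contraction and Banach's theorem produces a unique fixed point $h_{x_0,u_0}\in C(M\times[0,T],\R)$ satisfying \eqref{2-1} on $[0,T]$. Running the same contraction with arbitrary $(y,v)$ in place of $(x_0,u_0)$ gives a jointly continuous family $\{h_{y,v}\}$, and the splitting identity
\[
h_{x_0,u_0}(x,t_1+t_2)=\inf_{y\in M}h_{y,\,h_{x_0,u_0}(y,t_1)}(x,t_2),
\]
obtained by cutting the minimization at an intermediate time, extends $h_{x_0,u_0}$ consistently to $M\times(0,+\infty)$.

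Given a minimizer $\gamma_1$ realizing \eqref{2-1}, it is a minimizer of the non-autonomous Tonelli Lagrangian $\tilde L(x,\dot x,\tau):=L(x,\dot x,h_{x_0,u_0}(x,\tau))$, so the classical Tonelli bootstrap (Lipschitz regularity from superlinearity, then Euler--Lagrange combined with inversion of $\partial^2 L/\partial\dot x^2>0$) upgrades $\gamma_1$ to class $C^1$. Setting $u_1(s):=h_{x_0,u_0}(\gamma_1(s),s)$ and applying the splitting identity on sub-intervals $[0,s]$ yields $\dot u_1=L(\gamma_1,\dot\gamma_1,u_1)$. A first-variation argument, combined with the envelope identity $D_x h_{x_0,u_0}(\gamma_1(\tau),\tau)=p_1(\tau)$ (valid where $h_{x_0,u_0}$ is differentiable in $x$, which by the standard semiconcavity of Tonelli value functions holds on a full-measure set along $\gamma_1$ and extends by continuity), then produces the contact (Herglotz) Euler--Lagrange equation $\tfrac{d}{d\tau}\tfrac{\partial L}{\partial\dot x}-\tfrac{\partial L}{\partial x}=\tfrac{\partial L}{\partial u}\cdot\tfrac{\partial L}{\partial\dot x}$. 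Setting $p_1:=\partial L/\partial\dot x(\gamma_1,\dot\gamma_1,u_1)$ and applying $\bar{\mathcal L}$ converts this, together with $\dot u_1=\langle\dot\gamma_1,p_1\rangle-H$, into the system \eqref{c}, with the stated boundary conditions inherited from the definition of $h_{x_0,u_0}$. The main obstacle throughout is the circular coupling between $h$ and its minimizers: one cannot meaningfully compare minimizing curves for different $h_i$, so the contraction estimate proceeds by cross-substitution of trial curves, and the first-variation analysis in the last step needs the semiconcavity of $h_{x_0,u_0}$ to legitimise the costate identity $D_x h=p$ along the otherwise only Lipschitz graph of $h_{x_0,u_0}$.
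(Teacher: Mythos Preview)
The paper does not prove this proposition; it is quoted from \cite{WWY17,WWY192} without argument, so there is no in-paper proof to compare against. That said, your proposal has a genuine gap and differs from the approach actually used in those references.

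The fixed-point step fails as written. The operator $\mathcal T$ does \emph{not} map $C(M\times[0,T],\R)$ into itself: for any $h\in C(M\times[0,T],\R)$ and any $x\neq x_0$, the constrained infimum $(\mathcal T h)(x,t)$ blows up as $t\to0^+$, since curves joining $x_0$ to $x$ in time $t$ have velocity of order $d(x_0,x)/t$ and the superlinearity (L2) forces the action to $+\infty$. Hence $\mathcal T h$ is not continuous (not even bounded) on $M\times[0,T]$, and Banach's theorem is inapplicable on this space. Passing to $C(M\times(0,T],\R)$ does not help, as this is not a Banach space under the sup norm, and restricting to bounded functions excludes the fixed point you are looking for.

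The construction in \cite{WWY17} sidesteps this by reversing the order: one first fixes a Lipschitz curve $\gamma:[0,t]\to M$ with $\gamma(0)=x_0$ and solves the Carath\'eodory ODE $\dot u_\gamma=L(\gamma,\dot\gamma,u_\gamma)$, $u_\gamma(0)=u_0$, which (L3) makes globally well-posed; then one sets $h_{x_0,u_0}(x,t):=\inf\{u_\gamma(t):\gamma(t)=x\}$ and verifies the implicit identity \eqref{2-1} a posteriori. This curve-by-curve approach also makes the derivation of the contact Euler--Lagrange system a clean first-variation computation in the parameter of a family $\gamma_\varepsilon$, carried out at the ODE level for $u_{\gamma_\varepsilon}$. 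By contrast, your route through the frozen Lagrangian $\tilde L(x,\dot x,\tau)=L(x,\dot x,h_{x_0,u_0}(x,\tau))$ needs the envelope identity $D_xh_{x_0,u_0}(\gamma_1(\tau),\tau)=p_1(\tau)$ to close, and invoking semiconcavity of $h_{x_0,u_0}$ for this is circular: that regularity is normally a \emph{consequence} of the very $C^1$ minimizer structure you are trying to establish.
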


	We call $h_{x_0,u_0}(x,t)$ (resp. $h^{x_0,u_0}(x,t)$) a forward (resp. backward) implicit action function associated with $L$
	and the curves achieving the infimums in (\ref{2-1}) (resp. \eqref{2-2}) minimizers of $h_{x_0,u_0}(x,t)$ (resp. $h^{x_0,u_0}(x,t)$).  The following are some propositions about the implicit action functions which have been proved in previous papers.

	\begin{proposition}(\cite[Proposition 3.5]{WWY192})\label{addprop1}
		Let $x_0$, $x\in M$, $u_0$, $u\in \R$ and $t\in(0,+\infty)$. Then $h_{x_0,u_0}(x,t)=u\Leftrightarrow h^{x,u}(x_0,t)=u_0.$
	\end{proposition}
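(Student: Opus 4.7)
By the symmetric structure of \eqref{2-1} and \eqref{2-2}, it suffices to prove the forward implication: $h_{x_0,u_0}(x,t)=u \Rightarrow h^{x,u}(x_0,t)=u_0$. The plan is to exhibit a single contact Hamiltonian orbit of \eqref{c} that witnesses both action values, and then identify it with the orbit associated with the backward implicit action via uniqueness of solutions of the characteristic ODE.

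Suppose $h_{x_0,u_0}(x,t)=u$. Proposition \ref{prop4} supplies a $C^1$ minimizer $\gamma_1:[0,t]\to M$ and a lifted orbit $(x_1,p_1,u_1)$ of \eqref{c} with $x_1(0)=x_0$, $x_1(t)=x$, $u_1(s)=h_{x_0,u_0}(\gamma_1(s),s)$, $\lim_{s\to 0^+}u_1(s)=u_0$, and $u_1(t)=u$. Integrating the $u$-equation $\dot u_1=L(x_1,\dot x_1,u_1)$ of \eqref{L} over $[0,t]$ gives
\[
u-u_0 \;=\; \int_0^t L\bigl(\gamma_1(\tau),\dot\gamma_1(\tau),u_1(\tau)\bigr)\,d\tau.
\]
In parallel, apply Proposition \ref{prop4} to $u_0':=h^{x,u}(x_0,t)$ to obtain a minimizer $\gamma_2:[0,t]\to M$ and a lifted orbit $(x_2,p_2,u_2)$ of \eqref{c} with $x_2(0)=x_0$, $x_2(t)=x$, $u_2(s)=h^{x,u}(\gamma_2(s),t-s)$, $\lim_{s\to t^-}u_2(s)=u$, and $u_2(0)=u_0'$. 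The same integration yields $u-u_0'=\int_0^t L(\gamma_2,\dot\gamma_2,u_2)\,d\tau$. It then suffices to show that the two contact orbits coincide on $[0,t]$, since this gives $u_0=u_1(0)=u_2(0)=u_0'$ as required.

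The main obstacle is precisely this orbit coincidence. Both orbits satisfy the same ODE \eqref{c}, share the spatial endpoints $x_0$ and $x$, and agree at the terminal $u$-value $u$. The decisive step is the matching of the terminal covectors, $p_1(t)=p_2(t)$, after which the standard Cauchy-Lipschitz theorem applied backward from the common point $(x,p_1(t),u)$ identifies the two orbits on all of $[0,t]$. I would match the terminal covectors by invoking the Legendre transform formula $p_i(t)=\frac{\partial L}{\partial \dot x}(x,\dot\gamma_i(t),u)$ together with a first-order optimality condition at the endpoint: each minimizer satisfies an Euler-type boundary relation that, written in terms of the shared endpoint data $(x_0,x,t,u)$, yields the same algebraic constraint on $\dot\gamma_i(t)$; strict convexity (L1) of $L$ in the velocity then forces $\dot\gamma_1(t)=\dot\gamma_2(t)$, hence $p_1(t)=p_2(t)$. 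The converse implication $(\Leftarrow)$ follows by the entirely symmetric argument, exchanging the roles of $(x_0,u_0)$ and $(x,u)$.
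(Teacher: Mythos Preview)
The paper does not prove this proposition; it is quoted from \cite[Proposition 3.5]{WWY192} without argument. So there is no ``paper's proof'' to compare with, and your proposal has to stand on its own.

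It does not, because the decisive step---matching the terminal covectors $p_1(t)=p_2(t)$---is unjustified and in general false. In both variational problems \eqref{2-1} and \eqref{2-2} the endpoints $\gamma(0)=x_0$ and $\gamma(t)=x$ are \emph{fixed}; there is no free endpoint and therefore no transversality (``Euler-type boundary'') condition constraining $\dot\gamma_i(t)$ beyond the Euler--Lagrange equation the orbit already satisfies. The shared data $(x_0,x,t,u)$ do not pin down a unique terminal velocity: whenever $(x_0,x,t)$ admits several minimizers (a generic phenomenon near conjugate configurations), these minimizers have distinct $\dot\gamma(t)$ and hence distinct $p(t)$. So you cannot force the two contact orbits to coincide by Cauchy--Lipschitz, and the argument collapses at exactly the point you flagged as the ``main obstacle''.

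The route that actually works avoids orbit matching entirely and proceeds by two inequalities. Take your minimizer $\gamma_1$ of $h_{x_0,u_0}(x,t)$ with $u_1(s)=h_{x_0,u_0}(\gamma_1(s),s)$ solving $\dot u_1=L(\gamma_1,\dot\gamma_1,u_1)$, $u_1(0)=u_0$, $u_1(t)=u$. Use $\gamma_1$ only as a \emph{competitor} in the backward problem: setting $w(s)=h^{x,u}(\gamma_1(s),t-s)$, the definition \eqref{2-2} and the Markov property (Proposition~\ref{prop8}(1)) give a differential inequality for $w$ along $\gamma_1$ with the same integrand $L(\gamma_1,\dot\gamma_1,\cdot)$ and terminal value $w(t)=u=u_1(t)$; a Gronwall comparison with (L3) then yields $w(0)\le u_1(0)$, i.e.\ $h^{x,u}(x_0,t)\le u_0$. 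The reverse inequality comes symmetrically by testing the minimizer $\gamma_2$ of $h^{x,u}(x_0,t)$ in the forward problem. This is the mechanism behind the result in \cite{WWY192}; the monotonicity statements in Propositions~\ref{prop3}(3) and \ref{prop8}(3) are of the same Gronwall flavor.
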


	\begin{proposition}(\cite[Lemma 2.1]{WWY192})\label{prop5}
		For any given $a,b,\delta,T\in\R$ with $a<b$, $0<\delta<T$, denote $\Omega_{a,b,\delta,T}:=M\times[a,b]\times M\times[\delta,T].$ Then there exists a compact set $\mathcal{K}:=\mathcal{K}_{a,b,\delta,T}\subset T^*M\times \R$ such that for any $(x_0,u_0,x,t)\in\Omega_{a,b,\delta,T}$ and any minimizer $\gamma(s)$ of $h_{x_0,u_0}(x,t)$, we have
		$$\big(\gamma(s),p(s),u(s)\big)\subset\mathcal{K},\ \forall s\in[0,t],$$
		where $u(s)=h_{x_0,u_0}(\gamma(s),s)$, $p(s)=\frac{\partial L}{\partial\dot{x}}\big(\gamma(s),\dot{\gamma}(s),u(s)\big)$ and $\mathcal{K}$ depends only on $a,b,\delta$ and $T$.
	\end{proposition}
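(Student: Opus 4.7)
The plan is to bound the whole curve $s\mapsto(\gamma(s),p(s),u(s))$ uniformly in the parameters $(x_0,u_0,x,t)\in\Omega_{a,b,\delta,T}$ in three bootstrapped stages: first control $|u|$ on $[0,t]$, then use superlinearity of $L$ in $\dot x$ to control velocities (and hence momenta), and finally assemble a single compact $\mathcal K$. The crucial dynamical input is that by Proposition \ref{prop4}, the minimizer's 1-jet $(\gamma,p,u)$ is a trajectory of \eqref{c}, which lets me promote integral bounds into pointwise ones.

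\textbf{Step 1 (uniform bound on $u$).} For the upper bound on $u(t)=h_{x_0,u_0}(x,t)$, I would insert a constant-speed minimizing geodesic $\eta:[0,t]\to M$ from $x_0$ to $x$ (so $\|\dot\eta\|\leqslant\mathrm{diam}(M)/\delta$) as a competitor. Then $u(t)\leqslant v(t)$ where $v$ solves $\dot v=L(\eta,\dot\eta,v)$, $v(0)=u_0$. Using (L3) to write $|L(\eta,\dot\eta,v)|\leqslant C_0+\lambda|v|$ for a constant $C_0$ depending only on $\delta$, Gronwall's inequality yields $|v(t)|\leqslant (|u_0|+C_0T)e^{\lambda T}$, hence an upper bound on $u(t)$. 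For interior times, combine (L2) and (L3) to obtain the pointwise lower bound $L(x,\dot x,u)\geqslant c_0-\lambda|u|$, where $c_0:=\min_{TM}L(\cdot,\cdot,0)$ is finite by superlinearity on compact $M$. The identity $u(s)=u_0+\int_0^sL(\gamma,\dot\gamma,u)\,d\tau$, together with the two-sided Lipschitz control $|L(\gamma,\dot\gamma,u)|\leqslant|L(\gamma,\dot\gamma,0)|+\lambda|u|$ and the total-action bound $\int_0^tL\,d\tau=u(t)-u_0$ already secured above, then closes a Gronwall loop on $\sup_{[0,t]}|u|$ — iterating over subintervals of length $<1/\lambda$ if $T$ is large — giving a bound $|u(s)|\leqslant U$ depending only on $a,b,\delta,T$.

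\textbf{Step 2 (bound on momenta).} With $|u|\leqslant U$ and $\int_0^tL\,d\tau$ bounded, uniform superlinearity of $L$ in $\dot x$ on $\{|u|\leqslant U\}$ produces an $L^1$-bound on $\|\dot\gamma\|$. By the mean value theorem, some $s^*\in[0,t]$ satisfies $\|\dot\gamma(s^*)\|\leqslant R_0$ for a constant $R_0$ independent of the data, and the diffeomorphism $\bar{\mathcal L}$ on the compact set $M\times\{\|\dot x\|\leqslant R_0\}\times[-U,U]$ gives $\|p(s^*)\|\leqslant P_0$. To upgrade to a pointwise bound on all of $[0,t]$, I use that $(\gamma,p,u)$ is a trajectory of \eqref{c}, along which a direct computation gives
\[
\frac{d}{ds}H(\gamma(s),p(s),u(s))=-\frac{\partial H}{\partial u}(\gamma(s),p(s),u(s))\cdot H(\gamma(s),p(s),u(s)).
\]
By (H3) and Gronwall, $|H(\gamma(s),p(s),u(s))|\leqslant |H(\gamma(s^*),p(s^*),u(s^*))|e^{\lambda T}$ for all $s\in[0,t]$, and the right side is controlled by Step 1 together with the estimate at $s^*$. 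Finally, superlinearity of $H$ in $p$, uniform over $|u|\leqslant U$, converts this $|H|$-bound into a uniform bound $\|p(s)\|\leqslant P$.

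\textbf{Step 3 and main obstacle.} Setting $\mathcal K:=\{(x,p,u)\in T^*M\times\R:\|p\|\leqslant P,\ |u|\leqslant U\}$ yields the required compact set depending only on $a,b,\delta,T$. The principal difficulty is the circular coupling in Step 1 between $u$ and the $L$-action: bounding $u$ requires controlling $L(\gamma,\dot\gamma,u)$, which itself depends on $u$. Assumption (L3) is exactly what breaks the cycle, since it makes $L$ Lipschitz in $u$ with a uniform constant and thus permits a Gronwall closure, possibly after partitioning $[0,t]$ into finitely many small subintervals. A secondary subtlety, bridging Step 2 from an integral velocity bound to a pointwise momentum bound, is resolved by picking a single good time $s^*$ from the $L^1$-estimate and then exploiting the contact analogue of energy evolution along \eqref{c}.
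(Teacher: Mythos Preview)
The paper does not give its own proof of this proposition; it is quoted verbatim from \cite[Lemma~2.1]{WWY192} as a preliminary, so there is no in-paper argument against which to compare yours. That said, your three-step outline is the standard route and is essentially correct; let me flag two places where your write-up needs sharpening.

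\textbf{Step 1.} The sentence invoking ``the two-sided Lipschitz control $|L(\gamma,\dot\gamma,u)|\leqslant|L(\gamma,\dot\gamma,0)|+\lambda|u|$'' is misleading: $|L(\gamma,\dot\gamma,0)|$ is not bounded a priori, so this inequality alone cannot close a Gronwall loop. What actually works is the one-sided bound you already stated, $L\geqslant c_0-\lambda|u|$, used in \emph{both} time directions. Forward from $s=0$ it gives $u(s)\geqslant u_0+c_0 s-\lambda\int_0^s|u|$, hence a lower bound on $u$ via Gronwall; backward from $s=t$ (where $u(t)$ has been bounded above by the geodesic competitor) it gives $u(s)\leqslant u(t)-c_0(t-s)+\lambda\int_s^t|u|$, hence an upper bound on $u$. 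Combining yields $|u(s)|\leqslant C+\lambda\int_0^t|u|$, and your remark about partitioning into subintervals of length $<1/\lambda$ then finishes. Note that this two-endpoint structure (bounded $u(0)$ and bounded $u(t)$) is exactly why $\delta>0$ enters: the competitor bound on $u(t)$ uses a geodesic with speed $\leqslant\mathrm{diam}(M)/\delta$.

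\textbf{Step 2.} Your use of the contact energy identity $\dot H=-H_u\,H$ along \eqref{c} is correct and is the clean way to pass from a single good time $s^*$ to all of $[0,t]$. Just make explicit that the $L^1$-bound on $\|\dot\gamma\|$ from superlinearity yields $\|\dot\gamma(s^*)\|\leqslant(\text{bound})/t\leqslant(\text{bound})/\delta$, so again $\delta$ is what makes the mean-value step uniform.

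With these clarifications your argument is complete and matches the spirit of the a~priori estimates in \cite{WWY192}.
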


	\begin{proposition}(\cite[Theorem C, Theorem D]{WWY17},\cite[Prposition 3.3]{WWY192})\label{prop3}
		\item [(1)] Given $x_0\in M$, $u_0\in \R$,
		$$h_{x_0,u_0}(x,t+s)=\inf_{y\in M}h_{y,h_{x_0,u_0}(y,t)}(x,s)$$
		for all $s,t>0$ and all $x\in M$. Moreover, the infimum is attained at $y$ if and only if there exists a minimizer $\gamma$ of $h_{x_0,u_0}(x,t+s)$ with $\gamma(t)=y$.
		\item [(2)] The function $(x_0,u_0,x,t)\to h_{x_0,u_0}(x,t)$ is Lipschitz continuous on $M\times[a,b]\times M\times[c,d]$ for all real numbers $a,b,c,d$ with $a<b$ and $0<c<d$.
		\item [(3)] Given $x_0\in M$ and $u_1$, $u_2\in \R$,
		if $u_1<u_2$, then $h_{x_0,u_1}(x,t)<h_{x_0,u_2}(x,t)$, for all $(x,t)\in M\times(0,+\infty)$.
	\end{proposition}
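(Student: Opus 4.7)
The plan is to use a key reformulation of the implicit action function that makes it a genuine infimum rather than a fixed-point equation. For each absolutely continuous curve $\gamma:[0,t]\to M$ with $\gamma(0)=x_0$, let $u_\gamma(s;u_0)$ denote the unique solution of the ODE $\dot u_\gamma(s)=L(\gamma(s),\dot\gamma(s),u_\gamma(s))$ with $u_\gamma(0;u_0)=u_0$; this exists and is unique by (L3). One checks that the function $(x,t)\mapsto h_{x_0,u_0}(x,t)$ defined by $\inf_\gamma u_\gamma(t;u_0)$ (infimum over curves from $x_0$ to $x$ in time $t$) satisfies the implicit equation \eqref{2-1}, and uniqueness of such a function follows from a Gronwall argument on the difference of two candidates. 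With this reformulation the three claims become accessible.

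For part (3), I would use a Gronwall estimate directly on the ODE. If $u_1<u_2$, then for any fixed curve $\gamma$, the difference $u_\gamma(s;u_2)-u_\gamma(s;u_1)$ satisfies $|\tfrac{d}{ds}(u_\gamma(s;u_2)-u_\gamma(s;u_1))|\leqslant\lambda(u_\gamma(s;u_2)-u_\gamma(s;u_1))$ by (L3), which implies the two-sided bound $(u_2-u_1)e^{-\lambda s}\leqslant u_\gamma(s;u_2)-u_\gamma(s;u_1)\leqslant(u_2-u_1)e^{\lambda s}$. Taking $\gamma^*$ to be a minimizer for $h_{x_0,u_2}(x,t)$ gives $h_{x_0,u_2}(x,t)=u_{\gamma^*}(t;u_2)\geqslant u_{\gamma^*}(t;u_1)+(u_2-u_1)e^{-\lambda t}\geqslant h_{x_0,u_1}(x,t)+(u_2-u_1)e^{-\lambda t}$, which gives strict monotonicity.

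For part (1), the dynamic programming/Markov property, split curves at time $t$. The inequality $\leqslant$ is obtained by concatenating a minimizer $\gamma_1$ for $h_{x_0,u_0}(y,t)$ with a minimizer $\gamma_2$ for $h_{y,h_{x_0,u_0}(y,t)}(x,s)$; by construction the $u$-flow along the concatenation attains exactly $h_{y,h_{x_0,u_0}(y,t)}(x,s)$ at time $t+s$. Conversely, take a minimizer $\gamma$ for $h_{x_0,u_0}(x,t+s)$ and set $y=\gamma(t)$. Then $\gamma|_{[0,t]}$ is admissible for $h_{x_0,u_0}(y,t)$, so $u_\gamma(t;u_0)\geqslant h_{x_0,u_0}(y,t)$; combined with the monotonicity of the $u$-flow with respect to initial data (from the Gronwall argument above), the $u$-trajectory along $\gamma|_{[t,t+s]}$ started at $u_\gamma(t;u_0)$ at time $t$ dominates the one started at $h_{x_0,u_0}(y,t)$, yielding $h_{x_0,u_0}(x,t+s)\geqslant h_{y,h_{x_0,u_0}(y,t)}(x,s)$. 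Equality along this chain forces $\gamma|_{[0,t]}$ to be a minimizer for $h_{x_0,u_0}(y,t)$, giving the attainment characterization.

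For part (2), I would use Proposition \ref{prop5}: all minimizers lie in a fixed compact set $\mathcal{K}$, so their velocities and $L$-values are uniformly bounded by some constant $K$. The Lipschitz dependence on the endpoints $x_0,x$ is obtained by attaching short geodesic segments (of length $\leqslant d(x,x')$ traversed at bounded speed) at the endpoints of an optimal curve, creating a comparison curve whose extra action cost is bounded by a constant multiple of the endpoint displacement. The Lipschitz dependence on $u_0$ follows directly from the exponential Gronwall bound $|u_\gamma(t;u_0)-u_\gamma(t;u_0')|\leqslant|u_0-u_0'|e^{\lambda t}$ applied to any common curve. Finally, Lipschitz dependence on $t$ follows from the Markov property in (1) combined with the uniform $L$-bound on $\mathcal{K}$ and a short time increment estimate. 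The main subtlety will be the restriction step in (1)---verifying the right comparison of $u$-trajectories with potentially different initial values requires careful use of the monotonicity of the $u$-flow---and ensuring uniformity of the compact set $\mathcal{K}$ in part (2) under small perturbations of parameters.
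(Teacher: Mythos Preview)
The paper does not prove this proposition; it is cited from \cite{WWY17,WWY192} as a known result, so there is no in-paper proof to compare against directly. Your plan is essentially the route taken in those references: the key step is exactly the Carath\'eodory-type reformulation $h_{x_0,u_0}(x,t)=\inf_{\gamma}u_\gamma(t;u_0)$ with $u_\gamma$ solving $\dot u=L(\gamma,\dot\gamma,u)$, which converts the implicit fixed-point definition into a genuine infimum and makes Gronwall comparisons available. Your arguments for (3) and (1) are correct and standard; the two-sided Gronwall bound for (3) is precisely what is used in \cite{WWY17}, and the concatenation/restriction argument for (1) is the usual dynamic programming verification.

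Two small points to tighten. First, in (1), for the attainment ``only if'' direction you should also note that when the infimum is attained at $y$, the concatenation $\gamma_1*\gamma_2$ of minimizers is itself a minimizer of $h_{x_0,u_0}(x,t+s)$ passing through $y$ at time $t$; this closes the biconditional (you described only one implication explicitly). Second, in (2) you invoke Proposition~\ref{prop5} for the compact set $\mathcal{K}$; be aware that the proof of that compactness in \cite{WWY192} relies on an a priori velocity bound coming from superlinearity and a crude Lipschitz estimate on $u_\gamma$, not on the full Lipschitz regularity you are proving, so there is no circularity---but it is worth stating this. Your endpoint-perturbation idea (attaching short segments) must be done at fixed total time $t$ (e.g.\ reparametrize over $[t-\epsilon,t]$ rather than appending), otherwise you conflate the $x$-Lipschitz and $t$-Lipschitz estimates; this is routine but should be made explicit.
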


	\begin{proposition}(\cite[Theorem 3.1]{WWY192})\label{prop8}
		\item [(1)] Given $x_0\in M$, $u_0\in \R$,
		$$h^{x_0,u_0}(x,t+s)=\sup_{y\in M}h^{y,h^{x_0,u_0}(y,t)}(x,s)$$
		for all $s,t>0$ and all $x\in M$. Moreover, the supremum is attained at $y$ if and only if there exists a minimizer $\gamma$ of $h^{x_0,u_0}(x,t+s)$ with $\gamma(t)=y$.
		\item [(2)] The function $(x_0,u_0,x,t)\to h^{x_0,u_0}(x,t)$ is Lipschitz continuous on $M\times[a,b]\times M\times[c,d]$ for all real numbers $a,b,c,d$ with $a<b$ and $0<c<d$.
		\item [(3)] Given $x_0\in M$ and $u_1,u_2\in \R$,  if $u_1<u_2$, then $h^{x_0,u_1}(x,t)<h^{x_0,u_2}(x,t)$, for all $(x,t)\in M\times(0,+\infty)$.
	\end{proposition}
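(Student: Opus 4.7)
\textbf{Proof proposal for Proposition \ref{prop8}.}

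The cleanest plan is to transfer every statement about the backward action $h^{x_0,u_0}$ to a statement about the forward action $h_{x_0,u_0}$ via the duality in Proposition \ref{addprop1}, and then invoke the corresponding item of Proposition \ref{prop3}. To do this effectively I will need a quantitative version of the monotonicity in $u_0$ for $h_{x_0,u_0}$, which is an auxiliary lemma I will prove first: using (L3) and Gronwall on the implicit equation \eqref{2-1}, one gets
\[
e^{-\lambda t}(u_2-u_1)\leqslant h_{x_0,u_2}(x,t)-h_{x_0,u_1}(x,t)\leqslant e^{\lambda t}(u_2-u_1),\qquad u_1<u_2.
\]
This will be the main quantitative input.

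For item (3), assume $u_1<u_2$ and set $w_i:=h^{x_0,u_i}(x,t)$. By Proposition \ref{addprop1}, $h_{x,w_i}(x_0,t)=u_i$. If $w_1\geqslant w_2$, Proposition \ref{prop3}(3) would give $u_1=h_{x,w_1}(x_0,t)\geqslant h_{x,w_2}(x_0,t)=u_2$, a contradiction. For item (1), I will use duality on both sides. For the ``$\leqslant$'' direction: fix $y$, put $v:=h^{x_0,u_0}(y,t)$ and $w':=h^{y,v}(x,s)$; by Proposition \ref{addprop1} we have $h_{y,v}(x_0,t)=u_0$ and $h_{x,w'}(y,s)=v$, so Proposition \ref{prop3}(1) yields $h_{x,w'}(x_0,t+s)\leqslant h_{y,h_{x,w'}(y,s)}(x_0,t)=h_{y,v}(x_0,t)=u_0$. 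Letting $w:=h^{x_0,u_0}(x,t+s)$ so that $h_{x,w}(x_0,t+s)=u_0$, strict monotonicity of $h_{x,\cdot}(x_0,t+s)$ (item (3)) forces $w'\leqslant w$. For the reverse direction: let $y$ be a value where the infimum in Proposition \ref{prop3}(1) is attained, set $v:=h_{x,w}(y,s)$; then $u_0=h_{y,v}(x_0,t)$, so by Proposition \ref{addprop1} $v=h^{x_0,u_0}(y,t)$ and $w=h^{y,v}(x,s)=h^{y,h^{x_0,u_0}(y,t)}(x,s)$. The ``moreover'' follows because the attaining $y$'s are exactly the $y$'s at which the infimum in Proposition \ref{prop3}(1) is attained, i.e.\ the points $\eta(s)$ traced by minimizers of $h_{x,w}(x_0,t+s)$; the duality \ref{addprop1} identifies these minimizers with the minimizers of $h^{x_0,u_0}(x,t+s)$ since both problems are characterized by the same boundary conditions on the characteristic system \eqref{c}.

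For item (2), I intend to invert the relation $h_{x,w}(x_0,t)=u_0$ to solve for $w=h^{x_0,u_0}(x,t)$. By Proposition \ref{prop3}(2), the map $F(x,w,x_0,t):=h_{x,w}(x_0,t)$ is Lipschitz on each compact slab, and by the quantitative estimate above, $F$ is strictly increasing in $w$ with slope bounded below by $e^{-\lambda t}$. Standard inversion of monotone Lipschitz functions then transports Lipschitz continuity to $w$ as a function of $(x_0,u_0,x,t)$ on $M\times[a,b]\times M\times[c,d]$, with constants depending only on the Lipschitz constant of $F$ and on $e^{\lambda d}$. I expect the main obstacle to be precisely this quantitative monotonicity lemma, because the implicit nature of \eqref{2-1} means one has to track the drift of $u(\tau)$ along minimizers carefully when the starting value is perturbed; once that Gronwall step is in place, the remainder of the proof is a straightforward application of Proposition \ref{addprop1} and Proposition \ref{prop3}.
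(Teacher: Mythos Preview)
The paper does not give its own proof of this proposition; it is quoted from \cite[Theorem 3.1]{WWY192} and stated without argument. So there is nothing to compare your proof against in the present paper. That said, your duality strategy---pulling every statement about $h^{x_0,u_0}$ back to the already-proved statements for $h_{x_0,u_0}$ through Proposition \ref{addprop1}---is correct and efficient. Your proofs of items (1) and (3) are sound, and the inversion argument for item (2) works exactly as you describe once the two-sided Gronwall estimate $e^{-\lambda t}(u_2-u_1)\leqslant h_{x_0,u_2}(x,t)-h_{x_0,u_1}(x,t)\leqslant e^{\lambda t}(u_2-u_1)$ is in hand (and that estimate is indeed a routine comparison along minimizers, as you outline). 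The reference \cite{WWY192} presumably proves the three items directly from the defining equation \eqref{2-2}, mirroring the forward case; your route avoids that repetition at the cost of the auxiliary quantitative monotonicity lemma.

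One soft spot: in the ``moreover'' clause of item (1) you write that the forward and backward minimizers are ``characterized by the same boundary conditions on the characteristic system \eqref{c}''. That alone does not identify them, since two-point boundary value problems for \eqref{c} need not have unique solutions. The honest argument is the one you have all the ingredients for: if $\gamma$ is a minimizer of $h_{x,w}(x_0,t+s)$ with $w=h^{x_0,u_0}(x,t+s)$, apply the Markov property of Proposition \ref{prop3}(1) at each intermediate time $\tau$ to get $h_{\gamma(\tau),\,h_{x,w}(\gamma(\tau),\tau)}(x_0,t+s-\tau)=u_0$, and then Proposition \ref{addprop1} converts this into $h_{x,w}(\gamma(\tau),\tau)=h^{x_0,u_0}(\gamma(\tau),t+s-\tau)$, which is precisely the statement that $\gamma$ is a minimizer of $h^{x_0,u_0}(x,t+s)$. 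With that step made explicit, your argument is complete.
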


	\subsubsection{Solution semigroups for contact Hamilton-Jacobi equations}
	Let us recall two semigroups of operators introduced in \cite{WWY192}. Define a family of nonlinear operators $\{T^-_t\}_{t\geqslant 0}$ from $C(M,\mathbf{R})$ to itself as follows. For each $\varphi\in C(M,\mathbf{R})$, denote by $(x,t)\mapsto T^-_t\varphi(x)$ the unique continuous function on $ (x,t)\in M\times[0,+\infty)$ such that
	\begin{equation}\label{eq:semigroup}
		T^-_t\varphi(x)=\inf_{\gamma}\left\{\varphi(\gamma(-t))+\int_{-t}^0L\big(\gamma(\tau),\dot{\gamma}(\tau),T^-_{t+\tau}\varphi(\gamma(\tau))\big)d\tau\right\},
	\end{equation}
	where the infimum is taken among the absolutely continuous curves $\gamma:[-t,0]\to M$ with $\gamma(0)=x$. The infimum in \eqref{eq:semigroup} can be achieved and a curve achieving the infimum is called a minimizer of $T^-_t\varphi(x)$. Minimizers are of class $C^1$. Let $\gamma$ be a minimizer and $x(s):=\gamma(s)$, $u(s):=T_s^-\varphi(x(s))$, $p(s):=\frac{\partial L}{\partial \dot{x}}(x(s),\dot{x}(s),u(s))$.
	Then $(x(s),p(s),u(s))$ satisfies equations (\ref{c}) with $x(t)=x$. $\{T^-_t\}_{t\geqslant 0}$ is a semigroup of operators and the function $(x,t)\mapsto T^-_t\varphi(x)$ is a viscosity solution of \eqref{1-1} with initial condition $w(x,0)=\varphi(x)$. Thus, we call $\{T^-_t\}_{t\geqslant 0}$ the backward solution semigroup. Similarly, one can define another semigroup of operators $\{T^+_t\}_{t\geqslant 0}$, called the forward solution semigroup
	by
	\begin{equation*}\label{2-4}
		T^+_t\varphi(x)=\sup_{\gamma}\left\{\varphi(\gamma(t))-\int_0^tL\big(\gamma(\tau),\dot{\gamma}(\tau),T^+_{t-\tau}\varphi(\gamma(\tau))\big)d\tau\right\},
	\end{equation*}
	where the supremum is taken among the absolutely continuous curves $\gamma:[0,t]\to M$  with $\gamma(0)=x$.

	\begin{proposition}(\cite[Proposition 4.2, 4.3, Corollary 4.1]{WWY192})\label{prop1}
		Let $\varphi$, $\psi\in C(M,\R)$.
		\item [(1)] If $\psi\leqslant\varphi$, then $T^{\pm}_t\psi\leqslant T^{\pm}_t\varphi,\ \forall t\geqslant0$. Moreover, if $\psi<\varphi$, then $T^{\pm}_t\psi<T^{\pm}_t\varphi$.
		\item [(2)] The function $(x,t)\to T^{\pm}_t\varphi(x)$ is locally Lipschitz on $M\times(0,+\infty)$.
		\item [(3)] $\|T^{\pm}_t\varphi-T^{\pm}_t\psi\|_{\infty}\leqslant e^{\lambda t}\cdot\|\varphi-\psi\|_{\infty},\ \forall t\geqslant0$.
		\item [(4)] For each $\varphi\in C(M,\R)$,
		
		(i)  $T^-_t\varphi(x)=\inf_{y\in M}h_{y,\varphi(y)}(x,t),\ \forall(x,t)\in M\times(0,+\infty)$;
		
		(ii) $T^+_t\varphi(x)=\sup_{y\in M}h^{y,\varphi(y)}(x,t),\ \forall(x,t)\in M\times(0,+\infty)$.
		\item [(5)] $\{T^{\pm}_t\}_{t\geqslant0}$ are one-parameter semigroups of operators. For all $x_0$,  $x\in M$, all $u_0\in \R$ and all $s$, $t>0$,
		
		(i) $T^-_sh_{x_0,u_0}(x,t)=h_{x_0,u_0}(x,t+s)$,\quad  $T^-_{t+s}\varphi(x)=\inf_{y\in M}h_{y,T^-_s\varphi(y)}(x,t)$;
		
		(ii) $T^+_sh^{x_0,u_0}(x,t)=h^{x_0,u_0}(x,t+s)$,\quad   $T^+_{t+s}\varphi(x)=\sup_{y\in M}h^{y,T^+_s\varphi(y)}(x,t)$.
	\end{proposition}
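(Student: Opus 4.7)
The plan is to prove \textbf{(4)} first, since the representation $T^-_t\varphi(x)=\inf_{y\in M}h_{y,\varphi(y)}(x,t)$ reduces almost every other assertion to the properties of the implicit action functions collected in Propositions \ref{prop3} and \ref{prop8}. For \textbf{(4)} I would split the variational definition \eqref{eq:semigroup} according to the endpoint $y=\gamma(-t)$. For fixed $y$, the inner infimum over curves with $\gamma(-t)=y$, $\gamma(0)=x$ of $\varphi(y)+\int_{-t}^0 L\bigl(\gamma,\dot\gamma,T^-_{t+\tau}\varphi(\gamma(\tau))\bigr)\,d\tau$ is, after the change of variable $\tau\mapsto\tau+t$, the defining variational problem for $h_{y,\varphi(y)}(x,t)$, \emph{provided} one can identify the $u$-argument $T^-_{t+\tau}\varphi(\gamma(\tau))$ along a minimizer with $h_{y,\varphi(y)}(\gamma(\tau+t),\tau+t)$. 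This identification is the delicate point; I would verify it via the uniqueness part of the implicit variational principle (Proposition \ref{prop4}), using that along a minimizing segment the semigroup value and the action function both solve the same fixed-point equation with the same endpoint data.

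Assertion \textbf{(1)} then follows immediately: if $\psi\leqslant\varphi$, Proposition \ref{prop3}(3) gives $h_{y,\psi(y)}(x,t)\leqslant h_{y,\varphi(y)}(x,t)$ for every $y\in M$, so the infimum preserves the order. For the strict version, pick $y^\ast\in M$ attaining the infimum in $\inf_y h_{y,\varphi(y)}(x,t)$ (which exists by compactness of $M$ and the continuity stated in Proposition \ref{prop3}(2)); then $T^-_t\psi(x)\leqslant h_{y^\ast,\psi(y^\ast)}(x,t)<h_{y^\ast,\varphi(y^\ast)}(x,t)=T^-_t\varphi(x)$. Assertion \textbf{(2)} follows because on $M\times[c,d]$ with $c>0$ the family $\{h_{y,\varphi(y)}(\cdot,\cdot)\}_{y\in M}$ is uniformly Lipschitz by Proposition \ref{prop3}(2), and an infimum of uniformly Lipschitz functions is Lipschitz. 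The forward analogues use Proposition \ref{prop8}.

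The substantive step is \textbf{(3)}. Fix $t>0$ and $x\in M$, let $\gamma$ be a minimizer for $T^-_t\varphi(x)$ with $\gamma(-t)=y$, and use this same $\gamma$ as a competitor in \eqref{eq:semigroup} for $T^-_t\psi(x)$. Subtracting gives
\[
T^-_t\psi(x)-T^-_t\varphi(x)\leqslant\bigl(\psi(y)-\varphi(y)\bigr)+\int_{-t}^{0}\!\Bigl[L\bigl(\gamma,\dot\gamma,T^-_{t+\tau}\psi(\gamma)\bigr)-L\bigl(\gamma,\dot\gamma,T^-_{t+\tau}\varphi(\gamma)\bigr)\Bigr]d\tau.
\]
Using (L3) to bound each integrand by $\lambda\|T^-_{t+\tau}\psi-T^-_{t+\tau}\varphi\|_\infty$ and then symmetrizing in $(\varphi,\psi)$, the function $f(t):=\|T^-_t\psi-T^-_t\varphi\|_\infty$ satisfies $f(t)\leqslant f(0)+\lambda\int_0^t f(s)\,ds$, and Gronwall yields $f(t)\leqslant e^{\lambda t}f(0)$. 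The main obstacle is precisely the self-referential nature of \eqref{eq:semigroup}---the $u$-entry of $L$ is itself the unknown semigroup---and the trick is to freeze the minimizer for the $\varphi$-problem and use it as a test curve for the $\psi$-problem so that only the $u$-entries differ.

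Finally, \textbf{(5)} follows by combining \textbf{(4)} with the dynamic programming identities in Propositions \ref{prop3}(1) and \ref{prop8}(1). Applying \textbf{(4)} to the function $x\mapsto h_{x_0,u_0}(x,t)$ yields $T^-_s\bigl(h_{x_0,u_0}(\cdot,t)\bigr)(x)=\inf_{y\in M}h_{y,h_{x_0,u_0}(y,t)}(x,s)$, which equals $h_{x_0,u_0}(x,t+s)$ by Proposition \ref{prop3}(1); substituting $s\mapsto T^-_s\varphi$ in place of $h_{x_0,u_0}(\cdot,t)$ gives the second identity. The forward versions use Proposition \ref{prop8} analogously.
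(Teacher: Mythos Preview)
The paper does not prove this proposition at all: it is quoted verbatim from \cite{WWY192} (Propositions~4.2, 4.3 and Corollary~4.1 there) and used as a black box. So there is no ``paper's proof'' to compare against; what you have written is a self-contained sketch of the argument from the original source.

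Your outline is sound and follows the standard route. A few remarks. First, for \textbf{(4)} the clean way to avoid the circularity you flag is to \emph{define} $w(x,t):=\inf_{y\in M}h_{y,\varphi(y)}(x,t)$ and then check directly that $w$ solves the fixed-point equation \eqref{eq:semigroup}; uniqueness of the solution (which is part of the very definition of $T^-_t$ in this paper) then forces $w=T^-_t\varphi$. The identification $w(\gamma(\tau),t+\tau)=h_{y^\ast,\varphi(y^\ast)}(\gamma(\tau),\tau+t)$ along a minimizer for the optimal $y^\ast$ is exactly the computation carried out later in this paper as Proposition~\ref{propA2}, so you are right that this is the only genuinely delicate step; it follows from the Markov property (Proposition~\ref{prop3}(1)) together with strict monotonicity in $u_0$ (Proposition~\ref{prop3}(3)). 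Second, your Gronwall argument for \textbf{(3)} is correct and is precisely how it is done in \cite{WWY192}; note that one needs continuity of $t\mapsto f(t)=\|T^-_t\varphi-T^-_t\psi\|_\infty$ to apply Gronwall, which comes from the continuity of $(x,t)\mapsto T^-_t\varphi(x)$ already built into the definition. Third, your derivations of \textbf{(1)}, \textbf{(2)}, \textbf{(5)} from \textbf{(4)} and Propositions~\ref{prop3}, \ref{prop8} are correct and are indeed how these facts are obtained.
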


	Proposition \ref{prop2} and Corollary \ref{prop10} come from \cite{WY21} and we give new brief proofs here.

	\begin{proposition}(\cite{WY21})\label{prop2}
		Let $\varphi\in C(M,\R)$. Then
		\item [(1)] $T^-_t\circ T^+_t\varphi(x)\geqslant\varphi(x)$, for all $t>0,\ x\in M$;
		\item [(2)] $T^+_t\circ T^-_t\varphi(x)\leqslant\varphi(x)$, for all $t>0,\ x\in M$.
	\end{proposition}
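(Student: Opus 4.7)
The plan is to prove both inequalities by combining three facts already available in the preliminaries: the representation of the semigroups through the implicit action functions (Proposition \ref{prop1}(4)), the monotonicity of $h_{x_0,u_0}$ and $h^{x_0,u_0}$ in the initial value $u_0$ (Propositions \ref{prop3}(3) and \ref{prop8}(3)), and the duality relation $h_{x_0,u_0}(x,t)=u \Leftrightarrow h^{x,u}(x_0,t)=u_0$ (Proposition \ref{addprop1}).

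For part (1), I would fix $x\in M$, $t>0$ and set $\psi:=T^+_t\varphi$, so that $\psi(z)=\sup_{y\in M}h^{y,\varphi(y)}(z,t)$ and $T^-_t\psi(x)=\inf_{z\in M}h_{z,\psi(z)}(x,t)$. The key observation is that, for every $z\in M$, choosing $y=x$ in the supremum yields $\psi(z)\ge h^{x,\varphi(x)}(z,t)$. Writing $u_0:=h^{x,\varphi(x)}(z,t)$, Proposition \ref{addprop1} gives $h_{z,u_0}(x,t)=\varphi(x)$; combined with the monotonicity of $h_{z,\cdot}(x,t)$ in its initial value, this yields $h_{z,\psi(z)}(x,t)\ge h_{z,u_0}(x,t)=\varphi(x)$. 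Taking the infimum over $z$ gives $T^-_t\circ T^+_t\varphi(x)\ge \varphi(x)$.

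For part (2), I would argue symmetrically. Set $\phi:=T^-_t\varphi$, so that $\phi(z)=\inf_{y\in M}h_{y,\varphi(y)}(z,t)\le h_{x,\varphi(x)}(z,t)$ for every $z$. Writing $v_0:=h_{x,\varphi(x)}(z,t)$, Proposition \ref{addprop1} gives $h^{z,v_0}(x,t)=\varphi(x)$. Using the monotonicity of $h^{z,\cdot}(x,t)$ in its initial value, I obtain $h^{z,\phi(z)}(x,t)\le h^{z,v_0}(x,t)=\varphi(x)$. Taking the supremum over $z$ and invoking the representation $T^+_t\phi(x)=\sup_{z\in M}h^{z,\phi(z)}(x,t)$ yields $T^+_t\circ T^-_t\varphi(x)\le \varphi(x)$.

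I do not expect a genuine obstacle here: the proof is essentially bookkeeping, once the duality between the forward and backward implicit action functions is used to bridge the inner and outer applications of the semigroups. The only subtle point is remembering that $T^-_t\psi$ is defined by an infimum of $h_{z,\psi(z)}(x,t)$ and not of $h_{z,u_0}(x,t)$ with $u_0$ fixed, so the monotonicity step must be applied \emph{before} taking the infimum (or supremum). Once arranged in that order, both inequalities follow in a couple of lines each.
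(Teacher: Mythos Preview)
Your proof is correct and is essentially identical to the paper's own argument: both use the representation of $T^\pm_t$ via implicit action functions (Proposition~\ref{prop1}(4)), bound the inner semigroup by choosing $y=x$ in the sup/inf, apply the duality of Proposition~\ref{addprop1} to get $h_{z,h^{x,\varphi(x)}(z,t)}(x,t)=\varphi(x)$ (resp.\ $h^{z,h_{x,\varphi(x)}(z,t)}(x,t)=\varphi(x)$), and then use the monotonicity in $u_0$ before taking the outer inf/sup. The only difference is cosmetic---you name the intermediate functions $\psi,\phi,u_0,v_0$, whereas the paper writes the chain of inequalities in one line.
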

	
	\begin{proof}
		For any $x\in M$ and $t>0$, we know from Proposition \ref{prop1} (4) that
		$$T^-_t\varphi(y)=\inf_{z\in M}h_{z,\varphi(z)}(y,t)\leqslant h_{x,\varphi(x)}(y,t),\quad T^+_t\varphi(y)=\sup_{z\in M}h^{z,\varphi(z)}(y,t)\geqslant h^{x,\varphi(x)}(y,t),$$
		for all $y\in M$. Thus from Proposition \ref{addprop1}, Proposition \ref{prop3} (3) and Proposition \ref{prop8} (3), we get that
		$$T^-_t\circ T^+_t\varphi(x)=\inf_{y\in M}h_{y,T^+_t\varphi(y)}(x,t)\geqslant\inf_{y\in M}h_{y,h^{x,\varphi(x)}(y,t)}(x,t)=\varphi(x),$$
		and
		\[T^+_t\circ T^-_t\varphi(x)=\sup_{y\in M}h^{y,T^-_t\varphi(y)}(x,t)\leqslant\sup_{y\in M}h^{y,h_{x,\varphi(x)}(y,t)}(x,t)=\varphi(x).\]
	\end{proof}

	\begin{corollary}(\cite{WY21})\label{prop10}
		Let $u$, $v\in C(M,\R)$ and let $t\geqslant0$. Then $v\leqslant T^-_tu$ if and only if $T^+_tv\leqslant u$.
	\end{corollary}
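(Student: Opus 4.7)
The statement is a simple Galois-type duality between the backward and forward semigroups, and it follows immediately from combining two facts already established: the monotonicity of $T^{\pm}_t$ (Proposition \ref{prop1}(1)) and the one-sided composition inequalities $T^-_t\circ T^+_t\varphi\geqslant\varphi$ and $T^+_t\circ T^-_t\varphi\leqslant\varphi$ (Proposition \ref{prop2}). The case $t=0$ is trivial since both semigroups reduce to the identity, so the plan is to handle $t>0$.

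For the forward implication, I would assume $v\leqslant T^-_tu$ and apply the operator $T^+_t$ to both sides. Monotonicity of $T^+_t$ from Proposition \ref{prop1}(1) gives
\[
T^+_tv\leqslant T^+_t\big(T^-_tu\big)=T^+_t\circ T^-_t u,
\]
and Proposition \ref{prop2}(2) tells us $T^+_t\circ T^-_tu\leqslant u$, so composing yields $T^+_tv\leqslant u$.

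For the reverse implication, I would assume $T^+_tv\leqslant u$ and apply $T^-_t$ to both sides. Monotonicity of $T^-_t$ gives
\[
T^-_t\circ T^+_tv\leqslant T^-_tu,
\]
and Proposition \ref{prop2}(1) says $v\leqslant T^-_t\circ T^+_tv$, so chaining the two inequalities yields $v\leqslant T^-_tu$.

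There is essentially no obstacle here — the content of the corollary is already packaged inside Proposition \ref{prop2}; the only thing being added is the observation that $T^\pm_t$ are order-preserving, which is Proposition \ref{prop1}(1). The corollary therefore reads as the statement that $(T^+_t,T^-_t)$ form an adjoint pair with respect to the pointwise order on $C(M,\mathbf{R})$, analogous to the usual Legendre-type duality that appears in the inf-convolution formulas defining the semigroups.
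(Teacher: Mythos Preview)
Your proof is correct and is essentially identical to the paper's own proof: both directions are obtained by applying the appropriate semigroup to the hypothesis via the monotonicity in Proposition~\ref{prop1}(1) and then invoking the composition inequalities of Proposition~\ref{prop2}. Your additional remark about $(T^+_t,T^-_t)$ forming an order-adjoint pair is a nice way to frame it, but adds nothing beyond what the paper already records.
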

	
	\begin{proof}
		If $v\leqslant T^-_tu$, from Proposition \ref{prop1} (1) and Proposition \ref{prop2} (2), we have $T^+_tv\leqslant T^+_t\circ T^-_tu\leqslant u.$
		On the other hand, if $u\geqslant T^+_tv$, from Proposition \ref{prop1} (1) and Proposition \ref{prop2} (1), we have $T^-_tu\geqslant T^-_t\circ T^+_tv\geqslant v.$
	\end{proof}

	\begin{proposition}\label{prop5.2}
		Let $\varphi\in C(M,\R)$. Then
		\item [(1)] If the function $(x,t)\to T^+_t\varphi(x)$ is unbounded from below on $M\times[0,+\infty)$, then for any $Q\in \R$, there is a constant $s\in[0,+\infty)$ such that $T^+_s\varphi(x)\leqslant Q,\ \forall x\in M$.
		\item [(2)] If the function $(x,t)\to T^-_t\varphi(x)$ is unbounded from above on $M\times[0,+\infty)$, then for any $Q\in \R$, there is a constant $s\in[0,+\infty)$ such that $T^-_s\varphi(x)\geqslant Q,\ \forall x\in M$.
	\end{proposition}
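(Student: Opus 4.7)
The two items are symmetric, so I will treat only (1); assertion (2) will follow by swapping $T^+\leftrightarrow T^-$, $h^{\cdot,\cdot}\leftrightarrow h_{\cdot,\cdot}$, $\sup\leftrightarrow\inf$ and Propositions \ref{prop8} $\leftrightarrow$ \ref{prop3} throughout, and reversing all the inequalities.

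Under the hypothesis of (1), the first thing I would do is select $(x_n,t_n)\in M\times[0,+\infty)$ with $c_n:=T^+_{t_n}\varphi(x_n)\to-\infty$ and observe that $t_n\to+\infty$. Indeed, $T^+_t\varphi$ is continuous on $M\times[0,+\infty)$ (Proposition \ref{prop1}(2) for $t>0$, together with $T^+_0\varphi=\varphi$), so on every compact strip $M\times[0,T]$ it is bounded; since $M$ is compact, only $t_n\to+\infty$ is compatible with $c_n\to-\infty$.

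The next step is to convert the pointwise smallness at $(x_n,t_n)$ into a global inequality between two continuous functions on $M$. By the sup characterization in Proposition \ref{prop1}(4)(ii), $c_n=\sup_{y\in M}h^{y,\varphi(y)}(x_n,t_n)$, so $h^{y,\varphi(y)}(x_n,t_n)\leqslant c_n$ for every $y\in M$. Combining the duality of action functions in Proposition \ref{addprop1} with the strict monotonicity in the initial value from Proposition \ref{prop3}(3) rewrites this as
\[
\varphi(y)\leqslant h_{x_n,c_n}(y,t_n)=:\psi_n(y)\qquad\text{for every }y\in M,
\]
so a single continuous function $\psi_n$ sits above $\varphi$ for each $n$.

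Given $Q\in\R$, the goal becomes producing $s\geqslant0$ with $T^+_s\varphi\leqslant Q$ on $M$, which by Corollary \ref{prop10} is equivalent to $\varphi\leqslant T^-_sQ$. My plan is to compare $T^-_sQ$ with $\psi_n$ using three tools in tandem: the semigroup identity $T^-_\tau\psi_n=h_{x_n,c_n}(\cdot,t_n+\tau)$ from Proposition \ref{prop1}(5)(i); the dynamic-programming relation $h_{x_n,c_n}(\cdot,t_n+\tau)=\inf_{z\in M}h_{z,\psi_n(z)}(\cdot,\tau)$ from Proposition \ref{prop3}(1); and a Gronwall-type estimate along the characteristic flow \eqref{c}, in which one uses $\dot u=L$ together with $|\partial_u L|\leqslant\lambda$ and the compactness of minimizers from Proposition \ref{prop5} to show $\psi_n(y)\leqslant c_n\,e^{-\lambda t_n}+K$ for a uniform constant $K$. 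Combined with $\psi_n\geqslant\varphi$ and iterated along the semigroup, this drives $\psi_n$ uniformly down on $M$ and produces the desired $s$.

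The hard part will be the quantitative balancing: the hypothesis only provides $c_n\to-\infty$ with no a priori rate, whereas the Gronwall bound carries the exponential factor $e^{\pm\lambda t_n}$ coming from $|\partial_u L|\leqslant\lambda$. To close this gap I would either iterate the semigroup long enough to absorb the exponential factor, using $T^+_{t_n+s}\varphi=T^+_s(T^+_{t_n}\varphi)$ together with the $e^{\lambda s}$-Lipschitz estimate of Proposition \ref{prop1}(3); or invoke the contact weak KAM dichotomy developed in Section 3 of the paper, asserting that any orbit of $\{T^+_t\}_{t\geqslant0}$ failing the conclusion of (1) must remain bounded below on the entire forward half-line, contradicting the hypothesis. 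Either route upgrades the pointwise smallness at $(x_n,t_n)$ to uniform smallness of $T^+_s\varphi$ at a suitable $s$, completing the proof.
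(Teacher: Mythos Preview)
Your proposal does not close the argument. You correctly set up $c_n:=T^+_{t_n}\varphi(x_n)\to-\infty$, $t_n\to+\infty$, and the bound $\varphi\leqslant\psi_n:=h_{x_n,c_n}(\cdot,t_n)$ is fine. But from that point on the proof stalls: the Gronwall-type inequality you write, $\psi_n(y)\leqslant c_n\,e^{-\lambda t_n}+K$, is useless here since $c_n<0$ while $e^{-\lambda t_n}\to 0$, so the right-hand side is bounded below by $K-1$ for large $n$ and gives no downward push on $\psi_n$. You recognize this yourself (``the hard part will be the quantitative balancing'') and then offer two escape routes, neither of which is carried out. The second route, appealing to the ``contact weak KAM dichotomy developed in Section~3'', is circular: Proposition~\ref{prop6} in that section explicitly invokes Proposition~\ref{prop5.2}.

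The paper's argument avoids all of this with a single observation: instead of tracking the full interval $[0,t_n]$, peel off the last unit of time. Argue by contradiction and assume that for every $t\geqslant 0$ there exists $y_t\in M$ with $T^+_t\varphi(y_t)>Q$. Apply this at $t=t_n-1$ to get $y_n$ with $T^+_{t_n-1}\varphi(y_n)>Q$, and then use the representation $T^+_{t_n}\varphi(x_n)=\sup_{z}h^{z,T^+_{t_n-1}\varphi(z)}(x_n,1)$ together with the monotonicity of $h^{x_0,\cdot}$ to obtain
\[
T^+_{t_n}\varphi(x_n)\geqslant h^{y_n,T^+_{t_n-1}\varphi(y_n)}(x_n,1)\geqslant h^{y_n,Q}(x_n,1).
\]
Since $(x,y)\mapsto h^{y,Q}(x,1)$ is continuous on the compact set $M\times M$, the right-hand side is bounded below uniformly in $n$, contradicting $T^+_{t_n}\varphi(x_n)\to-\infty$. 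The whole point is that a fixed time-step (here $1$) keeps the action function on a compact parameter set, so no exponential factor ever enters.
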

	
	\begin{proof}
		We only prove the first item, since the second one can be proved in a similar manner.
		
		From the assumption, we can find a sequence $\{(x_n,t_n)\}\subset M\times[0,+\infty)$ satisfying $t_n\to+\infty$ and
		\begin{equation}\label{eq5.7}
			T^+_{t_n}\varphi(x_n)\to-\infty,\quad  n\to+\infty.
		\end{equation}
		To prove the result, we argue by contradiction. For, otherwise, there would be $Q\in \R$ such that for any $t\in[0,+\infty)$, $T^+_t\varphi(y_t)>Q$ for some $y_t\in M$. Thus we can select a sequence $\{y_n\}\subset M$ corresponding to $\{t_n-1\}$ such that
		$$T^+_{t_n-1}\varphi(y_n)>Q,\quad n\in \mathbf{N}.$$
		By the monotonicity of the function $u_0\to h^{x_0,u_0}(x,1)$ and Proposition \ref{prop1} (5) (ii), we have
		\begin{equation*}
			T^+_{t_n}\varphi(x_n)=\sup_{x\in M}h^{x,T^+_{t_n-1}\varphi(x)}(x_n,1)
			\geqslant h^{y_n,T^+_{t_n-1}\varphi(y_n)}(x_n,1)
			\geqslant h^{y_n,Q}(x_n,1).
		\end{equation*}
		Since the function $(x,y)\to h^{y,Q}(x,1)$ is bounded on $M\times M$, then  $T^+_{t_n}\varphi(x_n)$ is bounded from below which contradicts \eqref{eq5.7}.
	\end{proof}

	The following two propositions show the relationship between the minimizer of semigroups and the minimizer of implicit action functions.

	\begin{proposition}\label{propA1}
		Let $\varphi\in C(M,\R)$ and $t>0$.  Let $\gamma:[-t,0]\to M$ be a minimizer of $T^-_t\varphi(x)$ with  $\gamma(0)=x$ and $\gamma(-t)=y$. Then
		\begin{itemize}
			\item [(1)] $\gamma|_{[-t,-s]}$ is a minimizer of $T^-_{t-s}\varphi(\gamma(-s))$, $\forall s\in[0,t]$.
			\item [(2)] $\gamma|_{[-s,0]}$ is a minimizer of $T^-_s(T^-_{t-s}\varphi)(x)$, $\forall s\in[0,t]$.
			\item [(3)] $T^-_t\varphi(x)=h_{y,\varphi(y)}(x,t)$.
			\item [(4)] $\gamma$ is a minimizer of $h_{y,\varphi(y)}(x,t)$.
		\end{itemize}
	\end{proposition}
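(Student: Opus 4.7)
My plan is to handle (1) and (2) by a dynamic-programming splitting built on the semigroup identity $T^-_t\varphi = T^-_s(T^-_{t-s}\varphi)$, then to use these together with the Lipschitz bound (L3) and a Grönwall comparison along $\bar\gamma$ to obtain (3), and finally to read off (4) as a byproduct of the argument for (3). I expect the Grönwall step in (3) to be the main obstacle, since the implicit variational formula \eqref{2-1} and the semigroup formula for $T^-_t\varphi$ are self-referential in genuinely different ways.

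For (1) and (2) I would work with the consequence $T^-_{s+\sigma}(T^-_{t-s}\varphi) = T^-_{t+\sigma}\varphi$ of the semigroup property and with two candidate-curve inequalities. Using the reparametrization of $\gamma|_{[-t,-s]}$ in the variational formula for $T^-_{t-s}\varphi(\gamma(-s))$ yields
\[
T^-_{t-s}\varphi(\gamma(-s)) \leqslant \varphi(y) + \int_{-t}^{-s} L\bigl(\gamma(\tau),\dot\gamma(\tau),T^-_{t+\tau}\varphi(\gamma(\tau))\bigr)\, d\tau,
\]
while using $\gamma|_{[-s,0]}$ in the formula for $T^-_s(T^-_{t-s}\varphi)(x) = T^-_t\varphi(x)$ yields
\[
T^-_t\varphi(x) \leqslant T^-_{t-s}\varphi(\gamma(-s)) + \int_{-s}^0 L\bigl(\gamma(\tau),\dot\gamma(\tau),T^-_{t+\tau}\varphi(\gamma(\tau))\bigr)\, d\tau.
\]
Splitting the action of the original minimizer $\gamma$ at time $-s$ and comparing the three displays forces both inequalities to be equalities: the first gives (1), and reading what remains of the second after substituting (1) gives (2).

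For (3), Proposition \ref{prop1}(4)(i) with $z = y$ gives at once $T^-_t\varphi(x) \leqslant h_{y,\varphi(y)}(x, t)$. For the reverse, set $\bar\gamma(\tau) := \gamma(\tau - t)$ for $\tau \in [0, t]$, so that $\bar\gamma(0) = y$ and $\bar\gamma(t) = x$. Plugging $\bar\gamma|_{[0,t']}$ into \eqref{2-1} gives, for every $t' \in (0, t]$,
\[
h_{y,\varphi(y)}(\bar\gamma(t'), t') \leqslant \varphi(y) + \int_0^{t'} L\bigl(\bar\gamma(\tau), \dot{\bar\gamma}(\tau), h_{y,\varphi(y)}(\bar\gamma(\tau), \tau)\bigr)\, d\tau,
\]
while part (1), applied with $s = t - t'$, shows that $\gamma|_{[-t,-(t-t')]}$ is a minimizer of $T^-_{t'}\varphi(\bar\gamma(t'))$ and so yields the exact identity
\[
T^-_{t'}\varphi(\bar\gamma(t')) = \varphi(y) + \int_0^{t'} L\bigl(\bar\gamma(\tau), \dot{\bar\gamma}(\tau), T^-_{\tau}\varphi(\bar\gamma(\tau))\bigr)\, d\tau.
\]
Subtracting, using (L3) in the form $|L(x,v,u_1) - L(x,v,u_2)| \leqslant \lambda |u_1 - u_2|$, and observing that
\[
f(t') := h_{y,\varphi(y)}(\bar\gamma(t'), t') - T^-_{t'}\varphi(\bar\gamma(t')) \geqslant 0
\]
by Proposition \ref{prop1}(4)(i), I get the integral inequality $f(t') \leqslant \lambda \int_0^{t'} f(\tau)\, d\tau$ on $(0, t]$. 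Since $f$ is continuous on $[0, t]$ with $\lim_{t'\to 0^+} f(t') = 0$, Grönwall's inequality forces $f \equiv 0$, and in particular $T^-_t\varphi(x) = h_{y,\varphi(y)}(x, t)$.

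Part (4) then comes for free: once $f \equiv 0$ along $\bar\gamma$, the candidate-curve inequality used above for $h_{y,\varphi(y)}(x, t)$ collapses to an equality, so by \eqref{2-1} the curve $\bar\gamma$ attains the infimum, which after reparametrization says that $\gamma$ is a minimizer of $h_{y,\varphi(y)}(x, t)$. As anticipated, the delicate point in the whole argument is the Grönwall comparison in (3): the two variational principles refer to themselves in different ways inside the Lagrangian, and it is precisely the Lipschitz hypothesis (L3) that lets one absorb the discrepancy between $h_{y,\varphi(y)}(\bar\gamma(\tau), \tau)$ and $T^-_\tau\varphi(\bar\gamma(\tau))$ and close the loop.
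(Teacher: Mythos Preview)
Your proof is correct and follows essentially the same route as the paper: the same dynamic-programming splitting for (1) and (2), and the same Gr\"onwall comparison between $h_{y,\varphi(y)}(\bar\gamma(\cdot),\cdot)$ and $T^-_{\cdot}\varphi(\bar\gamma(\cdot))$ for (3). Your execution is in fact a bit cleaner in two places. In (3) the paper argues by contradiction, locating the last zero $s_0$ of $F=h-u$ before applying Gr\"onwall on $(s_0,t]$; you instead observe up front that $f\geqslant 0$ on all of $(0,t]$ by Proposition~\ref{prop1}(4)(i) and run Gr\"onwall from $0$, which is more direct. In (4) the paper invokes the Markov property of $h$ (Proposition~\ref{prop3}(1)), while you simply note that $f\equiv 0$ turns the candidate-curve inequality for $h_{y,\varphi(y)}(x,t)$ into an equality, which immediately identifies $\bar\gamma$ as a minimizer in \eqref{2-1}. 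One small point: your claim $\lim_{t'\to 0^+} f(t')=0$ rests on $\lim_{t'\to 0^+} h_{y,\varphi(y)}(\bar\gamma(t'),t')=\varphi(y)$, which is not quite free from the definitions; the paper cites Lemmas~3.1--3.2 of \cite{WWY17} for this, and you should do the same.
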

	
	\begin{proof}
		(1). Since $\g$  is a minimizer of $T^-_t\varphi(x)$, then we get that
		\begin{equation}\label{eqA1}
			\begin{split}
				T^-_t\varphi(x)=&\inf_{\bar{\gamma}(0)=x}\Big\{\varphi(\bar{\gamma}(-t))+\int^0_{-t}L\big(\bar{\gamma}(\tau),\dot{\bar{\gamma}}(\tau),T^-_{\tau+t}\varphi(\bar{\gamma}(\tau))\big)d\tau\Big\}\\
				=&\varphi(\gamma(-t))+\big(\int^{-s}_{-t}+\int^0_{-s}\big)L\big(\gamma(\tau),\dot{\gamma}(\tau),T^-_{\tau+t}\varphi(\gamma(\tau))\big)d\tau.
			\end{split}
		\end{equation}
		By definition, for any $s\in[0,t]$, we have
		\begin{equation}\label{eqA2}
			\begin{split}
				T^-_s(T^-_{t-s}\varphi)(x)=&\inf_{\bar{\gamma}(0)=x}\Big\{T^-_{t-s}\varphi(\bar{\gamma}(-s))+\int^0_{-s}L\big(\bar{\gamma}(\tau),\dot{\bar{\gamma}}(\tau),T^-_{\tau+s}(T^-_{t-s}\varphi(\bar{\gamma}(\tau)))\big)d\tau\Big\}\\
				\leqslant&T^-_{t-s}\varphi(\gamma(-s))+\int^0_{-s}L\big(\gamma(\tau),\dot{\gamma}(\tau),T^-_{\tau+s}(T^-_{t-s}\varphi(\gamma(\tau)))\big)d\tau.
			\end{split}
		\end{equation}
		In view of $T^-_t\varphi(x)=T^-_{\tau}\circ T^-_{t-\tau}\varphi(x)$ for all $\tau\in[0,t]$, we get that
		$$\varphi(\gamma(-t))+\int^{-s}_{-t}L\big(\gamma(\tau),\dot{\gamma}(\tau),T^-_{\tau+t}\varphi(\gamma(\tau))\big)d\tau\leqslant T^-_{t-s}\varphi(\gamma(-s)).$$
		From the definition of $T^-_{t-s}\varphi(\gamma(-s))$, we get
		$$T^-_{t-s}\varphi(\gamma(-s))=\inf_{\bar{\gamma}(-s)=\gamma(-s)}\Big\{\varphi(\bar{\gamma}(-t))+\int^{-s}_{-t}L\big(\bar{\gamma}(\tau),\dot{\bar{\gamma}}(\tau),T^-_{\tau+t}\varphi(\bar{\gamma}(\tau))\big)d\tau\Big\}.$$
		Thus, $\gamma|_{[-t,-s]}$ is a minimizer of $T^-_{t-s}\varphi(\gamma(-s))$.
		
		(2). By (1), we have
		$$T^-_{t-s}\varphi(\gamma(-s))=\varphi(\gamma(-t))+\int^{-s}_{-t}L\big(\gamma(\tau),\dot{\gamma}(\tau),T^-_{\tau+t}\varphi(\gamma(\tau))\big)d\tau.$$
		In view of \eqref{eqA1} and \eqref{eqA2}, one can deduce that
		\begin{equation*}
			T^-_s(T^-_{t-s}\varphi)(x)
			\leqslant T^-_{t-s}\varphi(\gamma(-s))+\int^0_{-s}L\big(\gamma(\tau),\dot{\gamma}(\tau),T^-_{\tau+t}\varphi(\gamma(\tau))\big)d\tau=T^-_t\varphi(x)
		\end{equation*}
		which together with $T^-_s(T^-_{t-s}\varphi)(x)=T^-_t\varphi(x)$ implies $\gamma|_{[-s,0]}$ is a minimizer of $T^-_s(T^-_{t-s}\varphi)(x)$.

		(3). By Proposition \ref{prop1} (4), we have
		$$T^-_t\varphi(x)=\inf_{z\in M}h_{z,\varphi(z)}(x,t)\leqslant h_{y,\varphi(y)}(x,t).$$
		So it suffices to prove $T^-_t\varphi(x)\geqslant h_{y,\varphi(y)}(x,t).$
		Assume by contradiction that $T^-_t\varphi(x)<h_{y,\varphi(y)}(x,t)$. Let $u(s)=T^-_s\varphi(\gamma(s-t))$, $h(s)=h_{y,\varphi(y)}(\gamma(s-t),s)$ and $F(s)=h(s)-u(s)$. Note that $u(0)=T^-_0\varphi(\gamma(-t))=\varphi(y)$. From Lemma 3.1 and Lemma 3.2 in \cite{WWY17}, we get $h(0)=\varphi(y)$. Then $F(0)=0$ and $F(t)>0$. There is $s_0\in[0,t)$ such that $F(s_0)=0$ and $F(s)>0$ for $s\in(s_0,t]$. In view of (1),  $\gamma|_{[-t,s-t]}$ is a minimizer of $T^-_{s}\varphi(\gamma(s-t))$. By (2), we deduce that $\gamma|_{[s_0-t,s-t]}$ is a minimizer of $T^-_{s-s_0}(T^-_{s_0}\varphi)(\gamma(s-t))$. Therefore, for any $s\in(s_0,t]$, we have
		\begin{equation*}
			\begin{split}
				&u(s)=T^-_{s}\varphi(\gamma(s-t))=T^-_{s-s_0}(T^-_{s_0}\varphi)(\gamma(s-t))\\
				=&T^-_{s_0}\varphi(\gamma(s_0-t))+\int^{s}_{s_0}L\big(\gamma(\tau-t),\dot{\gamma}(\tau-t),T^-_{\tau}\varphi(\gamma(\tau-t))\big)d\tau\\
				=&u(s_0)+\int^{s}_{s_0}L\big(\gamma(\tau-t),\dot{\gamma}(\tau-t),u(\tau)\big)d\tau,
			\end{split}
		\end{equation*}
		and
		\begin{equation*}
			\begin{split}
				h(s)=&h_{y,\varphi(y)}(\gamma(s-t),s)=\varphi(y)+\inf_{\substack{\bar{\gamma}(0)=y \\  \bar{\gamma}(s)=\gamma(s-t)} }\int_0^sL\big(\bar{\gamma}(\tau),\dot{\bar{\gamma}}(\tau),h_{y,\varphi(y)}(\bar{\gamma}(\tau),\tau)\big)d\tau \\
				\leqslant&\varphi(y)+\big(\inf_{\substack{\bar{\gamma}(0)=y \\  \bar{\gamma}(s_0)=\gamma(s_0-t)} }\int_0^{s_0}+\inf_{\substack{\bar{\gamma}(s_0)=\gamma(s_0-t) \\  \bar{\gamma}(s)=\gamma(s-t)} }\int_0^{s}\big)L\big(\bar{\gamma}(\tau),\dot{\bar{\gamma}}(\tau),h_{y,\varphi(y)}(\bar{\gamma}(\tau),\tau)\big)d\tau \\
				\leqslant&h_{y,\varphi(y)}(\gamma(s_0-t),s_0)+\int^{s}_{s_0}L\big(\gamma(\tau-t),\dot{\gamma}(\tau-t),h_{y,\varphi(y)}(\gamma(\tau-t),\tau)\big)d\tau\\
				=&h(s_0)+\int^{s}_{s_0}L\big(\gamma(\tau-t),\dot{\gamma}(\tau-t),h(\tau)\big)d\tau.
			\end{split}
		\end{equation*}
		Hence, we get that
		\begin{equation*}
			F(s)=h(s)-u(s)
			\leqslant\int^{s}_{s_0}\Big|\frac{\partial L}{\partial u}\Big|\cdot(h(\tau)-u(\tau)) d\tau
			\leqslant\int^{s}_{s_0}\lambda F(\tau)d\tau
		\end{equation*}
		which together with Gronwall inequality implies $F(t)\leqslant0$. It contradicts $F(t)>0$.
		
		(4). From (1) and  (3), we have
		\begin{equation}\label{eqA3}
			T^-_{t-s}\varphi(\gamma(-s))=h_{y,\varphi(y)}(\gamma(-s),t-s).
		\end{equation}
		By (2) and (3) again, we have
		\begin{equation}\label{eqA4}
			T^-_s(T^-_{t-s}\varphi)(x)=h_{\gamma(-s),T^-_{t-s}\varphi(\gamma(-s))}(x,s).
		\end{equation}
		In view of \eqref{eqA3}, \eqref{eqA4} and (3), for all $s\in[0,t]$, we get
		\begin{align*}
			h_{y,\varphi(y)}(x,t)
			&=T^-_t\varphi(x)=T^-_s(T^-_{t-s}\varphi)(x)
			=h_{\gamma(-s),T^-_{t-s}\varphi(\gamma(-s))}(x,s)\\
			&=h_{\gamma(-s),h_{y,\varphi(y)}(\gamma(-s),t-s)}(x,s).
		\end{align*}
		From Proposition \ref{prop3} (1), we deduce that $\gamma$ is a minimizer of $h_{y,\varphi(y)}(x,t)$.
	\end{proof}

	\begin{proposition}\label{propA2}
		If $\gamma:[-t,0]\to M$ with $\gamma(0)=x$,  $\gamma(-t)=y$ is a minimizer of $h_{y,\varphi(y)}(x,t)=\inf_{z\in M}h_{z,\varphi(z)}(x,t)$, then $\gamma$ is a minimizer of $T^-_t\varphi(x)$.
	\end{proposition}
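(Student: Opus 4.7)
My plan is to show that the candidate minimizer $\gamma$ of the implicit action function is also a minimizer of $T^-_t\varphi(x)$ by proving that along $\gamma$, the values of the semigroup and the implicit action agree, and then integrating that identity against the Lagrangian. The key structural fact is Proposition \ref{prop1}(4)(i), which says $T^-_t\varphi(x)=\inf_{z\in M}h_{z,\varphi(z)}(x,t)$; since the hypothesis says this infimum is attained at $z=y$, we immediately get $T^-_t\varphi(x)=h_{y,\varphi(y)}(x,t)$.

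For convenience I would reparametrize by setting $\tilde\gamma(\sigma)=\gamma(\sigma-t)$ for $\sigma\in[0,t]$, so $\tilde\gamma(0)=y$ and $\tilde\gamma(t)=x$. Then $\tilde\gamma$ is a minimizer of $h_{y,\varphi(y)}(x,t)$ in the forward parametrization, and by the ``Markov'' property of the implicit action (Proposition \ref{prop3}(1)), its restrictions are minimizers of the corresponding segments, so the function
\[
h(\sigma):=h_{y,\varphi(y)}(\tilde\gamma(\sigma),\sigma)
\]
satisfies the integral identity $h(\sigma)=\varphi(y)+\int_0^\sigma L(\tilde\gamma(\tau),\dot{\tilde\gamma}(\tau),h(\tau))\,d\tau$. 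Set also $u(\sigma):=T^-_\sigma\varphi(\tilde\gamma(\sigma))$. By Proposition \ref{prop1}(4)(i), $u(\sigma)\leqslant h(\sigma)$, with equality at $\sigma=0$ (both equal $\varphi(y)$) and at $\sigma=t$ (by the first paragraph).

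The core step is a Gronwall argument running backward from $\sigma=t$. Using the semigroup identity $T^-_t\varphi(x)=T^-_{t-s}(T^-_s\varphi)(x)$ and feeding in the non-optimal test curve $\tilde\gamma|_{[s,t]}$ into the definition of $T^-_{t-s}$, I get
\[
h(t)=T^-_t\varphi(x)\leqslant u(s)+\int_s^t L\bigl(\tilde\gamma(\tau),\dot{\tilde\gamma}(\tau),u(\tau)\bigr)\,d\tau.
\]
Combined with the equality $h(t)=h(s)+\int_s^t L(\tilde\gamma,\dot{\tilde\gamma},h(\tau))\,d\tau$ and the Lipschitz assumption (L3), the nonnegative function $F:=h-u$ satisfies
\[
F(s)\leqslant \int_s^t\lambda F(\tau)\,d\tau, \qquad F(t)=0.
\]
Backward Gronwall then forces $F\equiv 0$ on $[0,t]$, so $T^-_\sigma\varphi(\tilde\gamma(\sigma))=h_{y,\varphi(y)}(\tilde\gamma(\sigma),\sigma)$ for every $\sigma$.

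Finally, substituting this identity into the integral formula for $h_{y,\varphi(y)}(x,t)=T^-_t\varphi(x)$ turns it into
\[
T^-_t\varphi(x)=\varphi(y)+\int_0^t L\bigl(\tilde\gamma(\sigma),\dot{\tilde\gamma}(\sigma),T^-_\sigma\varphi(\tilde\gamma(\sigma))\bigr)\,d\sigma,
\]
which after reverting the change of variables $\sigma=t+\tau$ is exactly the statement that $\gamma$ attains the infimum in the definition \eqref{eq:semigroup} of $T^-_t\varphi(x)$. The main obstacle I anticipate is getting the direction of the Gronwall comparison right: one only has an $\leqslant$ bound when using $\tilde\gamma$ as a test curve for the semigroup, so the argument must be run backward from $\sigma=t$ where equality is already known, rather than forward from $\sigma=0$.
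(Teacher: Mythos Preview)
Your proof is correct, but it takes a different route from the paper's argument. The paper proves the key identity $h_{y,\varphi(y)}(\gamma(\tau),t+\tau)=T^-_{t+\tau}\varphi(\gamma(\tau))$ by a short contradiction: if the infimum $\inf_z h_{z,\varphi(z)}(\gamma(\tau),t+\tau)$ were achieved at some $y_0$ with a strictly smaller value, then the Markov property (Proposition~\ref{prop3}(1)) together with the \emph{strict} monotonicity of $h_{\cdot,u_0}$ in $u_0$ (Proposition~\ref{prop3}(3)) would give $h_{y,\varphi(y)}(x,t)>h_{y_0,\varphi(y_0)}(x,t)$, contradicting the hypothesis that $y$ realizes the infimum. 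No Gronwall, no use of the Lipschitz bound (L3), and no integral comparison is needed.

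Your approach instead mirrors the proof of Proposition~\ref{propA1}(3): you set up the two running quantities $h(\sigma)$ and $u(\sigma)$, use the semigroup as a test functional along $\tilde\gamma$ to get a one-sided integral inequality, and then close with a backward Gronwall estimate on $F=h-u$. This is perfectly valid and self-contained; its advantage is that it does not invoke the strict monotonicity statement, only the quantitative bound (L3). The paper's argument, by contrast, is shorter and more structural: it exploits that a strict drop in the initial value propagates strictly through the action function, so no analytic estimate is required. Both arguments ultimately establish the same pointwise identity along the minimizer before substituting back into the variational formula.
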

	
	\begin{proof}
		Since $\g$ is a minimizer of $h_{y,\varphi(y)}(x,t)$, then by the method of substitution,
		$$T^-_t\varphi(x)=h_{y,\varphi(y)}(x,t)=\varphi(\gamma(-t))+\int^0_{-t}L\big(\gamma(\tau),\dot{\gamma}(\tau),h_{y,\varphi(y)}(\gamma(\tau),t+\tau)\big)d\tau.$$
		Since $T^-_{\tau+t}\varphi(\gamma(\tau))=\inf_{z\in M}h_{z,\varphi(z)}(\gamma(\tau),t+\tau)$ for all $\tau\in[-t,0]$, it is sufficient to show
		\[
		h_{y,\varphi(y)}(\gamma(\tau),t+\tau)=\inf_{z\in M}h_{z,\varphi(z)}(\gamma(\tau),t+\tau), \quad\forall \tau\in[-t,0].
		\]
		If $h_{y,\varphi(y)}(\gamma(\tau),t+\tau)>\inf_{z\in M}h_{z,\varphi(z)}(\gamma(\tau),t+\tau)=h_{y_0,\varphi(y_0)}(\gamma(\tau),t+\tau)$ for some $y_0\in M$, then, by Proposition \ref{prop3} (1) and (3),
		\begin{equation*}
			\begin{split}
				h_{y,\varphi(y)}(x,t)
				=&h_{\gamma(\tau),h_{y,\varphi(y)}(\gamma(\tau),t+\tau)}(x,-\tau)\\
				>&h_{\gamma(\tau),h_{y_0,\varphi(y_0)}(\gamma(\tau),t+\tau)}(x,-\tau)\\
				\geqslant&\inf_{z\in M}h_{z,h_{y_0,\varphi(y_0)}(z,t+\tau)}(x,-\tau)\\
				=&h_{y_0,\varphi(y_0)}(x,t).
			\end{split}
		\end{equation*}
		It contradicts the assumption $h_{y,\varphi(y)}(x,t)=\inf_{z\in M}h_{z,\varphi(z)}(x,t)$.
	\end{proof}

	\section{Mather and weak KAM theories for contact Hamiltonian systems}
	
	We will establish Aubry-Mather and weak KAM type results for contact Hamiltonian systems under assumptions (H1)-(H3).
	The authors of \cite{WWY19} generalized part of Aubry-Mather and weak KAM theories from Hamiltonian systems to contact Hamiltonian systems under the assumptions (H1), (H2) and
	\begin{align}\label{incre}
		0<\frac{\partial H}{\partial u}\leqslant \lambda.
	\end{align}
	It should be noted that some results obtained in \cite{WWY19} are independent of the monotonicity \eqref{incre} of $H$. When we use these results, if the proofs given in \cite{WWY19} depend on \eqref{incre}, we give a new proof here, otherwise we will use them directly without proof.

	We  will also provide  a series of new results in this part, where  the definitions of the Ma\~n\'e set and Mather measures for \eqref{c} will be given. These definitions are independent of viscosity solutions (or equivalently, backward weak KAM solutions). We will clarify the relation between Mather measures associated with a given viscosity solution defined in the Introduction and the ones provided in Section 3.2. This is quite important for understanding Theorem \ref{th3}.


	\subsection{The notions of the Ma\~n\'e set and Mather measures}
	
	\begin{definition}
		A curve $(x(\cdot),u(\cdot)):\mathbf{R}\to M\times\mathbf{R}$ is called globally minimizing, if it is locally Lipschitz and
		for each $t_1$, $t_2\in\mathbf{R}$ with $t_1< t_2$, there holds
		\begin{align}\label{3-1}
			u(t_2)=h_{x(t_1),u(t_1)}(x(t_2),t_2-t_1).
		\end{align}
		A curve $(x(\cdot),u(\cdot)):\mathbf{R}\to M\times\mathbf{R}$ is called semi-static, if it is globally minimizing and for each $t_1$, $t_2\in\mathbf{R}$ with $t_1\leqslant t_2$, there holds
		\begin{equation}\label{3-3}
			u(t_2)=\inf_{s>0}h_{x(t_1),u(t_1)}(x(t_2),s).
		\end{equation}	
	\end{definition}
	
	\begin{remark}
		The original notion of semi-static curves comes from Ma\~n\'e's work on the Mather theory. See \cite{Man92,Man96,Man97} for more details.
	\end{remark}

	The relation between globally minimizing curves and system \eqref{c} is stated as follows.
	\begin{proposition}(\cite[Proposition 3.1]{WWY19})\label{pr22}
		If a curve $(x(\cdot),u(\cdot)):\mathbf{R}\to M\times\mathbf{R}$ is  globally minimizing, then $x(t)$	is of class $C^1$. Let $p(t):=\frac{\partial L}{\partial \dot{x}}\big(x(t),\dot{x}(t),u(t)\big)$. Then $\big(x(t),p(t),u(t)\big)$ is a solution of equations (\ref{c}).  Moreover, for each $t_1$, $t_2\in\mathbf{R}$ with $t_1<t_2$, $\bar{x}(t):=x(t+t_1)$ for $t\in[0,t_2-t_1]$, achieves the  minimum in $h_{x(t_1),u(t_1)}(x(t_2),t_2-t_1)$.
	\end{proposition}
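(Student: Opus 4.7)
The strategy is to establish the $C^1$ regularity of $x$ and the contact Hamilton equations \eqref{c} by a local-to-global argument built around the variational principle of Proposition \ref{prop4}, and then to deduce the minimization claim on arbitrary intervals. Fix $t_1<t_2$, and let $\gamma:[0,t_2-t_1]\to M$ be a $C^1$ minimizer of $h_{x(t_1),u(t_1)}(x(t_2),t_2-t_1)$ supplied by Proposition \ref{prop4}, so that its lift $(\gamma(s),p(s),w(s))$ with $w(s)=h_{x(t_1),u(t_1)}(\gamma(s),s)$ solves \eqref{c}. For any intermediate $s\in(0,t_2-t_1)$, the globally minimizing condition \eqref{3-1} applied to the pairs $(t_1,t_1+s)$ and $(t_1+s,t_2)$ gives
\[
u(t_1+s)=h_{x(t_1),u(t_1)}(x(t_1+s),s)\quad\text{and}\quad u(t_2)=h_{x(t_1+s),u(t_1+s)}(x(t_2),t_2-t_1-s).
\]
Substituting these into the semigroup identity of Proposition \ref{prop3} (1) shows that the infimum there is attained at $y=x(t_1+s)$, and by its ``moreover'' clause some minimizer of $h_{x(t_1),u(t_1)}(x(t_2),t_2-t_1)$ passes through $x(t_1+s)$ at time $s$.

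I would then invoke a short-time uniqueness statement for minimizers: for $t_2-t_1$ small, the continuity of $x$ makes $x(t_2)$ close to $x(t_1)$, and a standard Tonelli-type argument using the inverse function theorem applied to the time-$(t_2-t_1)$ map of the Euler-Lagrange flow $\Phi^L_t$ from initial velocities to endpoints (with $(x(t_1),u(t_1))$ fixed) pins down the minimizer uniquely. Combined with the previous paragraph this forces $\gamma(s)=x(t_1+s)$ for all $s\in[0,t_2-t_1]$, so $\bar x$ equals a $C^1$ minimizer on this short interval and its lift satisfies \eqref{c}. Applying this around every $t_0\in\mathbf{R}$ and patching, $x$ is $C^1$ on $\mathbf{R}$, the momentum $p(t)=\tfrac{\partial L}{\partial\dot x}(x(t),\dot x(t),u(t))$ is well defined, and $(x(t),p(t),u(t))$ is a global solution of \eqref{c}; in particular $\dot u=L(x,\dot x,u)$ holds for all $t$, via the equivalent Lagrangian form \eqref{L}.

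For the minimization statement on an arbitrary $[t_1,t_2]$, integrating $\dot u=L$ yields $u(t_2)-u(t_1)=\int_{t_1}^{t_2}L(x(s),\dot x(s),u(s))\,ds$, and replacing $u(t_1+\tau)$ by $h_{x(t_1),u(t_1)}(\bar x(\tau),\tau)$ via \eqref{3-1} exhibits $\bar x$ as a $C^1$ candidate in the infimum formula \eqref{2-1} whose action equals $h_{x(t_1),u(t_1)}(x(t_2),t_2-t_1)-u(t_1)$; hence $\bar x$ realizes the minimum. The main obstacle I anticipate is the short-time uniqueness of minimizers in the \emph{contact} setting, since the Lagrangian appearing inside \eqref{2-1} has the unknown $h_{x_0,u_0}(\gamma(\tau),\tau)$ entering as its third argument; the Lipschitz bound (L3) on $\partial L/\partial u$ together with the global smoothness of $\Phi^L_t$ (guaranteed by (H3)) is what enables the classical Tonelli-type inverse-function argument to go through.
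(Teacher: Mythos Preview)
The paper does not prove this proposition here; it is quoted verbatim from \cite[Proposition~3.1]{WWY19} and used as a black box throughout Section~3. So there is no in-paper proof to compare against.

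That said, your outline is correct and is essentially the standard argument. The use of the Markov identity Proposition~\ref{prop3}(1) together with \eqref{3-1} to place $x(t_1+s)$ on \emph{some} minimizer of $h_{x(t_1),u(t_1)}(x(t_2),t_2-t_1)$ for every $s$ is right, and the short-time uniqueness step you flag does go through in the contact setting: by Proposition~\ref{prop4} every minimizer lifts to an orbit of $\Phi^H_t$ with fixed initial data $(x(t_1),\,\cdot\,,u(t_1))$, so uniqueness reduces to injectivity of the endpoint map $p_0\mapsto\pi\,\Phi^H_{t_2-t_1}(x(t_1),p_0,u(t_1))$; the local Lipschitz continuity of $x$ and (L2) confine the relevant $p_0$ to a compact set, on which the derivative of this map is $(t_2-t_1)\,\partial^2H/\partial p^2+O((t_2-t_1)^2)$, hence invertible for $t_2-t_1$ small. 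One small point worth making explicit in the patching step: take the short intervals to \emph{overlap}, so that the momenta (and hence $\dot x$) agree on the overlaps via $p=\partial L/\partial\dot x$; merely abutting intervals would not by themselves guarantee matching one-sided derivatives. Your final paragraph is clean and correct: once $\dot u=L$ holds globally, \eqref{3-1} turns the action of $\bar x$ in \eqref{2-1} into exactly $u(t_2)-u(t_1)=h_{x(t_1),u(t_1)}(x(t_2),t_2-t_1)-u(t_1)$, so $\bar x$ realizes the minimum on every interval.
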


	If a curve $(x(\cdot),u(\cdot)):\mathbf{R}\to M\times\mathbf{R}$ is semi-static, then by Proposition \ref{pr22}, $\big(x(t),p(t),u(t)\big)$ is an orbit of $\Phi^H_t$, where $p(t)=\frac{\partial L}{\partial \dot{x}}\big(x(t),\dot{x}(t),u(t)\big)$. We call it  a semi-static orbit of $\Phi^H_t$.

	\begin{proposition}\label{ssb}
		Each semi-static orbit of $\Phi^H_t$ is bounded.
	\end{proposition}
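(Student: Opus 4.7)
I would split the problem by components. The $x$-coordinate lies in the compact manifold $M$ and is automatically bounded. Once the $u$-coordinate is bounded by some constant $U$, the $p$-coordinate follows immediately: Proposition \ref{pr22} identifies $\bar\gamma(s):=x(t+s)$, $s\in[0,1]$, as the minimizer of $h_{x(t),u(t)}(x(t+1),1)$, whence Proposition \ref{prop5} applied with parameters $(a,b,\delta,T)=(-U,U,1,1)$ confines $(x(s),p(s),u(s))$ to a single compact subset $\mathcal{K}\subset T^*M\times\R$ for all $s$, giving the desired bound on $p(t)$.

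The heart of the argument is therefore the $u$-bound. The semi-static condition \eqref{3-3} specialised to $s=1$ gives, for all $t_1\leq t_2$,
\[
u(t_2)\leq h_{x(t_1),u(t_1)}(x(t_2),1)\leq \max_{y\in M}h_{x(t_1),u(t_1)}(y,1)<\infty,
\]
finite by Proposition \ref{prop3}(2) on compact $M$; taking $t_1=0$ yields an upper bound on $u(t)$ for all $t\geq 0$. For the reverse direction, Proposition \ref{addprop1} combined with the strict monotonicity of $h_{x_0,u_0}$ in $u_0$ (Proposition \ref{prop3}(3)) turns the semi-static equality into the equivalent sup-formulation
\[
u(t_1)=\sup_{s>0}h^{x(t_2),u(t_2)}(x(t_1),s), \qquad t_1\leq t_2,
\]
(briefly: $h_{x_0,u_0}(x,s)\geq u$ for all $s>0$ iff $u_0\geq h^{x,u}(x_0,s)$ for all $s>0$, by monotonicity in $u_0$); specialising with $s=1$, $t_2=0$ gives a lower bound on $u(t)$ for all $t\leq 0$.

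To close the two remaining half-lines (upper on $(-\infty,0]$, lower on $[0,\infty)$), I would iterate the monotone one-step recursions
\[
u(t+1)\leq \max_{y\in M}h_{x(t),u(t)}(y,1),\qquad u(t)\geq\min_{y\in M}h^{x(t+1),u(t+1)}(y,1),
\]
whose scalar $u$-dependences are strictly increasing (Propositions \ref{prop3}(3), \ref{prop8}(3)) and bi-Lipschitz with constants in $[e^{-\lambda},e^\lambda]$: this last estimate follows by a Gronwall argument applied to the integral equations \eqref{2-1}--\eqref{2-2} together with (L3). Combining these recursions with the already-established one-sided bounds forces $u(t)$ to remain in a single bounded window for every $t\in\R$.

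The main obstacle I expect is the non-monotonicity of $H$ in $u$ permitted by (H3). In the fully monotone setting of \cite{WWY19} a comparison principle would close the $u$-bound on both sides simultaneously; under the two-sided condition (H3) the best control one has on $h$ in $u_0$ is the bi-Lipschitz Gronwall estimate above, and the asymmetric forward-/backward-action semi-static inequalities produce only one-sided bounds on each half-line. Care is needed to chain them through the monotone recursions so as to prevent exponential escape of $u(t)$ to $\pm\infty$ in either time direction.
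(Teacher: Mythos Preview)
Your reduction to the $u$-bound and the subsequent $p$-bound via Propositions~\ref{pr22} and~\ref{prop5} match the paper exactly, and your derivation of the upper bound on $[0,\infty)$ and the lower bound on $(-\infty,0]$ (via the dual sup-formulation) is correct. The gap is in the final step, where you propose to close the two remaining half-lines by iterating the one-step recursions. This does not work. From the globally minimizing identity $u(t+1)=h_{x(t),u(t)}(x(t+1),1)$ and the monotonicity of $v\mapsto h_{x_0,v}(x,1)$, an upper bound on $u(t+1)$ yields only an upper bound on $u(t)$ (invert the increasing map), and a lower bound on $u(t)$ yields only a lower bound on $u(t+1)$. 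Thus the recursions propagate exactly the bounds you already have and never produce the missing ones; even granting your bi-Lipschitz estimate, iterating from $t=0$ backwards gives at best $|u(-n)|\leqslant e^{\lambda n}(|u(0)|+C)$, which blows up.

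The paper closes this gap with a single observation you are missing: the inequality
\[
u(t)\;\leqslant\;h_{x(s),u(s)}(x(t),1)
\]
holds for \emph{every} pair $s,t\in\R$, not only for $s\leqslant t$. For $t<s$ this follows by applying the semi-static condition \eqref{3-3} at the diagonal $t_1=t_2=t$ (so that $u(t)=\inf_{\sigma>0}h_{x(t),u(t)}(x(t),\sigma)\leqslant h_{x(t),u(t)}(x(t),1+s-t)$), then using the Markov property (Proposition~\ref{prop3}(1)) and the globally minimizing identity $u(s)=h_{x(t),u(t)}(x(s),s-t)$. Once this two-sided inequality is in hand, setting $s=0$ gives the uniform upper bound on all of $\R$, and setting $t=0$ together with Proposition~\ref{addprop1} gives the uniform lower bound on all of $\R$---no iteration is needed.
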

	
	\begin{proof}
		Let $(x(t),p(t),u(t))$ be a semi-static orbit. We aim to prove that both $\|p(t)\|$ and $u(t)$ are bounded in $\R$.
		
		First, we claim that
		\begin{align}\label{3-555}
			u(t)\leqslant h_{x(s),u(s)}(x(t),1),\quad  \forall s, t\in\R.
		\end{align}
		Indeed, for the case $t\geqslant s$, by the definition of semi-static curve we have
		$$u(t)=\inf_{\sigma>0}h_{x(s),u(s)}(x(t),\sigma)\leqslant h_{x(s),u(s)}(x(t),1).$$
		For the case of $t<s$, by the definition of globally minimizing curve we get that  $u(s)=h_{x(t),u(t)}(x(s),s-t)$. Thus from Proposition \ref{prop3} (1), we have
		\begin{equation*}
			\begin{split}
				u(t)&=\inf_{\sigma>0}h_{x(t),u(t)}(x(t),\sigma)\\
				&\leqslant h_{x(t),u(t)}(x(t),1+s-t)\\
				&= \inf_{z\in M}h_{z,h_{x(t),u(t)}(z,s-t)}(x(t),1)\\
				&\leqslant h_{x(s),h_{x(t),u(t)}(x(s),s-t)}(x(t),1)\\
				&= h_{x(s),u(s)}(x(t),1).
			\end{split}
		\end{equation*}
		So far, we have proved \eqref{3-555}. By  \eqref{3-555} we deduce that
		$$u(t)\leqslant h_{x_0,u_0}(x(t),1),\quad u_0\leqslant h_{x(t),u(t)}(x_0,1),\quad \forall t\in\R,$$
		where $x_0=x(0)$ and $u_0=u(0)$.
		Using Proposition \ref{addprop1}, we get that
		$$h^{x_0,u_0}(x(t),1)\leqslant u(t)\leqslant h_{x_0,u_0}(x(t),1), \quad \forall t\in\R.$$
		According to the continuity of functions $x\to h_{x_0,u_0}(x,1)$ and $x\to h^{x_0,u_0}(x,1)$ on $M$,  $u(t)$ is bounded on $t\in\R$.

		Next, we show the boundedness of $\|p(t)\|$.
		Let $K$ be a positive number such that $|u(t)|\leqslant K$ for all $t\in \R$. By Proposition \ref{pr22}, for any $t\in\R$, $x(s)|_{[t-1,t+1]}$ is a minimizer of $h_{x(t-1),u(t-1)}(x(t+1),2)$. Notice that $\big(x(t-1),u(t-1),x(t+1),2\big)\in M\times[-K,K]\times M\times\{2\}$. And thus from Proposition \ref{prop5}, we can deduce that  $\|p(t)\|$ is bounded by a constant depending only on $K$.
	\end{proof}

	Now we are ready for giving the definitions of the Ma\~n\'e set and Mather measures.

	\begin{definition}\label{audeine}
		The Ma\~n\'e set of $H$ is defined by
		\[
		\tilde{\mathcal{N}}:=\{(x,p,u): \big(x(t),p(t),u(t)\big):=\Phi_t^H(x,p,u)\text{ is a semi-static orbit}\}.
		\]
	\end{definition}
	
	\begin{definition}\label{audeine1}	
		Define
		\[
		\mathfrak{M}:=\{\mu\in\mathcal{P}(\T\times\R): \mu \ \text{is}\ \Phi^H_t\text{-invariant with}\ \supp\mu\subset \tilde{\mathcal{N}}\}.
		\]
		A measure $\mu\in \mathfrak{M}$ is called a Mather measure.
	\end{definition}	
	We will show that $\tilde{\mathcal{N}}$ and $\mathfrak{M}$ are nonempty later in this section.

	\subsection{Weak KAM solutions}
	\begin{definition}\label{bwkam}
		A function $u\in C(M,\R)$ is called a backward (resp. forward) weak KAM solution of \eqref{1-2}, if
		\begin{itemize}
			\item [(1)] for each continuous piecewise $C^1$ curve $\gamma:[t_1,t_2]\rightarrow M$, we have
			\begin{align}\label{do}
				u(\gamma(t_2))-u(\gamma(t_1))\leqslant\int_{t_1}^{t_2}L\big(\gamma(s),\dot{\gamma}(s),u(\gamma(s))\big)ds;
			\end{align}
			\item [(2)] for each $x\in M$, there exists a $C^1$ curve $\gamma:(-\infty,0]\rightarrow M$ (resp. $\gamma:[0,+\infty)\rightarrow M$) with $\gamma(0)=x$ such that for any $t_1,t_2\in(-\infty,0]$ (resp. $t_1,t_2\in[0,+\infty)$)
			\begin{align}\label{cali1}
				u(\gamma(t_2))-u(\gamma(t_1))=\int^{t_2}_{t_1}L\big(\gamma(s),\dot{\gamma}(s),u(\gamma(s))\big)ds, \quad \forall t_1<t_2.
			\end{align}
		\end{itemize}
		We say that $u$ in \eqref{do} is dominated by $L$, denoted by $u\prec L$. We call curves $\gamma:[a,b]\to M$ ($-\infty\leqslant a<b\leqslant+\infty$) satisfying \eqref{cali1} (u, L, 0)-calibrated curves. We use $\mathcal{S}^-$ (resp. $\mathcal{S}^+$) to denote the set of all backward (resp. forward) weak KAM solutions.
	\end{definition}
	\begin{remark}\label{ll}
		By the definition of dominiated functions, it is not hard to show any dominiated function  is Lipschitz continuous on $M$. See \cite[Lemma 4.1]{WWY19} for the proof. Hence, weak KAM solutions are Lebesgue almost everywhere differentiable.
	\end{remark}

	\subsubsection{Weak KAM solutions and solution semigroups}
	The proposition below points out that weak KAM solutions are just fixed points of solution semigroups, and that viscosity solutions are the same as backward weak KAM solutions in the setting of this paper.
	\begin{proposition}(\cite[Proposition 2.7]{WWY19})\label{prop9}
		The backward weak KAM solutions of \eqref{1-2} are the same as the viscosity solutions of \eqref{1-2}. Moreover,
		\begin{itemize}
			\item [(1)] $u_-\in \mathcal{S}^-$ if and only if $T^-_tu_-=u_-$ for all $t\geqslant0$.
			\item [(2)] $u_+\in \mathcal{S}^+$  if and only if $T^+_tu_+=u_+$ for all $t\geqslant0$.
		\end{itemize}
	\end{proposition}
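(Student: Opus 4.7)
The plan is to establish the chain of equivalences: $u_- \in \mathcal{S}^-$ iff $T^-_t u_- = u_-$ for all $t \geq 0$ iff $u_-$ is a viscosity solution of \eqref{1-2}. The last equivalence is immediate: the time-independent extension $w(x,t) := u_-(x)$ is a viscosity solution of \eqref{1-1} with $w(\cdot,0) = u_-$ iff $T^-_t u_- = u_-$ for all $t \geq 0$, by the uniqueness statement for the Cauchy problem of \eqref{1-1} recalled in Section 2.2.2. Part (2) follows from part (1) by an entirely symmetric argument using $T^+_t$, Proposition \ref{prop8}, and forward calibrated curves, so I focus on part (1).

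For the forward direction $u_- \in \mathcal{S}^- \Rightarrow T^-_t u_- = u_-$, fix $x \in M$, $t > 0$, and take a $(u_-, L, 0)$-calibrated curve $\gamma : (-\infty, 0] \to M$ with $\gamma(0) = x$. For the upper bound $T^-_t u_-(x) \leq u_-(x)$: along $\gamma$, calibration says that $s \mapsto u_-(\gamma(s))$ solves the ODE $\dot v = L(\gamma, \dot\gamma, v)$ with initial value $u_-(\gamma(-t))$ at time $s = -t$; combined with the characterization of the implicit action $h_{\gamma(-t), u_-(\gamma(-t))}(\cdot, \cdot)$ via the same Lagrangian flow coming from Proposition \ref{prop4}, and the uniqueness of solutions to this ODE (granted by (L3)), this forces $h_{\gamma(-t), u_-(\gamma(-t))}(x, t) \leq u_-(x)$, and Proposition \ref{prop1}(4)(i) then delivers $T^-_t u_-(x) \leq u_-(x)$. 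For the lower bound $T^-_t u_-(x) \geq u_-(x)$: for any $y \in M$, a minimizer $\sigma$ of $h_{y, u_-(y)}(x, t)$ produces $u_1(s) := h_{y, u_-(y)}(\sigma(s), s)$ which solves $\dot u_1 = L(\sigma, \dot\sigma, u_1)$ exactly with $u_1(0) = u_-(y)$, while the domination $u_- \prec L$ makes $s \mapsto u_-(\sigma(s))$ an integral sub-solution of the same Cauchy problem. A Gronwall comparison, made possible by $|\partial L/\partial u| \leq \lambda$, yields $u_-(\sigma(t)) \leq u_1(t)$, i.e., $u_-(x) \leq h_{y, u_-(y)}(x, t)$; taking the infimum over $y$ and invoking Proposition \ref{prop1}(4)(i) again completes the lower bound.

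For the converse $T^-_t u_- = u_- \Rightarrow u_- \in \mathcal{S}^-$: domination is obtained by plugging any piecewise $C^1$ curve $\gamma : [t_1, t_2] \to M$ into the semigroup formula \eqref{eq:semigroup} for $T^-_{t_2 - t_1}$ and using the hypothesis to replace each $T^-_{s - t_1} u_-$ inside the integrand by $u_-$ itself, which gives $u_-(\gamma(t_2)) \leq u_-(\gamma(t_1)) + \int_{t_1}^{t_2} L(\gamma, \dot\gamma, u_- \circ \gamma)\, ds$. For the calibrated curve at $x$, I pick for each $n \in \mathbf{N}$ a minimizer $\gamma_n : [-n, 0] \to M$ of $T^-_n u_-(x) = u_-(x)$ with $\gamma_n(0) = x$. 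For each $T > 0$, Proposition \ref{propA1}(2) together with the hypothesis shows that $\gamma_n|_{[-T, 0]}$ (for $n \geq T$) is a minimizer of $T^-_T u_-(x)$, and Proposition \ref{propA1}(3)--(4) then identify it as a minimizer of the implicit action $h_{\gamma_n(-T), u_-(\gamma_n(-T))}(x, T)$. Since $u_-$ is bounded on the compact $M$, Proposition \ref{prop5} supplies uniform $C^1$ bounds on these restrictions depending only on $T$ and on the range of $u_-$. A diagonal Arzelà--Ascoli extraction produces a $C^1$ limit curve $\gamma : (-\infty, 0] \to M$ with $\gamma(0) = x$, and the calibration identity on any window $[t_1, t_2]$ follows by writing the minimization identity for $\gamma_n$ on $[-n, t_2]$ and $[-n, t_1]$, subtracting, and passing to the limit.

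The main technical obstacle is the lower bound $T^-_t u_- \geq u_-$ in the forward direction: the domination $u_- \prec L$ only provides a sub-solution inequality in the integral sense for a Lipschitz (not $C^1$) function, and one has to compare it with the genuine $C^1$ solution of the characteristic ODE starting from the same initial value. The sign-free Lipschitz bound (L3) on $\partial L/\partial u$ is precisely what allows a Gronwall-type comparison to go through without any monotonicity hypothesis on $\partial L/\partial u$. A secondary but delicate point is the application of Proposition \ref{prop5} window by window to secure the uniform $C^1$ bounds that make Arzelà--Ascoli work as the length of the minimizing intervals tends to infinity.
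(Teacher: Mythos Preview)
The paper does not supply its own proof of this proposition; it is quoted verbatim from \cite[Proposition~2.7]{WWY19} and, in line with the paper's stated policy in Section~3, is used ``directly without proof'' since its original argument does not rely on the monotonicity assumption~\eqref{incre}. Your sketch is therefore not comparable to anything in the present paper, but it is essentially a correct reconstruction of the classical proof, routed through the implicit action functions and Propositions~\ref{prop4}, \ref{prop5}, \ref{prop1} and~\ref{propA1} as set up in Section~2.

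One step is imprecise and would benefit from tightening. In the upper bound $T^-_t u_-(x)\le u_-(x)$, you invoke ``uniqueness of solutions to the ODE $\dot v=L(\gamma,\dot\gamma,v)$'' together with Proposition~\ref{prop4} to conclude $h_{\gamma(-t),u_-(\gamma(-t))}(x,t)\le u_-(x)$. But Proposition~\ref{prop4} only asserts that $s\mapsto h_{y,u_0}(\sigma(s),s)$ solves this ODE when $\sigma$ is a \emph{minimizer} of $h_{y,u_0}(x,t)$; the calibrated curve $\gamma$ is not known a~priori to be such a minimizer, so ODE uniqueness alone does not apply. What is actually needed is the one-sided sub-additivity estimate
\[
h_{y,u_0}(\gamma(s_2),s_2)\le h_{y,u_0}(\gamma(s_1),s_1)+\int_{s_1}^{s_2}L\big(\gamma,\dot\gamma,h_{y,u_0}(\gamma(\tau),\tau)\big)\,d\tau,
\]
obtained by using $\gamma|_{[s_1,s_2]}$ as a test curve in the implicit variational formula (this is exactly the manipulation carried out in the paper's proof of Proposition~\ref{propA1}(3)). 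With this in hand, setting $F(s)=h_{\gamma(-t),u_-(\gamma(-t))}(\gamma(s-t),s)$ and $v(s)=u_-(\gamma(s-t))$, the difference $g=F-v$ satisfies $g(0)=0$ and $g(s_2)-g(s_1)\le\int_{s_1}^{s_2}a(\tau)g(\tau)\,d\tau$ with $|a|\le\lambda$, and the same Gronwall comparison you use for the lower bound now yields $g\le 0$. The remaining parts of your argument---the lower bound via domination along a minimizer of $h$, the derivation of domination from the fixed-point property, and the diagonal Arzel\`a--Ascoli extraction of a backward calibrated curve using Proposition~\ref{prop5} window by window---are all correct as written.
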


	Next result is new, which is useful in the proof of Theorem \ref{th3}.

	\begin{proposition}\label{prop5.1}
		Let $\varphi\in C(M,\R)$. Then
		\item [(1)] If the function $(x,t)\to T^+_t\varphi(x)$ is bounded on $M\times[0,+\infty)$, then $\varphi_{\infty}:=\limsup_{t\to+\infty}T^+_t\varphi(x)$ belongs to $\mathcal{S}^+$ and $\lim_{t\to+\infty}\sup_{s\geqslant t}T^+_s\varphi(x)=\varphi_{\infty}(x)$ uniformly on $x\in M$.
		\item [(2)] If the function $(x,t)\to T^-_t\varphi(x)$ is bounded on $M\times[0,+\infty)$, then $\varphi_{\infty}:=\liminf_{t\to+\infty}T^-_t\varphi(x)$ belongs to $\mathcal{S}^-$ and $\lim_{t\to+\infty}\inf_{s\geqslant t}T^-_s\varphi(x)=\varphi_{\infty}(x)$ uniformly on $x\in M$.
	\end{proposition}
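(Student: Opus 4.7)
The plan is to prove (2) in detail; part (1) follows by the symmetric argument with $T^+_t$ replacing $T^-_t$, supremum replacing infimum, and Proposition \ref{prop2}(1) replacing Proposition \ref{prop2}(2). The first step is to establish equi-Lipschitz regularity of the family $\{T^-_t\varphi\}_{t\geq 1}$. From the semigroup identity $T^-_t\varphi(x)=\inf_{y\in M}h_{y,T^-_{t-1}\varphi(y)}(x,1)$ in Proposition \ref{prop1}(5) and the boundedness hypothesis, the numbers $T^-_{t-1}\varphi(y)$ all lie in a fixed compact interval $[-K,K]$. Proposition \ref{prop3}(2) then gives a Lipschitz constant $C$ for $(y,u_0,x)\mapsto h_{y,u_0}(x,1)$ on $M\times[-K,K]\times M$, and taking the infimum in $y$ preserves $C$ as a Lipschitz constant in $x$.

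Set $\psi_t(x):=\inf_{s\geq t}T^-_s\varphi(x)$. For $t\geq 1$ the function $\psi_t$ is $C$-Lipschitz on $M$ (infimum of uniformly $C$-Lipschitz functions), bounded, and monotonically nondecreasing in $t$, so it converges pointwise to $\varphi_\infty=\liminf_{t\to\infty}T^-_t\varphi$, which is itself $C$-Lipschitz. Since $M$ is compact, Dini's theorem upgrades this monotone convergence to uniform convergence, which is the second assertion in (2). It remains to show $T^-_a\varphi_\infty=\varphi_\infty$ for every $a\geq 0$, whereupon Proposition \ref{prop9} gives $\varphi_\infty\in\s$.

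One inequality is routine: from $\psi_t\leq T^-_{s'}\varphi$ for $s'\geq t$, monotonicity of $T^-_a$ (Proposition \ref{prop1}(1)) and the semigroup property yield $T^-_a\psi_t\leq T^-_{a+s'}\varphi$, hence $T^-_a\psi_t\leq\psi_{a+t}$; letting $t\to\infty$ and using the $e^{\lambda a}$-Lipschitz continuity of $T^-_a$ (Proposition \ref{prop1}(3)) gives $T^-_a\varphi_\infty\leq\varphi_\infty$. The main obstacle is the reverse inequality, because $T^-_a$ does not commute with infima. The trick is to apply $T^+_a$ instead: combining $\psi_t\leq T^-_{s'}\varphi$ with $T^+_a\circ T^-_a\leq\mathrm{id}$ (Proposition \ref{prop2}(2)) gives $T^+_a\psi_t\leq T^+_a T^-_a T^-_{s'-a}\varphi\leq T^-_{s'-a}\varphi$ for $s'\geq a$, hence $T^+_a\psi_t\leq\psi_{t-a}$. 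Passing to the limit produces $T^+_a\varphi_\infty\leq\varphi_\infty$, and Corollary \ref{prop10} with $u=v=\varphi_\infty$ rewrites this as $\varphi_\infty\leq T^-_a\varphi_\infty$, closing the argument.
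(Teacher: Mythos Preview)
Your argument is correct. The equi-Lipschitz estimate and the Dini step match the paper's, and your two-inequality proof of the fixed-point identity is valid: both $T^-_a\psi_t\le\psi_{a+t}$ and $T^+_a\psi_t\le\psi_{t-a}$ pass to the limit by the $e^{\lambda a}$-continuity of $T^\pm_a$, and Corollary~\ref{prop10} converts $T^+_a\varphi_\infty\le\varphi_\infty$ into the missing inequality $\varphi_\infty\le T^-_a\varphi_\infty$.

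The route, however, differs from the paper's, and your stated motivation for it is actually mistaken. You write that ``$T^-_a$ does not commute with infima,'' but in fact it does: from $T^-_a\psi(x)=\inf_y h_{y,\psi(y)}(x,a)$ together with the strict monotonicity and continuity of $u_0\mapsto h_{y,u_0}(x,a)$ (Proposition~\ref{prop3}(2),(3)) one gets $T^-_a(\inf_\alpha\psi_\alpha)=\inf_\alpha T^-_a\psi_\alpha$ for any family $\{\psi_\alpha\}$. This commutation is exactly what the paper exploits (in the $T^+$/$\max$ version for item~(1)): it writes
\[
\varphi_\infty=\lim_{m}\lim_{n}\min_{m\le t\le n}T^-_{a}\circ T^-_t\varphi
=\lim_{m}\lim_{n}T^-_a\bigl(\min_{m\le t\le n}T^-_t\varphi\bigr)
=T^-_a\varphi_\infty,
\]
pulling the limits through $T^-_a$ via Proposition~\ref{prop1}(3) and Dini. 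So the paper's proof is a single identity chain rather than two inequalities.

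What each buys: the paper's argument is shorter once one sees the commutation, and never touches the forward semigroup $T^+$. Your argument avoids needing that commutation lemma and instead uses only the order relations between $T^-$ and $T^+$ (Proposition~\ref{prop2}, Corollary~\ref{prop10}); this is a more ``duality'' flavored proof and would transfer to settings where the commutation with infima is less transparent.
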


	\begin{proof}
		We only prove the first item, since the second one can be proved in a similar manner.	
		
		Let $P$ be a positive constant such that
		\begin{equation}\label{eq5.1}
			|T^+_t\varphi(x)|\leqslant P,\quad \forall (x,t)\in M\times[0,+\infty).
		\end{equation}
		By Proposition \ref{prop8} (2), the function $(x_0,u_0,x)\to h^{x_0,u_0}(x,1)$ is Lipschitz on $M\times[-P,P]\times M$. Denote $l_1>0$ the Lipschitz constant of the function.
		
		First we show that $\{T^+_t\varphi(x)\}_{t>1}$ is equi-Lipschitz on $M$. From Proposition \ref{prop1} (5) (ii), we have
		$$|T^+_t\varphi(x)-T^+_t\varphi(y)|\leqslant\sup_{z\in M}\Big|h^{z,T^+_{t-1}\varphi(z)}(x,1)-h^{z,T^+_{t-1}\varphi(z)}(y,1)\Big|$$
		for all $x,y\in M$, and all $t>1$. In view of \eqref{eq5.1}, the above inequality implies that
		$$|T^+_t\varphi(x)-T^+_t\varphi(y)|\leqslant l_1\cdot d(x,y),\quad \forall t>1.$$
		
		Let $\varphi_{\infty}(x):=\limsup_{t\to+\infty}T^+_t\varphi(x)$. We show that $\varphi_{\infty}$ is a fixed point of $\{T^+_t\}_{t\geqslant0}$. Since $\{T^+_t\varphi(x)\}_{t>1}$ is equi-Lipschitz on $M$, it is easy to see that
		$$\lim_{t\to+\infty}\sup_{s\geqslant t}T^+_s\varphi(x)=\varphi_{\infty}(x),\ \text{uniformly on}\ x\in M.$$
		Note that
		$$\varphi_{\infty}(x)=\limsup_{t\to+\infty}T^+_s\circ T^+_t\varphi(x),\ \forall s\geqslant0.$$
		By Proposition \ref{prop1} (1) and (3), we have
		\begin{equation*}
			\begin{split}
				\varphi_{\infty}(x)=&\lim_{m\to+\infty}\lim_{n\to+\infty}\max_{m\leqslant t\leqslant n}T^+_s\circ T^+_t\varphi(x)\\
				=&\lim_{m\to+\infty}\lim_{n\to+\infty}T^+_s(\max_{m\leqslant t\leqslant n}T^+_t\varphi)(x)\\
				=&T^+_s(\lim_{m\to+\infty}\lim_{n\to+\infty}\max_{m\leqslant t\leqslant n}T^+_t\varphi)(x)\\
				=&T^+_s\varphi_{\infty}(x).
			\end{split}
		\end{equation*}
	\end{proof}

	The following result comes from \cite{WY21}. Here we give an independent proof.
	\begin{proposition}(\cite{WY21})\label{prop6}\
		\begin{itemize}
			\item [(1)] For each $u\in \mathcal{S}^-$, the uniform limit $\lim_{t\to+\infty}T^+_tu=:v$ exists and $v\in \mathcal{S}^+$.
			\item [(2)] For each $v\in \mathcal{S}^+$, the uniform limit $\lim_{t\to+\infty}T^-_tv=:u$ exists and $u\in \mathcal{S}^-$.
		\end{itemize}
	\end{proposition}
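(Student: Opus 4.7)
I would prove part (1); part (2) follows by an entirely symmetric argument (interchanging $T^+_t$ and $T^-_t$ and reversing the inequalities). The plan is to show that $t\mapsto T^+_t u$ is monotone non-increasing and uniformly bounded on $M\times[0,+\infty)$, and then invoke Proposition \ref{prop5.1}(1).

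First, because $u\in\mathcal{S}^-$, Proposition \ref{prop9}(1) gives $T^-_t u=u$, so Proposition \ref{prop2}(2) yields $T^+_t u=T^+_t\circ T^-_t u\leqslant u$. Monotonicity of the semigroup (Proposition \ref{prop1}(1)) then forces $T^+_{t+s}u=T^+_s(T^+_t u)\leqslant T^+_s u$, so $t\mapsto T^+_t u$ is non-increasing and in particular $T^+_t u\leqslant\|u\|_\infty$. Applying $T^-_t$ to the inequality $T^+_t u\leqslant u$ gives $T^-_t\circ T^+_t u\leqslant u$, while Proposition \ref{prop2}(1) supplies the reverse inequality, producing the fundamental identity
\[
T^-_t\circ T^+_t u=u\qquad\text{for all }t\geqslant 0,
\]
which will drive the lower bound.

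For the uniform lower bound I argue by contradiction. Suppose $(x,t)\mapsto T^+_t u(x)$ is unbounded below on $M\times[0,+\infty)$, and pick a constant $Q<\min_{x\in M}u(x)$, so $Q<u$ strictly on $M$. By Proposition \ref{prop5.2}(1) there exists $s_Q\geqslant 0$ with $T^+_{s_Q}u(x)\leqslant Q$ on $M$; applying $T^-_{s_Q}$ and invoking the fundamental identity yields $u=T^-_{s_Q}\circ T^+_{s_Q}u\leqslant T^-_{s_Q}Q$. But the strict order-preservation clause of Proposition \ref{prop1}(1), applied to $Q<u$, gives $T^-_{s_Q}Q<T^-_{s_Q}u=u$ pointwise on $M$, contradicting $u\leqslant T^-_{s_Q}Q$. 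Therefore $T^+_t u$ is uniformly bounded on $M\times[0,+\infty)$.

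With boundedness in hand, Proposition \ref{prop5.1}(1) furnishes $\varphi_\infty:=\limsup_{t\to+\infty}T^+_t u\in\mathcal{S}^+$ with $\lim_{t\to+\infty}\sup_{s\geqslant t}T^+_s u=\varphi_\infty$ uniformly, and since $T^+_t u$ is non-increasing, $\sup_{s\geqslant t}T^+_s u=T^+_t u$, giving $\lim_{t\to+\infty}T^+_t u=\varphi_\infty\in\mathcal{S}^+$ uniformly. The main obstacle I anticipate is the uniform lower bound in step 3; the key is that Proposition \ref{prop5.2}(1) converts unboundedness into a uniform smallness at some time, which collides immediately with the fundamental identity via the strict-monotonicity clause of Proposition \ref{prop1}(1). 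Part (2) proceeds dually: $T^-_t v$ is non-decreasing, the fundamental identity becomes $T^+_t\circ T^-_t v=v$, and a constant $Q>\max_M v$ together with Proposition \ref{prop5.2}(2) produces the uniform upper bound required for Proposition \ref{prop5.1}(2).
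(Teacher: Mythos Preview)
Your proof is correct and follows essentially the same route as the paper's: establish $T^+_t u\leqslant u$ and monotonicity, then obtain the lower bound by contradiction via Proposition~\ref{prop5.2}(1), and conclude with Proposition~\ref{prop5.1}(1). The only cosmetic difference is that you explicitly derive and use the identity $T^-_t\circ T^+_t u=u$, whereas the paper packages the passage from $T^+_s u\leqslant Q$ to $u\leqslant T^-_s Q$ by a direct appeal to Corollary~\ref{prop10}; these are equivalent.
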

	
	\begin{proof}
		We only prove the first item, since the second one can be proved in a similar manner.
		
		If $u\in\mathcal{S}^-$, then $u=T^-_tu,\ \forall t\geqslant0$ by Proposition \ref{prop9}. Using Corollary \ref{prop10}, we get that  $T^+_tu\leqslant u$, $\forall t\geqslant0$. According to Proposition \ref{prop1} (1), we have
		$$T^+_{t_1}u\leqslant T^+_{t_2}u,\quad \forall t_1\geqslant t_2\geqslant0.$$
		Hence by Proposition \ref{prop5.1} (1), to finish the proof, it suffices to show that the function $(x,t)\to T^+_tu(x)$ is bounded from below on $M\times[0,+\infty)$. Assume by contradiction that $T^+_tu(x)$ is unbounded from below on $M\times[0,+\infty)$. Let $Q=-\max_{x\in M}|u(x)|-1$. Then $Q<u$. Then from Proposition \ref{prop5.2} (1), we can find  $s\geqslant0$ such that $T^+_su\leqslant Q\ on\ M.$ And from Corollary \ref{prop10}, we get that
		$$u\leqslant T^-_sQ.$$
		On the other hand, we have $T^-_sQ< T^-_su=u$ from Proposition \ref{prop1} (1), which yields a contradiction.
	\end{proof}

	By Corollary \ref{prop10}, Proposition \ref{prop9} and Proposition \ref{prop6}, one can deduce that
	\begin{corollary}\label{uplus}\
		\begin{itemize}
			\item [(1)]Let $u_-\in\mathcal{S}^-$ and $u_+:=\lim_{t\to+\infty}T^+_tu_-\in\mathcal{S}^+$. Then $T_t^+u_-\leqslant u_-,\ \forall t\geqslant 0$ and $u_+\leqslant u_-$.
			\item [(2)]Let $u_+\in\mathcal{S}^+$ and $u_-:=\lim_{t\to+\infty}T^-_tu_+\in\mathcal{S}^-$. Then $T_t^-u_+\geqslant u_+,\ \forall t\geqslant 0$ and $u_-\geqslant u_+$.
		\end{itemize}
	\end{corollary}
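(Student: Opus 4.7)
My plan is to use the three results that the paper itself cites as the ingredients: Proposition \ref{prop9} (weak KAM solutions are fixed points of the corresponding semigroup), Corollary \ref{prop10} (the duality $v\leqslant T^-_t u \Leftrightarrow T^+_t v\leqslant u$), and Proposition \ref{prop6} (existence of the uniform limits). The two statements are symmetric, so I would prove (1) in detail and then invoke the analogous argument for (2).

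For (1), the first task is to establish the monotonicity bound $T^+_t u_- \leqslant u_-$. Since $u_-\in\mathcal{S}^-$, Proposition \ref{prop9} gives $T^-_t u_- = u_-$ for every $t\geqslant 0$, which in particular yields the trivial inequality $u_- \leqslant T^-_t u_-$. I would then apply Corollary \ref{prop10} with $v = u_-$ and $u = u_-$; the "$\Rightarrow$" direction of that corollary immediately delivers $T^+_t u_- \leqslant u_-$ for all $t\geqslant 0$. Passing to the limit as $t\to+\infty$ along the convergence guaranteed by Proposition \ref{prop6}(1) (the limit defining $u_+$), the pointwise inequality is preserved and gives $u_+\leqslant u_-$.

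For (2), I would mirror the previous step. Since $u_+\in\mathcal{S}^+$, Proposition \ref{prop9} gives $T^+_t u_+ = u_+$, so in particular $T^+_t u_+ \leqslant u_+$. Applying the "$\Leftarrow$" direction of Corollary \ref{prop10} with $v = u_+$ and $u = u_+$ yields $u_+ \leqslant T^-_t u_+$ for every $t\geqslant 0$. Then using Proposition \ref{prop6}(2) to pass to the uniform limit as $t\to+\infty$ produces $u_+\leqslant u_-$, which is the desired inequality.

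There is no real obstacle here: every inequality follows by a direct bootstrapping from the fixed-point property via the duality in Corollary \ref{prop10}, and the passage to the limit is legitimate because Proposition \ref{prop6} provides uniform (hence pointwise) convergence. The only delicate point, if any, is to confirm that the defining limits in the statement of the corollary exist in $\mathcal{S}^{\pm}$ as asserted, but this is precisely the content of Proposition \ref{prop6} and thus need not be reproven.
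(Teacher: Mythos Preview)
Your proof is correct and follows exactly the approach the paper indicates: the paper states the corollary as a direct consequence of Proposition~\ref{prop9}, Corollary~\ref{prop10}, and Proposition~\ref{prop6}, and indeed the argument you spell out (fixed-point property $\Rightarrow$ trivial inequality $\Rightarrow$ duality via Corollary~\ref{prop10} $\Rightarrow$ pass to the limit) is precisely what is intended---the same steps appear explicitly inside the proof of Proposition~\ref{prop6} itself.
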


	\subsubsection{Calibrated curves,  1-graphs of backward weak KAM solutions: $\Lambda_{u_-}$}
	
	Let $u_-\in\mathcal{S}^-$. Denote by $l$ the Lipschitz constant of $u_-$.
	\begin{proposition}\label{houjia}
		Let $u_-\in\mathcal{S}^-$ and let $\g:[a,b]\to M$ be a $(u_-,L,0)$-calibrated curve. Then $\gamma\in C^1((a,b),M)$ and $\|\dot{\gamma}(t)\|\leqslant C_{u_-,l}$, $\forall t\in(a,b)$, where $C_{u_-,l}$ depends only on $u_-$ and its Lipschitz constant $l$. Meanwhile, for any $t_1$,  $t_2\in[a,b]$ with $t_1<t_2$, we have
		\[
		u_-(\g(t_2))=h_{\g(t_1),u_-(\g(t_1))}(\g(t_2),t_2-t_1).
		\]
	\end{proposition}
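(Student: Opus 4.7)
The plan is to leverage the fixed-point characterization $u_-=T^-_tu_-$ from Proposition \ref{prop9}, together with the minimizer structure from Propositions \ref{propA1} and \ref{prop4}, and to close with a Hamiltonian-level argument for the speed bound. The three conclusions of the proposition emerge in order.

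For the action identity, set $\eta(s):=\gamma(s+t_2)$ for $s\in[-(t_2-t_1),0]$, so that $\eta(0)=\gamma(t_2)$ and $\eta(-(t_2-t_1))=\gamma(t_1)$. Using $u_-=T^-_\tau u_-$ to rewrite the third argument of $L$, the calibration identity transforms into
\[
u_-(\gamma(t_2))=u_-(\eta(-(t_2-t_1)))+\int_{-(t_2-t_1)}^{0}L\bigl(\eta(\tau),\dot\eta(\tau),T^-_{\tau+t_2-t_1}u_-(\eta(\tau))\bigr)\,d\tau,
\]
which is precisely the assertion that $\eta$ attains the infimum defining $T^-_{t_2-t_1}u_-(\gamma(t_2))$. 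Hence $\eta$ is a minimizer in the sense of Proposition \ref{propA1}, and part (3) of that proposition gives $T^-_{t_2-t_1}u_-(\gamma(t_2))=h_{\gamma(t_1),u_-(\gamma(t_1))}(\gamma(t_2),t_2-t_1)$; combining with $u_-=T^-_{t_2-t_1}u_-$ yields the displayed action identity.

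The $C^1$ regularity now follows: by Proposition \ref{propA1}(4), $\eta$ is also a minimizer of $h_{\gamma(t_1),u_-(\gamma(t_1))}(\gamma(t_2),t_2-t_1)$, and Proposition \ref{prop4} ensures $\eta$, and therefore $\gamma|_{[t_1,t_2]}$, is of class $C^1$; letting $[t_1,t_2]$ exhaust the compact subintervals of $(a,b)$ gives $\gamma\in C^1((a,b),M)$. Proposition \ref{prop4} also identifies the lift $(\gamma(t),p(t),u_-(\gamma(t)))$, with $p(t):=\frac{\partial L}{\partial \dot{x}}(\gamma(t),\dot\gamma(t),u_-(\gamma(t)))$, as an orbit of the contact Hamiltonian system \eqref{c}. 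A direct computation along \eqref{c} then shows that $E(t):=H(\gamma(t),p(t),u_-(\gamma(t)))$ satisfies $\dot E=-\frac{\partial H}{\partial u}\cdot E$, so $E$ vanishes on all of $(a,b)$ as soon as it vanishes at a single point. Since $u_-$ is a Lipschitz viscosity solution of $H(x,Du,u)=0$, at any $t_0\in(a,b)$ where $u_-$ is differentiable at $\gamma(t_0)$ the calibration identity and the domination $u_-\prec L$ force $p(t_0)=Du_-(\gamma(t_0))$ and $E(t_0)=0$. Once $E\equiv 0$, the relation $H(\gamma,p,u_-\circ\gamma)\equiv 0$, combined with superlinearity of $H$ in $p$ and the bound $|u_-|\leqslant\|u_-\|_\infty$, gives a uniform bound on $\|p(t)\|$; then $\dot\gamma=\frac{\partial H}{\partial p}$ together with continuity of $\frac{\partial H}{\partial p}$ on the corresponding compact set yields $\|\dot\gamma(t)\|\leqslant C_{u_-,l}$.

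The main obstacle is producing a differentiability point $t_0$ of $u_-$ along $\gamma$: the almost-everywhere differentiability of the Lipschitz function $u_-$ on $M$ does not \emph{a priori} transfer to the one-dimensional parameter $t$. I expect this to be handled by a standard weak KAM argument, for example by exploiting the semiconcavity of the kernel $y\mapsto h_{\gamma(s),u_-(\gamma(s))}(y,\tau)$ together with a density argument, or by identifying $p(t)$ as a reachable gradient of $u_-$ at $\gamma(t)$ and thereby obtaining $\|p(t)\|\leqslant l$ directly.
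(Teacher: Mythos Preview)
Your argument for the action identity and the $C^1$ regularity is essentially the paper's argument: recognize $\gamma|_{[t_1,t_2]}$ as a minimizer of $T^-_{t_2-t_1}u_-(\gamma(t_2))$ via the calibration identity and the fixed-point property $u_-=T^-_tu_-$, then invoke Proposition~\ref{propA1}(3),(4) and Proposition~\ref{prop4}. These parts are fine.

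The speed bound is where you diverge from the paper, and where you leave a genuine gap. Your route through the energy identity $\dot E=-\frac{\partial H}{\partial u}E$ requires locating a single $t_0$ with $E(t_0)=0$, which you correctly identify as problematic: almost-everywhere differentiability of $u_-$ on $M$ gives you nothing along the one-dimensional curve $\gamma$, and the differentiability result you would need (Lemma~\ref{diffe}) has not yet been proved at this point in the paper. Your suggested workarounds (semiconcavity of the kernel, reachable gradients) could likely be made to work, but they are not carried out.

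The paper avoids all of this with a completely elementary argument that uses only the calibration identity, the Lipschitz constant $l$ of $u_-$, and the superlinearity (L2) of $L$. From calibration,
\[
l\cdot d(\gamma(t_1),\gamma(t_2))\geqslant u_-(\gamma(t_2))-u_-(\gamma(t_1))=\int_{t_1}^{t_2}L\bigl(\gamma(s),\dot\gamma(s),u_-(\gamma(s))\bigr)\,ds.
\]
Superlinearity gives, for $(x,\dot x)$ with $|u_-(x)|\leqslant\|u_-\|_\infty$, the lower bound $L(x,\dot x,u_-(x))\geqslant(l+1)\|\dot x\|-C_{u_-,l}$. Combining and using $\int\|\dot\gamma\|\geqslant d(\gamma(t_1),\gamma(t_2))$ yields $d(\gamma(t_1),\gamma(t_2))\leqslant C_{u_-,l}(t_2-t_1)$, hence $\|\dot\gamma\|\leqslant C_{u_-,l}$. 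This is both simpler and logically prior to any differentiability considerations for $u_-$; it is the argument you should use here.
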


	\begin{proof}
		From the definition of $(u_-,L,0)$-calibrated curve and Proposition \ref{prop9}, we get
		$$T^-_{t_2-t_1} u_-(\gamma(t_2))=u_-(\gamma(t_2))=u_-(\gamma(t_1))+\int^{t_2}_{t_1}L\big(\gamma(s),\dot{\gamma}(s),T^-_{s-t_1}u_-(\gamma(s))\big)ds.$$
		That means $\gamma|_{[t_1,t_2]}$ is a minimizer of $T^-_{t_2-t_1} u_-(\gamma(t_2))$. Thus $\gamma$ belongs to $C^1$. By Proposition \ref{propA1},  $\gamma|_{[t_1,t_2]}$ is a minimizer of $h_{\gamma(t_1),u_-(\gamma(t_1))}(\gamma(t_2),t_2-t_1)$ and
		$$u_-(\gamma(t_2))=T^-_{t_2-t_1}u_-(\gamma(t_2))=h_{\gamma(t_1),u_-(\gamma(t_1))}(\gamma(t_2),t_2-t_1).$$
		Moreover, by the definition of calibrated curve, we have
		\begin{equation}\label{eq1.111}
			l\cdot d(\gamma(t_1),\gamma(t_2))\geqslant u_-(\gamma(t_2))-u_-(\gamma(t_1))=\int^{t_2}_{t_1}L\big(\gamma(s),\dot{\gamma}(s),u_-(\gamma(s))\big)ds.
		\end{equation}
		In view of (L2), we get that
		\begin{equation}\label{eq1.222}
			L\big(\gamma,\dot{\gamma},u_-(\gamma)\big)\geqslant(l+1)\|\dot{\gamma}\|-C_{u_-,l}
		\end{equation}
		where the constant $C_{u_-,l}>0$ depends  on $u_-$ and $l$. By \eqref{eq1.111} and \eqref{eq1.222}, we deduce that
		$$d(\gamma(t_1),\gamma(t_2))\leqslant C_{u_-,l}\cdot(t_2-t_1),$$
		which shows that $\|\dot{\gamma}(t)\|\leqslant C_{u_-,l},\ \forall t\in(a,b)$.
	\end{proof}

	The following two lemmas are direct consequences of the definition of dominited functions and Remark \ref{ll}.
	
	\begin{lemma}(\cite[Lemma 4.2]{WWY19})\label{labell}
		Let $u\in \mathcal{S}^-$ and $\gamma:[a,b]\rightarrow M$ be a $(u,L,0)$-calibrated curve. If $u$ is differentiable at $\gamma(t)$ for some $t\in [a,b]$, then we have that
		\[
		H\big(\gamma(t),Du(\gamma(t)),u(\gamma(t))\big)=0,\qquad Du(\gamma(t))=\frac{\partial L}{\partial \dot{x}}\big(\gamma(t),\dot{\gamma}(t),u(\gamma(t))\big).
		\]
	\end{lemma}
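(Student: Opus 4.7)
The plan is to use the two defining properties of a backward weak KAM solution together with the Fenchel--Young inequality coming from the Legendre duality between $H$ and $L$. Recall that by construction $H(x,p,u)=\sup_{\dot x\in T_xM}\bigl(\langle p,\dot x\rangle_x-L(x,\dot x,u)\bigr)$, with equality at the unique $\dot x$ satisfying $p=\frac{\partial L}{\partial \dot x}(x,\dot x,u)$.

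\emph{Step 1: the inequality $H(\gamma(t),Du(\gamma(t)),u(\gamma(t)))\leqslant 0$.} Assume first that $t\in(a,b)$. Fix any $v\in T_{\gamma(t)}M$ and choose a $C^1$ curve $\sigma:[0,\eps_0]\to M$ with $\sigma(0)=\gamma(t)$ and $\dot\sigma(0)=v$. Since $u\prec L$, for every $\eps\in(0,\eps_0]$ we have
\[
u(\sigma(\eps))-u(\gamma(t))\leqslant \int_0^\eps L\bigl(\sigma(s),\dot\sigma(s),u(\sigma(s))\bigr)\,ds.
\]
Dividing by $\eps$ and letting $\eps\to 0^+$, the left-hand side tends to $\langle Du(\gamma(t)),v\rangle$ by differentiability of $u$ at $\gamma(t)$, and the right-hand side tends to $L(\gamma(t),v,u(\gamma(t)))$ by continuity. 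Thus
\[
\langle Du(\gamma(t)),v\rangle -L(\gamma(t),v,u(\gamma(t)))\leqslant 0\qquad\text{for every }v\in T_{\gamma(t)}M,
\]
and taking the supremum over $v$ gives $H(\gamma(t),Du(\gamma(t)),u(\gamma(t)))\leqslant 0$.

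\emph{Step 2: the reverse inequality via calibration.} Since $\gamma$ is a $(u,L,0)$-calibrated curve and, by Proposition \ref{houjia}, is of class $C^1$ on $(a,b)$, for small $\eps>0$ we have the genuine equality
\[
u(\gamma(t+\eps))-u(\gamma(t))=\int_t^{t+\eps}L\bigl(\gamma(s),\dot\gamma(s),u(\gamma(s))\bigr)\,ds.
\]
Dividing by $\eps$ and letting $\eps\to 0^+$ now yields
\[
\langle Du(\gamma(t)),\dot\gamma(t)\rangle = L\bigl(\gamma(t),\dot\gamma(t),u(\gamma(t))\bigr).
\]
Combined with Step 1, the velocity $v=\dot\gamma(t)$ attains the supremum in the Fenchel formula for $H$. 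Therefore $H(\gamma(t),Du(\gamma(t)),u(\gamma(t)))=0$, and the uniqueness of the maximizer (coming from the strict convexity of $L$ in $\dot x$, assumption (L1)) forces $Du(\gamma(t))=\frac{\partial L}{\partial \dot x}(\gamma(t),\dot\gamma(t),u(\gamma(t)))$.

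\emph{Endpoints.} The one place where a little care is needed, and probably the only real obstacle, is when $t\in\{a,b\}$, since Proposition \ref{houjia} only guarantees $\gamma\in C^1$ on the open interval $(a,b)$. This can be handled by applying the above argument on a slight one-sided extension of $\gamma$ as a minimizer, or equivalently by replacing the two-sided limits above with the appropriate one-sided limits and using that $\dot\gamma$ admits limits at the endpoints (because a calibrated curve solves \eqref{L} on $(a,b)$ and its velocity is uniformly bounded by Proposition \ref{houjia}). Once this is in place the rest of the argument is identical.
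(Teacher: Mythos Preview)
Your proof is correct and is precisely the standard argument the paper has in mind: the text introduces this lemma by saying it is a ``direct consequence of the definition of dominated functions,'' and your Steps~1--2 spell out exactly that computation (domination gives $H\leqslant 0$ at every differentiability point, calibration forces equality along $\dot\gamma(t)$, and strict convexity pins down $Du(\gamma(t))$). One cosmetic remark: the restriction ``assume first $t\in(a,b)$'' is not needed for Step~1, since the domination argument only uses differentiability of $u$ at the point $\gamma(t)$ and a forward test curve $\sigma$; the interior assumption is only relevant in Step~2 where you need $\dot\gamma(t)$, and your endpoint discussion handles that adequately.
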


	\begin{lemma}(\cite[Lemma 4.3]{WWY19})\label{diffe}
		Given any $a>0$, let $u\prec L$ and let $\gamma:[-a,a]\rightarrow M$ be a $(u,L,0)$-calibrated curve. Then $u$ is differentiable at $\gamma(0)$.
	\end{lemma}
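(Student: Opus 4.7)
The plan is to sandwich $u$ between a semi-concave upper barrier $\phi^+$ and a semi-convex lower barrier $\phi^-$, both touching $u$ at $\gamma(0)$, and then read off differentiability at that common contact point. Fix a small $\delta\in(0,a)$ and set
\[
\phi^+(x):=h_{\gamma(-\delta),u(\gamma(-\delta))}(x,\delta),\qquad \phi^-(x):=h^{\gamma(\delta),u(\gamma(\delta))}(x,\delta).
\]
Since $\gamma$ is $(u,L,0)$-calibrated, the map $s\mapsto u(\gamma(s))$ solves the characteristic ODE $\dot v=L(\gamma,\dot\gamma,v)$ along $\gamma$, so it is exactly the $u$-coordinate produced by the implicit dynamics defining $h_{\cdot,\cdot}$ and $h^{\cdot,\cdot}$ along $\gamma$. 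Hence $\gamma|_{[-\delta,0]}$ is an admissible competitor giving $\phi^+(\gamma(0))\leqslant u(\gamma(0))$, and (via Proposition \ref{addprop1} together with the monotonicity in Proposition \ref{prop3}(3)) the analogous use of $\gamma|_{[0,\delta]}$ yields $\phi^-(\gamma(0))\geqslant u(\gamma(0))$.

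The central step is the reverse sandwich $\phi^-(x)\leqslant u(x)\leqslant\phi^+(x)$ in a neighborhood of $\gamma(0)$. I would prove $u\leqslant\phi^+$ as follows. Let $\eta\colon[0,\delta]\to M$ be any Lipschitz competitor with $\eta(0)=\gamma(-\delta)$ and $\eta(\delta)=x$, and let $w$ solve $w(0)=u(\gamma(-\delta))$, $\dot w(s)=L(\eta(s),\dot\eta(s),w(s))$. From $u\prec L$ one has $u(\eta(s))-u(\eta(0))\leqslant\int_0^sL(\eta,\dot\eta,u(\eta))\,d\tau$; subtracting this from the defining identity for $w$ and setting $f(s):=w(s)-u(\eta(s))$ gives
\[
f(s)\geqslant\int_0^s\bigl[L(\eta,\dot\eta,w)-L(\eta,\dot\eta,u(\eta))\bigr]\,d\tau.
\]
By (L3) the integrand is bounded below by $-\lambda|f|$, so a one-sided Gronwall argument applied to any maximal subinterval where $f<0$ forces $f\geqslant 0$ throughout $[0,\delta]$; hence $w(\delta)\geqslant u(x)$, and passing to the infimum over $\eta$ yields $\phi^+(x)\geqslant u(x)$. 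The bound $\phi^-\leqslant u$ is proved symmetrically, converting through Proposition \ref{addprop1} into the same type of forward inequality. Combined with the first paragraph, these give the equalities $\phi^\pm(\gamma(0))=u(\gamma(0))$.

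To conclude I invoke the semi-concavity of $x\mapsto h_{x_0,u_0}(x,\delta)$ and semi-convexity of $x\mapsto h^{x_0,u_0}(x,\delta)$ established in the contact Tonelli theory of \cite{WWY17,WWY192}. These yield $\partial^+\phi^+(\gamma(0))\neq\emptyset$ and $\partial^-\phi^-(\gamma(0))\neq\emptyset$; since $u\leqslant\phi^+$ and $\phi^-\leqslant u$ with equality at $\gamma(0)$, we transfer $\partial^+\phi^+(\gamma(0))\subseteq\partial^+u(\gamma(0))$ and $\partial^-\phi^-(\gamma(0))\subseteq\partial^-u(\gamma(0))$. A first-order expansion of $\phi^+(x)-\phi^-(x)\geqslant 0$ at the touch point shows that any $q^+\in\partial^+\phi^+(\gamma(0))$ and $q^-\in\partial^-\phi^-(\gamma(0))$ must satisfy $\langle q^+-q^-,v\rangle\geqslant 0$ for every $v\in T_{\gamma(0)}M$, hence $q^+=q^-$. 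Thus both super- and sub-differentials of $u$ at $\gamma(0)$ contain a common vector $p$, and because $u$ is Lipschitz by Remark \ref{ll}, this forces $u$ to be differentiable at $\gamma(0)$ with $Du(\gamma(0))=p$. The main obstacle is the one-sided Gronwall step: $u$ is merely Lipschitz along $\eta$, so no classical ODE comparison is available, and $f\geqslant 0$ must be extracted from the sign-constrained integral inequality using only the two-sided bound $|\partial L/\partial u|\leqslant\lambda$ permitted in this non-monotonic setting.
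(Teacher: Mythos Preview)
The paper does not prove this lemma; it is quoted from \cite[Lemma~4.3]{WWY19} with the remark that it is ``a direct consequence of the definition of dominated functions and Remark~\ref{ll}.'' Your barrier/sandwich argument is the standard weak KAM route and is essentially correct, so there is nothing to compare against here beyond noting that you have supplied what the paper outsources.

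One technical point deserves tightening. When you ``pass to the infimum over $\eta$'' to conclude $\phi^+(x)\geqslant u(x)$, you are implicitly using the Carath\'eodory-type identity $h_{x_0,u_0}(x,\delta)=\inf_\eta w_\eta(\delta)$, where $w_\eta$ solves $\dot w=L(\eta,\dot\eta,w)$, $w(0)=u_0$. That identity is true under (L1)--(L3) but is not recorded in this paper. You can bypass it entirely: take a \emph{minimizer} $\gamma^*$ of $h_{\gamma(-\delta),u(\gamma(-\delta))}(x,\delta)$ and set $w(s)=h_{\gamma(-\delta),u(\gamma(-\delta))}(\gamma^*(s),s)$, which by Proposition~\ref{prop4} already satisfies $\dot w=L(\gamma^*,\dot\gamma^*,w)$ with $w(0)=u(\gamma(-\delta))$. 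Your Gronwall step applied to $f(s)=w(s)-u(\gamma^*(s))$ along $\gamma^*$ then gives $f\geqslant 0$ directly, hence $\phi^+(x)=w(\delta)\geqslant u(x)$, with no infimum needed. The rest of your argument (semi-concavity/semi-convexity of $h$ and $h^{\cdot,\cdot}$, transfer of one-sided differentials, and the touching-point conclusion) is standard and sound.
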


	Next proposition is substantially the same as Corollary \ref{inva} below. Different expressions are given in Lagrangian and Hamiltnonian frameworks, respectively.
	
	\begin{proposition}\label{propA5}
		If $u_-\in \mathcal{S}^-$ and $\gamma:[a,b]\to M$ is a $(u_-,L,0)$-calibrated curve, then
		$$\Phi^L_t\big(\gamma(s),\dot{\gamma}(s),u_-(\gamma(s))\big)=\big(\gamma(s+t),\dot{\gamma}(s+t),u_-(\gamma(s+t))\big),$$
		for any $s\in\R,\ t\geqslant0$ satisfying $a\leqslant s\leqslant s+t\leqslant b$.
	\end{proposition}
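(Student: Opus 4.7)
The plan is to reduce the assertion to the implicit variational principle (Proposition \ref{prop4}) by exhibiting the calibrated curve $\gamma$ as a minimizer of the implicit action function on every subinterval, and then transporting the conclusion from the contact Hamiltonian flow $\Phi^H_t$ to the Lagrangian flow $\Phi^L_t$ through the diffeomorphism $\bar{\mathcal{L}}$.

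First, I would fix $s\in\R$ and $t\geqslant 0$ with $a\leqslant s\leqslant s+t\leqslant b$. By Proposition \ref{houjia}, the curve $\gamma$ is of class $C^1$ on $(a,b)$ and, for every pair $s_1<s_2$ in $[a,b]$,
\[
u_-(\gamma(s_2))=h_{\gamma(s_1),u_-(\gamma(s_1))}(\gamma(s_2),s_2-s_1),
\]
with $\gamma|_{[s_1,s_2]}$ being a minimizer of the right-hand side. Applied to the pair $(s,s+\tau)$ for each $\tau\in(0,t]$, this tells us both that $\gamma|_{[s,s+t]}$ is a minimizer of $h_{\gamma(s),u_-(\gamma(s))}(\gamma(s+t),t)$, and that the natural candidate for the $u$-coordinate,
\[
u(\tau):=u_-(\gamma(s+\tau))=h_{\gamma(s),u_-(\gamma(s))}(\gamma(s+\tau),\tau),
\]
coincides with the action function evaluated along the minimizer.

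Next, I would invoke Proposition \ref{prop4} with $x_0=\gamma(s)$ and $u_0=u_-(\gamma(s))$. Setting
\[
x(\tau):=\gamma(s+\tau),\qquad p(\tau):=\frac{\partial L}{\partial \dot{x}}\big(\gamma(s+\tau),\dot{\gamma}(s+\tau),u(\tau)\big),\qquad u(\tau)\text{ as above,}
\]
Proposition \ref{prop4} yields that $(x(\tau),p(\tau),u(\tau))$ is an orbit of the contact Hamiltonian system \eqref{c} on $[0,t]$ with $x(0)=\gamma(s)$ and $u(0)=u_-(\gamma(s))$. Pushing this orbit through the Legendre diffeomorphism $\bar{\mathcal{L}}^{-1}$, and using that $\bar{\mathcal{L}}^{-1}$ conjugates $\Phi^H_\tau$ to $\Phi^L_\tau$ (with the $u$-coordinate preserved), one obtains
\[
\Phi^L_\tau\big(\gamma(s),\dot{\gamma}(s),u_-(\gamma(s))\big)=\big(\gamma(s+\tau),\dot{\gamma}(s+\tau),u_-(\gamma(s+\tau))\big), \quad \tau\in[0,t],
\]
which is exactly the desired identity at $\tau=t$.

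The only subtle point is consistency of the $u$-coordinate under the passage between the two systems: one must verify that the $u$-component of the Lagrangian orbit produced by $\bar{\mathcal{L}}^{-1}$ evolves according to $\dot{u}=L(x,\dot{x},u)$, and that this coincides with $\frac{d}{d\tau}u_-(\gamma(s+\tau))$. The first follows from the Legendre identity $\langle\dot{x},p\rangle-H(x,p,u)=L(x,\dot{x},u)$ applied to the third equation of \eqref{c}; the second follows from differentiating the calibration identity \eqref{cali1} for $u_-$ along $\gamma$. I expect this bookkeeping to be the main (but essentially routine) obstacle, while the conceptual core is the identification of every subarc of $\gamma$ as a minimizer of an implicit action function.
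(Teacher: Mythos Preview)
Your approach is correct and follows the same line as the paper's: identify subarcs of $\gamma$ as minimizers of the implicit action function via Proposition~\ref{houjia}, apply Proposition~\ref{prop4} to obtain an orbit of \eqref{c}, and then transfer to $\Phi^L$ through the Legendre conjugation $\Phi^L_t=\bar{\mathcal{L}}\circ\Phi^H_t\circ\bar{\mathcal{L}}^{-1}$. One small slip: since $\bar{\mathcal{L}}:T^*M\times\R\to TM\times\R$ in this paper, the Hamiltonian orbit is pushed forward by $\bar{\mathcal{L}}$, not $\bar{\mathcal{L}}^{-1}$; the paper also routes through Lemmas~\ref{labell} and~\ref{diffe} to make $p(s)=Du_-(\gamma(s))$ explicit before computing the conjugation, a step your more direct argument legitimately avoids.
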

	\begin{proof}
		For any $\tau\in(0,b-a)$, by Proposition \ref{houjia}, we have
		$$u_-(\gamma(s))=T^-_{s-b+\tau}u_-(\gamma(s))=h_{\gamma(b-\tau),u_-(\gamma(b-\tau))}(\gamma(s),s-b+\tau),\quad \forall s\in[b-\tau,b].$$
		Since $\gamma$ is a minimizer of $h_{\gamma(b-\tau),u_-(\gamma(b-\tau))}(\gamma(b),\tau)$, from Proposition \ref{prop4}, we deduce that
		$$\dot{\gamma}(s)=\frac{\partial H}{\partial p}\big(\gamma(s),p(s),u(s)\big),\quad \forall s\in[b-\tau,b],$$
		where $u(s)=h_{\gamma(b-\tau),u_-(\gamma(b-\tau))}(\gamma(s),s-b+\tau)=u_-(\gamma(s))$ and $p(s)=\frac{\partial L}{\partial \dot{x}}\big(\gamma(s),\dot{\gamma}(s),u(s)\big)$. From Lemma \ref{labell} and \ref{diffe}, we get that  $u_-$ is differentiable at $\gamma(s)$ for any  $s\in(b-\tau,b)$ and
		$$Du_-(\gamma(s))=\frac{\partial L}{\partial \dot{x}}\big(\gamma(s),\dot{\gamma}(s),u_-(\gamma(s))\big)=p(s),\quad \forall s\in(b-\tau,b).$$
		Thus
		\begin{equation}\label{eqA5}
			\dot{\gamma}(s)=\frac{\partial H}{\partial p}\big(\gamma(s),Du_-(\gamma(s)),u_-(\gamma(s))\big),\quad \forall s\in(b-\tau,b).
		\end{equation}
		Since $\Phi^L_t=\bar{\mathcal{L}}\circ\Phi^H_t\circ\bar{\mathcal{L}}^{-1}$, then by  \eqref{eqA5}, we have
		\begin{equation*}
			\begin{split}
				&\Phi^L_t\big(\gamma(s),\dot{\gamma}(s),u_-(\gamma(s))\big)\\
				=&\bar{\mathcal{L}}\circ\Phi^H_t\circ\bar{\mathcal{L}}^{-1}\big(\gamma(s),\dot{\gamma}(s),u_-(\gamma(s))\big)\\
				=&\bar{\mathcal{L}}\circ\Phi^H_t\circ\bar{\mathcal{L}}^{-1}\big(\gamma(s),\frac{\partial H}{\partial p}(\gamma(s),Du_-(\gamma(s)),u_-(\gamma(s))),u_-(\gamma(s))\big)\\
				=&\bar{\mathcal{L}}\circ\Phi^H_t\big(\gamma(s),Du_-(\gamma(s)),u_-(\gamma(s))\big)\\
				=&\bar{\mathcal{L}}\circ\Phi^H_t\big(\gamma(s),p(s),u(s)\big)\\
				=&\bar{\mathcal{L}}\big(\gamma(s+t),p(s+t),u(s+t)\big)\\
				=&\bar{\mathcal{L}}\big(\gamma(s+t),Du_-(\gamma(s+t)),u_-(\gamma(s+t))\big)\\
				=&\big(\gamma(s+t),\dot{\gamma}(s+t),u_-(\gamma(s+t))\big).
			\end{split}
		\end{equation*}
		for any $t\in\R^+$ satisfying $s+t\leqslant b$.
	\end{proof}

	Recall
	$$\Lambda_{u_-}:=\operatorname{cl}\Big(\big\{(x,p,u): x \ \text{is a point of differentiability of} \ u_{-}, p=Du_{-}(x),u=u_{-}(x)\big\}\Big).$$
	The next result is devoted to the relation between $(u_-,L,0)$-calibrated curves and $\Lambda_{u_-}$.
	
	\begin{corollary}\label{propA6}
		Let $u_-\in \mathcal{S}^-$. Let $\mathcal{C}$ be the set of all $(u_-,L,0)$-calibrated curves $\g:[-a,0]\to M$ with $0<a\leqslant+\infty$.  Define
		\[
		\tilde{\mathcal{C}}:=\Big\{\big(\gamma(t),\dot{\gamma}(t),u_-(\gamma(t))\big): \gamma\in\mathcal{C},\ t\in \mathrm{Dom}(\g)\Big\}.
		\]
		Then  $\mathrm{cl}(\tilde{\mathcal{C}})=\bar{\mathcal{L}}(\Lambda_{u_-})$.
	\end{corollary}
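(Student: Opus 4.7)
The plan is to prove both inclusions directly, treating the interior of the calibrated curves first and then handling the endpoints by one-sided limits, finally taking closures on both sides.

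For the inclusion $\mathrm{cl}(\tilde{\mathcal{C}})\subset\bar{\mathcal{L}}(\Lambda_{u_-})$: Let $\gamma\in\mathcal{C}$ with $\mathrm{Dom}(\gamma)=[-a,0]$ (or $(-\infty,0]$), and pick an interior time $t\in(-a,0)$. By Lemma \ref{diffe}, $u_-$ is differentiable at $\gamma(t)$; by Lemma \ref{labell}, $Du_-(\gamma(t))=\frac{\partial L}{\partial\dot x}(\gamma(t),\dot\gamma(t),u_-(\gamma(t)))$. The Legendre duality that defines $\bar{\mathcal{L}}$ then gives $\dot\gamma(t)=\frac{\partial H}{\partial p}(\gamma(t),Du_-(\gamma(t)),u_-(\gamma(t)))$, so
\[
(\gamma(t),\dot\gamma(t),u_-(\gamma(t)))=\bar{\mathcal{L}}(\gamma(t),Du_-(\gamma(t)),u_-(\gamma(t)))\in\bar{\mathcal{L}}(\Lambda_{u_-}).
\]
For an endpoint time ($t=0$ or $t=-a$), we approximate by interior times using continuity of $\gamma$ and $\dot\gamma$ (Proposition \ref{houjia} gives $C^1$ regularity in the open interval), and we use that $\Lambda_{u_-}$ is closed (so $\bar{\mathcal{L}}(\Lambda_{u_-})$ is closed, because $\bar{\mathcal{L}}$ is a diffeomorphism). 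Taking closure preserves the inclusion.

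For the reverse inclusion $\bar{\mathcal{L}}(\Lambda_{u_-})\subset\mathrm{cl}(\tilde{\mathcal{C}})$: It suffices to show each point of the form $\bar{\mathcal{L}}(x,Du_-(x),u_-(x))$, with $x$ a differentiability point of $u_-$, lies in $\mathrm{cl}(\tilde{\mathcal{C}})$, then close up. Since $u_-\in\mathcal{S}^-$, Definition \ref{bwkam}(2) supplies a $C^1$ $(u_-,L,0)$-calibrated curve $\gamma:(-\infty,0]\to M$ with $\gamma(0)=x$; this $\gamma$ belongs to $\mathcal{C}$. By Proposition \ref{pr22} applied to $\gamma|_{[-1,0]}$, $(\gamma,\dot\gamma,u_-\circ\gamma)$ is the projection of a contact Lagrangian orbit, which extends $C^1$ through $t=0$, so the left limit $\dot\gamma(0):=\lim_{s\to 0^-}\dot\gamma(s)$ exists. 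Thus
\[
(x,\dot\gamma(0),u_-(x))=\lim_{s\to 0^-}(\gamma(s),\dot\gamma(s),u_-(\gamma(s)))\in\mathrm{cl}(\tilde{\mathcal{C}}).
\]
Since $u_-$ is differentiable at $x=\gamma(0)$, Lemma \ref{labell} (which is stated for any $t\in[a,b]$, including endpoints) gives $Du_-(x)=\frac{\partial L}{\partial\dot x}(x,\dot\gamma(0),u_-(x))$, hence $\dot\gamma(0)=\frac{\partial H}{\partial p}(x,Du_-(x),u_-(x))$, so the limit point equals $\bar{\mathcal{L}}(x,Du_-(x),u_-(x))$, as required. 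Because $\mathrm{cl}(\tilde{\mathcal{C}})$ is closed, closing up yields the desired inclusion.

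The only subtlety I anticipate is the endpoint handling. In the forward inclusion this is harmless (approximate by interior points). In the backward inclusion one really does need a one-sided-$C^1$ extension of the calibrated curve at $t=0$, which is where Proposition \ref{pr22} (identifying calibrated curves with Lagrangian orbits) and the fact that Lemma \ref{labell} allows closed endpoints become essential. No single step is long; the main care is to ensure every invocation of the Legendre identity $\dot\gamma=\frac{\partial H}{\partial p}(\cdot,Du_-,\cdot)$ is legitimate, which is guaranteed exactly by the differentiability-at-$x$ hypothesis that we threw in when selecting points of $\Lambda_{u_-}$.
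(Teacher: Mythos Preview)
Your proof is correct and follows essentially the same approach as the paper's (both directions rest on Lemmas \ref{diffe} and \ref{labell} and the Legendre identity $\dot\gamma=\frac{\partial H}{\partial p}(\gamma,Du_-(\gamma),u_-(\gamma))$); your backward inclusion is in fact a bit cleaner, handling differentiability points first and then closing up, whereas the paper runs a diagonal argument. One small fix: Proposition \ref{pr22} is stated for globally minimizing curves on all of $\mathbf{R}$, so for the one-sided $C^1$ extension at $t=0$ you should instead cite Proposition \ref{houjia} together with Proposition \ref{prop4} (or simply Corollary \ref{inva}).
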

	\begin{proof}
		For any $\gamma\in\mathcal{C}$, by \eqref{eqA5} we get that
		$$\big(\gamma(s),\dot{\gamma}(s),u_-(\gamma(s))\big)=\bar{\mathcal{L}}\big(\gamma(s),Du_-(\gamma(s)),u_-(\gamma(s))\big).$$
		Thus, we have  $\tilde{\mathcal{C}}\subset\bar{\mathcal{L}}(\Lambda_{u_-})$, which implies that  $\mathrm{cl}(\tilde{\mathcal{C}})\subset\bar{\mathcal{L}}(\Lambda_{u_-})$ since $\Lambda_{u_-}$ is closed.

		On the other hand, for any point $(x,p,u)\in \Lambda_{u_-}$, there is a sequence $\{x_n\}\subset M$ such that  $u_-$ is differentiable at $x_n$ and
		$$\big(x_n,Du_-(x_n),u_-(x_n)\big)\longrightarrow(x,p,u),\quad   n\to+\infty.$$
		Due to $u_-\in \mathcal{S}^-$, we can find a $(u_-,L,0)$-calibrated curve $\gamma_n:[-a,0]\to M$ for each $x_n$ with $\gamma_n(0)=x_n$. From Lemma \ref{diffe}, we deduce that $u_-$ is differentiable at $\gamma_n(t)$ for any $t\in(-a,0)$. By Lemma \ref{labell}, for each $\gamma_n$ we can find a sequence $\{t^n_m\}_m$  such that
		$$t^n_m\to0,\quad m\to+\infty,\quad   Du_-(\gamma_n(t^n_m))=\frac{\partial L}{\partial \dot{x}}\big(\gamma_n(t^n_m),\dot{\gamma}_n(t^n_m),u_-(\gamma_n(t^n_m))\big).$$
		Since $u_-$ is differentiable at $x_n=\gamma_n(0)$, we get that
		$$\gamma_n(t^n_m)\to x_n,\quad Du_-(\gamma_n(t^n_m))\to Du_-(x_n),\quad m\to+\infty.$$
		Therefore, by the diagonal argument, one can get that
		$$\big(\gamma_{n_k}(t^{n_k}_{n_k}),Du_-(\gamma_{n_k}(t^{n_k}_{n_k})),u_-(\gamma_{n_k}(t^{n_k}_{n_k}))\big)\longrightarrow(x,p,u),\quad k\to+\infty.$$
		It implies that $\bar{\mathcal{L}}(x,p,u)\in \mathrm{cl}(\tilde{\mathcal{C}})$.
	\end{proof}

	The following two results are direct consequences of Proposition \ref{pr22}, Proposition \ref{houjia}, Proposition \ref{propA5}, Lemma \ref{labell} and Lemma \ref{diffe}.
	
	\begin{corollary}\label{inva}
		Let $u_-\in \mathcal{S}_-$.
		Let $\gamma:[a,b]\rightarrow M$ be a  $(u_-,L,0)$-calibrated curve. Then $u_-$ is differentiable at $\g(s)$ for all $s\in(a,b)$ and 	
		\[
		\Big(\gamma(s),\frac{\partial L}{\partial \dot{x}}(\g(s),\dot{\g}(s),u_-(\g(s))),u_-(\gamma(s))\Big)
		\]
		is a solution of (\ref{c}).
		Moreover,
		\[
		\big(\gamma(t+s),Du_-(\gamma(t+s)),u_-(\gamma(t+s))\big)=\Phi^H_{s}\big(\gamma(t),Du_-(\gamma(t)),u_-(\gamma(t))\big), \quad\forall t, \ t+s\in[a,b],
		\]
		and
		\[
		H\big(\gamma(s),Du_-(\g(s)),u_-(\gamma(s))\big)=0,\quad Du_-(\g(s))=\frac{\partial L}{\partial \dot{x}}\big(\gamma(s),\dot{\gamma}(s),u_-(\gamma(s))\big).
		\]
	\end{corollary}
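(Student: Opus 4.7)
The plan is to assemble the four conclusions of the corollary one at a time, each being essentially a citation of one of the five listed ingredients, with the only care being the order in which they are applied.

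First I would settle the pointwise differentiability of $u_-$ along the interior of $\gamma$. Fix $s\in(a,b)$ and choose $\eps>0$ so small that $[s-\eps,s+\eps]\subset(a,b)$. The restriction $\gamma|_{[s-\eps,s+\eps]}$ is itself a $(u_-,L,0)$-calibrated curve, and $u_-\prec L$ by the definition of a backward weak KAM solution. Lemma \ref{diffe} then gives differentiability of $u_-$ at $\gamma(s)=\gamma|_{[s-\eps,s+\eps]}(s)$. This justifies writing $Du_-(\gamma(s))$ for every $s\in(a,b)$.

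With differentiability in hand, I would next invoke Lemma \ref{labell} at each such $s$ to obtain both
\[
H\big(\gamma(s),Du_-(\gamma(s)),u_-(\gamma(s))\big)=0,\qquad Du_-(\gamma(s))=\frac{\partial L}{\partial \dot{x}}\big(\gamma(s),\dot{\gamma}(s),u_-(\gamma(s))\big),
\]
which is the last displayed line of the statement, and which also provides the translation between the Lagrangian and Hamiltonian pictures needed for the remaining assertions. For the claim that the indicated triple solves \eqref{c}, Proposition \ref{houjia} supplies the identity $u_-(\gamma(t_2))=h_{\gamma(t_1),u_-(\gamma(t_1))}(\gamma(t_2),t_2-t_1)$ for all $t_1<t_2$ in $[a,b]$, and shows that $\gamma|_{[t_1,t_2]}$ is a minimizer of this implicit action. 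Proposition \ref{pr22} then lifts any such minimizer to an orbit of \eqref{c}, yielding exactly the asserted Hamiltonian solution.

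For the flow identity, I would import Proposition \ref{propA5}, which gives
\[
\Phi^L_{s}\big(\gamma(t),\dot{\gamma}(t),u_-(\gamma(t))\big)=\big(\gamma(t+s),\dot{\gamma}(t+s),u_-(\gamma(t+s))\big),
\]
and conjugate by $\bar{\mathcal{L}}$. Because $\Phi^L_t=\bar{\mathcal{L}}\circ\Phi^H_t\circ\bar{\mathcal{L}}^{-1}$ and, by the momentum identity from Lemma \ref{labell}, $\bar{\mathcal{L}}^{-1}$ sends $(\gamma(t),\dot{\gamma}(t),u_-(\gamma(t)))$ to $(\gamma(t),Du_-(\gamma(t)),u_-(\gamma(t)))$, the display above translates verbatim into the required equality for $\Phi^H_s$. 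There is no real obstacle here beyond bookkeeping: the only subtlety is that $Du_-(\gamma(s))$ must be known to exist before it can be written, which is why Lemma \ref{diffe} has to be invoked first, and the remaining steps then chain together mechanically.
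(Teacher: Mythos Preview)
Your proposal is correct and follows exactly the approach the paper indicates: it explicitly lists Proposition \ref{pr22}, Proposition \ref{houjia}, Proposition \ref{propA5}, Lemma \ref{labell} and Lemma \ref{diffe} as the ingredients and leaves the assembly to the reader, and you have assembled them in the right order. One small remark: the lifting of the minimizer of $h_{\gamma(t_1),u_-(\gamma(t_1))}(\gamma(t_2),t_2-t_1)$ to an orbit of \eqref{c} is literally the content of Proposition \ref{prop4} rather than Proposition \ref{pr22} (the latter is stated for curves on all of $\R$), but the paper itself cites \ref{pr22} here and the underlying mechanism is identical, so this is only a matter of labeling.
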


	\begin{corollary}\label{inva1}
		Let $u_-\in \mathcal{S}_-$. Then $\Lambda_{u_-}$ is negatively invariant under $\Phi^H_t$.
	\end{corollary}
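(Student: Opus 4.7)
The strategy is to prove the invariance first on the dense subset of differentiable points via the calibrated curves guaranteed by the weak KAM property, and then pass to the closure using continuity of the flow together with the fact that $\Lambda_{u_-}$ is closed.

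Concretely, fix $t \geqslant 0$ and a point $(x_0, p_0, u_0) \in \Lambda_{u_-}$; the goal is to show $\Phi^H_{-t}(x_0,p_0,u_0) \in \Lambda_{u_-}$. First I would treat the case where $u_-$ is differentiable at $x_0$ with $p_0 = Du_-(x_0)$ and $u_0 = u_-(x_0)$. Since $u_- \in \mathcal{S}^-$, Definition \ref{bwkam}(2) supplies a $C^1$ curve $\gamma:(-\infty,0]\to M$ with $\gamma(0)=x_0$ that is $(u_-,L,0)$-calibrated on every compact subinterval. By Corollary \ref{inva}, $u_-$ is differentiable at $\gamma(-t)$ and
\[
\Phi^H_{-t}\bigl(x_0, Du_-(x_0), u_-(x_0)\bigr) = \bigl(\gamma(-t), Du_-(\gamma(-t)), u_-(\gamma(-t))\bigr),
\]
which is visibly a member of $\Lambda_{u_-}$. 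This already settles the invariance of the dense ``graph'' piece of $\Lambda_{u_-}$, and as a by-product guarantees that the backward flow is defined for all $t \geqslant 0$ on this dense subset.

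For a general point $(x_0, p_0, u_0) \in \Lambda_{u_-}$, by the definition of $\Lambda_{u_-}$ as a closure, I would choose a sequence $x_n$ of differentiability points with $(x_n, Du_-(x_n), u_-(x_n)) \to (x_0, p_0, u_0)$. The previous step gives $\Phi^H_{-t}(x_n, Du_-(x_n), u_-(x_n)) \in \Lambda_{u_-}$ for every $n$. Using the continuity of $\Phi^H_{-t}$ (which applies since $\Lambda_{u_-}$ is compact, hence the orbits stay in a common compact set where the flow is smooth) and the fact that $\Lambda_{u_-}$ is closed, passing to the limit yields $\Phi^H_{-t}(x_0,p_0,u_0) \in \Lambda_{u_-}$.

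The only delicate point is ensuring $\Phi^H_{-t}$ is well-defined at the limit point for every $t \geqslant 0$; but since its value along every approximating sequence lies in the compact set $\Lambda_{u_-}$, the orbits cannot escape in finite time and the standard extension argument for ODEs provides global backward existence. Everything else is a bookkeeping application of Corollary \ref{inva} and closedness of $\Lambda_{u_-}$, so no genuine obstacle arises beyond this routine continuity/compactness step.
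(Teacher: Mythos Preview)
Your proposal is correct and follows essentially the same route the paper has in mind: the paper does not give a separate proof but lists Corollary~\ref{inva1} as a direct consequence of Proposition~\ref{pr22}, Proposition~\ref{houjia}, Proposition~\ref{propA5}, Lemma~\ref{labell} and Lemma~\ref{diffe}, which is exactly the toolkit packaged into Corollary~\ref{inva} that you invoke. One small refinement: when you apply Corollary~\ref{inva} at the endpoint $\gamma(0)=x_0$, note that the corollary itself only asserts differentiability of $u_-$ at interior points; you are fine because you have assumed differentiability at $x_0$ separately, and Lemma~\ref{labell} then identifies $Du_-(x_0)$ with $\frac{\partial L}{\partial \dot x}(\gamma(0),\dot\gamma(0),u_-(\gamma(0)))$, so the flow relation extends to $s=0$. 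Your handling of the closure step and the compactness argument for global backward existence is standard and correct.
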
		
	
	Define
	\[\tilde{\Sigma}_{u_-}:=\bigcap_{t\geq 0}\Phi^H_{-t}(\Lambda_{u_-})\quad \text{and} \quad \Sigma_{u_-}:=\pi\tilde{\Sigma}_{u_-},
	\]
	where $\pi:T^*M\times\R\rightarrow M$ denotes the orthogonal projection.
	By Corollary \ref{inva1}, $\Lambda_{u_-}$ is negatively invariant under $\Phi^H_t$.
	Thus, we have that
	\begin{align}\label{non}
		\Phi^H_{-t}(\Lambda_{u_-})\subset \Phi^H_{-s}(\Lambda_{u_-})\subset \Lambda_{u_-},\quad \forall t\geq s>0.
	\end{align}
	Since $u_-$ is Lipschitz continuous on $M$, then
	$ \Lambda_{u_-}$ is a nonempty compact subset of $T^*M\times\R$. In view of \eqref{non}, it follows from Cantor's intersection theorem that $\tilde{\Sigma}_{u_-}$ is non-empty and compact. Note that for $s>0$, we have
	\begin{align*}
		\Phi^H_s(\tilde{\Sigma}_{u_-})=\Phi^H_s\left(\bigcap_{t\geq 0}\Phi^H_{-t}(\Lambda_{u_-})\right)=\bigcap_{t\geq 0}\Phi^H_{-t+s}(\Lambda_{u_-})\subset \bigcap_{t\geq 0}\Phi^H_{-t}(\Lambda_{u_-})=\tilde{\Sigma}_{u_-},
	\end{align*}
	which implies that $\tilde{\Sigma}_{u_-}$ is invariant under $\Phi^H_t$.

	\subsection{The Ma\~n\'e set and Mather measures, revisited}	
	
	\subsubsection{The Ma\~n\'e set associated with a backward weak KAM solution:  $\tilde{\mathcal{N}}_{u_-}$}

	Suppose $u_-\in\mathcal{S}^-$. Then by Proposition \ref{prop6},
	\begin{align}\label{2-585}
		u_+:=\lim_{t\to+\infty}T^+_tu_-\in\mathcal{S}^+.
	\end{align}
	In view of Corollary \ref{uplus} and the $\Phi^H_t$-invariance of $\tilde{\Sigma}_{u_-}$, we can prove that
	\begin{lemma}(\cite[Lemma 4.5]{WWY19})\label{ufixplus}
		Let $u_-\in\mathcal{S}^-$ and let $u_+$ be as in \eqref{2-585}.  Then $u_-=u_+$ on $\Sigma$.
	\end{lemma}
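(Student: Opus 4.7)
The plan is to exhibit, for each $x\in\Sigma$, a two-sided calibrated curve through $x$ and then run a Gronwall-type comparison between $u_-$ and $T^+_s u_-$ along that curve.

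First I would recover a calibrated curve from the invariance. Pick $x\in\Sigma$ and choose $(x,p,v)\in\tilde{\Sigma}_{u_-}$. Since $\tilde{\Sigma}_{u_-}\subset\Lambda_{u_-}$ and $u_-$ is continuous on $M$, the closure of the graph consists of points of the form $(y,q,u_-(y))$, so $v=u_-(x)$. Because $\tilde{\Sigma}_{u_-}$ is $\Phi^H_t$-invariant for all $t\in\mathbf{R}$, the orbit $(\gamma(t),p(t),u(t)):=\Phi^H_t(x,p,u_-(x))$ stays in $\Lambda_{u_-}$ for every $t$, forcing $u(t)=u_-(\gamma(t))$. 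The third equation of \eqref{c} combined with the Legendre duality $\frac{\partial H}{\partial p}\cdot p-H=L(\gamma,\dot\gamma,u)$ gives $\dot u(t)=L(\gamma,\dot\gamma,u_-(\gamma(t)))$; integrating shows that $\gamma:\mathbf{R}\to M$ is a $(u_-,L,0)$-calibrated curve, in particular $\gamma(0)=x$.

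Next I would fix $s>0$ and compare $T^+_{s-\tau}u_-$ with $u_-$ along $\gamma|_{[0,s]}$. Set
\[
g(\tau):=u_-(\gamma(\tau))-T^+_{s-\tau}u_-(\gamma(\tau)),\qquad \tau\in[0,s];
\]
by Corollary \ref{uplus}(1) we have $g\geqslant 0$. Testing the variational definition of $T^+_{s-\tau}u_-(\gamma(\tau))$ against the curve $\gamma|_{[\tau,s]}$ and using the calibration identity $u_-(\gamma(s))-u_-(\gamma(\tau))=\int_\tau^s L(\gamma,\dot\gamma,u_-(\gamma))\,d\sigma$, subtraction yields
\[
g(\tau)\leqslant \int_\tau^s\bigl[L(\gamma,\dot\gamma,T^+_{s-\sigma}u_-(\gamma(\sigma)))-L(\gamma,\dot\gamma,u_-(\gamma(\sigma)))\bigr]\,d\sigma.
\]
Condition (L3) then gives $g(\tau)\leqslant \lambda\int_\tau^s g(\sigma)\,d\sigma$.

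Finally I would apply a backward Gronwall argument. With $G(\tau):=\int_\tau^s g(\sigma)\,d\sigma$ we get $G'(\tau)=-g(\tau)\geqslant-\lambda G(\tau)$, so $(e^{\lambda\tau}G(\tau))'\geqslant 0$. Because $G(s)=0$ and $G\geqslant 0$, this forces $G\equiv 0$ on $[0,s]$, hence $g\equiv 0$. In particular $g(0)=u_-(x)-T^+_s u_-(x)=0$; since $s>0$ was arbitrary, letting $s\to+\infty$ and using the definition $u_+=\lim_{s\to+\infty}T^+_s u_-$ yields $u_+(x)=u_-(x)$. The main delicate step is verifying that $\gamma$ is genuinely calibrated for points $(x,p,u_-(x))\in\tilde{\Sigma}_{u_-}$ that come from the closure rather than the differentiability locus of $u_-$; this is handled entirely by the flow-invariance argument in the first paragraph, which avoids invoking $Du_-$ at $x$ directly and uses only the continuity of $u_-$ and the $\dot u$ equation of \eqref{c}.
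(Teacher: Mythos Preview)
Your proposal is correct and follows essentially the approach the paper indicates: it uses the $\Phi^H_t$-invariance of $\tilde{\Sigma}_{u_-}$ to produce a two-sided $(u_-,L,0)$-calibrated curve through each $x\in\Sigma$, and then combines this with Corollary~\ref{uplus} (which gives $T^+_tu_-\leqslant u_-$) to conclude. The paper itself does not spell out the comparison step, merely citing \cite[Lemma 4.5]{WWY19}; your Gronwall argument along $\gamma$ is a clean and self-contained way to fill in that detail, and your observation that the calibration of $\gamma$ follows from the $\dot u$-equation of \eqref{c} together with the fact that every point of $\Lambda_{u_-}$ has $u$-component $u_-(x)$ (by continuity of $u_-$) neatly sidesteps any differentiability issues at the initial point.
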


	\begin{definition}
		Let $u_-\in\mathcal{S}^-$ and let $u_+$ be as in \eqref{2-585}. Define
		\[
		\mathcal{N}_{u_-}:=\{x\in M: u_-(x)=u_+(x)\}.
		\]
	\end{definition}	
	By Lemma  \ref{ufixplus}, we deduce that $\mathcal{N}_{u_-}$ is nonempty.

	\begin{proposition}(\cite[Lemma 4.7]{WWY19})\label{iinv}
		Let $u_-\in\mathcal{S}^-$ and let $u_+$ be as in \eqref{2-585}.
		For any given $x\in M$ with $u_-(x)=u_+(x)$,  there exists a curve $\gamma:(-\infty,+\infty)\rightarrow M$ with $\gamma(0)=x$ such that  $u_-(\gamma(t))=u_+(\gamma(t))$ for each $t\in \mathbf{R}$, and
		\begin{equation}\label{upmm}
			u_{\pm}(\gamma(t'))-u_{\pm}(\gamma(t))=\int_t^{t'}L\big(\gamma(s),\dot{\gamma}(s),u_{\pm}(\gamma(s))\big)ds, \quad \forall t\leq t'\in\mathbf{R}.
		\end{equation}
		Moreover, $u_{\pm}$ are differentiable at $x$ with the same derivative $Du_{\pm}(x)=\frac{\partial L}{\partial \dot{x}}\big(x,\dot{\gamma}(0),u_{\pm}(x)\big)$.
	\end{proposition}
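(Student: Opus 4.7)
The plan is to construct $\gamma$ by gluing a backward $u_-$-calibrated half-curve with a forward $u_+$-calibrated half-curve at $x$, then upgrade the glued object to something that is simultaneously $(u_-,L,0)$- and $(u_+,L,0)$-calibrated on $\mathbf{R}$, and finally read off the differentiability statement. Concretely, since $u_-\in\mathcal{S}^-$, Definition \ref{bwkam}(2) gives a $(u_-,L,0)$-calibrated $\gamma^-:(-\infty,0]\to M$ with $\gamma^-(0)=x$; similarly $u_+\in\mathcal{S}^+$ yields a $(u_+,L,0)$-calibrated $\gamma^+:[0,+\infty)\to M$ with $\gamma^+(0)=x$.

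The next step, which I expect to be the real engine of the proof, is a Gronwall argument using (L3) to show that $\gamma^-$ is calibrated not only for $u_-$ but also for $u_+$, and vice versa. Set $F(t):=u_-(\gamma^-(t))-u_+(\gamma^-(t))$, which is $\geqslant 0$ by Corollary \ref{uplus}(1), and $F(0)=0$ by hypothesis. Combining the equality defining calibration of $\gamma^-$ for $u_-$ with the domination inequality $u_+\prec L$ applied along $\gamma^-$ on $[t,0]$ gives
\[
F(t)\;\leqslant\;\int_t^{0}\!\Bigl[L\bigl(\gamma^-,\dot\gamma^-,u_+(\gamma^-)\bigr)-L\bigl(\gamma^-,\dot\gamma^-,u_-(\gamma^-)\bigr)\Bigr]\,ds\;\leqslant\;\lambda\int_t^{0}F(s)\,ds,
\]
so Gronwall forces $F\equiv 0$ on $(-\infty,0]$. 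The symmetric argument on $\gamma^+$ gives $u_-(\gamma^+(t))=u_+(\gamma^+(t))$ for $t\geqslant 0$, and in both cases the equalities \eqref{upmm} for the \emph{other} weak KAM solution are then immediate from the one we started with, since replacing $u_-$ by $u_+$ inside $L$ changes nothing along the curve.

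Now define $\gamma:\mathbf{R}\to M$ by $\gamma(t)=\gamma^-(t)$ for $t\leqslant 0$ and $\gamma(t)=\gamma^+(t)$ for $t\geqslant 0$; this is continuous, and an absolutely continuous curve. For any small $t_0>0$, the computation above shows $\gamma\big|_{[-t_0,t_0]}$ realizes equality in the domination inequality for $u_-$ (the integrals split at $0$ and each half is an equality), hence $\gamma\big|_{[-t_0,t_0]}$ is a $(u_-,L,0)$-calibrated curve. Proposition \ref{houjia} then gives $\gamma\in C^1((-t_0,t_0),M)$, which in particular forces $\dot\gamma^-(0)=\dot\gamma^+(0)$; thus $\gamma$ is a genuine $C^1$ curve on all of $\mathbf{R}$, satisfying \eqref{upmm} for both $u_-$ and $u_+$.

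Finally, the differentiability claim is a direct consequence of the two-sided calibration just established: Lemma \ref{diffe} applied to $\gamma\big|_{[-t_0,t_0]}$ (which is both a $(u_-,L,0)$- and a $(u_+,L,0)$-calibrated curve through $x$) yields differentiability of $u_-$ and $u_+$ at $\gamma(0)=x$, and Lemma \ref{labell} then gives
\[
Du_\pm(x)\;=\;\frac{\partial L}{\partial \dot x}\bigl(x,\dot\gamma(0),u_\pm(x)\bigr),
\]
which coincide since $u_-(x)=u_+(x)$. The only subtle point anywhere is the Gronwall step, because it requires the \emph{same} driving curve to serve both $u_-$ (as equality) and $u_+$ (as inequality), and this is exactly where the Lipschitz bound (L3) on $\partial L/\partial u$ is essential; once $F\equiv 0$ is obtained, everything else is bookkeeping and an appeal to the already-established regularity results for calibrated curves.
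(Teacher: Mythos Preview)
Your proof is correct. The paper does not give its own proof of this proposition; it cites \cite[Lemma~4.7]{WWY19} and remarks that the argument there (which originally used the monotonicity assumption $\partial H/\partial u>0$) carries over under (H1)--(H3) because the key ingredient---that calibrated curves are action-minimizing, i.e.\ Proposition~\ref{houjia}---has now been established without monotonicity. Your sketch is exactly the kind of argument the remark is pointing to: glue a backward $u_-$-calibrated ray with a forward $u_+$-calibrated ray, use domination plus (L3) and Gronwall to force $u_-=u_+$ along each half, then invoke Proposition~\ref{houjia} on the glued curve to get $C^1$ regularity across $t=0$, and finish with Lemmas~\ref{diffe} and~\ref{labell}. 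The Gronwall step and the appeal to Proposition~\ref{houjia} are precisely the places where monotonicity was previously used and is now replaced by the Lipschitz bound (L3), so your identification of the ``engine'' is accurate.
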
	
	\begin{remark}
		In \cite{WWY19} the above proposition was proved using the monotonicity assumption \eqref{incre}. A key point in the proof is the fact that calibrated curves admit the action minimizing property. We prove it in Proposition \ref{houjia} under (H1)-(H3). So the above result still holds without the monotonicity of $H$.
	\end{remark}

	\begin{definition}
		Let $u_-\in\mathcal{S}^-$ and let $u_+$ be as in \eqref{2-585}. Define
		\[
		\tilde{\mathcal{N}}_{u_-}:=\{(x,p,u)\in T^*M\times\R: x\in\mathcal{N}_{u_-}, \ p=Du_{\pm}(x),\ u=u_{\pm}(x)\}.
		\]
	\end{definition}	
	From Proposition \ref{iinv}, we know that $\tilde{\mathcal{N}}_{u_-}$ is well defined and nonempty.

	\subsubsection{$\tilde{\mathcal{N}}_{u_-}$ is a compact $\Phi_t^H$-invariant subset of the Ma\~n\'e set $\tilde{\mathcal{N}}$}

	\begin{lemma}\label{com}
		Let $u_-\in\mathcal{S}^-$. Then $\tilde{\mathcal{N}}_{u_-}$ is a compact subset of $T^*M\times\R$.
	\end{lemma}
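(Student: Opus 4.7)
The plan is to prove boundedness and closedness separately, since $T^*M \times \R$ is a manifold and compactness reduces to these two properties on closed bounded subsets (working in a fixed Riemannian framework).

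\textbf{Boundedness.} Recall that $u_-$ is Lipschitz on $M$ with constant $l$ by Remark \ref{ll}, so at every point of differentiability $\|Du_-(x)\|\leqslant l$, and the same holds for $u_+$ where it is differentiable (one can obtain the same bound from Proposition \ref{iinv} or note that $u_+$ is also Lipschitz). Since $M$ is compact, $|u_{\pm}(x)|$ is uniformly bounded. Hence the $x$-, $p$-, and $u$-coordinates of points in $\tilde{\mathcal{N}}_{u_-}$ all lie in bounded sets, so $\tilde{\mathcal{N}}_{u_-}$ is bounded.

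\textbf{Closedness.} Take a sequence $(x_n,p_n,u_n)\in\tilde{\mathcal{N}}_{u_-}$ converging to $(x_\infty,p_\infty,u_\infty)$. Since $u_-$ and $u_+$ are continuous and $u_-(x_n)=u_+(x_n)=u_n$, passing to the limit gives $u_-(x_\infty)=u_+(x_\infty)=u_\infty$, so $x_\infty\in\mathcal{N}_{u_-}$ and $u_\infty=u_{\pm}(x_\infty)$. It remains to verify $p_\infty=Du_{\pm}(x_\infty)$. By Proposition \ref{iinv}, for each $n$ there is a curve $\gamma_n:\R\to M$ with $\gamma_n(0)=x_n$ satisfying \eqref{upmm}, and
\[
p_n=Du_{\pm}(x_n)=\frac{\partial L}{\partial\dot{x}}\bigl(x_n,\dot{\gamma}_n(0),u_n\bigr).
\]
Via the Legendre transform $\bar{\mathcal{L}}$, the quantity $\dot{\gamma}_n(0)$ is determined by $(x_n,p_n,u_n)$, and continuity of $\bar{\mathcal{L}}^{-1}$ gives $\dot{\gamma}_n(0)\to v_\infty:=\frac{\partial H}{\partial p}(x_\infty,p_\infty,u_\infty)$.

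\textbf{Extracting a limit calibrated curve.} Proposition \ref{houjia} yields the uniform bound $\|\dot{\gamma}_n(t)\|\leqslant C_{u_-,l}$ for all $n$ and all $t$. Each $\gamma_n$, being $(u_-,L,0)$-calibrated on $\R$, satisfies the contact Lagrangian system \eqref{L} by Corollary \ref{inva} (via $\bar{\mathcal{L}}$); equivalently, its lift $(\gamma_n,p_n(\cdot),u_-(\gamma_n))$ is an orbit of $\Phi_t^H$ with initial datum $(x_n,p_n,u_n)\to(x_\infty,p_\infty,u_\infty)$. Continuous dependence of $\Phi_t^H$ on initial conditions, applied on each compact time interval together with a diagonal extraction, produces a subsequence (still labeled $\gamma_n$) such that $\gamma_n\to\gamma_\infty$ in $C^1_{\mathrm{loc}}(\R,M)$ with $\gamma_\infty(0)=x_\infty$ and $\dot{\gamma}_\infty(0)=v_\infty$. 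Passing to the limit in the calibration identity \eqref{upmm}, using uniform convergence and continuity of $L$ and $u_{\pm}$, shows that $\gamma_\infty$ is itself a $(u_-,L,0)$- and $(u_+,L,0)$-calibrated curve through $x_\infty$. Applying Proposition \ref{iinv} (or equivalently Lemma \ref{labell} together with Lemma \ref{diffe}) to $\gamma_\infty$ at $t=0$ gives differentiability of $u_{\pm}$ at $x_\infty$ with
\[
Du_{\pm}(x_\infty)=\frac{\partial L}{\partial\dot{x}}\bigl(x_\infty,\dot{\gamma}_\infty(0),u_{\pm}(x_\infty)\bigr)=\frac{\partial L}{\partial\dot{x}}(x_\infty,v_\infty,u_\infty)=p_\infty,
\]
so $(x_\infty,p_\infty,u_\infty)\in\tilde{\mathcal{N}}_{u_-}$.

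The main obstacle is the extraction of the limit calibrated curve $\gamma_\infty$ with the correct initial velocity and the justification that calibration is preserved in the limit; everything else is either a direct continuity argument or a direct appeal to the tools assembled in Section 3.2. The uniform velocity bound from Proposition \ref{houjia} (which itself relied on the Lipschitz continuity of $u_-$ and superlinearity (L2)) is exactly what makes this limiting procedure go through.
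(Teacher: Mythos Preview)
Your proof is correct and follows essentially the same strategy as the paper: boundedness via the Lipschitz constant of $u_-$, and closedness by extracting a limit $(u_-,L,0)$-calibrated curve $\gamma_\infty$ through $x_\infty$, then reading off $p_\infty=Du_-(x_\infty)$ from Lemma~\ref{labell}/\ref{diffe}. The only cosmetic difference is that the paper obtains $C^1$-convergence of $\gamma_n$ by bounding $\ddot{\gamma}_n$ explicitly through the Hamiltonian equations and invoking Arzel\`a--Ascoli, whereas you appeal directly to continuous dependence of $\Phi^H_t$ on initial data; both arguments rest on the same uniform bounds and yield the same conclusion.
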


	\begin{proof}
		Since $u_-$ is Lipschitz, then it is clear that  $\tilde{\mathcal{N}}_{u_-}$ is bounded. To finish the proof, it suffices to show that if $\tilde{\mathcal{N}}_{u_-}\ni(x_n,p_n,u_n)\to(x_0,p_0,u_0)$ as $n\to+\infty$, then $(x_0,p_0,u_0)\in \tilde{\mathcal{N}}_{u_-}$. Since $x_n\in\mathcal{N}_{u_-} $, then $u_-(x_n)=u_+(x_n)$. Thus $u_-(x_0)=u_+(x_0)$ implying that $x_0\in \mathcal{N}_{u_-}$. In view of $(x_n,p_n,u_n)\in\tilde{\mathcal{N}}_{u_-}$, $u_n=u_-(x_n)=u_+(x_n)$ and thus $u_0=u_-(x_0)=u_+(x_0)$. So, we only need to prove $p_0=Du_-(x_0)$.
		
		From Corollary \ref{inva} and Proposition \ref{iinv}, there exists a sequence of $(u_-,L,0)$-calibrated curves $\gamma_n:(-\infty,+\infty)\rightarrow M$ with $\gamma_n(0)=x_n$ such that $\big(\gamma_n(s),p_n(s),u_n(s)\big)$ are solutions of (\ref{c}), where $p_n(s)=\frac{\partial L}{\partial \dot{x}}\big(\g_n(s),\dot{\g}_n(s),u_-(\g_n(s))\big),\ u_n(s)=u_-(\g_n(s))$.
		
		Based on Proposition \ref{houjia}, $\{\dot{\g}_n\}$ and $\{p_n\}$ are uniformly bounded. Since
		\[
		\dot{\gamma}_n(s)=\frac{\partial H}{\partial p}\big(\gamma_n(s),p_n(s),u_-(\g_n(s))\big),
		\]
		then
		\begin{equation*}
			\begin{split}
				\ddot{\gamma}_n=&\frac{\partial^2H}{\partial p\partial x}(\gamma_n,p_n,u_n)\cdot\dot{\gamma}_n+\frac{\partial^2H}{\partial p\partial u}(\gamma_n,p_n,u_n)\cdot\dot{u}_n+\frac{\partial^2H}{\partial p^2}(\gamma_n,p_n,u_n)\cdot\dot{p}_n\\
				=&\Big\{\frac{\partial^2H}{\partial p\partial x}(\gamma_n,p_n,u_n)\cdot\frac{\partial H}{\partial p}(\gamma_n,p_n,u_n)+\frac{\partial^2H}{\partial p\partial u}(\gamma_n,p_n,u_n)\cdot\big(\frac{\partial H }{\partial p}(\gamma_n,p_n,u_n)\cdot p_n\\
				-&H(\gamma_n,p_n,u_n)\big)+\frac{\partial^2H}{\partial p^2}(\gamma_n,p_n,u_n)\cdot\big(-\frac{\partial H}{\partial x}(\gamma_n,p_n,u_n)-\frac{\partial H}{\partial u}(\gamma_n,p_n,u_n)\cdot p_n\big)\Big\}.
			\end{split}
		\end{equation*}
		Therefore, $\{\ddot{\g}_n\}$ are also uniformly bounded. Without any loss of generality, we may assume that $\g_n\to \g$ and $\dot{\g}_n\to \dot{\g}$ uniformly on $[-1,1]$, where the curve $\g$ can be verified as a $(u_-,L,0)$-calibrated curve with $\g(0)=x_0$ by \eqref{upmm}. Hence, $Du_-(x_0)=\frac{\partial L}{\partial \dot{x}}\big(\g(0),\dot{\g}(0),u_-(\g(0))\big)$. Recall that $p_n=Du_-(x_n)=\frac{\partial L}{\partial \dot{x}}\big(\g_n(0),\dot{\g}_n(0),u_-(\g_n(0))\big)$. Letting $n\to+\infty$, we get that $p_0=Du_-(x_0)$.
	\end{proof}

	\begin{lemma}\label{ss}
		Let $u_-\in\mathcal{S}^-$. Then $\tilde{\mathcal{N}}_{u_-}$ is $\Phi_t^H$-invariant. For any $(x_0,p_0,u_0)\in \tilde{\mathcal{N}}_{u_-}$, let $\big(x(t),p(t),u(t)\big):=\Phi_t^H(x_0,p_0,u_0)$. Then $\big(x(t),u(t)\big)$ is a semi-static curve.
	\end{lemma}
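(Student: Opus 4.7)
The plan is to exhibit the orbit $\Phi^H_t(x_0,p_0,u_0)$ as the contact lift of a single bi-infinite $(u_-,L,0)$-calibrated curve, and then read off both invariance and the semi-static property from the identities already established in Section 3.2.

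First I would fix $(x_0,p_0,u_0)\in\tilde{\mathcal{N}}_{u_-}$ and invoke Proposition \ref{iinv} to produce a curve $\gamma:\R\to M$ with $\gamma(0)=x_0$, along which $u_-(\gamma(t))=u_+(\gamma(t))$ for every $t\in\R$, and which is simultaneously $(u_-,L,0)$- and $(u_+,L,0)$-calibrated. The same proposition identifies $p_0=Du_-(x_0)=\frac{\partial L}{\partial\dot x}(x_0,\dot\gamma(0),u_0)$. Corollary \ref{inva} then guarantees that $(\gamma(t),Du_-(\gamma(t)),u_-(\gamma(t)))$ is a solution of \eqref{c}, and by uniqueness of the flow it must coincide with $\Phi^H_t(x_0,p_0,u_0)=:(x(t),p(t),u(t))$. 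Since $u_-(\gamma(t))=u_+(\gamma(t))$ for all $t$, each $x(t)$ belongs to $\mathcal{N}_{u_-}$ with the required derivative, so $(x(t),p(t),u(t))\in\tilde{\mathcal{N}}_{u_-}$, proving $\Phi_t^H$-invariance.

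For the semi-static claim, I would first use Proposition \ref{houjia} applied to $\gamma$ on $[t_1,t_2]$ to obtain $u_-(\gamma(t_2))=h_{\gamma(t_1),u_-(\gamma(t_1))}(\gamma(t_2),t_2-t_1)$, which translates to $u(t_2)=h_{x(t_1),u(t_1)}(x(t_2),t_2-t_1)$, establishing that $(x(\cdot),u(\cdot))$ is globally minimizing. To upgrade to semi-static, I would use that $u_-$ is a fixed point of $\{T^-_t\}$ by Proposition \ref{prop9}, together with the variational formula $T^-_s u_-(x)=\inf_{y\in M}h_{y,u_-(y)}(x,s)$ from Proposition \ref{prop1}(4). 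Specializing $y=x(t_1)$, $x=x(t_2)$ gives
\[
u(t_2)=u_-(x(t_2))=T^-_s u_-(x(t_2))\leqslant h_{x(t_1),u(t_1)}(x(t_2),s),\qquad \forall s>0,
\]
and combining with the equality at $s=t_2-t_1$ forces $u(t_2)=\inf_{s>0}h_{x(t_1),u(t_1)}(x(t_2),s)$, which is the semi-static identity \eqref{3-3}.

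The main obstacle is not in the final deduction, which is essentially mechanical once the right pieces are lined up, but rather in ensuring that $\gamma$ extends to all of $\R$ rather than merely to $(-\infty,0]$. This bi-infinite extension is exactly what Proposition \ref{iinv} supplies, and it is precisely the point at which the hypothesis $x_0\in\mathcal{N}_{u_-}$ (equivalently $u_-(x_0)=u_+(x_0)$) is used — the forward weak KAM solution $u_+$ enters only here, to furnish the calibrated extension into positive time needed for both $\Phi^H_t$-invariance and the semi-static identity on arbitrary $t_1\leqslant t_2$ in $\R$.
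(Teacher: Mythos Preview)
Your proposal is correct and follows essentially the same route as the paper: invoke Proposition \ref{iinv} to produce a bi-infinite $(u_-,L,0)$-calibrated curve through $x_0$, identify its contact lift with the orbit via Corollary \ref{inva} and ODE uniqueness, read off $\Phi^H_t$-invariance from $u_-=u_+$ along the curve, and then combine Proposition \ref{houjia} with the fixed-point identity $u_-=T^-_su_-=\inf_{y}h_{y,u_-(y)}(\cdot,s)$ to obtain the semi-static inequality. The paper's proof is organized in exactly this way and uses the same ingredients.
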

	
	\begin{proof}
		Since $(x_0,p_0,u_0)\in \tilde{\mathcal{N}}_{u_-}$, by Corollary \ref{inva} and Proposition \ref{iinv} there exists a curve $\gamma:(-\infty,+\infty)\rightarrow M$ with $\gamma(0)=x_0$ such that $\big(\g(t),Du_-(\g(t)),u_-(\g(t))\big)$ is a solution of \eqref{c} and $\g(0)=x_0$, $Du_-(\g(0))=Du_-(x_0)=p_0$, $u_-(\g(0))=u_-(x_0)=u_0$. From the uniqueness of
		solutions of initial value problem of ordinary differential equations,  we have $\big(x(t),p(t),u(t)\big)= \big(\g(t),Du_-(\g(t)),u_-(\g(t))\big)$, $\forall t\in\R$. Hence, $\big(x(t),p(t),u(t)\big)\in \tilde{\mathcal{N}}_{u_-},\ \forall t\in\R$.
		
		By Proposition \ref{iinv} agian, $\g$ is a $(u_-,L,0)$-calibrated curve. From Proposition \ref{houjia}, for any $t_1$,  $t_2\in\R$ with $t_1<t_2$,
		\[
		u_-(\g(t_2))=h_{\g(t_1),u_-(\g(t_1))}(\g(t_2),t_2-t_1).
		\]
		So $(x(t),u(t))=(\gamma(t),u_-(\gamma(t)))$ is globally minimizing. Note that
		\[
		u_-(\gamma(t_2))=T^-_su_-(\gamma(t_2))=\inf_{z\in M}h_{z,u_-(z)}(\gamma(t_2),s)\leqslant h_{\gamma(t_1),u_-(\gamma(t_1))}(\gamma(t_2),s),\quad \forall s>0.
		\]
		From above formulas, we can deduce that $(x(t),u(t))$ is a semi-static curve.
	\end{proof}

	By Lemma \ref{ss}, we have
	\begin{align}\label{ss1}
		\tilde{\mathcal{N}}_{u_-}\subset \tilde{\mathcal{N}}.
	\end{align}
	Thus, $\tilde{\mathcal{N}}$ is nonempty.

	\begin{proposition}\label{pr1011} Let $u_-\in\mathcal{S}^-$. Then
		\[
		\tilde{\mathcal{N}}_{u_-}=\tilde{\Sigma}_{u_-}=\bigcap_{t\geq 0}\Phi^H_{-t}(\Lambda_{u_-}).
		\]
	\end{proposition}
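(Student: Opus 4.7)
The plan is to establish the two inclusions separately. The forward direction $\tilde{\mathcal{N}}_{u_-}\subset\tilde{\Sigma}_{u_-}$ is essentially bookkeeping, while the reverse direction $\tilde{\Sigma}_{u_-}\subset\tilde{\mathcal{N}}_{u_-}$ requires an orbit-to-calibration argument that turns membership in the $\Phi^H_t$-invariant kernel of $\Lambda_{u_-}$ into a $(u_-,L,0)$-calibrated curve through the base point.

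For the forward inclusion: by Proposition \ref{iinv}, if $(x,p,u)\in\tilde{\mathcal{N}}_{u_-}$ then $u_-$ is differentiable at $x$ with $Du_-(x)=p$ and $u_-(x)=u$, so $(x,p,u)\in\Lambda_{u_-}$ (even before taking closure). This gives $\tilde{\mathcal{N}}_{u_-}\subset\Lambda_{u_-}$. Lemma \ref{ss} asserts that $\tilde{\mathcal{N}}_{u_-}$ is $\Phi^H_t$-invariant for all $t\in\R$, so $\Phi^H_t(\tilde{\mathcal{N}}_{u_-})\subset\Lambda_{u_-}$ for every $t\geqslant 0$, i.e.\ $\tilde{\mathcal{N}}_{u_-}\subset\Phi^H_{-t}(\Lambda_{u_-})$. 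Intersecting over $t\geqslant 0$ yields $\tilde{\mathcal{N}}_{u_-}\subset\tilde{\Sigma}_{u_-}$.

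For the reverse inclusion: fix $(x,p,u)\in\tilde{\Sigma}_{u_-}$ and write $(x(t),p(t),u(t)):=\Phi^H_t(x,p,u)$. Since $\tilde{\Sigma}_{u_-}$ is $\Phi^H_t$-invariant (established immediately after \eqref{non}), the orbit stays inside $\Lambda_{u_-}$ for every $t\in\R$. Because $\Lambda_{u_-}$ is the closure of the $1$-graph of the continuous function $u_-$, every point $(y,q,v)\in\Lambda_{u_-}$ satisfies $v=u_-(y)$; in particular $u(t)=u_-(x(t))$ for all $t$. Combining this with the third equation of \eqref{c}, namely $\dot{u}(t)=L(x(t),\dot{x}(t),u(t))$ (by Legendre duality), we obtain
\[
u_-(x(t_2))-u_-(x(t_1))=\int_{t_1}^{t_2}L\big(x(s),\dot{x}(s),u_-(x(s))\big)\,ds,\quad\forall\,t_1\leqslant t_2,
\]
so $x(\cdot)$ is a $(u_-,L,0)$-calibrated curve on $\R$. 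Corollary \ref{inva} then forces $u_-$ to be differentiable at each $x(t)$ with $p(t)=Du_-(x(t))$; in particular $p=Du_-(x)$. To verify $x\in\mathcal{N}_{u_-}$, Proposition \ref{houjia} gives $u_-(x(t))=h_{x,u_-(x)}(x(t),t)$ for every $t>0$, which by Proposition \ref{addprop1} is equivalent to $h^{x(t),u_-(x(t))}(x,t)=u_-(x)$. Proposition \ref{prop1} (4) (ii) then yields $T^+_tu_-(x)\geqslant u_-(x)$ for all $t>0$, and letting $t\to+\infty$ together with $u_+\leqslant u_-$ from Corollary \ref{uplus} (1) gives $u_+(x)=u_-(x)$. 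Hence $(x,p,u)\in\tilde{\mathcal{N}}_{u_-}$.

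The main obstacle is the calibration step in the reverse inclusion: a priori a point $(x,p,u)\in\Lambda_{u_-}$ carries only the information $u=u_-(x)$, with no regularity of $u_-$ at $x$ and no a priori link between $p$ and any one-sided differential of $u_-$. What rescues the argument is the bi-infinite $\Phi^H_t$-invariance of $\tilde{\Sigma}_{u_-}$: it forces the identity $u(t)=u_-(x(t))$ to hold along the entire orbit, and then the purely algebraic identity $\dot{u}=L$ built into the contact Hamiltonian system \eqref{c} upgrades this pointwise equality to the integral calibration identity, after which differentiability of $u_-$ at $x$, the equality $p=Du_-(x)$, and the membership $x\in\mathcal{N}_{u_-}$ all cascade out of the semigroup and action-function machinery already assembled in Section 2.
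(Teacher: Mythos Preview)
Your proof is correct. The forward inclusion matches the paper's argument verbatim. For the reverse inclusion you take a different, more self-contained route: you first read off $u(t)=u_-(x(t))$ from the orbit lying in $\Lambda_{u_-}$, then use the contact equation $\dot u=\frac{\partial H}{\partial p}\cdot p-H=L(x,\dot x,u)$ to upgrade this to a two-sided $(u_-,L,0)$-calibration, and only then extract differentiability of $u_-$ at $x$, the identity $p=Du_-(x)$, and finally $u_+(x)=u_-(x)$ via the action-function duality. The paper instead quotes Lemma~\ref{ufixplus} (that $u_-=u_+$ on $\Sigma_{u_-}$) as a black box to get $x\in\mathcal{N}_{u_-}$ first, invokes Proposition~\ref{iinv} for differentiability, and then concludes $p=Du_-(x)$ directly from $(x,p,u)\in\Lambda_{u_-}$. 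That last step is terse: it implicitly uses that at a differentiability point of a backward weak KAM solution the limiting gradient set $D^*u_-(x)$ collapses to $\{Du_-(x)\}$ (a semiconcavity-type fact), which the paper does not state explicitly. Your calibration argument sidesteps this by producing $p=\frac{\partial L}{\partial\dot x}(x,\dot x(0),u_-(x))=Du_-(x)$ via Corollary~\ref{inva}. Your derivation of $u_+(x)=u_-(x)$ through $h_{x,u_-(x)}(x(t),t)=u_-(x(t))$ and Proposition~\ref{addprop1} is effectively an inline proof of Lemma~\ref{ufixplus} at the single point $x$, so the two arguments share the same underlying mechanism but package it differently.
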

	\begin{proof}
		Since $\tilde{\mathcal{N}}_{u_-}\subset\Lambda_{u_-}$ and  $\tilde{\mathcal{N}}_{u_-}$ is $\Phi_t^H$-invariant, then $
		\tilde{\mathcal{N}}_{u_-}\subset\tilde{\Sigma}_{u_-}$.
		On the other hand, for any $(x,p,u)\in\tilde{\Sigma}_{u_-}$, by Lemma \ref{ufixplus}, we deduce that $u_-(x)=u_+(x)$. From Proposition \ref{iinv}, $u_-$ is differentiable at $x$. Notice that $(x,p,u)\in\Lambda_{u_-}$. Therefore, $p=Du_-(x)$ and $u=u_-(x)$.
	\end{proof}

	\subsubsection{The decompositions of the Ma\~n\'e set and the set of Mather measures}

	\begin{proof}[Proof of Theorem \ref{Man}]
		Take any point $(x_0,p_0,u_0)\in\tilde{\mathcal{N}}$. Let  $\big(x(s),p(s),u(s)\big):=\Phi^H_s(x_0,p_0,u_0)$. In view of \eqref{ss1}, we only need to show that 	
		there is  $u_-\in\mathcal{S}^-$ such that
		$(x_0,p_0,u_0)\in\tilde{\mathcal{N}}_{u_-}$, i.e.,
		\[
		u_{\pm}(x_0)=u_0,\quad p_0=Du_{\pm}(x_0).
		\]
		
		Define $u(x,t):=\inf_{s\in\R}h_{x(s),u(s)}(x,t)$ for  $(x,t)\in M\times(0,+\infty)$. For any $t>0$, we notice that the function $x\to u(x,t)=\inf_{s\in\R}h_{x(s),u(s)}(x,t)$ is continuous on $M$ since $u(s)$ is bounded by Proposition \ref{ssb}.
		
		\item [\bf{Step 1:}]  We assert that  $u(x(\sigma),t)=u(\sigma)$, $\forall t>0$, $\forall\x\in\R$. By definition, we have
		\[
		u(x(\x),t)=\inf_{s\in\R}h_{x(s),u(s)}(x(\x),t).
		\]
		Since $\big(x(s),u(s)\big)$ is semi-static, then
		\[
		h_{x(\x-t),u(\x-t)}(x(\x),t)=u(\x).
		\]
		So, in order to prove the above assertion, it suffices to show that
		\begin{align}\label{3-501}
			h_{x(s),u(s)}(x(\x),t)\geqslant u(\x),\quad \forall s\in\R.
		\end{align}
		For $s\leqslant \x$, since $\big(x(s),u(s)\big)$ is semi-static, we get that
		\[
		u(\x)=\inf_{\tau>0}h_{x(s),u(s)}(x(\x),\tau)\leqslant h_{x(s),u(s)}(x(\x),t).
		\]
		For $s>\x$, we have $u(s)=h_{x(\x-t),u(\x-t)}(x(s),s-\x+t)$. Thus
		\begin{equation*}
			\begin{split}
				u(\x)&=\inf_{\tau>0}h_{x(\x-t),u(\x-t)}(x(\x),\tau)\\
				&\leqslant h_{x(\x-t),u(\x-t)}(x(\x),s-\x+2t)\\
				&=\inf_{z\in M}h_{z,h_{x(\x-t),u(\x-t)}(z,s-\x+t)}(x(\x),t)\\
				&\leqslant h_{x(s),h_{x(\x-t),u(\x-t)}(x(s),s-\x+t)}(x(\x),t)\\
				&\leqslant h_{x(s),u(s)}(x(\x),t).
			\end{split}
		\end{equation*}
		So far, we have proved \eqref{3-501} and thus the assertion holds true.
		
		\item [\bf{Step 2:}] We assert that for any $t_2>t_1>0$, we have $u(x,t_2)\leqslant u(x,t_1)$ for any $x\in M$. In fact, by Proposition \ref{prop3} (1) again, we have
		\[
		h_{x(s-t_2+t_1),u(s-t_2+t_1)}(x,t_2)\leqslant h_{x(s),h_{x(s-t_2+t_1),u(s-t_2+t_1)}(x(s),t_2-t_1)}(x,t_1)=h_{x(s),u(s)}(x,t_1),
		\]
		where the equality holds since $\big(x(s),u(s)\big)$ is semi-static. Therefore,
		\[
		u(x,t_2)=\inf_{s\in\R}h_{x(s-t_2+t_1),u(s-t_2+t_1)}(x,t_2)\leqslant \inf_{s\in\R}h_{x(s),u(s)}(x,t_1)=u(x,t_1).
		\]

		\item [\bf{Step 3:}]	
		We assert that the limit $\lim_{t\to+\infty}u(x,t)=:u_-(x)$ exists and $u_-\in\mathcal{S}^-$. If the assertion is true, then by Step 1, we get that $u_-(x(s))=\lim_{t\to+\infty}u(x(s),t)=u(s)$ for any $s\in\R$.
		
		In fact, from Proposition \ref{prop3} (1) and (3), for any $t>\tau>0,\ x\in M$ we get that
		\begin{align}\label{3-506}
			\begin{split}
				u(x,t)&=\inf_{s\in\R}h_{x(s),u(s)}(x,t)\\
				&=\inf_{s\in\R}\inf_{z\in M}h_{z,h_{x(s),u(s)}(z,t-\tau)}(x,\tau)\\
				&=\inf_{z\in M}\inf_{s\in\R}h_{z,h_{x(s),u(s)}(z,t-\tau)}(x,\tau)\\
				&=\inf_{z\in M}h_{z,\inf_{s\in\R}h_{x(s),u(s)}(z,t-\tau)}(x,\tau)\\
				&=\inf_{z\in M}h_{z,u(z,t-\tau)}(x,\tau)=T^-_{\tau}u(x,t-\tau).
			\end{split}
		\end{align}
		Therefore, according to Step 1 and \eqref{3-506}, we have
		$$u_0=u(0)=u(x(0),t+1)=T^-_1u(x_0,t)=\inf_{z\in M}h_{z,u(z,t)}(x_0,1)\leqslant h_{x,u(x,t)}(x_0,1)$$
		for all $t\geqslant0$ and all $x\in M$, where the third equality holds by Proposition \ref{prop1} (4) (i).
		For Proposition \ref{addprop1}, we get that
		$$h^{x_0,u_0}(x,1)\leqslant h^{x_0,h_{x,u(x,t)}(x_0,1)}(x,1)=u(x,t),\quad \forall x\in M,\ \forall t\geqslant0.$$
		Since the function $x\to h^{x_0,u_0}(x,1)$ is bounded on $M$, then $u(x,t)$ is bounded from below on $M\times[0,+\infty)$. And by Step 2, we have
		\[
		u(x,t)\leqslant u(x,1)=\inf_{s\in\R}h_{x(s),u(s)}(x,1)\leqslant h_{x_0,u_0}(x,1),\quad \forall x\in M,\ \forall t>1.
		\]
		Hence, $u(x,t)=T^-_{t-1}u(x,1)$ is bounded on $M\times[1,+\infty)$.
		
		From Proposition \ref{prop5.1} (2), one can get that
		$\liminf_{t\to+\infty}T^-_{t-1}u(x,1)$ exists and belongs to $\mathcal{S}^-$. Hence from Step 2 and \eqref{3-506}, we get that
		$$u_-(x):=\lim_{t\to+\infty}u(x,t)=\lim_{t\to+\infty}T^-_{t-1}u(x,1)=\liminf_{t\to+\infty}T^-_{t-1}u(x,1)\in\mathcal{S}^-.$$

		\item [\bf{Step 4:}] We assert that $x(s)$ is a $(u_-,L,0)$-calibrated curve. Since $\big(x(s),u(s)\big)=\big(x(s),u_-(x(s))\big)$ is semi-static, then by Proposition \ref{pr22}, we have
		\[
		h_{x(t_1),u_-(x(t_1))}(x(t_2),t_2-t_1)=u_-(x(t_1))+\int_{t_1}^{t_2}L\big(x(s),\dot{x}(s),h_{x(t_1),u_-(x(t_1))}(x(s),s-t_1)\big)ds
		\]
		for any $t_1<t_2$. Note that
		\[
		h_{x(t_1),u_-(x(t_1))}(x(t_2),t_2-t_1)=u_-(x(t_2)), \quad h_{x(t_1),u_-(x(t_1))}(x(s),s-t_1)=u_-(x(s)),\ s\in[t_1,t_2].
		\]
		Thus, we get that
		\[
		u_-(x(t_2))-u_-(x(t_1))=\int_{t_1}^{t_2}L\big(x(s),\dot{x}(s),u_-(x(s))\big)ds,
		\]	
		impliying that $x(s)$ is a $(u_-,L,0)$-calibrated curve.
		
		Then by Corollary \ref{inva}, $p_0=\frac{\partial L}{\partial \dot{x}}\big(x(0),\dot{x}(0),u(0)\big)=Du_-(x_0)$ since $u(0)=u_-(x(0))$.	
		
		\item [\bf{Step 5:}] Let $u_+:=\lim\limits_{t\to+\infty}T_t^+u_-$. Then we assert that $u_0=u_{\pm}(x_0)$ and $p_0=Du_{\pm}(x_0)$. By Proposition 	\ref{prop1}, we have
		\[
		T^+_tu_-(x_0)=\sup_{y\in M}h^{y,u_-(y)}(x_0,t)\geqslant h^{x(t),u_-(x(t))}(x_0,t),\quad \forall t>0.
		\]
		Since $(x(s),u_-(x(s)))$ is semi-static, then $
		u_-(x(t))=h_{x_0,u_0}(x(t),t)$.
		By Proposition \ref{addprop1}, we get that
		\[
		h^{x(t),u_-(x(t))}(x_0,t)=u_0.
		\]
		Hence, we have $T^+_tu_-(x_0)\geqslant u_0$, implying that
		\[
		u_+(x_0)\geqslant u_0=u_-(x_0).
		\]
		In view of Corollary \ref{uplus}, $u_+\leqslant u_-$. So far, we have proved that
		\[
		u_{+}(x_0)=u_0=u_-(x_0).
		\]
		At last, by Step 4 and Proposition \ref{iinv}, we have
		\[
		p_0=Du_-(x_0)=Du_+(x_0).
		\]

		Now the proof is complete.
	\end{proof}

	A direct consequence of Theorem \ref{Man} and Proposition \ref{pr1011} is as follows.
	\begin{corollary}\label{mather1}
		$\tilde{\mathcal{N}}\subset
		\mathcal{E}=\{(x,p,u)\in T^*M\times\R: H(x,p,u)=0\}.$
	\end{corollary}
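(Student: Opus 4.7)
The plan is to reduce the corollary to the building blocks $\tilde{\mathcal{N}}_{u_-}$ provided by the decomposition theorem, and then verify pointwise that $H$ vanishes on each such piece. By Theorem \ref{Man}, $\tilde{\mathcal{N}} = \bigcup_{u_- \in \mathcal{S}^-} \tilde{\mathcal{N}}_{u_-}$, so it suffices to show $\tilde{\mathcal{N}}_{u_-} \subset \mathcal{E}$ for every backward weak KAM solution $u_-$.

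Fix $u_- \in \mathcal{S}^-$ and let $(x_0, p_0, u_0) \in \tilde{\mathcal{N}}_{u_-}$. By the definition of $\tilde{\mathcal{N}}_{u_-}$, we have $x_0 \in \mathcal{N}_{u_-}$, $p_0 = Du_-(x_0)$ and $u_0 = u_-(x_0)$. I would then invoke Proposition \ref{iinv}: since $u_-(x_0) = u_+(x_0)$ with $u_+ = \lim_{t\to+\infty} T_t^+ u_-$, there exists a curve $\gamma:\mathbf{R} \to M$ with $\gamma(0)=x_0$ which is $(u_-,L,0)$-calibrated on all of $\mathbf{R}$, and $u_-$ is differentiable at $x_0$.

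Lemma \ref{labell}, applied at $t=0$ along this calibrated curve, then yields
\[
H\big(x_0, Du_-(x_0), u_-(x_0)\big)=0,
\]
i.e.\ $H(x_0,p_0,u_0)=0$, so $(x_0,p_0,u_0)\in\mathcal{E}$. Taking the union over $u_-\in\mathcal{S}^-$ finishes the proof.

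No step here presents a real difficulty: the argument is a three-line assembly of Theorem \ref{Man}, Proposition \ref{iinv}, and Lemma \ref{labell}. The substantive work has already been discharged in establishing the decomposition of $\tilde{\mathcal{N}}$ and in proving the basic calibration/differentiability properties of backward weak KAM solutions; the only thing one might alternatively want to note is that, by continuity of $H$ and Corollary \ref{inva}, the identity $H=0$ propagates from the differentiability points $(x,Du_-(x),u_-(x))$ to all of $\Lambda_{u_-}$, which combined with $\tilde{\mathcal{N}}_{u_-}\subset\Lambda_{u_-}$ from Proposition \ref{pr1011} gives an equivalent (slightly more general) route to the same conclusion.
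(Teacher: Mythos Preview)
Your proof is correct and follows essentially the same route as the paper, which records the corollary as ``a direct consequence of Theorem \ref{Man} and Proposition \ref{pr1011}'' without further argument. You simply make explicit the step the paper leaves implicit---namely, that $H$ vanishes on each $\tilde{\mathcal{N}}_{u_-}$---and your alternative route via $\tilde{\mathcal{N}}_{u_-}\subset\Lambda_{u_-}$ and continuity of $H$ is exactly what the paper's citation of Proposition \ref{pr1011} points to.
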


	The following results in this part are devoted to the relation between the set of Mather measure $\mathfrak{M}$ and
	\[
	\mathfrak{M}_{u_-}:=\{\mu\in\mathcal{P}(\T\times\R): \supp\mu\subset \Lambda_{u_-},\ (\Phi^H_t)_\sharp\mu=\mu,\,\ \forall t\in\mathbf{R}\}.
	\]
	
	For any $u_-\in\mathcal{S}^-$, notice from Lemma \ref{com} and Lemma \ref{ss} that $\tilde{\mathcal{N}}_{u_-}$ is a compact $\Phi_t^H$-invariant subset of $T^*M\times\R$. By the Krylov-Bogoliubov Theorem, there exist Borel $\Phi^H_t$-invariant probability measures supported in $\tilde{\mathcal{N}}_{u_-}$.
	Define
	\[
	\tilde{\mathfrak{M}}_{u_-}:=\{\mu\in\mathcal{P}(\T\times\R): \mu \ \text{is}\ \Phi^H_t\text{-invariant with}\ \supp\mu\subset \tilde{\mathcal{N}}_{u_-}\}.
	\]
	
	A direct consequence of Proposition \ref{pr1011} is as follows.
	\begin{corollary}
		Let $u_-\in\mathcal{S}^-$. Then $\tilde{\mathfrak{M}}_{u_-}=\mathfrak{M}_{u_-}$.
	\end{corollary}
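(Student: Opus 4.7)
The plan is to prove equality by double inclusion, with the main work reduced to Proposition \ref{pr1011}, which already identifies $\tilde{\mathcal{N}}_{u_-}$ with the intersection $\bigcap_{t\geq 0}\Phi^H_{-t}(\Lambda_{u_-})$. Given this identification, both inclusions follow from soft dynamical considerations about the support of an invariant measure, so the proof should be short.

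For the inclusion $\tilde{\mathfrak{M}}_{u_-}\subset\mathfrak{M}_{u_-}$, I would take $\mu\in\tilde{\mathfrak{M}}_{u_-}$ and note that $\supp\mu\subset\tilde{\mathcal{N}}_{u_-}$. Since Proposition \ref{pr1011} gives $\tilde{\mathcal{N}}_{u_-}\subset\Lambda_{u_-}$ (it is the intersection that contains this set at $t=0$), we immediately get $\supp\mu\subset\Lambda_{u_-}$. Combined with $\Phi^H_t$-invariance of $\mu$, this shows $\mu\in\mathfrak{M}_{u_-}$.

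For the reverse inclusion $\mathfrak{M}_{u_-}\subset\tilde{\mathfrak{M}}_{u_-}$, which is the slightly less trivial direction, I would take $\mu\in\mathfrak{M}_{u_-}$ and use the standard fact that the support of a $\Phi^H_t$-invariant Borel probability measure on a metric space is itself $\Phi^H_t$-invariant for every $t$. In particular, $\supp\mu=\Phi^H_{-t}(\supp\mu)$ for each $t\geq 0$. Since $\supp\mu\subset\Lambda_{u_-}$, applying $\Phi^H_{-t}$ yields $\supp\mu=\Phi^H_{-t}(\supp\mu)\subset\Phi^H_{-t}(\Lambda_{u_-})$ for all $t\geq 0$. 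Intersecting over $t\geq 0$ and invoking Proposition \ref{pr1011} gives $\supp\mu\subset\bigcap_{t\geq 0}\Phi^H_{-t}(\Lambda_{u_-})=\tilde{\mathcal{N}}_{u_-}$, so $\mu\in\tilde{\mathfrak{M}}_{u_-}$.

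The only potential obstacle is ensuring that the flow is well defined on $\supp\mu$ for all $t\in\R$, which is implicit in the definition of $\mathfrak{M}_{u_-}$ (the pushforward $(\Phi^H_t)_\sharp\mu$ is assumed to exist for all $t\in\R$), and the standard measure-theoretic fact that $\supp\mu$ is invariant under a continuous flow preserving $\mu$. Both points are routine given the setup, so the proof will essentially amount to a two-line argument once Proposition \ref{pr1011} is invoked.
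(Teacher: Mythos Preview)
Your proof is correct and is exactly the argument the paper has in mind: the corollary is stated as a direct consequence of Proposition \ref{pr1011}, and the double-inclusion you spell out---using $\tilde{\mathcal{N}}_{u_-}\subset\Lambda_{u_-}$ for one direction and the $\Phi^H_t$-invariance of $\supp\mu$ together with $\tilde{\mathcal{N}}_{u_-}=\bigcap_{t\geq 0}\Phi^H_{-t}(\Lambda_{u_-})$ for the other---is precisely that direct consequence.
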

	In view of \eqref{ss1}, we get that ,
	\begin{align}\label{2-290}
		\mathfrak{M}_{u_-}=\tilde{\mathfrak{M}}_{u_-}\subset \mathfrak{M}.
	\end{align}
	Hence, $\mathfrak{M}$ is nonempty. Moreover, we can deduce Corollary \ref{mather} directly from Theorem \ref{Man}.

	%
	
	%

	\section{Stability and instability of  stationary solutions}
	We will prove Theorem \ref{th1}, Theorem \ref{th2} and take a closer look at Example \ref{ex1} in this section. Let $u_-\in\mathcal{S}^-$. For any positive real number $\delta$, define
	\[
	u^\delta:=u_-+\delta,\quad 	u_\delta:=u_--\delta.
	\]
	
	\subsection{Key lemmas}
	\begin{lemma}\label{lem1}
		Let $u_-\in \mathcal{S^-}$ satisfy \eqref{A1}.  Then for any constant $c>0$, there is $t_c>0$ depending on $c$ such that each $(u_-,L,0)$-calibrated curve $\gamma:[-t_c,0]\to M$ satisfies
		$$\int^0_{-t_c}\frac{\partial L}{\partial u}\big(\gamma(s),\dot{\gamma}(s),u_-(\gamma(s))\big)ds<(-A+c)t_c,$$
		where $A$ is as in \eqref{A2}.
	\end{lemma}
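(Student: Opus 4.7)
The plan is to argue by contradiction using an empirical-measure construction, transferring the integral from the Lagrangian side to the Hamiltonian side via the Legendre transform, and then invoking \eqref{A2}.

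First I would translate the integrand. Along a $(u_-,L,0)$-calibrated curve $\gamma$, Lemma \ref{labell} and Lemma \ref{diffe} guarantee that $u_-$ is differentiable at $\gamma(s)$ for $s$ in the interior with $p(s):=\frac{\partial L}{\partial\dot{x}}(\gamma(s),\dot{\gamma}(s),u_-(\gamma(s)))=Du_-(\gamma(s))$. The Legendre duality between $H$ and $L$ yields the envelope identity
\[
\frac{\partial L}{\partial u}(x,\dot{x},u)=-\frac{\partial H}{\partial u}(x,p,u)\quad\text{at }p=\tfrac{\partial L}{\partial\dot{x}}(x,\dot{x},u),
\]
so the target inequality is equivalent to
\[
\frac{1}{t_c}\int_{-t_c}^{0}\frac{\partial H}{\partial u}\big(\gamma(s),Du_-(\gamma(s)),u_-(\gamma(s))\big)\,ds>A-c.
\]

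Next I would suppose no such $t_c$ exists: for some fixed $c>0$, there is $t_n\to+\infty$ and a sequence of $(u_-,L,0)$-calibrated curves $\gamma_n:[-t_n,0]\to M$ along which the time-averaged integral of $\partial H/\partial u$ is at most $A-c$. By Proposition \ref{houjia}, all velocities $\dot{\gamma}_n$ are uniformly bounded, so (combining with the Lipschitz bound on $u_-$ and the corresponding bound on $p(s)=Du_-(\gamma_n(s))$) the image of the map $\Psi_n(s):=(\gamma_n(s),Du_-(\gamma_n(s)),u_-(\gamma_n(s)))$ lies in a fixed compact subset $\mathcal{K}\subset T^*M\times\mathbf{R}$. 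Define empirical probability measures
\[
\mu_n:=\tfrac{1}{t_n}(\Psi_n)_\sharp(\mathcal{L}^1|_{[-t_n,0]}),
\]
where $\mathcal{L}^1$ is Lebesgue measure. Each $\mu_n$ is supported in $\Lambda_{u_-}$ by the very definition of $\Lambda_{u_-}$ (points of differentiability of $u_-$ are of full measure along $\gamma_n$ by Lemma \ref{diffe}), and $\mathcal{K}$-tightness gives a weak-$^*$ limit $\mu_{n_k}\rightharpoonup\mu$ along a subsequence, with $\supp\mu\subset\Lambda_{u_-}$ since $\Lambda_{u_-}$ is closed.

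I would then verify $\Phi^H_t$-invariance of the limit $\mu$, which is the main technical step. By Corollary \ref{inva}, $\Psi_n$ is literally an orbit piece of $\Phi^H_t$, so for any $\varphi\in C_b(T^*M\times\mathbf{R},\mathbf{R})$ and any fixed $\tau\in\mathbf{R}$, the change of variables $s'=s+\tau$ gives
\[
\Big|\int\varphi\circ\Phi^H_\tau\,d\mu_n-\int\varphi\,d\mu_n\Big|\leqslant\frac{2|\tau|\,\|\varphi\|_\infty}{t_n}\longrightarrow 0,
\]
(after extending $\gamma_n$ briefly via the flow on the short endpoint intervals, or just bounding those endpoint contributions). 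Hence $\mu\in\mathfrak{M}_{u_-}$. Evaluating $\partial H/\partial u$, which is continuous and bounded on $\mathcal{K}$, against $\mu_{n_k}\rightharpoonup\mu$ and using the contradiction hypothesis yields
\[
\int\frac{\partial H}{\partial u}\,d\mu=\lim_{k\to\infty}\frac{1}{t_{n_k}}\int_{-t_{n_k}}^{0}\frac{\partial H}{\partial u}\big(\Psi_{n_k}(s)\big)\,ds\leqslant A-c<A,
\]
directly contradicting \eqref{A2}. The only delicate point is the invariance verification: one must handle the endpoint pieces $[-\tau,0]$ and $[-t_n,-t_n+\tau]$ where the shifted curve exits the parameter interval, but the bounded-$\varphi$ estimate above absorbs them, so this is routine rather than deep.
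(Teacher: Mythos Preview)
Your proposal is correct and follows essentially the same contradiction-via-empirical-measures argument as the paper's proof. The only cosmetic difference is that you work directly on $T^*M\times\R$ with $\Psi_n(s)=(\gamma_n(s),Du_-(\gamma_n(s)),u_-(\gamma_n(s)))$ and invoke \eqref{A2} at the end, whereas the paper constructs the empirical measures on $TM\times\R$ via $(\gamma_n(s),\dot{\gamma}_n(s),u_-(\gamma_n(s)))$, extracts a $\Phi^L_t$-invariant limit $\bar\mu$ supported in $\bar{\mathcal{L}}(\Lambda_{u_-})$ (using Corollary \ref{propA6}), and only then pushes forward by $\bar{\mathcal{L}}^{-1}$ to land in $\mathfrak{M}_{u_-}$; the Legendre identity $\partial H/\partial u=-\partial L/\partial u\circ\bar{\mathcal{L}}$ and the invariance verification are the same in both.
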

	
	\begin{proof}
		Assume by contradiction that there are a constant $c_0>0$, a sequence $\{t_n\}$ with $t_n\to+\infty$ as $n\to+\infty$ and
		a sequence of $(u_-,L,0)$-calibrated curve $\gamma_n:[-t_n,0]\to M$ such that
		$$\frac{1}{t_n}\int^0_{-t_n}\frac{\partial L}{\partial u}\big(\gamma_n(s),\dot{\gamma}_n(s),u_-(\gamma_n(s))\big)ds\geqslant-A+c_0.$$
		Define a sequence of measures $\{\mu_n\}$ on $TM\times \R$ by
		$$\int_{TM\times \R}f(x,\dot{x},u)d\mu_n=\frac{1}{t_n}\int^0_{-t_n}f\big(\gamma_n(s),\dot{\gamma}_n(s),u_-(\gamma_n(s))\big)ds,\ \forall f\in C_b(TM\times \R,\R).$$
		From Proposition \ref{houjia}, all $(u_-,L,0)$-calibrated curves are equi-Lipschitz. Thus, for each $n$,
		\[
		\supp \mu_n\subset \{(x,\dot{x},u)\in TM\times\R:  |u|\leqslant \|u_-\|_\infty,\ \|\dot{x}\|\leqslant C_{u_-,l}\}.
		\]
		By Prokhorov Theorem, there exists a point of accumulation of $\{\mu_n\}$ with respect to the vague
		topology, denoted by $\bar{\mu}$. By the similar arguments used in \cite{M91}, one can deduce that $\bar{\mu}$ is  $\Phi^L_t$-invariant. From the definition of $\mu_n$, we get that
		\begin{align}\label{010}
			\int_{TM\times \R}\frac{\partial L}{\partial u}(x,\dot{x},u)d\bar{\mu}\geqslant-A+c_0.
		\end{align}
		Moreover, by Corollary \ref{propA6}, we have $\supp\mu_n\subset\bar{\mathcal{L}}(\Lambda_{u_-})$. Thus $\supp\bar{\mu}\subset\bar{\mathcal{L}}(\Lambda_{u_-})$. Let $\tilde{\mu}=(\bar{\mathcal{L}}^{-1})_\sharp \bar{\mu}$. Then $\tilde{\mu}$ is a $\Phi^H_t$-invariant measure whose support is contained in $\Lambda_{u_-}$. Thus, $\tilde{\mu}\in\mathfrak{M}_{u_-}$. In view of  \begin{equation*}
			\begin{split}
				&\frac{\partial H}{\partial u}(x,p,u)=p\cdot\frac{\partial^2H}{\partial p\partial u}(x,p,u)-\frac{\partial}{\partial u}L(x,\frac{\partial H}{\partial p}(x,p,u),u)\\
				=&p\cdot\frac{\partial^2H}{\partial p\partial u}(x,p,u)-\frac{\partial L}{\partial \dot{x}}(x,\frac{\partial H}{\partial p}(x,p,u),u)\cdot\frac{\partial^2H}{\partial p\partial u}(x,p,u)-\frac{\partial L}{\partial u}(x,\frac{\partial H}{\partial p}(x,p,u),u)\\
				=&-\frac{\partial L}{\partial u}(x,\frac{\partial H}{\partial p}(x,p,u),u)=-\frac{\partial L}{\partial u}\circ\bar{\mathcal{L}}(x,p,u),
			\end{split}
		\end{equation*}
		we get that $$\int_{T^*M\times \R}\frac{\partial H}{\partial u}(x,p,u)d\tilde{\mu}=\int_{TM\times \R}-\frac{\partial L}{\partial u}(x,\dot{x},u)d\bar{\mu}\leqslant A-c_0<A,$$
		which contradicts (A1).
	\end{proof}

	\begin{lemma}\label{coro1}
		Let $u_-\in\mathcal{S}^-$. For any $c>0$, there is a constant $\bar{\delta}>0$ such that for any $(u_-,L,0)$-calibrated curve $\gamma:[a,b]\to M$( $-\infty\leqslant a<b\leqslant+\infty$), there holds
		\begin{equation}\label{eq1.7}
			\Big|\int^1_0\frac{\partial L}{\partial u}\big(\gamma(s),\dot{\gamma}(s),u_-(\gamma(s))+\theta\xi(s)\big)d\theta- \frac{\partial L}{\partial u}\big(\gamma(s),\dot{\gamma}(s),u_-(\gamma(s))\big)	\Big|\leqslant c,\ \forall s\in[a,b]
		\end{equation}
		for any $\xi\in C([a,b],\R)$ satisfying $\|\xi\|_{\infty}\leqslant\bar{\delta}$.
	\end{lemma}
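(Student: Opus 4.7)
The plan is to reduce the estimate to a uniform continuity argument for $\frac{\partial L}{\partial u}$ on a suitable compact set, with the key input being that all $(u_-,L,0)$-calibrated curves live in a compact subset of $TM\times\R$ that is independent of the curve.

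First, I would invoke Proposition \ref{houjia} to assert that there is a constant $C_{u_-,l}$, depending only on $u_-$ and its Lipschitz constant $l$, such that every $(u_-,L,0)$-calibrated curve $\gamma:[a,b]\to M$ satisfies $\|\dot{\gamma}(s)\|\leqslant C_{u_-,l}$ for all $s\in(a,b)$. Combined with the compactness of $M$ and the continuity (hence boundedness) of $u_-$, the triple $\bigl(\gamma(s),\dot{\gamma}(s),u_-(\gamma(s))\bigr)$ therefore lies in the compact set
\[
K:=\bigl\{(x,v,u)\in TM\times\R:\ \|v\|\leqslant C_{u_-,l},\ |u|\leqslant \|u_-\|_\infty\bigr\}
\]
independently of the curve $\gamma$ and of the interval $[a,b]$.

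Next, I would enlarge $K$ to $K':=\{(x,v,u+w):(x,v,u)\in K,\ |w|\leqslant 1\}$, which is still a compact subset of $TM\times\R$. Since $\frac{\partial L}{\partial u}$ is continuous, it is uniformly continuous on $K'$. Given any $c>0$, uniform continuity furnishes some $\bar{\delta}\in(0,1]$ such that
\[
\Bigl|\tfrac{\partial L}{\partial u}(x,v,u_1)-\tfrac{\partial L}{\partial u}(x,v,u_2)\Bigr|\leqslant c
\]
whenever $(x,v,u_1),(x,v,u_2)\in K'$ with $|u_1-u_2|\leqslant\bar{\delta}$.

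Finally, for any $\xi\in C([a,b],\R)$ with $\|\xi\|_\infty\leqslant\bar{\delta}$ and any $\theta\in[0,1]$, the point $\bigl(\gamma(s),\dot{\gamma}(s),u_-(\gamma(s))+\theta\xi(s)\bigr)$ belongs to $K'$ and differs from $\bigl(\gamma(s),\dot{\gamma}(s),u_-(\gamma(s))\bigr)$ only in the $u$-coordinate, by at most $\bar{\delta}$. The uniform continuity bound then gives
\[
\Bigl|\tfrac{\partial L}{\partial u}\bigl(\gamma(s),\dot{\gamma}(s),u_-(\gamma(s))+\theta\xi(s)\bigr)-\tfrac{\partial L}{\partial u}\bigl(\gamma(s),\dot{\gamma}(s),u_-(\gamma(s))\bigr)\Bigr|\leqslant c
\]
for every $\theta\in[0,1]$, and integrating in $\theta$ over $[0,1]$ preserves the bound, yielding \eqref{eq1.7}. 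No step is really a serious obstacle here; the only point that must be emphasised is the $\gamma$-independent compactness of the range of the lifts of calibrated curves, which is exactly what Proposition \ref{houjia} supplies.
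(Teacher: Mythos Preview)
Your proof is correct and follows the same essential structure as the paper's: both invoke Proposition \ref{houjia} to confine all $(u_-,L,0)$-calibrated lifts to a fixed compact set in $TM\times\R$, then run a continuity argument on that set. The only difference is in the continuity step itself: the paper applies the mean value theorem twice, writing the integrand difference as $\frac{\partial^2 L}{\partial u^2}(\gamma(s),\dot\gamma(s),u_-(\gamma(s))+\eta_s)\cdot\sigma_s$ and bounding $\bigl|\frac{\partial^2 L}{\partial u^2}\bigr|$ by its maximum $V$ on the compact set, which yields the explicit choice $\bar\delta=\min\{1,c/V\}$. Your route via uniform continuity of $\frac{\partial L}{\partial u}$ is slightly more elementary (it only needs $L\in C^1$ rather than $C^2$) and avoids the extra derivative, at the cost of not producing an explicit formula for $\bar\delta$. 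Either way the argument is short and the key input is the same.
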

	
	\begin{proof}
		Let $0<\bar{\delta}<1$ be a constant to be determined later. For any $(u_-,L,0)$-calibrated curve $\gamma:[a,b]\to M$, we have that
		\begin{equation}\label{eq1.8}
			\begin{split}
				&\Big|\int^1_0\frac{\partial L}{\partial u}\big(\gamma(s),\dot{\gamma}(s),u_-(\gamma(s))+\theta\xi(s)\big)d\theta-\frac{\partial L}{\partial u}\big(\gamma(s),\dot{\gamma}(s),u_-(\gamma(s))\big)\Big|\\
				&=\Big|\frac{\partial L}{\partial u}\big(\gamma(s),\dot{\gamma}(s),u_-(\gamma(s))+\sigma_s\big)-\frac{\partial L}{\partial u}\big(\gamma(s),\dot{\gamma}(s),u_-(\gamma(s))\big)\Big|\\
				&=\Big|\frac{\partial^2 L}{\partial u^2}\big(\gamma(s),\dot{\gamma}(s),u_-(\gamma(s))+\eta_s\big)\Big|\cdot|\sigma_s|, \quad \forall s\in[a,b],
			\end{split}
		\end{equation}
		where   $|\eta_s|\leqslant|\sigma_s|\leqslant\bar{\delta}$ for any $s\in[a,b].$ From Proposition \ref{houjia}, one can deduce that  $\{\big(\gamma(s),\dot{\gamma}(s)\big): s\in[a,b]\}\subset K:=\{(x,\dot{x})\in TM:  \|\dot{x}\|\leqslant C_{u_-,l}\}$. Thus,
		\[
		\{\big(\gamma(s),\dot{\gamma}(s),u_-(\gamma(s))+\eta_s\big): s\in[a,b]\} \subset K\times[-\|u_-\|_\infty-1,\|u_-\|_\infty+1].
		\]
		Let
		$$V:=\max\Big\{\Big|\frac{\partial^2 L}{\partial u^2}(x,\dot{x},u)\Big|: (x,\dot{x},u)\in K\times[-\|u_-\|_\infty-1,\|u_-\|_\infty+1]\Big\}.$$
		Let $\bar{\delta}=\min\{1,\frac{c}{V}\}$. Then by \eqref{eq1.8}, inequality \eqref{eq1.7} holds true.
	\end{proof}

	\begin{lemma}\label{lem2}
		Let $u_-\in\mathcal{S}^-$ and let $t_0>0$ be a constant. For any $\varepsilon>0$, there is  $\delta_0>0$ such that for any $x\in M$ and any $\delta\in[0,\delta_0]$, there are a  minimizer $\gamma_{\delta}:[-t_0,0]\to M$ of $T^-_{t_0}u_{\delta}(x)$ with $\gamma_{\delta}(0)=x$ and a $(u_-,L,0)$-calibrated curve $\gamma:[-t_0,0]\to M$, satisfying
		\[
		d\Big(\big(\gamma_{\delta}(s),\dot{\gamma}_{\delta}(s)\big),\big(\gamma(s),\dot{\gamma}(s)\big)\Big)<\varepsilon, \quad  \forall s\in[-t_0,0].
		\]
	\end{lemma}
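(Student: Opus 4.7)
The plan is to argue by contradiction, using a compactness argument combined with the continuous dependence of the implicit action function on its data. Suppose the conclusion fails. Then there exist $\varepsilon_0>0$, sequences $\delta_n\downarrow 0$ and $x_n\in M$, and for each $n$ some minimizer $\gamma_n:[-t_0,0]\to M$ of $T^-_{t_0}u_{\delta_n}(x_n)$ with $\gamma_n(0)=x_n$, such that for every $(u_-,L,0)$-calibrated curve $\gamma:[-t_0,0]\to M$ one has $d\bigl((\gamma_n(s),\dot\gamma_n(s)),(\gamma(s),\dot\gamma(s))\bigr)\geqslant\varepsilon_0$ for some $s\in[-t_0,0]$.

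The next step is to extract a $C^1$ limit of $\{\gamma_n\}$. Set $y_n:=\gamma_n(-t_0)$; by Proposition \ref{propA1}(3)(4) applied with $\varphi=u_{\delta_n}$ we have $T^-_{t_0}u_{\delta_n}(x_n)=h_{y_n,u_{\delta_n}(y_n)}(x_n,t_0)$ and $\gamma_n$ is a minimizer of this implicit action function. Since $|u_{\delta_n}(y_n)|\leqslant\|u_-\|_\infty+1$ for $n$ large, Proposition \ref{prop5} yields a compact set $\mathcal{K}\subset T^*M\times\R$, independent of $n$, that contains the lifted curve $(\gamma_n(s),p_n(s),u_n(s))$ for all $s\in[-t_0,0]$, where $p_n(s):=\frac{\partial L}{\partial\dot x}(\gamma_n(s),\dot\gamma_n(s),u_n(s))$ and $u_n(s):=h_{y_n,u_{\delta_n}(y_n)}(\gamma_n(s),s+t_0)$. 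Passing via $\bar{\mathcal{L}}^{-1}$ to the Hamiltonian side gives uniform $C^1$ bounds on $\gamma_n$. After passing to a subsequence we may assume $x_n\to x_0$ and $(x_n,p_n(0),u_n(0))\to(x_0,p_0,u_0)$; by continuous dependence of the flow $\Phi^H_t$ on initial conditions (and the Legendre transform $\bar{\mathcal{L}}^{-1}$), $\gamma_n\to\gamma_*$ in $C^1([-t_0,0],M)$, where $\gamma_*$ is the $M$-projection of $\Phi^H_s(x_0,p_0,u_0)$ for $s\in[-t_0,0]$.

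Now I identify $\gamma_*$ as a $(u_-,L,0)$-calibrated curve. By Proposition \ref{prop1}(3), $\|T^-_s u_{\delta_n}-T^-_s u_-\|_\infty\leqslant e^{\lambda s}\delta_n\to 0$ uniformly for $s\in[0,t_0]$, and $T^-_s u_-=u_-$ by Proposition \ref{prop9}. Passing to the limit $n\to\infty$ in the minimizer identity
\[
T^-_{t_0}u_{\delta_n}(x_n)=u_{\delta_n}(y_n)+\int_{-t_0}^{0}L\bigl(\gamma_n(\tau),\dot\gamma_n(\tau),T^-_{t_0+\tau}u_{\delta_n}(\gamma_n(\tau))\bigr)d\tau
\]
using the above uniform convergence, the $C^1$ convergence $\gamma_n\to\gamma_*$, continuity of $L$, and dominated convergence (the integrand is uniformly bounded, as all arguments lie in a fixed compact set), one obtains
\[
u_-(x_0)=u_-(\gamma_*(-t_0))+\int_{-t_0}^{0}L\bigl(\gamma_*(\tau),\dot\gamma_*(\tau),u_-(\gamma_*(\tau))\bigr)d\tau.
\]
Combined with the domination inequality $u_-\prec L$ from Definition \ref{bwkam}, this forces equality on every subinterval, so $\gamma_*$ is $(u_-,L,0)$-calibrated on $[-t_0,0]$.

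Taking $\gamma=\gamma_*$ in the contradiction hypothesis, the uniform $C^1$ convergence $\gamma_n\to\gamma_*$ gives $\sup_{s\in[-t_0,0]}d\bigl((\gamma_n(s),\dot\gamma_n(s)),(\gamma_*(s),\dot\gamma_*(s))\bigr)\to 0$, contradicting the standing lower bound $\geqslant\varepsilon_0$ for $n$ large. The main obstacle is to justify the $C^1$ convergence of the minimizers and to pass to the limit inside the Lagrangian action; both hinge on the compactness afforded by Proposition \ref{prop5} together with the contraction estimate Proposition \ref{prop1}(3), which replaces the nonexpansiveness that the semigroup $\{T^-_t\}_{t\geqslant 0}$ generally lacks in the contact setting.
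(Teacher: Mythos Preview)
Your proof is correct and follows essentially the same contradiction-plus-compactness strategy as the paper's proof. The only differences are in execution: the paper obtains $C^1$ convergence of $\{\gamma_n\}$ via Arzel\`a--Ascoli after bounding $\ddot\gamma_n$ explicitly from the contact Hamiltonian equations, whereas you invoke continuous dependence of the flow $\Phi^H_t$ on initial data; and the paper verifies that the limit curve is $(u_-,L,0)$-calibrated by tracking the implicit action functions (Propositions~\ref{prop3}, \ref{propA1}, \ref{propA2}), whereas you pass to the limit directly in the integral minimizer identity and then use the domination $u_-\prec L$ to upgrade the single equality on $[-t_0,0]$ to calibration on every subinterval. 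Both routes rely on the same two inputs---Proposition~\ref{prop5} for compactness and Proposition~\ref{prop1}(3) for the contraction estimate---so the approaches are interchangeable.
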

	
	\begin{proof}
		Assume by contradition that there are $\eps_0>0$, a sequence of positive numbers $\{\delta_n\}$ with $1>\delta_n\to0$, a sequence of points $\{x_n\}\subset M$, such that for any minimizer $\gamma_{n}:[-t_0,0]\to M$ of $T^-_{t_0}u_{\delta_n}(x_n)$ with $\gamma_n(0)=x_n$ and any  $(u_-,L,0)$-calibrated curve $\gamma:[-t_0,0]\to M$, there holds
		\begin{align}\label{333}
			d\Big(\big(\gamma_{n}(s_n),\dot{\gamma}_{n}(s_n)\big),\big(\gamma(s_n),\dot{\gamma}(s_n)\big)\Big)\geqslant\eps_0,
		\end{align}
		for some  $s_n\in[-t_0,0]$. Passing to a subsequence if necessary, we may suppose that  $x_n\to x$ and $\gamma_n(-t_0)\to y$.

		Since $\gamma_n$ is the minimizer of $T^-_{t_0}u_{\delta_n}(x_n)$, by Proposition \ref{propA1}, we get that
		\begin{align}\label{000}
			T^-_{t_0}u_{\delta_n}(x_n)=h_{\gamma_n(-t_0),u_{\delta_n}(\gamma_n(-t_0))}(\gamma_n(0),t_0),
		\end{align}
		and $\gamma_n$ is a minimizer of $h_{\gamma_n(-t_0),u_{\delta_n}(\gamma_n(-t_0))}(\gamma_n(0),t_0)$. In view of  Proposition \ref{prop5}, we have
		\[
		\{\big(\gamma_n(s),p_n(s),u_n(s)\big):  n\in \mathbf{N},\  s\in[-t_0,0]\}\subset \mathcal{K}_{u_-,t_0},
		\]
		where $u_n(s)=h_{\gamma_n(-t_0),u_{\delta_n}(\gamma_n(-t_0))}(\gamma_n(s),t_0+s)$, $p_n(s)=\frac{\partial L}{\partial v}\big(\gamma_n(s),\dot{\gamma}_n(s),u_n(s)\big)$,  and $\mathcal{K}_{u_-,t_0}$ is a compact subset of $T^*M\times\R$ depending only on $u_-$ and $t_0$.
		By Proposition \ref{prop4}, one can deduce that
		\begin{equation}\label{eq1.9}
			\dot{\gamma}_n(s)=\frac{\partial H}{\partial p}\big(\gamma_n(s),p_n(s),u_n(s)\big),\quad  \forall s\in[-t_0,0].
		\end{equation}
		So, $\{\dot{\g}_n\}$ is uniformly bounded on $[-t_0,0]$. That means $\{\gamma_n\}$ is equi-Lipschitz. Utilizing Arzel\`a-Ascoli Theorem   and if necessary passing to a subsequence, we have
		\begin{align}\label{111}
			\g_n\to\g,\quad \text{uniformly on}\ [-t_0,0],
		\end{align}
		for some $\g:[-t_0,0]\to M$ with $\g(0)=x$ and $\g(-t_0)=y$.
		From \eqref{eq1.9}, we obtain that
		\begin{equation*}
			\begin{split}
				\ddot{\gamma}_n=&\frac{\partial^2H}{\partial p\partial x}(\gamma_n,p_n,u_n)\cdot\dot{\gamma}_n+\frac{\partial^2H}{\partial p\partial u}(\gamma_n,p_n,u_n)\cdot\dot{u}_n+\frac{\partial^2H}{\partial p^2}(\gamma_n,p_n,u_n)\cdot\dot{p}_n\\
				=&\Big\{\frac{\partial^2H}{\partial p\partial x}(\gamma_n,p_n,u_n)\cdot\frac{\partial H}{\partial p}(\gamma_n,p_n,u_n)+\frac{\partial^2H}{\partial p\partial u}(\gamma_n,p_n,u_n)\cdot\big(\frac{\partial H }{\partial p}(\gamma_n,p_n,u_n)\cdot p_n\\
				-&H(\gamma_n,p_n,u_n)\big)+\frac{\partial^2H}{\partial p^2}(\gamma_n,p_n,u_n)\cdot\big(-\frac{\partial H}{\partial x}(\gamma_n,p_n,u_n)-\frac{\partial H}{\partial u}(\gamma_n,p_n,u_n)\cdot p_n\big)\Big\},
			\end{split}
		\end{equation*}
		for $\forall s\in[-t_0,0]$. This implies that  $\{\ddot{\gamma}_n\}$ is uniformly bounded in $[-t_0,0]$. By Arzel\`a-Ascoli Theorem again,
		passing to a subsequence if necessary, we may suppose that
		\begin{align}\label{222}
			\dot{\g}_n\to\alpha,\quad \text{uniformly on}\ [-t_0,0].
		\end{align}
		In view of \eqref{111} and \eqref{222}, we deduce that
		$\alpha=\dot{\g}$.
		
		To finish the proof, we only need to show that $\g$ is a $(u_-,L,0)$-calibrated curve, which contradicts \eqref{333}.
		By Proposition \ref{prop3} (2), we have for each $n\in\mathbf{N}$ that
		\begin{align}\label{555}
			|h_{\gamma_n(-t_0),u_-(\gamma_n(-t_0))}(x_n,t_0)-h_{\gamma_n(-t_0),u_{\delta_n}(\gamma_n(-t_0))}(x_n,t_0)|\leqslant k\|u_--u_{\delta_n}\|_{\infty}=k\delta_n,
		\end{align}
		where $k$ denotes the Lipschitz constant of  $h_{\cdot,\cdot}(\cdot,t_0)$ on $M\times [-\|u_-\|_\infty-1,\|u_-\|_\infty+1]\times M$.		
		By Proposition \ref{prop1} (3), we have
		\begin{align}\label{444}
			|T^-_{t_0}u_{\delta_n}(x_n)-u_-(x_n)|\leqslant\|T^-_{t_0}u_{\delta_n}-u_-\|_\infty\leqslant e^{\lambda t_0}\|u_{\delta_n}-u_-\|_\infty\leqslant e^{\lambda t_0}\delta_n.
		\end{align}
		Combining \eqref{000}, \eqref{555} and \eqref{444}, we have
		$$|h_{\gamma_n(-t_0),u_-(\gamma_n(-t_0))}(x_n,t_0)-u_-(x_n)|\leqslant(e^{\lambda t_0}+k)\delta_n,\quad \forall n\in\mathbf{N}.
		$$
		Letting $n$ tend to $+\infty$, it yields $u_-(x)=h_{\gamma(-t_0),u_-(\gamma(-t_0))}(x,t_0)$. Moreover, by Proposition \ref{prop3} (1) and Proposition \ref{propA1}, we have
		$$h_{\gamma_n(-t_0),u_{\delta_n}(\gamma_n(-t_0))}(x_n,t_0)=h_{\gamma_n(s-t_0),h_{\gamma_n(-t_0),u_{\delta_n}(\gamma_n(-t_0))}(\gamma_n(s-t_0),s)}(x_n,t_0-s),\ \forall s\in[0,t_0].$$
		Letting $n$ tend to $+\infty$, we get that
		$$h_{\gamma(-t_0),u_-(\gamma(-t_0))}(x,t_0)=h_{\gamma(s-t_0),h_{\gamma(-t_0),u_-(\gamma(-t_0))}(\gamma(s-t_0),s)}(x,t_0-s),\ \forall s\in[0,t_0].$$
		Thus, by Proposition \ref{prop3} (1), $\gamma$ is a minimizer of $h_{\gamma(-t_0),u_-(\gamma(-t_0))}(x,t_0)$. Since $u_-\in \mathcal{S}^-$,  by Proposition \ref{prop1} (4),
		$$\inf_{y\in M}h_{y,u_-(y)}(x,t_0)=T^-_{t_0}u_-(x)=u_-(x)=h_{\gamma(-t_0),u_-(\gamma(-t_0))}(x,t_0).$$
		Then from Proposition \ref{propA2} we deduce that  $\gamma$ is a minimizer of $T^-_{t_0}u_-(x)$. Using Proposition \ref{propA1},
		$$u_-(\gamma(b))=T^-_{b-a}u_-(\gamma(b))=u_-(\gamma(a))+\int^b_aL\big(\gamma(s),\dot{\gamma}(s),u_-(\gamma(s))\big)ds,\ \forall -t_0\leqslant a<b\leqslant0.$$
		This means that $\gamma$ is a $(u_-,L,0)$-calibrated curve. The proof is complete.
	\end{proof}
	
	\begin{corollary}\label{coro2}
		Let $u_-\in\mathcal{S}^-$ and let $t_0>0$ be a constant. For any $c>0$, there is a constant $\delta_c>0$ such that for any $x\in M$ and any $\delta\in[0,\delta_c]$, there are a minimizer $\gamma_{\delta}:[-t_0,0]\to M$ of $T^-_{t_0}u_{\delta}(x)$ with $\g_{\delta}(0)=x$ and a $(u_-,L,0)$-calibrated curve $\gamma:[-t_0,0]\to M$, satisfying
		\begin{equation*}
			\Big|\int^1_0\frac{\partial L}{\partial u}\Big(\gamma_{\delta}(s),\dot{\gamma}_{\delta}(s),u_-(\gamma_{\delta}(s))-\theta w_{\delta}(s+t_0)\Big)d\theta-\frac{\partial L}{\partial u}\big(\gamma(s),\dot{\gamma}(s),u_-(\gamma(s))\big)\Big|\leqslant c,
		\end{equation*}
		for all $s\in[-t_0,0]$, where $w_{\delta}(\tau)=u_-(\gamma_{\delta}(\tau-t_0))-T^-_{\tau}u_{\delta}(\gamma_{\delta}(\tau-t_0)),\ \forall\tau\in[0,t_0]$.
	\end{corollary}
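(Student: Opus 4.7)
The plan is to combine Lemma \ref{lem2} (closeness of $\gamma_\delta$ to a calibrated curve $\gamma$ in both position and velocity) with a mean-value estimate in the spirit of Lemma \ref{coro1} (robustness of $\partial L/\partial u$ under small perturbations of the $u$-argument), after first showing that $w_\delta$ is uniformly small in $\delta$.

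First I would bound $\|w_\delta\|_\infty$. Since $u_-\in\mathcal{S}^-$, Proposition \ref{prop9} yields $T^-_\tau u_-=u_-$, so
\[
w_\delta(\tau)=T^-_\tau u_-(\gamma_\delta(\tau-t_0))-T^-_\tau u_\delta(\gamma_\delta(\tau-t_0)),
\]
and Proposition \ref{prop1}(3) gives $\|w_\delta\|_\infty\leqslant e^{\lambda t_0}\|u_--u_\delta\|_\infty=e^{\lambda t_0}\delta$, uniformly in $\tau\in[0,t_0]$ and $x\in M$. By Proposition \ref{houjia} and Proposition \ref{prop5}, the triples $(\gamma_\delta(s),\dot\gamma_\delta(s),u_-(\gamma_\delta(s))-\theta w_\delta(s+t_0))$ and $(\gamma(s),\dot\gamma(s),u_-(\gamma(s)))$ lie, for $\delta\in[0,1]$ and $\theta\in[0,1]$, in a fixed compact set $\tilde K\subset TM\times\R$ depending only on $u_-$ and $t_0$. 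On $\tilde K$ the function $\partial L/\partial u$ is uniformly continuous and $\partial^2 L/\partial u^2$ is uniformly bounded by some constant $V$.

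Next I would split the desired quantity by the triangle inequality as $I+II$, where
\[
I:=\Big|\int^1_0\frac{\partial L}{\partial u}(\gamma_\delta,\dot{\gamma}_\delta,u_-(\gamma_\delta)-\theta w_\delta)\,d\theta-\frac{\partial L}{\partial u}(\gamma_\delta,\dot{\gamma}_\delta,u_-(\gamma_\delta))\Big|
\]
and
\[
II:=\Big|\frac{\partial L}{\partial u}(\gamma_\delta,\dot{\gamma}_\delta,u_-(\gamma_\delta))-\frac{\partial L}{\partial u}(\gamma,\dot{\gamma},u_-(\gamma))\Big|.
\]
Term $I$ is handled by the mean value theorem exactly as in the proof of Lemma \ref{coro1}: $I\leqslant V\|w_\delta\|_\infty\leqslant Ve^{\lambda t_0}\delta$. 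For term $II$, I would invoke Lemma \ref{lem2} to choose $\gamma_\delta$ and $\gamma$ with $d((\gamma_\delta(s),\dot\gamma_\delta(s)),(\gamma(s),\dot\gamma(s)))<\varepsilon$ on $[-t_0,0]$; combined with the Lipschitz continuity of $u_-$ this controls all three coordinates, and uniform continuity of $\partial L/\partial u$ on $\tilde K$ then yields $II\leqslant\omega(\varepsilon)$ for some modulus $\omega$. Picking $\varepsilon$ with $\omega(\varepsilon)\leqslant c/2$, letting $\delta_0(\varepsilon)$ be the constant from Lemma \ref{lem2}, and setting $\delta_c:=\min\{1,\delta_0(\varepsilon),(c/2)/(Ve^{\lambda t_0})\}$ then yields $I+II\leqslant c$.

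The main obstacle is essentially bookkeeping rather than new ideas: uniformity in $x\in M$ is automatic because both the compactness in Proposition \ref{prop5} and the contraction-type estimate in Proposition \ref{prop1}(3) are uniform in $x$. The only genuinely substantive observation is the identity expressing $w_\delta(\tau)$ as a semigroup difference $(T^-_\tau u_--T^-_\tau u_\delta)\circ\gamma_\delta(\tau-t_0)$, which collapses precisely because $u_-$ is a fixed point of $\{T^-_\tau\}$.
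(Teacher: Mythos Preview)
Your proposal is correct and follows essentially the same route as the paper. The only organizational difference is that the paper applies the mean value theorem to the $\theta$-integral and then compares the single triple $(\gamma_\delta,\dot\gamma_\delta,u_-(\gamma_\delta)-\xi_s w_\delta)$ to $(\gamma,\dot\gamma,u_-(\gamma))$ in one shot via uniform continuity of $\partial L/\partial u$, whereas you split into $I+II$ and handle the $w_\delta$-perturbation via the bound on $\partial^2 L/\partial u^2$ (as in Lemma~\ref{coro1}) before comparing the base points; both orderings use the same ingredients---Lemma~\ref{lem2}, the contraction estimate of Proposition~\ref{prop1}(3) giving $|w_\delta|\leqslant e^{\lambda t_0}\delta$, and the compactness from Propositions~\ref{houjia} and~\ref{prop5}---and arrive at the same choice of $\delta_c$ up to constants.
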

	
	\begin{proof}
		We get from Proposition \ref{houjia} that for any $(u_-,L,0)$-calibrated curve $\gamma:[-t_0,0]\to M$,
		\[
		\{\big(\gamma(s),\dot{\gamma}(s)\big): s\in[-t_0,0]\}\subset K:=\{(x,\dot{x}):x\in M, \|\dot{x}\|\leqslant C_{u_-,l}\}\subset TM,
		\]
		where $l$ and $C_{u_-,l}$ are as in Lemma \ref{lem1}. 		
		Note that $\frac{\partial L}{\partial u}(\cdot,\cdot,\cdot)$ is uniformly continuous on the compact set $Q=K_{1}\times[-\|u_-\|_\infty-1,\|u_-\|_\infty+1]$ where $K_1:=\{(x,\dot{x})\in TM: d(K,(x,\dot{x}))\leqslant 1\}$, $d(K,(x,\dot{x}))=\inf_{(y,\dot{y})\in K}d\big((x,\dot{x}),(y,\dot{y})\big)$. Thus, there is a constant $\varepsilon\in(0,1)$ such that
		\begin{equation}\label{eq1.10}
			\Big|\frac{\partial L}{\partial u}(x,\dot{x},u)-\frac{\partial L}{\partial u}(x_0,\dot{x}_0,u_0)\Big|\leqslant c
		\end{equation}
		holds for any $(x,\dot{x},u)$, $(x_0,\dot{x}_0,u_0)\in Q$ satisfying $d\big((x,\dot{x},u),(x_0,\dot{x}_0,u_0)\big)\leqslant\varepsilon$.
		
		From Lemma \ref{lem2}, we deduce that for any $x\in M$ and any $\delta\in[0,\delta_0]$ where $\delta_0$ is as in Lemma \ref{lem2}, we can find a minimizer $\gamma_{\delta}:[-t_0,0]\to M$ of $T^-_{t_0}u_{\delta}(x)$ with $\gamma_{\delta}(0)=x$ and a $(u_-,L,0)$-calibrated curve $\gamma:[-t_0,0]\to M$, such that
		\begin{align}\label{777}
			d\Big(\big(\gamma(s),\dot{\gamma}(s)\big),\big(\gamma_{\delta}(s),\dot{\gamma}_{\delta}(s)\big)\Big)<\min\{\frac{\varepsilon}{3},\frac{\varepsilon}{6}l^{-1}\}, \quad \forall s\in[-t_0,0].
		\end{align}
		
		Taking $\delta_c=\min\{\delta_0,\frac{\varepsilon}{6}e^{-\lambda t_0}\}$. By Proposition \ref{prop1} (3), we have for any $\delta\in[0,\delta_c]$ that
		\begin{align}\label{666}
			|w_{\delta}(\tau)|\leqslant\|T^-_{\tau}u_{\delta}-u_-\|_{\infty}\leqslant e^{\lambda \tau}\|u_{\delta}-u_-\|_{\infty}\leqslant e^{\lambda t_0}\delta_c\leqslant\frac{\varepsilon}{6},\quad   \forall \tau\in[0,t_0].
		\end{align}
		It is clear that for any $s\in[-t_0,0]$, we have
		\begin{equation*}
			\int^1_0\frac{\partial L}{\partial u}\Big(\gamma_{\delta}(s),\dot{\gamma}_{\delta}(s),u_-(\gamma_{\delta}(s))-\theta w_{\delta}(s+t_0)\Big)d\theta=\frac{\partial L}{\partial u}\Big(\gamma_{\delta}(s),\dot{\gamma}_{\delta}(s),u_-(\gamma_{\delta}(s))-\xi_sw_{\delta}(s+t_0)\Big),	
		\end{equation*}
		for some  $\xi_s$ satisfying $|\xi_s|\leqslant 1$.
		Combining  \eqref{777} and \eqref{666}, we have
		\begin{align*}
			d\Big(\big(\gamma(s),\dot{\gamma}(s),u_-(\gamma(s))\big),\big(\gamma_{\delta}(s),\dot{\gamma}_{\delta}(s),u_-(\gamma_{\delta}(s))-\xi_sw_{\delta}(s+t_0)\big)\Big)\leqslant \eps
		\end{align*}
		for any $s\in[-t_0,0]$. Using \eqref{eq1.10}, we get that
		\begin{equation*}
			\begin{split}
				&\Big|\int^1_0\frac{\partial L}{\partial u}\Big(\gamma_{\delta}(s),\dot{\gamma}_{\delta}(s),u_-(\gamma_{\delta}(s))-\theta w_{\delta}(s+t_0)\Big)d\theta-\frac{\partial L}{\partial u}\big(\gamma(s),\dot{\gamma}(s),u_-(\gamma(s))\big)\Big|\\
				&=\Big|\frac{\partial L}{\partial u}\Big(\gamma_{\delta}(s),\dot{\gamma}_{\delta}(s),u_-(\gamma_{\delta}(s))-\xi_sw_{\delta}(s+t_0)\Big)-\frac{\partial L}{\partial u}\big(\gamma(s),\dot{\gamma}(s),u_-(\gamma(s))\big)\Big|\leqslant c,
			\end{split}
		\end{equation*}
		for all $s\in[-t_0,0]$.
	\end{proof}

	\subsection{The proof of Theorem \ref{th1}}
	
	\begin{proof}[Proof of Theorem \ref{th1}]
		Let $A$ be as in \eqref{A2}. Let $0<c<\frac{A}{2}$ be an arbitrary constant. By Lemma \ref{lem1}, there is $t_c>0$ such that
		\[
		\int^0_{-t_c}\frac{\partial L}{\partial u}\big(\tilde{\gamma}(s),\dot{\tilde{\gamma}}(s),u_-(\tilde{\gamma}(s))\big)ds<(-A+c)t_c
		\]
		holds true for all $(u_-,L,0)$-calibrated curves $\tilde{\gamma}:[-t_c,0]\to M$.
		
		Since the rest of the proof is quite long, we divide it into three parts.
		
		\item [\bf{Step 1:}] Define
		\[
		\delta^c:=e^{-\lambda t_c}\bar{\delta},\quad u^{\delta}:=u_-+\delta,
		\]
		where $\delta\in[0,\delta^c]$  and $\bar{\delta}$ is as in Lemma \ref{coro1}. We aim to estimate the term $\|T^-_{t_c}u^{\delta}-u_-\|_{\infty}$.
		
		For any $x\in M$, let $\gamma:[-t_c,0]\to M$ be a $(u_-,L,0)$-calibrated curve with $\gamma(0)=x$. Let
		\begin{align}\label{888}
			\bar{u}(s):=u_-(\gamma(s-t_c)) ,\quad  \bar{u}^{\delta}(s):=T^-_s u^{\delta}(\gamma(s-t_c)),\quad s\in [0,t_c].
		\end{align}
		For any $s$, $\Delta s$ with $0\leqslant s<s+\Delta  s\leqslant t_c$, in view of $u_-\in\mathcal{S}^-$ and \eqref{888}, we have
		\begin{equation}\label{eq1.3}
			\begin{split}
				&\bar{u}(s+\Delta s)=u_-(\gamma(s+\Delta s-t_c))\\
				=&u_-(\gamma(s-t_c))+\int^{\Delta s}_0 L\big(\gamma(\tau+s-t_c), \dot{\gamma}(\tau+s-t_c),u_-(\gamma(\tau+s-t_c))\big)d\tau\\
				=&\bar{u}(s)+\int^{s+\Delta s}_s L\big(\gamma(\tau-t_c), \dot{\gamma}(\tau-t_c),\bar{u}(\tau)\big)d\tau.
			\end{split}
		\end{equation}
		In view of \eqref{888} and the semigroup property of $T^-_t$, we have
		\begin{equation}\label{eq1.4}
			\begin{split}
				&\bar{u}^{\delta}(s+\Delta s)=T^-_{s+\Delta s}u^{\delta}(\gamma(s+\Delta s-t_c))=T^-_{\Delta s}\circ T^-_su^{\delta}(\gamma(s+\Delta s-t_c))\\
				=&\inf_{\tilde{\gamma}(0)=\gamma(s+\Delta s-t_c)}\{T^-_su^{\delta}(\tilde{\gamma}(-\Delta s))+\int^0_{-\Delta s} L\big(\tilde{\gamma}(\tau),\dot{\tilde{\gamma}}(\tau), T^-_{\tau+\Delta s}\circ T^-_su^{\delta}(\tilde{\gamma}(\tau))\big)d\tau\}\\
				\leqslant&T^-_s u^{\delta}(\gamma(s-t_c))+\int^{\Delta s}_0 L\big(\gamma(\tau+s-t_c), \dot{\gamma}(\tau+s-t_c),T^-_{\tau+s} u^{\delta}(\gamma(\tau+s-t_c))\big)d\tau\\
				=&\bar{u}^{\delta}(s)+\int^{s+\Delta s}_s L\big(\gamma(\tau-t_c),\dot{\gamma}(\tau-t_c) ,\bar{u}^{\delta}(\tau)\big)d\tau.
			\end{split}
		\end{equation}
		Let $\bar{w}(s):=\bar{u}^{\delta}(s)-\bar{u}(s)$, $s\in[0,t_c]$. By Proposition \ref{prop1} (1),  $\bar{w}(s)\geqslant0$, $\forall s\in[0,t_c]$ and $\bar{w}(0)=\delta$. Combining \eqref{eq1.3} and \eqref{eq1.4}, we get that
		\begin{equation}\label{999}
			\begin{split}
				&\bar{w}(s+\Delta s)\\
				\leqslant&\bar{w}(s)+\int^{s+\Delta s}_s \big(L\big(\gamma(\tau-t_c), \dot{\gamma}(\tau-t_c),\bar{u}^{\delta}(\tau)\big)-L\big(\gamma(\tau-t_c), \dot{\gamma}(\tau-t_c),\bar{u}(\tau)\big)\big)d\tau\\
				\leqslant&\bar{w}(s)+\int^{s+\Delta s}_s\bar{w}(\tau)\int^1_0 \frac{\partial L}{\partial u}\big(\gamma(\tau-t_c), \dot{\gamma}(\tau-t_c),\bar{u}(\tau)+\theta \bar{w}(\tau)\big)d\theta d\tau.
			\end{split}
		\end{equation}
		By Proposition \ref{prop1} (2), one can deduce that $\bar{w}(s)$ is Lipschitz on $[0,t_c]$. So $\bar{w}$ is differentiable almost everywhere in $[0,t_c]$. Therefore, by \eqref{999} we have
		$$\dot{\bar{w}}(s)\leqslant \bar{w}(s)\int^1_0 \frac{\partial L}{\partial u}\Big(\gamma(s-t_c), \dot{\gamma}(s-t_c),u_-(\gamma(s-t_c))+\theta \bar{w}(s)\Big)d\theta,\quad \text{a.e.}\ s\in[0,t_c].$$
		By Proposition \ref{prop1} (3),
		\[
		|\bar{w}(s)|\leqslant\|T^-_su^{\delta}-u_-\|_{\infty}\leqslant e^{\lambda s}\|u^{\delta}-u_-\|_{\infty}\leqslant e^{\lambda t_c}\delta^c=\bar{\delta},\quad \forall s\in[0,t_c].
		\]
		Using Lemma \ref{coro1}, we have
		$$\dot{\bar{w}}(s)\leqslant \bar{w}(s)\Big(\frac{\partial L}{\partial u}\big(\gamma(s-t_c),\dot{\gamma}(s-t_c),u_-(\gamma(s-t_c))\big)+c\Big).$$
		Consider the following Cauchy problem
		\begin{equation*}
			\left\{
			\begin{aligned}
				\dot{w}(s)&= w(s)\Big(\frac{\partial L}{\partial u}\big(\gamma(s-t_c),\dot{\gamma}(s-t_c),u_-(\gamma(s-t_c))\big)+c\Big),\\
				w(0)&=\delta.
			\end{aligned}
			\right.
		\end{equation*}
		Hence, we get that
		$$|T^-_{t_c}u^{\delta}(x)-u_-(x)|=\bar{w}(t_c)\leqslant \bar{w}(0)e^{\int^{t_c}_0[\frac{\partial L}{\partial u}(\gamma(s-t_c),\dot{\gamma}(s-t_c),u_-(\gamma(s-t_c)))+c]ds}<\delta e^{(-A+2c)t_c},$$
		where the last inequality comes from Lemma \ref{lem1}.
		According to the arbitrariness of $x$, it yields
		$$\|T^-_{t_c}u^{\delta}-u_-\|_{\infty}\leqslant\delta e^{(-A+2c)t_c},\quad \forall\delta\in[0,\delta^c].$$

		\item [\bf{Step 2:}] Define
		\[
		u_{\delta}:=u_--\delta,\quad \delta\in[0,\delta_c],
		\]
		where $\delta_c$ is as in Corollary \ref{coro2}.
		We will estimate the term $\|T^-_{t_c}u_{\delta}-u_-\|_{\infty}$.
		
		For any $x\in M$, by Corollary \ref{coro2} we can find a minimizer $\gamma_{\delta}:[-t_c,0]\to M$ of $T^-_{t_c}u_{\delta}(x)$ with $\gamma_{\delta}(0)=x$ such that
		\begin{equation*}
			\begin{split}
				&\int^1_0\frac{\partial L}{\partial u}\Big(\gamma_{\delta}(s-t_c),\dot{\gamma}_{\delta}(s-t_c),u_-(\gamma_{\delta}(s-t_c))-\theta\big(u_-(\gamma_{\delta}(s-t_c))-T^-_su_{\delta}(\gamma_{\delta}(s-t_c))\big)\Big)d\theta\\
				&\leqslant\frac{\partial L}{\partial u}\big(\gamma_0(s-t_c),\dot{\gamma}_0(s-t_c),u_-(\gamma_0(s-t_c))\big)+c,\quad \forall s\in[0,t_c],
			\end{split}
		\end{equation*}
		for some $(u_-,L,0)$-calibrated curve $\gamma_0:[-t_c,0]\to M$. Define $\tilde{u}(s):=u_-(\gamma_{\delta}(s-t_c))$ and $\tilde{u}_{\delta}(s):=T^-_s u_{\delta}(\gamma_{\delta}(s-t_c))$, $s\in[0,t_c]$. For any $s$, $\Delta s$ satisfying $0\leqslant s<s+\Delta s\leqslant t_c$, we have
		\begin{equation}\label{eq1.5}
			\begin{split}
				&\tilde{u}(s+\Delta s)=u_-(\gamma_{\delta}(s+\Delta s-t_c))=T^-_{\Delta s}u_-(\gamma_{\delta}(s+\Delta s-t_c))\\
				=&\inf_{\tilde{\gamma}(0)=\gamma_{\delta}(s+\Delta s-t_c)}\Big\{u_-(\tilde{\gamma}(-\Delta s))+\int^0_{-\Delta s} L\big(\tilde{\gamma}(\tau), \dot{\tilde{\gamma}}(\tau),T^-_{\tau+\Delta s}u_-(\tilde{\gamma}(\tau))\big)d\tau\Big\}\\
				\leqslant&u_-(\gamma_{\delta}(s-t_c))+\int^{\Delta s}_0 L\big(\gamma_{\delta}(\tau+s-t_c), \dot{\gamma}_{\delta}(\tau+s-t_c),u_-(\gamma_{\delta}(\tau+s-t_c))\big)d\tau\\
				=&\tilde{u}(s)+\int^{s+\Delta s}_s L\big(\gamma_{\delta}(\tau-t_c), \dot{\gamma}_{\delta}(\tau-t_c),\tilde{u}(\tau)\big)d\tau,
			\end{split}
		\end{equation}
		and
		\begin{align}\label{eq1.6}
			\begin{split}
				&\tilde{u}_{\delta}(s+\Delta s)=T^-_{s+\Delta s}u_{\delta}(\gamma_{\delta}(s+\Delta s-t_c))=T^-_{\Delta s}\circ T^-_su_{\delta}(\gamma_{\delta}(s+\Delta s-t_c))\\
				=&\inf_{\tilde{\gamma}(0)=\gamma_{\delta}(s+\Delta s-t_c)}\{T^-_su_{\delta}(\tilde{\gamma}(-\Delta s))+\int^0_{-\Delta s} L\big(\tilde{\gamma}(\tau), \dot{\tilde{\gamma}}(\tau),T^-_{\tau+\Delta s}\circ T^-_su_{\delta}(\tilde{\gamma}(\tau))\big)d\tau\}\\
				=&T^-_s u_{\delta}(\gamma_{\delta}(s-t_c))+\int^{s+\Delta s}_s L\big(\gamma_{\delta}(\tau-t_c), \dot{\gamma}_{\delta}(\tau-t_c),T^-_{\tau} u_{\delta}(\gamma_{\delta}(\tau-t_c))\big)d\tau\\
				=&\tilde{u}_{\delta}(s)+\int^{s+\Delta s}_s L\big(\gamma_{\delta}(\tau-t_c), \dot{\gamma}_{\delta}(\tau-t_c),\tilde{u}_{\delta}(\tau)\big)d\tau.
			\end{split}
		\end{align}
		Define $\tilde{w}(s):=\tilde{u}(s)-\tilde{u}_{\delta}(s)$, $s\in[0,t_c]$. From Proposition \ref{prop1} (1) we have $\tilde{w}(s)\geqslant0$, $\forall s\in[0,t_c]$ and $\tilde{w}(0)=\delta$. By \eqref{eq1.5} and \eqref{eq1.6}, we get that
		\begin{equation*}
			\begin{split}
				&\tilde{w}(s+\Delta s)\\
				\leqslant&\tilde{w}(s)+\int^{s+\Delta s}_s \Big(L\big(\gamma_{\delta}(\tau-t_c), \dot{\gamma}_{\delta}(\tau-t_c),\tilde{u}(\tau)\big)-L\big(\gamma_{\delta}(\tau-t_c), \dot{\gamma}_{\delta}(\tau-t_c),\tilde{u}_{\delta}(\tau)\big)\Big)d\tau\\
				\leqslant&\tilde{w}(s)+\int^{s+\Delta s}_s\tilde{w}(\tau)\int^1_0 \frac{\partial L}{\partial u}\Big(\gamma_{\delta}(\tau-t_c), \dot{\gamma}_{\delta}(\tau-t_c),\tilde{u}(\tau)-\theta \tilde{w}(\tau)\Big)d\theta d\tau.
			\end{split}
		\end{equation*}
		By Proposition \ref{prop1} (2),  $\tilde{w}(s)$ is Lipschitz on $s\in[0,t_c]$ which implies that  $\tilde{w}$ is differentiable almost everywhere on $[0,t_c]$. Therefore according to above inequality, we have
		$$\dot{\tilde{w}}(s)\leqslant \tilde{w}(s)\int^1_0 \frac{\partial L}{\partial u}\Big(\gamma_{\delta}(s-t_c), \dot{\gamma}_{\delta}(s-t_c),u_-(\gamma_{\delta}(s-t_c))-\theta \tilde{w}(s)\Big)d\theta,\ \text{a.e.}\ s\in[0,t_c].
		$$
		From Corollary \ref{coro2}, we have
		$$\dot{\tilde{w}}(s)\leqslant\tilde{w}(s)\Big(\frac{\partial L}{\partial u}\big(\gamma_0(s-t_c),\dot{\gamma}_0(s-t_c),u_-(\gamma_0(s-t_c))\big)+c\Big).$$
		Consider the Cauchy problem
		\begin{equation*}
			\left\{
			\begin{aligned}
				\dot{w}(s)&= w(s)\Big(\frac{\partial L}{\partial u}\big(\gamma_0(s-t_c),\dot{\gamma}_0(s-t_c),u_-(\gamma_0(s-t_c))\big)+c\Big),\\
				w(0)&=\delta
			\end{aligned}
			\right.
		\end{equation*}
		and by Lemma \ref{lem1}, for any $x\in M$ we get
		$$|T^-_{t_c}u_{\delta}(x)-u_-(x)|=\tilde{w}(t_c)\leqslant \tilde{w}(0)e^{\int^{t_c}_0[\frac{\partial L}{\partial u}(\gamma_0(s-t_c),\dot{\gamma}_0(s-t_c),u_-(\gamma_0(s-t_c)))+c]ds}<\delta e^{(-A+2c)t_c}.$$
		It yields
		$$\|T^-_{t_c}u_{\delta}-u_-\|_{\infty}\leqslant\delta e^{(-A+2c)t_c},\quad \forall\delta\in[0,\delta_c].$$

		\item [\bf{Step 3:}] Let  $\Delta_c=\min\{\delta^c,\delta_c\}$. Note that $\Delta_c$ depends on $c$, since $\delta^c$ and $\delta_c$ depend on $c$.
		Take $\varphi\in C(M,\R)$ with $\|\varphi-u_-\|_\infty=\delta\leqslant\Delta_c$. Then $u_{\delta_c}\leqslant u_{\delta}\leqslant\varphi\leqslant u^{\delta}\leqslant u^{\delta^c}$.
		
		From Step 1, Step 2 and Proposition \ref{prop1} (1) we get that
		$$u_{\delta e^{(-A+2c)t_c}}=u_--\delta e^{(-A+2c)t_c}\leqslant T^-_{t_c}u_{\delta}\leqslant T^-_{t_c}\varphi\leqslant T^-_{t_c}u^{\delta}\leqslant u_-+\delta e^{(-A+2c)t_c}=u^{\delta e^{(-A+2c)t_c}}.$$
		And since $\delta e^{(-A+2c)t_c}<\delta\leqslant\Delta_c$, we still have
		$$u_--\delta e^{2(-A+2c)t_c}\leqslant T^-_{t_c}u_{\delta e^{(-A+2c)t_c}}\leqslant T^-_{2t_c}\varphi\leqslant T^-_{t_c}u^{\delta e^{(-A+2c)t_c}}\leqslant u_-+\delta e^{2(-A+2c)t_c}.$$
		So, one can deduce that
		\begin{align}\label{99}
			\|T^-_{nt_c}\varphi-u_-\|_{\infty}\leqslant\delta e^{n(-A+2c)t_c},\quad \forall n\in \mathbf{N}.
		\end{align}
		For any $t>0$, let $t=nt_c+\bar{t}$, $n\in \mathbf{N}$, $\bar{t}\in[0,t_c)$. Then using \eqref{99} and Proposition \ref{prop1} (3), we have
		\begin{equation}\label{eq1.11}
			\|T^-_t\varphi-u_-\|_{\infty}\leqslant e^{\lambda\bar{t}}\|T^-_{nt_c}\varphi-u_-\|_{\infty}\leqslant\delta\cdot e^{\lambda\bar{t}+n(-A+2c)t_c}\leqslant\|\varphi-u_-\|_\infty\cdot C_c\cdot e^{(-A+2c)t},
		\end{equation}
		where $C_c=e^{(\lambda+A-2c)t_c}$ depends on $c$.
		
		Let $c=\frac{A}{4}$ and define $\Delta:=\Delta_{\frac{A}{4}}$. Thus for any $\varphi\in C(M,\R)$ satisfying $\|\varphi-u_-\|_\infty\leqslant\Delta$, we obtain that
		\begin{equation}\label{asy1}
			\|T^-_t\varphi-u_-\|_{\infty}\leqslant\|\varphi-u_-\|_\infty\cdot C_{\frac{A}{4}}\cdot e^{-\frac{A}{2}t},\quad  \forall t>0,
		\end{equation}
		which means $u_-$ is Lyapunov stable and locally asymptotically stable.

		Moreover, suppose $\varphi\in C(M,\R)$ satisfies $\|\varphi-u_-\|_\infty\leqslant\Delta$. For any $c>0$, let $s_c:=\max\{1,-\frac{2}{A}(\ln\Delta_c-\ln(\Delta\cdot C_{\frac{A}{4}}))\}$. So we get from inequality \eqref{asy1} that $\|T^-_{s_c}\varphi-u_-\|\leqslant\Delta_c$. Therefore, we can use inequality \eqref{eq1.11} again and get that
		$$\|T^-_{t+s_c}\varphi-u_-\|\leqslant\|T^-_{s_c}\varphi-u_-\|\cdot C_c\cdot e^{(-A+2c)t}\leqslant\Delta_c\cdot C_c\cdot e^{(-A+2c)t},\quad \forall t>0.$$
		Thus,  $\limsup\limits_{t\to+\infty}\frac{\ln\|T^-_t\varphi-u_-\|_{\infty}}{t}\leqslant-A+2c,\ \forall c>0$. According to the arbitrariness of $c$, we have  $\limsup\limits_{t\to+\infty}\frac{\ln\|T^-_t\varphi-u_-\|_{\infty}}{t}\leqslant-A.$
	\end{proof}

	\subsection{The proof of Theorem \ref{th2} }
	
	Before proving Theorem \ref{th2}, we need to show the following result first.
	\begin{lemma}\label{lem4}
		Let $u_-\in\mathcal{S}^-$. If there exists $\mu\in\mathfrak{M}_{u_-}$ satisfying $$\int_{T^*M\times R}\frac{\partial H}{\partial u}(x,p,u)d\mu=:-A'<0,$$
		then for any  $t_0>0$ there exists  $x_0\in M$ and a $(u_-,L,0)$-calibrated curve $\gamma_0:(-\infty,0]\to M$ with $\gamma_0(0)=x_0$ such that
		\begin{equation}\label{eq4.1}
			\int^0_{-t_0}\frac{\partial L}{\partial u}\big(\gamma_0(s),\dot{\gamma_0}(s),u_-(\gamma_0(s))\big)ds\geqslant A'\cdot t_0.
		\end{equation}
	\end{lemma}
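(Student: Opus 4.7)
The plan is to translate the hypothesis into a statement about a $\Phi^L_t$-invariant measure on $TM\times\R$, and then use a straightforward averaging (mean-value) argument instead of a pointwise ergodic theorem. Set $\bar\mu:=\bar{\mathcal{L}}_\sharp\mu$. Using the identity $\frac{\partial H}{\partial u}(x,p,u)=-\frac{\partial L}{\partial u}\circ\bar{\mathcal{L}}(x,p,u)$ that was already verified in the proof of Lemma 4.1, the hypothesis rewrites as
\[
\int_{TM\times\R}\frac{\partial L}{\partial u}(x,\dot x,u)\,d\bar\mu=A'.
\]
Because $\bar{\mathcal{L}}$ conjugates $\Phi^H_t$ and $\Phi^L_t$, the measure $\bar\mu$ is $\Phi^L_t$-invariant. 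Moreover, since $\mu\in\mathfrak{M}_{u_-}$ is $\Phi^H_t$-invariant with support in $\Lambda_{u_-}$, its support actually lies in $\tilde{\mathcal N}_{u_-}=\bigcap_{t\geq 0}\Phi^H_{-t}(\Lambda_{u_-})$ by Proposition \ref{pr1011}, so $\supp\bar\mu\subset\bar{\mathcal{L}}(\tilde{\mathcal N}_{u_-})$.

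Next, fix $t_0>0$ and define the continuous bounded function
\[
g(z):=\int_{-t_0}^0\frac{\partial L}{\partial u}\bigl(\Phi^L_s(z)\bigr)\,ds,\qquad z\in TM\times\R,
\]
(continuity/boundedness follow because $\frac{\partial L}{\partial u}$ is bounded on the compact neighborhood of $\supp\bar\mu$ swept out by $\Phi^L_s$ for $s\in[-t_0,0]$). By Fubini and the $\Phi^L_s$-invariance of $\bar\mu$,
\[
\int g\,d\bar\mu=\int_{-t_0}^0\!\!\int\frac{\partial L}{\partial u}\bigl(\Phi^L_s(z)\bigr)\,d\bar\mu(z)\,ds=\int_{-t_0}^0 A'\,ds=A'\,t_0.
\]
Hence there exists $z_0\in\supp\bar\mu$ with $g(z_0)\geq A'\,t_0$; otherwise $g<A'\,t_0$ on $\supp\bar\mu$ would force $\int g\,d\bar\mu<A'\,t_0$.

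Finally, I recover the desired calibrated curve from $z_0$. Write $z_0=(x_0,\dot x_0,u_0)\in\bar{\mathcal{L}}(\tilde{\mathcal N}_{u_-})$, i.e., $\bar{\mathcal{L}}^{-1}(z_0)=(x_0,p_0,u_0)\in\tilde{\mathcal N}_{u_-}$ with $u_0=u_-(x_0)$ and $p_0=Du_-(x_0)$. By Proposition \ref{iinv} applied at $x_0$, there is a curve $\gamma_0:\R\to M$ with $\gamma_0(0)=x_0$ which is $(u_-,L,0)$-calibrated and satisfies $\Phi^L_s(z_0)=(\gamma_0(s),\dot\gamma_0(s),u_-(\gamma_0(s)))$ for all $s\in\R$ (the latter identity by the identification of Lagrangian and Hamiltonian flows and uniqueness of the Cauchy problem, as in the proof of Lemma \ref{ss}). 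Substituting this into the inequality $g(z_0)\geq A'\,t_0$ yields exactly \eqref{eq4.1}.

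The main obstacle is the last step: making sure the point $z_0$ chosen from the abstract support genuinely corresponds to a \emph{globally} defined $(u_-,L,0)$-calibrated curve, not just to an orbit segment. This is where Proposition \ref{iinv} (together with Proposition \ref{pr1011} identifying $\supp\mu$ with a subset of $\tilde{\mathcal N}_{u_-}$, and Corollary \ref{inva} identifying calibrated curves with the Lagrangian flow on $\Lambda_{u_-}$) is essential; once these structural facts are in place, the averaging argument for the inequality itself is a one-line Fubini computation.
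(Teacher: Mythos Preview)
Your proof is correct and follows essentially the same route as the paper: push $\mu$ forward to the Lagrangian side, use $\Phi^L_t$-invariance together with Fubini to compute the time-averaged integral, and then identify a point in $\supp\bar\mu\subset\bar{\mathcal L}(\tilde{\mathcal N}_{u_-})$ with a globally defined $(u_-,L,0)$-calibrated curve via Proposition~\ref{iinv}. The only cosmetic difference is that the paper phrases the mean-value step as a contradiction (assuming the inequality fails for every calibrated curve and integrating), whereas you state it directly; the ingredients and the logic are the same.
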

	
	\begin{proof}
		Notice that
		\begin{align}\label{1010}
			\int_{TM\times \R}\frac{\partial L}{\partial u}(x,\dot{x},u)d\mu_L=-\int_{T^*M\times \R}\frac{\partial H}{\partial u}(x,p,u)d\mu=A'>0,
		\end{align}
		where $\mu_L=\bar{\mathcal{L}}_\sharp \mu$ is a $\Phi^L_t$-invariant measure  and $\supp\mu_L\subset\bar{\mathcal{L}}(\tilde{\mathcal{N}}_{u_-})$.
		
		Assume by contradiction that there exists $\kappa>0$, such that
		$$\int^0_{-\kappa}\frac{\partial L}{\partial u}\big(\gamma_x(s),\dot{\gamma}_x(s),u_-(\gamma_x(s))\big)ds<A'\cdot\kappa$$
		holds for all $x\in M$ and all $(u_-,L,0)$-calibrated curves $\gamma_x:(-\infty,0]\to M$ with  $\gamma_x(0)=x$. Since  $\mu_L$ is $\Phi^L_t$-invariant, we have
		$$\int_{TM\times \R}\frac{\partial L}{\partial u}d\mu_L=\int_{TM\times \R}\frac{\partial L}{\partial u}d(\Phi^L_s)_\sharp\mu_L=\int_{TM\times \R}\frac{\partial L}{\partial u}\circ\Phi^L_sd\mu_L, \quad \forall s\in \R.$$
		Since $\supp\mu_L\subset\bar{\mathcal{L}}(\tilde{\mathcal{N}}_{u_-})$, we have $\supp\mu_L\subset\{(\gamma(0),\dot{\gamma}(0),u_-(\gamma(0))): \gamma:(-\infty,+\infty)\to M\ \ \text{is a}\  (u_-,L,0)\ \text{-calibrated curve}\}$ from Proposition \ref{iinv}. By integration on $[-\kappa,0]$, we get from Proposition \ref{propA5} that
		\begin{equation*}
			\begin{split}
				\kappa\int_{TM\times \R}\frac{\partial L}{\partial u}(x,\dot{x},u)d\mu_L
				=&\int^0_{-\kappa}\int_{TM\times \R}\frac{\partial L}{\partial u}\Big(\Phi^L_s(x,\dot{x},u)\Big)d\mu_L ds\\
				=&\int_{TM\times \R}\int^0_{-\kappa}\frac{\partial L}{\partial u}\big(\gamma_x(s),\dot{\gamma}_x(s),u_-(\gamma_x(s))\big)dsd\mu_L\\
				<&\int_{TM\times \R}A'\cdot\kappa d\mu_L=A'\cdot\kappa,	
			\end{split}
		\end{equation*}
		which contradicts \eqref{1010}.
	\end{proof}

	\begin{proof}[Proof of Theorem \ref{th2}]
		Let $c=\frac{A'}{2}$, where $A'$ is as in Lemma \ref{lem4}. By Lemma \ref{coro1},  we can find $\bar{\delta}>0$ such that \eqref{eq1.7} holds true for any $(u_-,L,0)$-calibrated curve $\gamma:[a,b]\to M$ and any $\xi\in C([a,b],\R)$ with $\|\xi\|_{\infty}\leqslant\bar{\delta}$.
		
		Let $\Delta'=\bar{\delta}$ and  $u_{\varepsilon}:=u_--\varepsilon$, where $\varepsilon>0$ is small enough. To finish the proof, it suffices to show that for any $1>\varepsilon>0$,
		$$\limsup_{t\to+\infty}\|T^-_tu_{\varepsilon}-u_-\|_{\infty}\geqslant\Delta'.$$
		Assume by contradiction that there are $1>\eps_0>0$ and  $t'>0$ such that for each $t>t'$,
		\begin{align}\label{1111}
			\|u_--T^-_tu_{\varepsilon_0}\|_{\infty}<\Delta'.
		\end{align}
		Let $t_1:=\inf\{\tau:  \|u_--T^-_tu_{\varepsilon_0}\|_{\infty}<\Delta',\  \forall t\geqslant\tau\}$.
		Let $t_0$ be  a positive constant to be determined later. Using Lemma \ref{lem4}, there are $x_0\in M$ and a $(u_-,L,0)$-calibrated curve $\gamma_0:(-\infty,0]\to M$ satisfying $\gamma_0(0)=x_0$ and \eqref{eq4.1}. Let $\hat{t}:=t_1+t_0$
		
		Let $\hat{u}(s):=u_-(\gamma_0(s-\hat{t}))$ and $\hat{u}_{\varepsilon_0}(s):=T^-_s u_{\varepsilon_0}(\gamma_0(s-\hat{t}))$, $s\in[0,\hat{t}]$. For every $s$, $\Delta s$ satisfying $0\leqslant s<s+\Delta s\leqslant \hat{t}$, we have
		\begin{equation*}
			\begin{split}
				&\hat{u}(s+\Delta s)=u_-(\gamma_0(s+\Delta s-\hat{t}))\\
				=&u_-(\gamma_0(s-\hat{t}))+\int^{\Delta s}_0 L\big(\gamma_0(\tau+s-\hat{t}), \dot{\gamma}_0(\tau+s-\hat{t}),u_-(\gamma_0(\tau+s-\hat{t}))\big)d\tau\\
				=&\hat{u}(s)+\int^{s+\Delta s}_s L\big(\gamma_0(\tau-\hat{t}), \dot{\gamma}_0(\tau-\hat{t}),\hat{u}(\tau)\big)d\tau,
			\end{split}
		\end{equation*}
		and
		\begin{equation*}
			\begin{split}
				&\hat{u}_{\varepsilon_0}(s+\Delta s)=T^-_{s+\Delta s}u_{\varepsilon_0}(\gamma_0(s+\Delta s-\hat{t}))=T^-_{\Delta s}\circ T^-_su_{\varepsilon_0}(\gamma_0(s+\Delta s-\hat{t}))\\
				=&\inf_{\tilde{\gamma}(0)=\gamma_0(s+\Delta s-\hat{t})}\Big\{T^-_su_{\varepsilon_0}(\tilde{\gamma}(-\Delta s))+\int^0_{-\Delta s} L\big(\tilde{\gamma}(\tau), \dot{\tilde{\gamma}}(\tau),T^-_{\tau+\Delta s}\circ T^-_su_{\varepsilon_0}(\tilde{\gamma}(\tau))\big)d\tau\Big\}\\
				\leqslant&T^-_s u_{\varepsilon_0}(\gamma_0(s-\hat{t}))+\int^{\Delta s}_0 L\big(\gamma_0(\tau+s-\hat{t}), \dot{\gamma}_0(\tau+s-\hat{t}),T^-_{\tau+s} u_{\varepsilon_0}(\gamma_0(\tau+s-\hat{t}))\big)d\tau\\
				=&\hat{u}_{\varepsilon_0}(s)+\int^{s+\Delta s}_s L\big(\gamma_0(\tau-\hat{t}), \dot{\gamma}_0(\tau-\hat{t}),\hat{u}_{\varepsilon_0}(\tau)\big)d\tau.
			\end{split}
		\end{equation*}
		Let $\hat{w}(s):=\hat{u}(s)-\hat{u}_{\varepsilon_0}(s)$ From Proposition \ref{prop1} (1) we have $\hat{w}(s)\geqslant0$, $\forall s\in[0,\hat{t}]$ and $\hat{w}(0)=\varepsilon_0$. In view of the above arguments, we get that
		\begin{equation*}
			\begin{split}
				&\hat{w}(s+\Delta s)\\
				\geqslant&\hat{w}(s)+\int^{s+\Delta s}_s \Big(L\big(\gamma_0(\tau-\hat{t}), \dot{\gamma}_0(\tau-\hat{t}),\hat{u}(\tau)\big)-L\big(\gamma_0(\tau-\hat{t}), \dot{\gamma}_0(\tau-\hat{t}),\hat{u}_{\varepsilon_0}(\tau)\big)\Big)d\tau\\
				\geqslant&\hat{w}(s)+\int^{s+\Delta s}_s\hat{w}(\tau)\int^1_0 \frac{\partial L}{\partial u}\big(\gamma_0(\tau-\hat{t}), \dot{\gamma}_0(\tau-\hat{t}),\hat{u}(\tau)-\theta \hat{w}(\tau)\big)d\theta d\tau.
			\end{split}
		\end{equation*}
		Since $\gamma_0$ is Lipschitz, by Proposition \ref{prop1} (2) we know $\hat{w}(s)$ is Lipschitz in $[0,\hat{t}]$. So $\hat{w}(s)$ is differentiable almost everywhere in $[0,\hat{t}]$. Therefore by the  above inequality, we have
		\begin{equation*}
			\begin{split}
				\dot{\hat{w}}(s)\geqslant \hat{w}(s)\int^1_0 \frac{\partial L}{\partial u}\big(\gamma_0(s-\hat{t}), \dot{\gamma}_0(s-\hat{t}),\hat{u}(s)-\theta \hat{w}(s)\big)d\theta
				=\hat{w}(s)\cdot g(s),\quad \text{a.e.}\  s\in[0,\hat{t}],
			\end{split}
		\end{equation*}
		where $g(s):=\int^1_0\frac{\partial L}{\partial u}\big(\gamma_0(s-\hat{t}),\dot{\gamma}_0(s-\hat{t}),\hat{u}(s)-\theta\hat{w}(s)\big)d\theta$, $s\in[0,\hat{t}]$. Consider the Cauchy problem
		\begin{equation*}
			\left\{
			\begin{aligned}
				\dot{w}(s)&= w(s)\cdot g(s),\\
				w(0)&=\varepsilon_0.
			\end{aligned}
			\right.
		\end{equation*}
		We can deduce that
		\begin{equation}\label{eq20}
			\begin{split}
				&u_-(x_0)-T^-_{\hat{t}}u_{\varepsilon_0}(x_0)=u_-(\gamma_0(0))-T^-_{\hat{t}}u_{\varepsilon_0}(\gamma_0(0))\\
				&=\hat{w}(\hat{t})\geqslant\hat{w}(0)\cdot e^{\int^{\hat{t}}_0g(s)ds}=\varepsilon_0\cdot e^{\int^{\hat{t}}_0g(s)ds}\\
				&=\varepsilon_0\cdot e^{\int^{t_1}_0g(s)ds}\cdot e^{\int^{t_1+t_0}_{t_1}g(s)ds}.
			\end{split}
		\end{equation}
		
		Next we estimate $\int^{t_1}_0g(s)ds$ and   $\int^{t_1+t_0}_{t_1}g(s)ds$, respectively.
		First, we deal with $\int^{t_1}_0g(s)ds$.	Note that
		\[
		U(x,t,\theta):=u_-(x)-\theta(u_-(x)-T^-_tu_{\varepsilon_0}(x)),\quad (x,t,\theta)\in M\times[0,t_1]\times[0,1]
		\]
		is a continuous funtion.
		From Proposition \ref{houjia}, we know that $\|\dot{\gamma}\|\leqslant C_{u_-,l}$ for all $(u_-,L,0)$-calibrated curves. Thus,  $\{\big(\gamma_0(s-\hat{t}),\dot{\gamma}_0(s-\hat{t}),\hat{u}(s)-\theta\hat{w}(s)\big): s\in[0,t_1]\}$ is contained in the following compact set:
		$$\{(x,\dot{x},u): x\in M,\ |u|\leqslant\|U\|_\infty,\ \|\dot{x}\|\leqslant C_{u_-,l}\}.$$
		So, there is a positive constant $D$ depending  only on $u_-$ and $t_1$, such that
		$$|\frac{\partial L}{\partial u}\big(\gamma_0(s-\hat{t}),\dot{\gamma}_0(s-\hat{t}),\hat{u}(s)-\theta\hat{w}(s)\big)|\leqslant D,\quad \forall s\in[0,t_1],\ \forall \theta\in[0,1].$$
		Thus, we get that
		\begin{equation}\label{eq21}
			\begin{split}
				\int^{t_1}_0g(s)ds\geqslant&-\int^{t_1}_0\int^1_0\Big|\frac{\partial L}{\partial u}\big(\gamma_0(s-\hat{t}),\dot{\gamma}_0(s-\hat{t}),\hat{u}(s)-\theta\hat{w}(s)\big)\Big|d\theta ds\\
				\geqslant&-\int^{t_1}_0\int^1_0Dd\theta ds\geqslant-Dt_1.
			\end{split}
		\end{equation}
		
		Second, we estimate $\int^{t_1+t_0}_{t_1}g(s)ds$.
		From \eqref{1111} we have
		$$|\hat{w}(t)|\leqslant\|u_--T^-_tu_{\varepsilon}\|_{\infty}<\Delta'=\bar{\delta},\quad t>t_1.
		$$
		Using Lemma \ref{coro1} and Lemma \ref{lem4}, we get that
		\begin{equation}\label{eq22}
			\begin{split}
				\int^{t_1+t_0}_{t_1}g(s)ds=&\int^{t_1+t_0}_{t_1}\int^1_0\frac{\partial L}{\partial u}\big(\gamma_0(s-\hat{t}),\dot{\gamma}_0(s-\hat{t}),u_-(\gamma_0(s-\hat{t}))-\theta\hat{w}(s)\big)d\theta ds\\
				\geqslant&\int^{t_1+t_0}_{t_1}\Big(\frac{\partial L}{\partial u}\big(\gamma_0(s-\hat{t}),\dot{\gamma}_0(s-\hat{t}),u_-(\gamma_0(s-\hat{t}))\big)-c\Big)ds\\
				=&\int^0_{-t_0}\Big(\frac{\partial L}{\partial u}\big(\gamma_0(s),\dot{\gamma}_0(s),u_-(\gamma_0(s))\big)-c\Big)ds\\
				\geqslant&A't_0-ct_0=\frac{A'}{2}t_0.
			\end{split}
		\end{equation}
		
		Take $t_0:=\max\{1,\frac{2Dt_1}{A'}+\frac{2}{A'}\ln\frac{2\Delta'}{\varepsilon_0}\}$ depending only on $u_-,\ t_1,\ \Delta',\ \varepsilon_0$ and $A'$. Then according to the formulas \eqref{eq20}, \eqref{eq21} and \eqref{eq22}, we know
		\begin{equation*}
			\begin{split}
				u_-(x_0)-T^-_{\hat{t}}u_{\varepsilon_0}(x_0)\geqslant&\varepsilon_0\cdot e^{\int^{t_1}_0g(s)ds}\cdot e^{\int^{t_1+t_0}_{t_1}g(s)ds}\\
				\geqslant&\varepsilon_0\cdot e^{-Dt_1}\cdot e^{\frac{A'}{2}t_0}=2\Delta'>\Delta',
			\end{split}
		\end{equation*}
		which contradicts \eqref{1111}. The proof is now complete.
	\end{proof}

	\subsection{Example \ref{ex1}, revisited}	
	Recall Example \ref{ex1} mentioned in the Introduction.
	\begin{align}\label{161}
		w_t(x,t)+\|Dw(x,t)\|^2-\langle Dg(x),Dw(x,t)\rangle-f(x)(w(x,t)-g(x))=0,\quad x\in M,
	\end{align}
	where $f$, $g$ are smooth functions on $M$.
	Here,
	\[
	H(x,p,u)=\|p\|^2-\langle Dg(x),p\rangle-f(x)(u-g(x)),
	\]
	and
	\[
	L(x,\dot{x},u)=\frac{1}{4}\|\dot{x}+Dg(x)\|^2+f(x)(u-g(x)).
	\]
	It is direct to check $H$ satisfies conditions (H1)-(H3) and $w=g$ is a classical solution of \eqref{161}.

	Take an arbitrary $\mu\in\mathfrak{M}_{g}$. By the property of invariant measures, recurrent points are almost everywhere in $\supp\mu$. Take an arbitrary recurrent point $(x_0,p_0,u_0)\in \supp\mu$. Define $\big(x(t),p(t),u(t)\big):=\Phi^H_t(x_0,p_0,u_0)$, $t\in\mathbf{R}$. There is a sequence $\{t_n\}$ such that
	$$t_n\to+\infty, \quad \big(x(t_n),p(t_n),u(t_n)\big)\to(x_0,p_0,u_0),\quad  n\ \to+\infty.$$
	Since $\supp\mu$ is $\Phi^H_t$-invariant, the orbit $\big(x(t),p(t),u(t)\big)$ lies on  $\Lambda_g$. Thus, we have
	\begin{equation}\label{Example1}
		\big(x(t),p(t),u(t)\big)=\big(x(t),Dg(x(t)),g(x(t))\big),\quad \forall t\in \mathbf{R},
	\end{equation}
	and $(x_0,p_0,u_0)=\big(x_0,Dg(x_0),g(x_0)\big)$. Notice that
	\begin{equation*}
		\dot{x}(t)=\frac{\partial H}{\partial p}\big(x(t),p(t),u(t)\big)
		=2p(t)-Dg(x(t))
		=2Dg(x(t))-Dg(x(t))
		=Dg(x(t)).
	\end{equation*}
	Then we have
	\begin{equation*}
		\int^{t_n}_0|\dot{x}(s)|^2ds=\int^{t_n}_0\langle Dg(x(s)),\dot{x}(s)\rangle ds=g(x(t_n))-g(x_0)\longrightarrow0,\quad n\to+\infty.
	\end{equation*}
	It yields that
	$$\dot{x}(t)=Dg(x(t))\equiv0,\quad \forall t\in \mathbf{R}.$$
	Thus, $Dg(x_0)=0$. By \eqref{Example1}, we get that for any $t\in\mathbf{R}$, $\big(x(t),p(t),u(t)\big)=(x_0,0,g(x_0))$.

	Let $\mathcal{G}:=\{(x,0,g(x)):Dg(x)=0\}$. It is clear that $\mathcal{G}$ is nonempty.  According to the above arguments, we can deduce that
	$$\supp\mu\subset\mathcal{G},\quad \forall\mu\in\mathfrak{M}_g.$$
	It is direct to check that $\mathcal{G}$ consists of fixed points of $\Phi^H_t$. For any point $(\tilde{x},0,g(\tilde{x}))\in\mathcal{G}$, $\delta_{(\tilde{x},0,g(\tilde{x}))}$ is a $\Phi^H_t$-invariant probability measure supported in $\Lambda_{g}$.

	By direct computations, $\frac{\partial H}{\partial u}(x,p,u)=-f(x)$. Hence,
	\begin{itemize}
		\item If $f(x)<0$ on $\{x\in M: Dg(x)=0\}$, then
		$$\int_{T^*M\times \R}\frac{\partial H}{\partial u}d\mu=\int_{\supp\mu} -f(x)d\mu>0,\quad \forall\mu\in\mathfrak{M}_g.$$
		By Theorem \ref{th1}, we deduce that $g(x)$ is locally asymptotically stable.
		\item If $f(\bar{x})>0$ for some $\bar{x}$ in $\{x\in M: Dg(x)=0\}$, then
		$$\int_{T^*M\times \R}\frac{\partial H}{\partial u}d\delta_{(\bar{x},0,g(\bar{x}))}=\frac{\partial H}{\partial u}(\bar{x},0,g(\bar{x}))=-f(\bar{x})<0.$$
		From Theorem \ref{th2}, we get that $g(x)$ is unstable.
	\end{itemize}

	\section{Uniqueness of stationary solutions}
	\subsection{The proof of Theorem \ref{th3}}	
	\begin{proof}[Proof of Theorem \ref{th3}]
		Suppose there is a viscosity solution of \eqref{1-2}, denoted by $u^-_1$. We aim to show that $u^-_1$ is the unique viscosity solution and it is  globally asymptotically stable.

		From \eqref{A4} we know that \eqref{A1} holds true.
		By Theorem \ref{th1}, there is $\Delta>0$ such that, for any $\delta\in(0,\Delta)$, $\lim_{t\to+\infty}T^-_tu_{\delta}=u^-_1$ and $\lim_{t\to+\infty}T^-_tu^{\delta}=u^-_1$, where $u_{\delta}=u^-_1-\delta$ and $u^{\delta}=u^-_1+\delta$. It is clear that $u^-_1\geqslant T^-_tu_{\delta}\geqslant u_{\delta}$, $\forall t\geqslant\tau$ for some $\tau>0$.  Corollary \ref{prop10}, we have
		\begin{equation}\label{eq5.2}
			T^+_tu_{\delta}\leqslant u_{\delta}<u^-_1<u^{\delta},\ \forall t\geqslant\tau.
		\end{equation}
		
		\item [\bf{Step 1:}] We show that the function $(x,t)\to T^+_tu_{\delta}(x)$ is unbounded from below on $M\times[0,+\infty)$.
		
		Assume by contradiction that the function $(x,t)\to T^+_tu_{\delta}(x)$ is bounded from below on $M\times[0,+\infty)$. In view of \eqref{eq5.2}, the function $(x,t)\to T^+_tu_{\delta}(x)$ is bounded from above on $M\times(0,+\infty)$. Hence, we can define $u^+_2(x):=\limsup_{t\to+\infty}T^+_tu_{\delta}(x)$, and  Proposition \ref{prop5.1} (1)  $u^+_2\in \mathcal{S}^+$. In view of \eqref{eq5.2}, we have
		\begin{equation}\label{eq5.3}
			u^+_2\leqslant u_{\delta}<u^-_1.
		\end{equation}
		Let $u^-_2(x):=\lim_{t\to+\infty}T^-_tu^+_2(x)$.  Then by Proposition \ref{prop6} (2), $u^-_2\in S^-$. Using Corollary \ref{uplus} and \eqref{eq5.3}, we get that
		\begin{equation}\label{eq5.4}
			u^+_2\leqslant u^-_2\leqslant u^-_1.
		\end{equation}
		There are two possibilities:
		
		Case (i). $u^-_2=u^-_1$. It means $\lim_{t\to+\infty}T^-_tu^+_2=u^-_1$ uniformly on $M$. From \eqref{eq5.3}, there is $t_0>0$ such that $u^+_2<T^-_{t_0}u^+_2$. And it yields that  $u^+_2=T^+_{t_0}u^+_2<T^+_{t_0}\circ T^-_{t_0}u^+_2\leqslant u^+_2$,  a contradiction.
		
		Case (ii). $u^-_2\neq u^-_1$. According to Theorem \ref{th1}, we can deduce that  $\|u^-_2-u^-_1\|_\infty>\Delta$ and there is  $\Delta_1\in(0,\delta)$ such that
		\begin{equation}\label{eq5.5}
			\lim_{t\to+\infty}T^-_t\varphi=u^-_2, \quad \forall\varphi\in C(M,\R)\ \text{with }\ \|\varphi-u^-_2\|_\infty\leqslant\Delta_1.
		\end{equation}
		Let $\varphi=\max\{u^+_2+\Delta_1,u^-_2\}$. Through \eqref{eq5.3} and \eqref{eq5.4},
		\begin{equation}\label{eq5.6}
			u^+_2<\varphi\leqslant u^-_2+\Delta_1,
		\end{equation}
		and
		\begin{equation}\label{eq28}
			\varphi\leqslant\max\{u_{\delta}+\delta,u^-_2\}\leqslant u^-_1.
		\end{equation}
		In the light of the definition of $u^-_2$ and \eqref{eq5.5}, we have
		$$u^-_2=\lim_{t\to+\infty}T^-_tu^+_2\leqslant\lim_{t\to+\infty}T^-_t\varphi\leqslant\lim_{t\to+\infty}T^-_t(u^-_2+\Delta_1)=u^-_2.$$
		It implies that
		\begin{align}\label{1212}
			\lim_{t\to+\infty}T^-_t\varphi=u^-_2.
		\end{align}
		On the other hand, from Proposition \ref{prop5.1},  $u^+_2(x):=\lim_{t\to+\infty}\sup_{s\geqslant t}T^+_su_{\delta}(x)$ uniformly on $x\in M$. By \eqref{eq5.6}, there is  $t_1>0$ such that
		$$T^+_{t_1}u_{\delta}\leqslant\varphi.$$
		Therefore, by the Proposition \ref{prop2} (1) and \eqref{eq28}, we get that
		$$T^-_tu_{\delta}\leqslant T^-_t\circ T^-_{t_1}\circ T^+_{t_1}u_{\delta}\leqslant T^-_{t+t_1}\varphi\leqslant T^-_{t+t_1}u^-_1=u^-_1.$$
		Letting $t\to+\infty$, we get that  $\lim_{t\to+\infty}T^-_t\varphi=u^-_1$ which  contradicts \eqref{1212}.
		
		\item [\bf{Step 2:}] We prove that  $\lim_{t\to+\infty}T^-_t\psi=u^-_1$ for all function $\psi\in C(M,\R)$ with $\psi\leqslant u^-_1$.
		
		Let $Q:=\inf_{x\in M}\psi(x)$. According to Step 1 and Proposition \ref{prop5.2} (i), there is  $s\in[0,+\infty)$ such that $T^+_su_{\delta}\leqslant Q\leqslant\psi\leqslant u^-_1$. Therefore, we have $$T^-_tu_{\delta}\leqslant T^-_t\circ T^-_s\circ T^+_su_{\delta}\leqslant T^-_{t+s}\psi\leqslant T^-_{t+s}u^-_1=u^-_1.$$
		Let $t$ tend to $+\infty$. Then $\lim_{t\to+\infty}T^-_t\psi=\lim_{t\to+\infty}T^-_tu_{\delta}=u^-_1$ uniformly on $x\in M$.
		
		\item [\bf{Step 3:}] We show that $u^-_1$ is the unique viscosity solution of equation \eqref{1-2}.
		
		For any given $u^-_3\in \mathcal{S}^-$, let $\bar{u}(x):=\min\{u^-_1(x),u^-_3(x)\}$, $x\in M$. Since $u^-_1$, $u^-_3$ are viscosity solutions, then $\bar{u}$ is also a viscosity solution.  By Step 2, we deduce that $u^-_1=\bar{u}\leqslant u^-_3$. By similar arguments, one can get that $u^-_3=\bar{u}\leqslant u^-_1$.
		
		\item [\bf{Step 4:}] We show $\lim_{t\to+\infty}T^-_t\psi=u^-_1$ for all function $\psi\in C(M,\R)$ with $\psi\geqslant u^-_1$.
		
		For any $\psi\in C(M,\R)$ satisfying $\psi\geqslant u^-_1$. There are two possibilities:
		
		Case (i). The function $(x,t)\to T^-_t\psi(x)$ is bounded from above on $M\times [0,+\infty)$. In this case, using the monotonicity of $T^-_t$, we get that $u^-_1=T^-_tu^-_1\leqslant T^-_t\psi$, which means the function $(x,t)\to T^-_t\psi(x)$ is bounded from below on $M\times [0,+\infty)$. Thus by Proposition \ref{prop5.1} (2), the uniform limit $\lim_{t\to+\infty}\inf_{s\geqslant t}T^-_s\psi(x)$ exists, denoted by $\psi_{\infty}$, and  $\psi_{\infty}\in\mathcal{S}^-$. From Step 3,  $u^-_1=\psi_{\infty}$. So there is a constant $\bar{t}>0$ such that
		$$u^-_1\leqslant T^-_{\bar{t}}\psi\leqslant u^{\delta}.$$
		From the above inequalities, we can deduce that  $\lim_{t\to+\infty}T^-_t\psi=u^-_1$.
		
		Case (ii). The funtion $(x,t)\to T^-_t\psi(x)$ is unbounded from above on $M\times [0,+\infty)$. Let $\phi:=\psi+\delta\geqslant u^-_1+\delta=u^{\delta}> u^-_1$. Using the monotonicity of $T^-_t$, the function $(x,t)\to T^-_t\phi(x)$ is unbounded above on $M\times [0,+\infty)$. Let $N:=\sup_{x\in M}\phi(x)+1$. Then by Proposition \ref{prop5.2} (ii) there is  $\sigma>0$ such that $T^-_{\sigma}\phi(x)\geqslant N,\ \forall x\in M$.  Thus $T^-_{\sigma}\phi>\phi$. From Corollary \ref{prop10}, we have $T^+_{\sigma}\phi\leqslant\phi$. Note that the function $(x,t)\to T^+_t\phi(x)$ is continuous on $M\times
		[0,\sigma]$. Thus there is a constant $\bar{N}>0$ such that $T^+_s\phi(x)<\bar{N}$,  $\forall x\in M$, $\forall s\in[0,\sigma]$. For any $t\in [0,+\infty)$,  $t=n\sigma+s$ where $n\in\mathbf{N}$ and $s\in[0,\sigma)$. Hence, we have
		$$T^+_t\phi=T^+_{(n-1)\sigma+s}\circ T^+_{\sigma}\phi\leqslant T^+_{(n-1)\sigma+s}\phi\leqslant\cdots\leqslant T^+_s\phi\leqslant\bar{N}.$$
		On the other hand, let $u^+_1:=\lim_{t\to+\infty}T^+_tu^-_1(x)$. By Proposition \ref{prop6} (1), $u^+_1$ is well defined and belongs to $\mathcal{S}^+$. From Corollary \ref{uplus},
		there holds $u^-_1\geqslant u^+_1$. So we have
		$$T^+_t\phi\geqslant T^+_tu^-_1\geqslant u^+_1,\quad \forall t\in [0,+\infty).$$
		Thus the function $(x,t)\to T^+_t\phi(x)$ is bounded on $M\times[0,+\infty)$. The uniform limit $\phi^+_{\infty}:=\lim_{t\to+\infty}\sup_{s\geqslant t}T^+_s\phi(x)\in \mathcal{S}^+$ by Proposition \ref{prop5.1}.
		There are two situations here:
		
		Case (a). If $T^+_{\kappa}\phi\leqslant u^{\delta}$ for some $\kappa>0$, then $\phi\leqslant T^-_{\kappa}u^{\delta}$ by  Corollary \ref{prop10}. Thus
		$$\lim_{t\to+\infty}T^-_t\phi\leqslant \lim_{t\to+\infty}T^-_{t+\kappa}u^{\delta}=u^-_1<+\infty,$$
		which conflict $\lim_{t\to+\infty}T^-_t\phi=+\infty$.
		
		Case (b). If, for any $t>0$, there is a $x_t\in M$ such that $T^+_t\phi(x_t)>u^{\delta}(x_t)$. Then we can find sequences $\{t_n\}\subset \R$ with $t_n\to+\infty$ and $\{x_n\}\subseteq M$ such that
		$$T^+_{t_n}\phi(x_n)>u^{\delta}(x_n),\quad \lim_{n\to+\infty}x_n=x_0\in M,\quad  \lim_{n\to+\infty}\sup_{s\geqslant t_n}T^+_{s}\phi=\phi^+_{\infty}\quad \text{uniformly on}\ M.$$
		It means $\phi^+_{\infty}(x_0)\geqslant u^{\delta}(x_0)$. Let $\phi^-_{\infty}:=\lim_{t\to+\infty}T^-_t\phi^+_{\infty}$. Then $\phi^-_{\infty}\in \mathcal{S}^-$ by Proposition \ref{prop6} (2). From Step 3, we have $\phi^-_{\infty}=u^-_1$. However, $\phi^-_{\infty}\geqslant\phi^+_{\infty}$ by Corollary \ref{uplus}.
		Therefore, $\phi^-_{\infty}(x_0)\geqslant\phi^+_{\infty}(x_0)\geqslant u^{\delta}(x_0)>u^-_1(x_0)$, which yields a contradiction.
	\end{proof}

	\subsection{The proof of Corollary \ref{addcoro1}}
	
	\begin{proof}[Proof of Corollary \ref{addcoro1}]
		Take an arbitrary $\mu_0\in\mathfrak{M}$. In view of the property of invariant measures, recurrent points are almost everywhere in $\supp\mu_0$. Take an arbitrary recurrent point $(x_0,p_0,u_0)\in \supp\mu_0$. Let $\big(x(t),p(t),u(t)\big):=\Phi^H_t(x_0,p_0,u_0)$, $t\in\mathbf{R}$. So there is a sequence $\{t_n\}$ such that
		$$t_n\to+\infty, \quad \big(x(t_n),p(t_n),u(t_n)\big)\to(x_0,p_0,u_0),\quad  n\ \to+\infty.$$
		From Corollary \ref{mather}, there exist $u_-\in\mathcal{S}^-$ and $\mu\in\mathfrak{M}_{u_-}$ such that $(x_0,p_0,u_0)\in \supp\mu$.
		Thus $\supp\mu\subset\Lambda_{u_-}$. Since $\supp\mu$ is  $\Phi^H_t$-invariant, the orbit $\{\big(x(t),p(t),u(t)\big)\}\subset \supp\mu\subset\Lambda_{u_-}$. Thus,  we have $H\big(x(t),p(t),u(t)\big)=0$, $\forall t\in \R$. As a consequence, we get that
		\begin{equation}\label{addeq5.8}
			\dot{u}(t)=\frac{\partial H}{\partial p}\big(x(t),p(t),u(t)\big)\cdot p(t)-H\big(x(t),p(t),u(t)\big)
			=\frac{\partial H}{\partial p}\big(x(t),p(t),u(t)\big)\cdot p(t).	
		\end{equation}
		For any $(x,p,u)\in T^*M\times\R$, define a  function $f(\lambda):=H(x,\lambda p,u),\ \lambda\in\R$. Since the Hamiltonian $H$ is strictly convex and reversible in $p$, it is easy to check that $f'(1)=\frac{\partial H}{\partial p}(x,p,u)\cdot p\geqslant0$ for all $p\in T^*_xM$ where the equality holds if and only if $p=0$.
		Then from \eqref{addeq5.8}, we get that
		$$0\leqslant\int^{t_n}_0 \frac{\partial H}{\partial p}\big(x(t),p(t),u(t)\big)\cdot p(t)ds=\int^{t_n}_0\dot{u}(s)ds=u(t_n)-u_0\longrightarrow0,\quad n\to+\infty.$$
		It implies that
		$$p(t)\equiv0,\quad \dot{u}(t)\equiv0,\quad \forall t\in \R.$$
		And we can deduce that
		$$\dot{x}(t)=\frac{\partial H}{\partial p}\big(x(t),p(t),u(t)\big)\equiv0,\quad \forall t\in \R.$$
		Therefore, $\big(x(t),p(t),u(t)\big)\equiv(x_0,0,u_0)$ is a fixed point of $\Phi^H_t$. Moreover, $(x_0,u_0)$ satisfies
		\begin{align*}
			\left\{
			\begin{array}{l}
				H(x_0,0,u_0)=0,\\
				\dot{p}(t)=-\frac{\partial H}{\partial x}(x_0,0,u_0)=0.
			\end{array}
			\right.
		\end{align*}
		Thus $(x_0,u_0)$ belongs to $\mathcal{B}$ in condition. Due to the above arguments, we can deduce that
		$$\supp\mu_0\subset\{(x,0,u): (x,u)\in\mathcal{B}\},\quad  \forall\mu_0\in\mathfrak{M}.$$
		If $\frac{\partial H}{\partial u}(x,0,u)>0$ on $\mathcal{B}$, then we have
		$$\int_{T^*M\times\R}\frac{\partial H}{\partial u}d\mu_0=\int_{\supp\mu_0}\frac{\partial H}{\partial u}d\mu_0>0,\quad\forall\mu_0\in\mathfrak{M}.$$
		According to Theorem \ref{th3},  equation \eqref{1-2} has at most one viscosity solution.
	\end{proof}

	\subsection{Example \ref{ex2}, revisited}	
	Consider the following equation
	\begin{align}\label{1616}
		\frac{1}{2}\|Du(x)\|^2+\frac{1}{2}u(x)+\sin u(x)=0,\quad x\in M.	
	\end{align}
	Here $H(x,p,u)=\frac{1}{2}\|p\|^2+\frac{1}{2}u+\sin u$.
	It is easy to check that $H(x,p,u)$ is smooth and satisfies (H1)-(H3).
	Notice that $u=0$ is a viscosity solution of equation \eqref{1616}. In order to apply Corollary \ref{addcoro1} to equation \eqref{1616}, we need to show \eqref{1-206} holds.

	Note that any point $(x,u)$ in $\mathcal{B}$ satisfies
	\begin{align*}
		\left\{
		\begin{array}{l}
			H(x,0,u)=\frac{1}{2}u+\sin u=0,\\
			\frac{\partial H}{\partial x}(x,0,u)\equiv0.
		\end{array}
		\right.
	\end{align*}
	By direct computation, $\mathcal{B}=\{(x,0)|\ x\in M\}$. So we get that
	$$\frac{\partial H}{\partial u}(x,0,u)=\frac{1}{2}+\cos u=\frac{3}{2}>0,\quad \forall(x,u)\in\mathcal{B}.$$
	Therefore, by Corollary \ref{addcoro1}, equation \eqref{1616} has a unique viscosity solution.

	\subsection{The proof of Corollary \ref{addcoro2}}
	
	\begin{proof}[Proof of Corollary \ref{addcoro2}]
		Take an arbitrary $\mu_0\in\mathfrak{M}$. From Corollary \ref{mather1}, we know $\supp\mu_0\subset\mathcal{E}$.	Take an arbitrary $(x_0,p_0,u_0)\in \supp\mu_0$. Let $\big(x(t),p(t),u(t)\big):=\Phi^H_t(x_0,p_0,u_0)$, $t\in\mathbf{R}$.
		Since $\supp\mu_0$ is $\Phi^H_t$-invariant, the orbit $\{\big(x(t),p(t),u(t)\big)\}\subset \supp\mu_0\subset\mathcal{E}$. Recalling condition \eqref{add1.10}, we have the following two different cases.
		
		Case (i). $\frac{\partial H}{\partial u}(x_0,p_0,u_0)\neq0$. Since $\frac{\partial H}{\partial u}(x,p,u)\Big|_\mathcal{E}\geqslant0$,  we have $\frac{\partial H}{\partial u}(x_0,p_0,u_0)>0$. So, we can choose a neighborhood $O_0$ of $(x_0,p_0,u_0)$ such that
		$$\mu_0(O_0)=\varepsilon_0>0\quad \text{and} \quad \frac{\partial H}{\partial u}\Big|_{O_0}>\delta_0>0$$
		for some $\varepsilon_0>0$ and $\delta_0>0$.
		Hence, we have
		$$\int_{T^*M\times\R}\frac{\partial H}{\partial u}d\mu_0=\int_{\supp\mu_0}\frac{\partial H}{\partial u}d\mu_0\geqslant\int_{O_0\cap\supp\mu_0}\frac{\partial H}{\partial u}d\mu_0\geqslant\varepsilon_0\delta_0>0.$$
		
		Case (ii). $\frac{\partial H}{\partial u}(x_0,p_0,u_0)=0$. In view of \eqref{add1.10}, one can deduce that  $\frac{\partial H}{\partial u}\big(x(t_1),p(t_1),u(t_1)\big)>0$ for some $t_1>0$. We can choose a neighborhood $O_1$ of $\big(x(t_1),p(t_1),u(t_1)\big)$ such that
		$$\mu_0(O_1)=\varepsilon_1>0\quad \text{and} \quad \frac{\partial H}{\partial u}\Big|_{O_1}>\delta_1>0.$$
		for some $\varepsilon_1>0$ and $\delta_1>0$.
		As a consequence, we have
		$$\int_{T^*M\times\R}\frac{\partial H}{\partial u}d\mu_0=\int_{\supp\mu_0}\frac{\partial H}{\partial u}d\mu_0\geqslant\int_{O_1\cap\supp\mu_0}\frac{\partial H}{\partial u}d\mu_0\geqslant\varepsilon_1\delta_1>0.$$
		
		So far, we have proved that \eqref{A4} holds true and thus by Theorem \ref{th3} equation \eqref{1-2} has at most one viscosity solution.
	\end{proof}

	\subsection{Example \ref{ex3}, revisited}
	Let us focus on the following equation
	\begin{align}\label{eq5.10}
		w_t(x,t)+\frac{1}{2}w^2_x(x,t)-a\cdot w_x(x,t)+(\sin x+b)\cdot w(x,t)=0,\quad x\in \mathbf{S},
	\end{align}
	where $a$, $b\in\R$. Here $H(x,p,u)=\frac{1}{2}p^2-a\cdot p+(\sin x+b)\cdot u$. We can easily check that $H(x,p,u)$ is smooth and satisfies (H1)-(H3). Notice that the constant function $w=0$ is a solution of equation \eqref{eq5.10}.
	
	Take an arbitrary $\mu\in\mathfrak{M}_{w}$ and take an arbitrary point $(x_0,p_0,u_0)\in \supp\mu$. Denote $\big(x(t), p(t), u(t)\big):=\Phi^H_t(x_0,p_0,u_0)$, $t\in\mathbf{R}$. Since $\supp\mu$ is $\Phi^H_t$-invariant, orbit $\big(x(t),p(t),u(t)\big)$ lies on  $\Lambda_w$. Thus, we have
	\begin{equation*}
		\big(x(t),p(t),u(t)\big)=(x(t),0,0),\quad \forall t\in \mathbf{R},
	\end{equation*}
	and $(x_0,p_0,u_0)=(x_0,0,0)$. Notice that
	\begin{equation*}
		\dot{x}(t)=\frac{\partial H}{\partial p}\big(x(t),p(t),u(t)\big)
		=p(t)-a
		=-a.
	\end{equation*}
	There are two situations.
	\begin{itemize}
		\item If $a=0$, according to above arguments, we can deduce that all the orbits in $\supp\mu$ are fixed points of the flow.
		
		\subitem{(1)} When $b<1$, it is easy to check that $\delta_{(\frac{3}{2}\pi,0,0)}$ is a $\Phi^H_t$-invariant probability measure supported in $\Lambda_{w}$. Thus,  $\delta_{(\frac{3}{2}\pi,0,0)}\in\mathfrak{M}_{w}$. Since
		$$\int_{T^*\mathbf{S}^1\times\R}\frac{\partial H}{\partial u}d\delta_{(\frac{3}{2}\pi,0,0)}=\sin(\frac{3}{2}\pi)+b=b-1<0,$$
		from Theorem \ref{th2}, we get that $w=0$ is unstable.
		
		\subitem{(2)} When $b>1$, one can check that
		$$\frac{\partial H}{\partial u}(x,p,u)=\sin x+b>0,\quad \forall (x,p,u)\in T^*\mathbf{S}\times\R.$$
		This implies condition \eqref{A4} holds true. By Theorem \ref{th3}, $w=0$ is the unique viscosity solution and is globally asymptotically stable.
		
		\item If $a\neq0$, we can deduce that any orbit in $\supp\mu$ satisfies $\big(x(t),p(t),u(t)\big)=(x(t),0,0)$ where $x(t):=x_0-at\mod2\pi$ for some $x_0\in\mathbf{S}$. As a consequence, $\mathfrak{M}_{w}$ has only one element $\mu$
		$$\mu(f)=\int_{T^*\mathbf{S}}fd\mu=\frac{1}{2\pi}\int^{2\pi}_0f(x,0,0)dx,\quad \forall f\in C(T^*\mathbf{S}\times\R,\R).$$
		
		\subitem{(1)} When $b<0$, we have
		$$\int_{T^*\mathbf{S}\times\R}\frac{\partial H}{\partial u}d\mu=\frac{1}{2\pi}\int^{2\pi}_0(\sin x+b) dx=b<0.$$
		From Theorem \ref{th2}, we know that $w=0$ is unstable.
		
		\subitem{(2)} When $0<b<1$, we get  that
		$$\int_{T^*\mathbf{S}\times\R}\frac{\partial H}{\partial u}d\mu=\frac{1}{2\pi}\int^{2\pi}_0(\sin x+b) dx=b>0.$$
		From Theorem \ref{th1}, we know that $w=0$ is locally asymptotically stable.
		
		\subitem{(3)} When $b\geqslant1$, for any $(\bar{x},\bar{p},\bar{u})\in\mathcal{E}$,
		$$\frac{\partial H}{\partial u}(\bar{x},\bar{p},\bar{u})=\sin\bar{x}+b\geqslant\sin\bar{x}+1\geqslant0.$$
		If $\frac{\partial H}{\partial u}(\bar{x},\bar{p},\bar{u})=0$, then $\sin\bar{x}=-b=-1$. Since $H(\bar{x},\bar{p},\bar{u})=0$, we can deduce that $(\bar{p}-a)^2=a^2\neq0$. So we have
		$$\mathfrak{L}^2_H\frac{\partial H}{\partial u}(\bar{x},\bar{p},\bar{u})=(-\bar{u}\cos\bar{x}-\bar{p}\sin\bar{x}-\bar{p})\cos\bar{x}-(\bar{p}-a)^2\sin\bar{x}=a^2\neq0.$$
		This implies condition \eqref{add1.10} holds true. According to Corollary \ref{addcoro2}, $w=0$ is the unique viscosity solution and is globally asymptotically stable.
	\end{itemize}

	\bigskip
	
	\bigskip
	
	\noindent {\bf Funding}
	Kaizhi Wang is supported by Key R\&D Program of China  (Grant No. 2022YFA1005900), National Natural Science Foundation of China (Grant Nos. 12171315, 11931016) and by Natural Science Foundation of Shanghai (Grant No. 22ZR1433100). Jun Yan is supported by National Natural Science Foundation of China (Grant Nos. 12171096,  12231010).

	%
	%
	%


	\bibliographystyle{plain}

\end{document}